\DeclareFontFamily{U}{mathx}{\hyphenchar\font45}
\DeclareFontShape{U}{mathx}{m}{n}{
	<5> <6> <7> <8> <9> <10>
	<10.95> <12> <14.4> <17.28> <20.74> <24.88>
	mathx10
}{}
\DeclareSymbolFont{mathx}{U}{mathx}{m}{n}
\DeclareMathAccent{\widecheck}{0}{mathx}{"71}
\DeclareMathAccent{\wideparen}{0}{mathx}{"75}
\setlist[enumerate]{leftmargin=1.5em}
\setlist[itemize]{leftmargin=1.5em}
\definecolor{green}{rgb}{0,0.8,0} 
\newtheorem{maintheorem}{Theorem}
\newtheorem{theorem}{Theorem}[section]
\newtheorem{lemma}[theorem]{Lemma}
\newtheorem{proposition}[theorem]{Proposition}
\theoremstyle{definition}
\theoremstyle{remark}
\newtheorem{remark}[theorem]{Remark}
\numberwithin{equation}{section}
\newcommand{\nrm}{\@ifstar{\nrmb}{\nrmi}}
\newcommand{\nrmi}[1]{\Vert{#1}\Vert}
\newcommand{\nrmb}[1]{\left\Vert{#1}\right\Vert}
\newcommand{\abs}{\@ifstar{\absb}{\absi}}
\newcommand{\absi}[1]{\vert{#1}\vert}
\newcommand{\absb}[1]{\left\vert{#1}\right\vert}
\newcommand{\brk}{\@ifstar{\brkb}{\brki}}
\newcommand{\brki}[1]{\langle{#1}\rangle}
\newcommand{\brkb}[1]{\left\langle{#1}\right\rangle}
\newcommand{\set}{\@ifstar{\setb}{\seti}}
\newcommand{\seti}[1]{\{#1\}}
\newcommand{\setb}[1]{\left\{ #1\right\}}
\newcommand{\nnrm}[1]{{\vert\kern-0.25ex\vert\kern-0.25ex\vert #1 
    \vert\kern-0.25ex\vert\kern-0.25ex\vert}}
\newcommand{\VERT}[1]{{\left\vert\kern-0.25ex\left\vert\kern-0.25ex\left\vert #1 
    \right\vert\kern-0.25ex\right\vert\kern-0.25ex\right\vert}}
\newcommand{\rd}{\partial}
\newcommand{\nb}{\nabla}
\newcommand{\alp}{\alpha}
\newcommand{\bt}{\beta}
\newcommand{\dlt}{\delta}
\newcommand{\Dlt}{\Delta}
\newcommand{\eps}{\epsilon}
\newcommand{\Lmb}{\Lambda}
\newcommand{\omg}{\omega}
\newcommand{\zt}{\zeta}
\newcommand{\bbR}{\mathbb R}
\newcommand{\bbS}{\mathbb S}
\newcommand{\calA}{\mathcal A}
\newcommand{\calJ}{\mathcal J}
\newcommand{\calL}{\mathcal L}
\newcommand{\calN}{\mathcal N}
\newcommand{\calQ}{\mathcal Q}
\newcommand{\calR}{\mathcal R}
\newcommand{\calT}{\mathcal T}
\newcommand{\mrI}{\mathrm{I}}
\newcommand{\mrII}{\mathrm{II}}
\newcommand{\mrIII}{\mathrm{III}}
\newcommand{\Abs}[1]{\left\vert#1\right\vert}		
\newcommand{\normb}[1]{{\left\Vert #1 \right\Vert}_{L^2}}
\begin{document}

\title{Global self-similar solutions for the 3D Muskat equation}
\author{Jungkyoung Na{*}} \thanks{*Department of Mathematics, Brown University\newline  E-mail address: jungkyoung\_na@brown.edu}

\date{\today}



\maketitle


\begin{abstract}
    In this paper, we establish the existence of global self-similar solutions to the 3D Muskat equation when the two fluids have the same viscosity but different densities. These self-similar solutions are globally defined in both space and time, with exact cones as their initial data. Furthermore we estimate the difference between our self-similar solutions and solutions of the linearized equation around the flat interface in terms of critical spaces and some weighted $\dot{W}^{k,\infty}(\mathbb{R}^2)$ spaces for $k=1,2$. The main ingredients of the proof are new estimates in the sense of $\dot{H}^{s_1}(\mathbb{R}^2) \cap \dot{H}^{s_2}(\mathbb{R}^2)$ with $3/2<s_1<2<s_2<3$, which is continuously embedded in critical spaces for the 3D Muskat problem: $\dot{H}^2(\mathbb{R}^2)$ and $\dot{W}^{1,\infty}(\mathbb{R}^2)$.
\end{abstract}

\tableofcontents

\section{Introduction}
In the field of fluid dynamics, free boundary problems have posed significant challenges in modeling the evolution of boundaries between fluids. Among these, the Muskat problem stands out, describing the dynamics of the interface between two distinct immiscible and incompressible fluids (such as water and oil, or salt water and fresh water) as they propagate through porous media (such as sand or sandstone aquifer). Introduced initially by Morris Muskat in the 1930s \cite{Muskat-1,Muskat}, this problem has attracted considerable attention in mathematical analysis and various applications in physical and engineering studies (see e.g. \cite{Naras-98,KT-21}).

In this paper, we study the existence of global self-similar solutions to the three-dimensional Muskat equation when the two fluids have the same viscosity but different constant densities. This specific case has been one of the central focuses in much of the existing literature. To the best of the author's knowledge, our result is the first to address self-similar solutions in the 3D setting. Our self-similar solutions, which model 2D interfaces between two fluids in $\bbR^3$, have exact cones as their initial data and exist globally in both space and time. Moreover, we estimate the difference between our self-similar solutions and solutions of the linearized equation around the flat interface in terms of critical spaces such as $\dot{H}^2(\bbR^2)$ and $\dot{W}^{1,\infty}(\bbR^2)$, along with certain weighted $\dot{W}^{k,\infty}(\bbR^2)$ spaces for $k=1,2$. While the estimates in critical spaces provide a foundational information, the additional estimates in some weighted $\dot{W}^{k,\infty}$ spaces for $k=1,2$ offer more detailed insights into the behavior of our self-similar solutions.  The proof of our result is based on a new reformulation inspired by \cite{AN22}, along with new estimates with respect to  homogeneous Sobolev spaces. This approach differs from the proof of existence of self-similar solutions for the 2D Muskat equation in \cite{2Dself}. These aspects will be discussed in Section \ref{Main results} in detail.

To begin with, we present the 3D Muskat problem. We denote by $\Omega_i(t)$ $(i=1,2)$ two different time dependent fluid regions in $\bbR^3$, separated by a time dependent surface $\Sigma(t)$. Under the assumption that $\Sigma(t)$ is the graph of a function, we denote
\begin{equation}\label{Original Muskat-regions}
\begin{split}
    \Omega_1(t)&=\left\{X=(x,z)\in \bbR^2\times \bbR : z>f(t,x) \right\},\\
    \Omega_2(t)&=\left\{X=(x,z)\in \bbR^2\times \bbR : z<f(t,x) \right\},\\
    \Sigma(t)&=\left\{X=(x,z)\in \bbR^2\times \bbR : z=f(t,x) \right\}
    \end{split}
\end{equation}
for a function $f:\bbR_{\ge0}\times\bbR^2\rightarrow \bbR$.
\begin{figure}[h]
\centering
\includegraphics[scale=0.6]{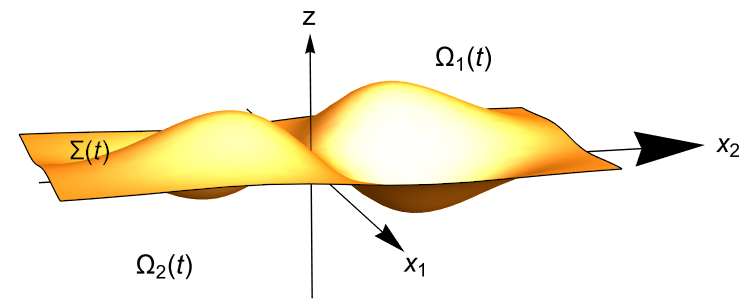}
\end{figure}

\noindent We assume that the medium has the constant permeability $\kappa$ and that the two fluids have the
same contant viscosity $\mu$ but different constant densities $\rho_i$ in $\Omega_i(t)$ $(i=1,2)$. 
Then the motions of the fluids can be written as 
\begin{equation}\label{Original Muskat}
\left\{
\begin{aligned}
 &\partial_t \rho_i(t,X) + u_i(t,X) \cdot \nb_{X} \rho_i(t,X) = 0,\\
 &\nb_X \cdot u_i(t,X)=0,\\
 &\frac{\mu}{\kappa}u_i(t,X)= (0,0,g\rho_i(t,X))-\nb_X P_i(t,X)
\end{aligned}
\right.
\end{equation}
in $\Omega_i(t)$ $(i=1,2)$ for $X=(x,z)\in \bbR^2\times\bbR$, where $u_i$ and $P_i$ are the fluid velocity and pressure in each $\Omega_i(t)$, respectively, and $g$ denotes the gravitational constant. The first equation in \eqref{Original Muskat} is the transport equation which represents the transport of the density by the flow. The second equation means the incompressibility of the fluid, and the third equation is the experimental Darcy's law \cite{Darcy}. We further assume that $\rho_1<\rho_2$, which corresponds to a stable regime. In other words, the stable regime means the heavier fluid lies below the lighter fluid. Then taking $\mu=\kappa=g=1$ and normalizing $\rho_2-\rho_1=2$ for simplicity, we can derive a contour equation of graphical interface $\Sigma(t)$ from \eqref{Original Muskat-regions} and \eqref{Original Muskat}:
\begin{equation}\label{eq:Muskat}
	\rd_{t} f(t,x) = \frac{1}{2\pi}\int_{\bbR^2} \frac{\alp \cdot \nabla_x \Delta_\alp f(t,x)}{\left(1+ (\Delta_\alp f(t,x))^2\right)^{\frac32}} \frac{d\alp}{|\alp|^2}, \qquad
	\Delta_\alp f(t,x)=\frac{f(t,x)-f(t,x-\alp)}{|\alp|}
\end{equation}
(see \cite{CG-07-contour} for a detailed derivation). Basic notable properties of this equation are twofold. Firstly, \eqref{eq:Muskat} is invariant by the transformation
\begin{equation*}
    f(t,x) \mapsto f_{\lambda}(t,x):=\lambda^{-1} f(\lambda t,\lambda x),\qquad \lambda>0.
\end{equation*}
Therefore, the Sobolev spaces $\dot{W}^{1,\infty}(\bbR^2)$, $\dot{H}^2(\bbR^2)$ and Wiener algebra $\calL^{1,1}(\bbR^2)$ are examples of critical spaces for the 3D Muskat problem. In general, $\dot{W}^{1,\infty}(\bbR^d)$, $\dot{H}^{1+\frac{d}{2}}(\bbR^d)$ and $\calL^{1,1}(\bbR^d)$ are critical spaces for the $(d+1)$-dimensional Muskat problem (see e.g. \cite{CNQX-22}).  In addition, \eqref{eq:Muskat} can be linearized around the flat solution as follows (see e.g. \cite{CG-07-contour}):
\begin{equation*}
    \rd_{t} f(t,x) + \Lmb f(t,x)=0,
\end{equation*}
where $\Lambda=(-\Delta)^\frac12$ denotes the Zygmund operator defined by
\begin{equation}\label{def: Zygmund}
    \Lmb f (x):=  \frac{1}{2\pi} \int_{\bbR^2} 
   \frac{f(x)-f(x-\alp)}{|\alp|}\frac{d\alp}{|\alp|^2}
\end{equation}
in $\bbR^2$. This linearization highlights the parabolic nature of the Muskat problem.

\subsection{Previous works}\label{Previous works}
As demonstrated by numerous previous studies, the Muskat problem is well-posed locally in time for sufficiently smooth initial interfaces, and globally in time if such initial data satisfies specific smallness conditions. However, for certain large initial data, the Muskat problem can experience finite-time singularity formation.

\medskip

\noindent \textbf{Well-posedness.} There have been numerous well-posedness results concerning the Cauchy problem for the 2D or 3D Muskat equation in subcritical regime. Local well-posedness results for initial data with sufficiently high regularity and global well-posedness results under further smallness assumption date back to Yi \cite{Yi-96,Yi-03}, Caflisch, Howison, and Siegel \cite{SCH-04} and Ambrose \cite{Ambrose-04,Ambrose-07}. In \cite{CG-07-contour}, D. C\'ordoba and Gancedo derived the contour equation \eqref{eq:Muskat} and showed local well-posedness in $H^s$ with $s\ge3$ in the 2D case and $s\ge4$ in the 3D case. Moreover, the authors of \cite{SCH-04} and \cite{CG-07-contour} also established ill-posedness results in the unstable regime, with the heavier fluid lying above the lighter one. Extended results to the viscosity jump case can be found in \cite{CCG-11,CCG-13}, building upon the work of \cite{CG-07-contour}.  Cheng, Granero-Belinch\'on and Shkoller \cite{Cheng-16} showed local well-posedness of the 2D problem in $H^2$. Moreover, they established global well-posedness and decay to equilibrium for small $H^2$ perturbations of the rest state. In \cite{CGSV-17}, Constantin, Gancedo, Shvydkoy and Vicol proved local well-posedness of the 2D problem for initial data in $W^{2,p}$ for $p\in[1,\infty)$, and furthermore, a global regularity result when the initial slope of the interface is sufficiently small. Later, Abels and Matioc \cite{AM-22} extended this 2D result to the subcritical space $W^{s,p}$ with $s\in(1+1/p,2)$ and $p\in(1,\infty)$. In \cite{Matioc-18,Matioc-19}, Matioc showed local well-posedness of the 2D problem for initial data in $H^2$ and $H^{s}$ with $s\in(3/2,2)$, respectively. Alazard and Lazar \cite{AL-20} paralinearized the 2D Muskat equation and applied it to show local well-posedness for initial data in $\dot{H}^1\cap\dot{H}^{s}$ with $s\in(3/2,2)$. In \cite{NP-20}, H. Q. Nguyen and Pausader employed a paradifferential approach to establish local well-posedness of arbitrary $d$-dimensional problems in any subcritical Sobolev spaces $H^s$ with $s>d/2+1$, accommodating various configurations such as viscosity jumps and presence of rigid boundaries. Very recently, Zlato{\v{s}} \cite{Zlatos-24-LWP} showed a local regularity result of the 2D problem on the half-plane and strips.

Now let us delve into well-posedness results in critical spaces. In \cite{CCGRS16}, Constantin, C\'ordoba, Gancedo, Piazza and Strain showed global existence of unique strong solutions of both 2D and 3D problems when the initial data $f_0\in L^2$ and its Wiener norm $\nrmb{f_0}_{\calL^{1,1}}:=\nrmb{|\xi|\hat{f_0}(\xi)}_{L^1_\xi}$ is less than $1/3$ for 2D and $1/5$ for 3D (see also \cite{CCGS-13}). Later, Gancedo, Garc\' ia-Ju\'arez, Patel and Strain \cite{GGPS-19} proved a similar result in the viscosity jump case. In \cite{DLL-17}, Deng, Lei and Lin established global existence of weak solutions for the 2D problem when initial data is monotone and belongs to $W^{1,\infty}$. Their solutions allow initial data with arbitrarily large slopes. Cameron \cite{C-19} established the existence of global classical solutions to the 2D problem under the condition that  initial data $f_0\in W^{1,\infty}$ and the product of the maximal and minimal slope of $f_0$ is less than $1$. Then he \cite{C-20} extended this 2D result to 3D setting for $f_0\in\dot{W}^{1,\infty}$ with $\nrmb{\nb_x f_0}<1/\sqrt{5}$ and $f_0$ satisfying sublinear growth. (See also \cite{C-20-ev} for an eventual regularization result in the 3D case.)
In \cite{CL-21}, C\'ordoba and Lazar proved a global existence result of the unique strong solution for initial data in 
$\dot{H}^{\frac{3}{2}}\cap \dot{H}^{\frac{5}{2}}$ with small
$\dot{H}^{\frac{3}{2}}$ semi-norm for the 2D problem. This allows the interface to have arbitrarily large finite slopes. Later, Gancedo and Lazar \cite{GL-22} extended this result to the 3D case, showing that the 3D problem is globally well-posed in the critical space $\dot{H}^2\cap\dot{W}^{1,\infty}$ with small $\dot{H}^{2}$, thereby permitting the interface to have arbitrary large finite slopes. In a sequence of three papers \cite{AN-21-series1,AN-21-series2,AN-23-series3}, Alazard and Q. H. Nguyen established local well-posedness of the 2D problem when initial data belonging to logarithmic subcritical space $\log(4+\Lmb)^{-1}H^{\frac32}:=\left\{f:\log(4+\Lmb)f\in H^{\frac32}\right\}$ in \cite{AN-21-series1},  $H^{\frac32}\cap \dot{W}^{1,\infty}$ in \cite{AN-21-series2}, and $H^{\frac32}$ in \cite{AN-23-series3}. Moreover they proved global well-posedness under the smallness of corresponding norms or semi-norms of initial data. In particular, the result in \cite{AN-21-series1} allows for initial data with infinite slopes. Later, they \cite{AN22} extended their 2D critical regularity results to 3D case in $\dot{H}^2\cap W^{1,\infty}$ by using the quailinearization of the 3D equation. In \cite{CNQX-22}, Chen, Q. H. Nguyen and Xu established local well-posedness of general $d$-dimensional problems in $L^2\cap\dot{W}^{1,\infty}$. Moreover, H. Q. Nguyen \cite{N-22} constructed unique global solutions for general $d$-dimensional problems in the Besov space $\dot{B}^{1}_{\infty,1}$, a space embedded in the critical space $\dot{W}^{1,\infty}$.

Finally, it is important to highlight studies on the existence and potential non-uniqueness of weak solutions in \cite{CFG-11,SL-12,CCF-21,FSL-18,NSL-21}. Moreover, a series of recent papers by Dong, Gancedo, and H. Q. Nguyen \cite{DGN-23, DGN-23-3d} are notable for their global regularity results concerning the 2D and 3D one-phase Muskat problem. For regularity results on the Hele-Shaw problem, which is mathematically analogous to the Muskat problem, we refer the reader to \cite{CJK-07, CJK-09}.

\medskip

\noindent \textbf{Finite time singularity formation.}
For the 2D Muskat problem, Castro et al. \cite{CCFGL-12,CCFG-13} proved the existence of a smooth graphical initial interface which becomes a non-graph (turning singularity) and later loses their $C^4$ regularity. G\'omez-Serrano and Granero-Belinch\'on \cite{GG-14} conducted a detailed study on the influences of depth and permeability of the medium in the formation of turning singularities. One can also refer to a series of papers \cite{Shi-23,Shi-23-1} for the analyticity of solutions which have turning singularities. Splash singularites, in other words, self-intersections at a single point are ruled out in our physical setting \cite{GS-14} while the existence of splash singularities are proved in the one-phase setting \cite{CCFG-16}. In \cite{CGZ-15,CGZ-17}, C\'ordoba, G\'omez-Serrano and Zlato{\v{s}} showed the existence of solutions undergoing the stability shifting. Very recently, Zlato{\v{s}} \cite{Zlatos-24-FB} proved that the finite time blow-up can arise in the half-plane setting even from arbitrarily small smooth initial data, unlike in the whole plane.

\medskip

\noindent \textbf{Self-similar solutions.}
To the best of the author's knowledge, the only known result concerning the existence of self-similar solutions to the Muskat problem in our physical setting is 2D self-similar solutions detailed in \cite{2Dself}. Under the assumption $0<s\ll 1$, the authors of the paper found self-similar solutions to the 2D Muskat problem that form exact corners with a slope $s$ at $t=0$ and become smooth in $x$ for $t>0$. Later Garc\'ia-Ju\'arez et al. \cite{GGHP-23} studied the behavior of an interface whose  initial data consists of a superposition of a finite number of small corners. For other physical settings, one can see \cite{EMM-12,LM-17,GGS-20} for the thin film Muskat problem and \cite{EM-11} for traveling wave solutions for the Muskat problem with surface tension.

\subsection{Main results}\label{Main results}
We assume that $f(t,\cdot)$ is radially symmetric, i.e., $f(t,x)=\Tilde{f}(t,|x|)$ for a function $\Tilde{f}(t,r)$ whose domain is $\bbR_{\ge0}\times\bbR_{\ge0} $. We set the self-similar ansatz $\tilde{f}(t,|x|)=tk\left(\frac{|x|}{t}\right)$ for a function $k: \bbR_{\ge0}\rightarrow \bbR$. Denoting $y=x/t$ and plugging our ansatz in \eqref{eq:Muskat}, we have
\begin{equation}\label{eq:Muskat-2}
     -y\cdot\nb_y(k(|y|)) +k(|y|) = \frac{1}{2\pi}\int_{\bbR^2} \frac{\alp \cdot \nabla_y \Delta_\alp k(|y|)}{\left(1+ \left(\Delta_\alp k(|y|)\right)^2\right)^{\frac32}}  \frac{d\alp}{|\alp|^2}.
\end{equation}
On the other hand, we note
\begin{equation}\label{eq:Muskat-2-1-lin}
    \Lambda (k(|y|)) = -\frac{1}{2\pi} \int_{\bbR^2} \alp \cdot \nabla_y \Delta_\alp k(|y|) \frac{d\alp}{|\alp|^2}.
\end{equation}
To obtain \eqref{eq:Muskat-2-1-lin}, we need two identities:
\begin{equation*}
    \int_{\bbR^2} \alp \cdot \nabla_y  k(|y|) \frac{d\alp}{|\alp|^3}= 0 \qquad \qquad \text{(by symmetry)}
\end{equation*}
and
\begin{equation*}
    \nb_y  (k(|y-\alp|))= -\nb_\alp (k(|y-\alp|))=  \nb_\alp \left( k(|y|)- k(|y-\alp|)\right),
\end{equation*}
which enable us to check that the right-hand side of \eqref{eq:Muskat-2-1-lin} is equal to
\begin{equation}\label{eq: before integration by parts-intro}
    \frac{1}{2\pi} \int_{\bbR^2} \alp \cdot \nb_\alp \left( k(|y|)- k(|y-\alp|)\right) \frac{d\alp}{|\alp|^3}.
\end{equation}
Using
\begin{equation*}
    \nb_\alp \cdot \left(\frac{\alp}{|\alp|^3}\right)=-\frac{1}{|\alp|^3},
\end{equation*}
we integrate \eqref{eq: before integration by parts-intro} by parts in $\alp$ and recall \eqref{def: Zygmund} to obtain \eqref{eq:Muskat-2-1-lin}.
Combining \eqref{eq:Muskat-2} with \eqref{eq:Muskat-2-1-lin}, we arrive at
\begin{equation}\label{eq: reformulation}
        (\Lambda-y\cdot\nb_y+1)  k (|y|) 
        = \calT[k(|y|)],
   \end{equation}
where $\calT$ is a nonlinear operator given by
\begin{equation}\label{def: nonlinear operator}
    \calT[f]:=\calT[f,f],\quad \calT[f_1,f_2]:=\frac{1}{2\pi}\int_{\bbR^2} \alp \cdot \nabla_y \Delta_\alp f_1\left(\left(1+ \left(\Delta_\alp f_2\right)^2\right)^{-\frac32} -1\right) \frac{d\alp}{|\alp|^2}.
\end{equation}
We construct a solution $k(|y|)$ for \eqref{eq: reformulation}, which yields a radially symmetric self-similar solution for \eqref{eq:Muskat}:
\begin{maintheorem}\label{thm A}
	Given $t_1\in(3/2,2)$, there exist $t^*=t^*(t_1)\in(t_1,2)$ and $s_*=s_*(t_1)>0$ such that for all $s\in(0,s_*)$, there exists a global self-similar solution of \eqref{eq:Muskat} given by 
 $f_{s}(t,x)=tk_{s}\left(\frac{|x|}{t}\right)$ for $(t,x)\in \bbR_{>0}\times \bbR^2$. Here, the function $k_s: \bbR_{\ge0}\rightarrow \bbR$ satisfies 
\begin{equation}\label{eq: thmA}
        \sum_{\substack{|\beta|=1\\1\le|\gamma|\le 2}} 
        \nrmb{\nb_y^{\beta}\left(k_{s}(|y|)-k^{Lin}_{s}(|y|)\right)}_{\dot{H}_y^{t^*-1} \cap \dot{H}_y^{t_1}(\bbR^2)}
        +\nrmb{\frac{\nb_y^{\gamma}\left(k_{s}(|y|)-k^{Lin}_{s}(|y|)\right)}{|y|^{t_1-|\gamma|}}}_{L_y^\infty(\bbR^2)}\lesssim s^3,
    \end{equation}
    where the function $k^{Lin}_s(|y|)$ is defined by
    \begin{equation}\label{eq: solution of lin eq for intro}
        k^{Lin}_{s} (|y|) := s\left(\sqrt{|y|^2+1}-\log\left(\sqrt{|y|^2+1}+1\right)\right).
    \end{equation}
\end{maintheorem}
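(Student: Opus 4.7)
The plan is to recast the problem as a perturbation of the explicit profile $k^{Lin}_s$. Writing $k_s = k^{Lin}_s + r$ and observing that $k^{Lin}_s$ corresponds to the radially-symmetric self-similar solution of the linear equation $\partial_t f + \Lambda f = 0$ with the cone $s|x|$ as initial datum---so that $(\Lambda-y\cdot\nabla_y+1)k^{Lin}_s = 0$---equation \eqref{eq: reformulation} becomes
\[
    r = L^{-1}\,\mathcal T[k^{Lin}_s + r], \qquad L := \Lambda - y\cdot\nabla_y + 1.
\]
Let $X$ denote the function space whose norm is the left-hand side of \eqref{eq: thmA}. The goal is to produce $r_s$ as a fixed point of $\Phi(r):=L^{-1}\mathcal T[k^{Lin}_s+r]$ in the ball $\{\|r\|_X\le Cs^3\}$ via Banach's contraction principle.

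The first step is to invert $L$ on the relevant function spaces. Since the Fourier symbol of $y\cdot\nabla_y$ is, up to an additive constant, the radial dilation $-\xi\cdot\nabla_\xi$, conjugation by the Fourier transform turns $L$ into a first-order ordinary differential operator in $\log|\xi|$ with strictly positive coefficient, yielding an explicit integrating-factor parametrix and hence mapping properties $L^{-1}\colon Y\to X$, with $Y$ the natural target space at which the forcing $\mathcal T[k^{Lin}_s+r]$ will be controlled, measured in both the pair of homogeneous Sobolev norms appearing in \eqref{eq: thmA} and the associated weighted $L^\infty$ scale. The choice $t_1\in(3/2,2)$, $t^*\in(t_1,2)$ is made precisely so that the space $\dot H^{t_1+1}\cap\dot H^{t^*}$ containing $k$ straddles the scaling-critical space $\dot H^2\cap\dot W^{1,\infty}$ for the 3D Muskat equation.

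The second and more delicate ingredient is a trilinear estimate for the nonlinearity. Taylor expanding $(1+(\Delta_\alpha f)^2)^{-3/2}-1 = -\tfrac{3}{2}(\Delta_\alpha f)^2 + O((\Delta_\alpha f)^4)$ for slopes of moderate size, the operator $\mathcal T[f]$ is essentially cubic in $f$, and one targets
\[
    \|\mathcal T[k^{Lin}_s]\|_Y \lesssim s^3, \qquad \|\mathcal T[k^{Lin}_s+r_1]-\mathcal T[k^{Lin}_s+r_2]\|_Y \lesssim (s+\|r_1\|_X+\|r_2\|_X)^2\|r_1-r_2\|_X,
\]
together with the analogous weighted-pointwise bounds. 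The first estimate follows from the explicit formula for $k^{Lin}_s$ combined with radial symmetry, while the second exploits the embedding $\dot H^{t_1}\cap\dot H^{t^*-1}\hookrightarrow \dot H^1\cap L^\infty$ at the level of $\nabla k$ to absorb two of the three factors in the cubic integrand at the critical scale, while the above-critical room provided by $\dot H^{t_1}$ controls the remaining factor. Combining the linear and nonlinear estimates, Banach's fixed point theorem then produces the unique $r_s$ with $\|r_s\|_X\le Cs^3$ for all sufficiently small $s>0$, whence $k_s=k^{Lin}_s+r_s$ and the bound \eqref{eq: thmA}.

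I expect the principal obstacle to be the trilinear singular-integral bound for $\mathcal T$ at a regularity extremely close to the scaling-critical one: the two Sobolev indices $t^*-1<1<t_1$ straddle $\dot H^1$ with essentially no room to spare, so the cubic structure must be extracted without losing any derivatives, which is the reason the mixed Sobolev--weighted-pointwise norm defining $X$ has to be tuned so precisely. A secondary difficulty is the weighted $L^\infty$ resolvent bound for $L^{-1}$, which has to coexist with the cone-like growth $k^{Lin}_s\sim s|y|$ at infinity; this is why all estimates in \eqref{eq: thmA} are placed on derivatives of the remainder rather than on the remainder itself.
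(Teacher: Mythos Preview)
Your overall strategy is correct and matches the paper's: linearize around $k^{Lin}_s$, invert $L=\Lambda-y\cdot\nabla_y+1$ via the Fourier-side ODE, and close a contraction using the cubic smallness of $\mathcal T$. Two implementation choices in the paper differ from your plan and are worth noting.

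First, the paper does not run the fixed point for the remainder $r=k_s-k^{Lin}_s$ in your space $X$. Instead it sets $r=\mathcal J[\tilde g]$ with $\mathcal J$ the radial inverse Laplacian and iterates in $\tilde g\in \dot H^{t^*-2}\cap\dot H^{t_1-1}$, which is a genuine Banach space because both indices lie below $1$. The map is $\tilde g\mapsto (\Delta\circ\widehat{\mathcal L})\,\mathcal T[k^{Lin}_s+\mathcal J[\tilde g]]$, where $\widehat{\mathcal L}$ is the explicit Fourier parametrix for $L$ (your integrating-factor solution). This $\Delta/\mathcal J$ conjugation is precisely what makes the functional-analytic setup clean, and bypasses the completeness issues lurking in $\dot H^{t^*}\cap\dot H^{t_1+1}$.

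Second, the weighted $L^\infty$ bounds are \emph{not} built into the iteration norm; they are recovered a posteriori. Once $\tilde g_s\in\dot H^{t_1-1}$ with $t_1-1\in(1/2,1)$ is in hand, the radial Sobolev (Strauss-type) inequality $|x|^{1-s}|\phi(x)|\lesssim\|\phi\|_{\dot H^s}$ for radial $\phi$ feeds into the explicit integral formula for $\mathcal J$ to give $|\nabla^\gamma\mathcal J[\tilde g_s]|\lesssim |y|^{t_1-|\gamma|}\|\tilde g_s\|_{\dot H^{t_1-1}}$. This is much simpler than proving weighted-$L^\infty$ mapping properties for $L^{-1}$ and $\mathcal T$, which your plan would require. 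A minor related point: $k^{Lin}_s$ only solves $Lk=0$ up to an additive constant, and the paper tracks these constants carefully (again via $\mathcal J$ and a Liouville-type argument for radial harmonic functions with $\nabla u\in L^p_{loc}$).

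Finally, the ``trilinear estimate'' is where essentially all the work lies. The paper controls $\Lambda^{t_1}\mathcal T$ (and the analogous differentiated operators $\mathcal Q,\mathcal R$ arising from the Lipschitz bound) by a seven-term decomposition obtained via symmetrization and integration by parts in $\alpha$; one of the terms isolates the quasilinear structure (cf.\ Alazard--Nguyen), another compares $\Delta_\alpha f$ to $\tfrac{\alpha}{|\alpha|}\cdot\nabla f$ and is closed by a Morrey-type bound, and the commutator term uses the pointwise identity $\Lambda^{t_1}(fg)-f\Lambda^{t_1}g-g\Lambda^{t_1}f=-C\int\delta_\alpha f\,\delta_\alpha g\,|\alpha|^{-2-t_1}d\alpha$. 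This decomposition, rather than a direct power-series expansion of $(1+a^2)^{-3/2}$, is what lets the estimate close with $k^{Lin}_s$ carrying only the norms it actually possesses ($\dot W^{1,\infty}$ and $\dot H^t$ for $t>2$, but not $\dot H^2$).
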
 
\begin{remark}
   Referring to the proofs in Sections \ref{sec: linear term} and \ref{sec: nonlinear term}, we can explicitly express $t^*$ as $\frac{t_1}{2}+1$. Although this form is not optimal, it serves as an illustrative example of $t^*$. The crucial point is that  we can find a $t^*$ which lies within the interval $(t_1,2)$.
\end{remark}

\medskip

\noindent \textbf{Interpretation of the result.}
We first observe that $k^{Lin}_{s} (|y|)$ defined by \eqref{eq: solution of lin eq for intro} is the radially symmetric  function satisfying
\begin{equation*}
    k^{Lin}_{s}(|y|)\in C^{\infty}(\bbR^2)\quad \text{and}\quad \nrmb{\nb_y k^{Lin}_{s}(|y|)}_{L^\infty(\bbR^2)}\le s. 
\end{equation*}
\begin{figure}[h]
\centering
\includegraphics[scale=0.6]{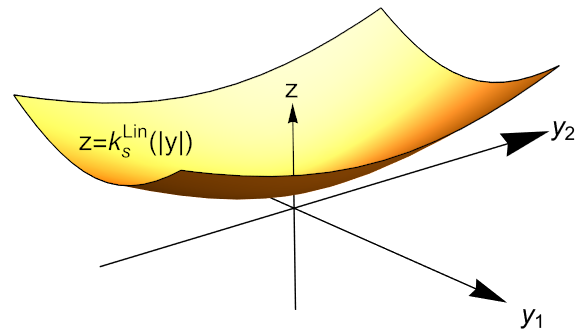}
\end{figure}

\noindent Furthermore, with the aid of Lemma \ref{lem: klin-1}, we can see that there exists a constant $C$ such that the function $k^{Lin}_{s} (|y|)+C$ is a solution to the linearized equation of \eqref{eq: reformulation}:
    \begin{equation}\label{lin eq for intro-interpretation}
        (\Lambda-y\cdot\nb_y+1)  k (|y|) =0.
    \end{equation}

Secondly, we note that the space $\dot{H}^{t^*-1}(\bbR^2)$, which appears in \eqref{eq: thmA}, is continuously embedded in $L^{\frac{2}{2-t^*}}(\bbR^2)$ (see \eqref{sobolev embedding-1}). Thus, we can deduce from the scaling $y=x/t$ and the $\dot{H}^{t^*-1}$ estimate in \eqref{eq: thmA} that 
\begin{equation*}\label{eq: thmA-1}
        \nrmb{\nb_x \left(f_s(t,x)-tk^{Lin}_s(|x|/t)\right)}_{L_x^{\frac{2}{2-t^*}}(\bbR^2) }\lesssim s^3t^{2-t^*}.
    \end{equation*}
By taking $t\rightarrow 0$, this implies that the initial data of the self-similar solution $f_s$ obtained in Theorem \ref{thm A} has the following form: 
\begin{equation*}
    f_s(0,x)=s|x|+C
\end{equation*}
for some constant $C$.
This initial data $f_s(0,x)$ forms an exact cone with linear growth and does not belong to $\dot{H}^2(\bbR^2)$, despite the restriction on the size of the  slope  $s$. These properties can be contrasted with the aformentioned global well-posedness results in 3D setting for initial data with medium-sized slope but sublinear growth \cite{C-20} and arbitrarily large slope but small enough in $\dot{H}^{2}(\bbR^2)$ \cite{GL-22}.

Next, we note that the space $\dot{H}^{t^*-1}(\mathbb{R}^2) \cap \dot{H}^{t_1}(\mathbb{R}^2)$ in \eqref{eq: thmA} is continuously embedded in $L^{\infty}(\mathbb{R}^2) \cap \dot{H}^1 (\mathbb{R}^2)$ (see \eqref{sobolev interpolation} and \eqref{sobolev embedding-3(infty)}). 
Hence, utilizing the scaling $y=x/t$, we can derive from $\dot{H}^{t^*-1} \cap \dot{H}^{t_1}$ estimate in \eqref{eq: thmA} that 
\begin{equation}\label{est: Linfty Hdot1 for intro}
    \nrmb{\nb_x \left(f_s(t,x)-tk^{Lin}_s(|x|/t)\right)}_{L_x^{\infty}\cap \dot{H}_x^1(\bbR^2) }\lesssim s^3.
\end{equation}
We recall that both $\dot{H}^2(\mathbb{R}^2)$ and $\dot{W}^{1,\infty}(\mathbb{R}^2)$ are critical spaces for the 3D Muskat problem. Thus, by considering $\nabla_x$ in the $L^{\infty}\cap \dot{H}^{1}$ norm from \eqref{est: Linfty Hdot1 for intro}, we observe that the inequality \eqref{est: Linfty Hdot1 for intro} shows that the difference between our self-similar solution and a solution of the linearized equation, with respect to critical Sobolev spaces, is $O(s^3)$. Our 3D result can be compared with the aformentioned 2D result in \cite{2Dself}. In that work, authors constructed a self-similar solution $f_s(t,x)=tk_s(x/t)$ $(x\in \bbR)$ of the 2D Muskat equation, satisfying
\begin{equation}\label{est: 2D self intro}
    \nrmb{\partial_y k_s(y)-\frac{2s}{\pi}\arctan(y)}_{H^1_y(\bbR)} \lesssim s^3, \qquad y:=x/t,
\end{equation}
under the assumption that $0<s\ll1$, so that its initial data is an exact corner of a small slope $s$. In \eqref{est: 2D self intro}, the integral of  $\frac{2s}{\pi}\arctan(y)$ is a solution of the linearized equation in the 2D case, corresponding to our $k^{Lin}_{s}$.

Finally, noticing the scaling $y=x/t$, we can obtain from the $L^\infty$ estimate in \eqref{eq: thmA} that
\begin{equation}\label{est: Linfty interms of x with time decay}
        \sum_{1\le|\gamma|\le 2} \Abs{\nb_x^{\gamma}\left(f_s(t,x)-tk^{Lin}_{s}(|x|/t)\right)}\lesssim \frac{s^3|x|^{t_1-|\gamma|}}{t^{t_1-1}} \quad \text{almost everywhere.}
    \end{equation}
Since $t_1-1>0$, this describes the long-time behavior of the first and second derivatives of our solution $f_s$. On the other hand, using $L^\infty$ estimate from \eqref{est: Linfty Hdot1 for intro}, we can derive that
\begin{equation}\label{est: Linfty interms of x without time decay}
        \sum_{|\beta|=1} \Abs{\nb_x^{\beta}\left(f_s(t,x)-tk^{Lin}_{s}(|x|/t)\right)}\lesssim s^3 \quad \quad \text{almost everywhere}.
    \end{equation}
Examining both \eqref{est: Linfty interms of x with time decay} and \eqref{est: Linfty interms of x without time decay}, we observe that for a fixed time $t>0$, the function $\nb_x\left(f_s(t,x)-tk^{Lin}_{s}(|x|/t)\right)$ exhibits behavior as $O(s^3|x|^{t_1-1})$ when $|x|\ll 1$ and as $O(s^3)$ when $|x|\gg 1$. Furthermore, considering the case $|\gamma|=2$ in  \eqref{est: Linfty interms of x with time decay} along with the $\dot{H}^1$ estimate from \eqref{est: Linfty Hdot1 for intro}, we find that for a fixed time $t>0$, $\nb_x^2\left(f_s(t,x)-tk^{Lin}_{s}(|x|/t)\right)$ decays with a bound of $s^3|x|^{t_1-2}$ and sufficiently fast as $|x|\rightarrow \infty$ to belong to $L^2(\bbR^2)$.

\medskip
	
\noindent \textbf{Challenges in 3D and ideas of the proof.}
Our goal is to find a solution $k_s(|y|)$ of \eqref{eq: reformulation} in the form
\begin{equation}\label{form of k -intro}
k_s(|y|)=k^{Lin}_s(|y|) + \calJ[\tilde{g}_s(|y|)]
\end{equation}
for sufficiently small $s$. Here, $\calJ$ is the operator from \eqref{eq: inverse laplacian-1}, which corresponds to the inverse of the Laplacian with certain regularity properties (see Proposition \ref{eq: poisson-radial}.) To achieve this, we substitute the ansatz \eqref{form of k -intro} into \eqref{eq: reformulation} and transform it into a fixed point equation \eqref{eq: g-fixed} in terms of $\tilde{g}_s(|y|)$. By using the smallness of the parameter $s$, we apply the Banach fixed point theorem to show the existence of $\tilde{g}_s(|y|)$. We then  take the operator $\calJ$ to $\tilde{g}_s(|y|)$ to obtain our desired solution $k_s(|y|)$. Consequently, our primary focus is on selecting an appropriate function space and estimating all the terms involved in the fixed point equation \eqref{eq: g-fixed} with respect to the chosen space to successfully apply the Banach fixed point theorem, as well as analyzing the operator $\calJ$ to determine regularity properties of our solution $k_s(|y|)$.

The main challenges in this process essentially arise from two factors: the limited regularity of $k^{Lin}_{s}(|y|)$ and the structure of nonlinearity in \eqref{eq:Muskat}. Regarding the regularity of $k^{Lin}_{s}(|y|)$, the fact that $\nrmb{k^{Lin}_{s}(|y|)}_{\dot{H}^t(\bbR^2)}<\infty$ only when $t>2$ (see Lemma \ref{lem: klin-1}) complicates the selection of a suitable function space. In other words, while we need to control all the terms in the fixed point equation \eqref{eq: g-fixed} involving $k^{Lin}_{s}(|y|)$, the $\dot{H}^2(\bbR^2)$ semi-norm of $k^{Lin}_{s}(|y|)$ diverges. Furthermore, we require a function space that is contained within critical spaces such as $\dot{H}^2(\bbR^2)$ and $\dot{W}^{1,\infty}(\bbR^2)$ to describe the behavior of our solutions in the context of critical spaces. This leads us to consider fractional Sobolev spaces of the type appearing in Theorem \ref{thm A}.
In contrast, in the 2D setting studied in \cite{2Dself}, the solution to the linearized equation corresponding to our $k^{Lin}_{s}$ is 
\begin{equation*}
    \frac{2s}{\pi}\left(y \arctan\left(y\right) - \frac12\log\left(y^{2} + 1\right)\right),\quad y\in \bbR.
\end{equation*}
This function, which is the integral of $\frac{2s}{\pi}\arctan(y)$, is such that its $\dot{H}^t(\bbR)$ semi-norm is bounded for any $t>\frac{3}{2}$. This regularity allowed the authors of \cite{2Dself} to derive the estimate \eqref{est: 2D self intro} without the need for fractional Sobolev spaces. Moreover, although we can exploit the smallness of $s$, the specific regularity of $k^{Lin}_{s}(|y|)$ prevents us from using the power series argument as in \cite{CCGRS16}. Specifically, we need to control $\calT\left[k^{Lin}_{s}(|\cdot|) \right]$ with respect to the fractional Sobolev space (Lemma \ref{lem: l^2 estimate for T(klin)}) to use the Banach fixed point theorem. However, for the power series argument in \cite{CCGRS16} to apply, $k^{Lin}_{s}(|y|)$ would need to belong to $\dot{H}^1(\bbR^2)$, which it does not. See the proof of Theorem 5.1 in \cite{CCGRS16} for further details.

To elucidate the structural challenge of  \eqref{eq:Muskat}, we recall the 2D Muskat equation (see e.g. \cite{CG-07-contour}):
\begin{equation*}\label{2D Muksat}
    \rd_{t} f(t,x) = \frac{1}{\pi}\int_{\bbR} \frac{ \rd_x \Delta_\alp f(t,x)}{1+ (\Delta_\alp f(t,x))^2} d\alp, \qquad
	\Delta_\alp f(t,x)=\frac{f(t,x)-f(t,x-\alp)}{\alp}, \qquad x\in\bbR.
\end{equation*}
After rewriting this in terms of the slope $\rd_x f$:
\begin{equation}\label{2D Muksat-slope}
    \rd_{t} \rd_x f = \frac{1}{\pi}\int_{\bbR} \frac{ \rd_{x} \Delta_\alp  \rd_x f}{1+ (\Delta_\alp f)^2} d\alp - \frac{2}{\pi} \int_{\bbR} \frac{  \left(\Delta_\alp  \rd_x f\right)^2 \Delta_\alp f }{(1+ (\Delta_\alp f)^2)^2} d\alp,
\end{equation}
the authors of \cite{2Dself} essentially extracted the quasilinear  structure of \eqref{2D Muksat-slope} as follows:
\begin{equation*}
    \frac{1}{\pi}\int_{\bbR} \frac{ \rd_x \Delta_\alp \rd_x f}{1+ (\Delta_\alp f)^2} d\alp= -(1+\left(\rd_x f\right)^2)^{-1}\Lmb \rd_x f + \frac{1}{\pi} \int_{\bbR} \rd_x \Delta_\alp \rd_x f \left((1+(\Delta_\alp f)^2)^{-1}-(1+\left(\rd_x f\right)^2)^{-1}\right)  d\alp,
\end{equation*}
which transforms \eqref{2D Muksat-slope} into
\begin{equation*}
    (1+\left(\rd_x f\right)^2)\rd_{t}\rd_x f+\Lmb \rd_x f= \text{Nonlinearity}.
\end{equation*}
They then used a key observation:
\begin{equation}\label{key ob for 2D-1}
    (1+\left(\rd_x f\right)^2)\rd_{t}\rd_x f=\rd_t(\rd_x f+\left(\rd_x f\right)^3/3),
\end{equation}
which played an important role in formulating a fixed point equation via a normal form. In other words, in the 2D case, the fact that the integral of the function $F_1(a):=1+a^2$ is $a+a^3/3$ led to the key observation \eqref{key ob for 2D-1} and consequently enabled the clear and efficient reformulation, despite the extracted quasilinear structure.
However, in the 3D case, the denominator of the integrand in \eqref{eq:Muskat} is
$F_2(\Dlt_\alp f)$ with $F_2(a):=(1+a^2)^{\frac32}$, whose integral has a much more complicated form:
\begin{equation*}
    \int_0^a F_2(b)\,db=\frac{3 \log\left(\left|\sqrt{a^{2} + 1} + a\right|\right) + a \sqrt{a^{2} + 1} \left(2a^{2} + 5\right)}{8}.
\end{equation*}
This presents significant difficulty in applying the 2D argument from \cite{2Dself}. 

To address these challenges, the strategy involves analyzing the operators $\widehat{\calL}$ in \eqref{linear operator L hat}, $\calT$ in  \eqref{def: nonlinear operator}, $\calQ$ in \eqref{def: calQ}, $\calR$ in \eqref{def: calR}, and $\calJ$ in \eqref{eq: inverse laplacian-1} in terms of the space $\dot{H}^{t^*}(\bbR^2) \cap \dot{H}^{t_1+1}(\bbR^2)$ introduced in Theorem \ref{thm A}.
First, the operator $\widehat{\calL}$ corresponds to the inverse operator of $(\Lambda-y\cdot\nb_y+1)$ that appears in \eqref{eq: reformulation}. Given $t_1\in[1,2)$, we utilize the fact that $\widehat{\calL}$ is defined via an integral to show that
\begin{equation}\label{operator L-estimate for intro}
    \nrmb{\widehat{\calL}}_{\dot{H}^{t_1}_{rad}(\bbR^2) \rightarrow \dot{H}^{t}_{rad}\cap\dot{H}^{t_1+1}_{rad}(\bbR^2)}\lesssim1
\end{equation}
for any $t\in(t_1,t_1+1]$ as detailed in Lemma \ref{lem: linear operator L estimate} and Remark \ref{remark for inverse operator of L}. Here, $X_{rad}$ denotes the space of radially symmetric functions in a space $X$. It is crucial that we can choose $t$ within the interval $(t_1,t_1+1]$, as this flexibility plays a significant role in obtaining estimates in terms of $\dot{H}^{t^*}(\bbR^2) \cap \dot{H}^{t_1+1}(\bbR^2)$ later on. 

All terms in the fixed point equation \eqref{eq: g-fixed} are of the form $\Dlt\circ\widehat{\calL}\circ \calT\circ\calJ$, $\Dlt\circ\widehat{\calL}\circ \calQ \circ\calJ$, and $\Dlt\circ\widehat{\calL}\circ \calR \circ\calJ$, where $\Dlt$ is the Laplacian. To apply the estimate for $\widehat{\calL}$ given in \eqref{operator L-estimate for intro}, we estimate these operators $\calT$, $\calQ$, and $\calR$ with respect to $\dot{H}^{t_1}$ for $t_1\in(3/2,2)$ as provided in Proposition \ref{proposition: key}, \ref{proposition: key for linear}, and \ref{proposition: key for nonlinear}. Due to technical considerations related to the regularity of $k^{Lin}_{s}$, we restrict our analysis to $t_1>3/2$, rather than $t_1\ge1$. The key aspects of these estimates are: 1) controlling operators using each component's semi-norms of spaces contained within $\dot{H}^{t^*} \cap \dot{H}^{t_1+1}$ and  2) assigning 
$\dot{W}^{1,\infty}$ or $\dot{H}^t$ ($t>2$) semi-norms to the components where $k^{Lin}_{s}$ will be inserted later. For example, Proposition \ref{proposition: key} and Remark \ref{rmk: proposition-1} illustrate how we control the operator $\calT$ in terms of $\dot{H}^{t^*}$ as shown in \eqref{key estimate-1}, while assigning $\dot{H}^{t^{**}}\cap \dot{H}^{t_1+1}$ $(t^{**}>2)$ to $f$ into which $k^{Lin}_{s}$ will be inserted later. The key strategy for accomplishing these estimates involves decomposing $\Lmb^{t_1}\calT$, $\Lmb^{t_1}\calQ$, and $\Lmb^{t_1}\calR$ into $T^{t_1,j}$, $Q^{t_1,j}$, and $R^{t_1,j}$ $(1\le j \le 7)$, respectively, through the symmetrization of these operators, as detailed in Lemma \ref{lem: reformulation}, \ref{lem: reformulation for linear A}, and \ref{lem: reformulation for nonlinear N}. In particular, the components $T^{t_1,1}$, $Q^{t_1,1}$, and $R^{t_1,1}$ correspond to the quasilinear structures of  $\Lmb^{t_1}\calT$, $\Lmb^{t_1}\calQ$, and $\Lmb^{t_1}\calR$, respectively, reflecting the idea inspired by \cite{AN22}. Moreover, the components $T^{t_1,2}$, $Q^{t_1,2}$, and $R^{t_1,2}$ contain factors that compare the difference between $\Delta_\alp f$ and $\frac{\alp}{|\alp|}\cdot \nb f$, so that the Morrey type estimate \eqref{App of Morrey} enables us to control the integrand near the origin. To control the components $T^{t_1,7}$, $Q^{t_1,7}$, and $R^{t_1,7}$, we use an explicit commutator identity for the fractional Laplacian given in \eqref{commutator identity for fractional laplacian}.

The operator $\calJ$ represents the inverse of the Laplacian acting on radially symmetric functions that belong to certain homogeneous Sobolev spaces (see Proposition \ref{prop: poisson radial}). Consequently, the operators $\Dlt$ and $\calJ$, which appear in compositions $\Dlt\circ\widehat{\calL}\circ \calT\circ\calJ$, $\Dlt\circ\widehat{\calL}\circ \calQ \circ\calJ$, and $\Dlt\circ\widehat{\calL}\circ \calR \circ\calJ$, play a crucial role in changing our space $\dot{H}^{t^*}(\bbR^2) \cap \dot{H}^{t_1+1}(\bbR^2)$ into $\dot{H}^{t^*-2}(\bbR^2) \cap \dot{H}^{t_1-1}(\bbR^2)$ which is a Banach space since $t^*-2<0<t_1-1<1$. After obtaining $\tilde{g}_s(|y|)\in\dot{H}^{t^*-2}(\bbR^2) \cap \dot{H}^{t_1-1}(\bbR^2)$ via the Banach fixed point theorem, we apply the operator $\calJ$ to $\tilde{g}_s(|y|)$ and utilize the regularity properties given in \eqref{eq: L infty est for inverse laplacain-1} (Proposition \ref{prop: poisson radial}) to yield the desired solution $k_s(|y|)$ satisfying Theorem \ref{thm A}. In this process, the Sobolev inequality \eqref{eq: radial sobolev for frac} for radial functions, which is an extension of the well-known Strauss' inequality \cite{Strauss-77}, plays an important role.

\subsection{Outline of the paper}
The rest of the paper is structured as follows. In Section \ref{sec: preliminaries}, we provide basic definitions and elementary lemmas that will be used frequently throughout the paper. Section \ref{sec: proof of the main result} is dedicated to the proof of Theorem \ref{thm A}. As the first step, we explore the Poisson equation of a radial function and obtain some properties of $\calJ$ in Subsection \ref{subsec: inverse laplacian}. Then we analyze the linearized equation \eqref{lin eq for intro-interpretation} in Subsection \ref{subsec: analysis of linearized eq}. Specifically, in Subsection \ref{subsubsec: homo}, we show $k^{Lin}_{s}(|y|)$ defined by \eqref{eq: solution of lin eq for intro} is a solution of \eqref{lin eq for intro-interpretation} up to constant, and then we derive two quantitative estimates of $k^{Lin}_{s}(|y|)$. In Subsection \ref{subsubsec: inhomo}, we obtain the estimate \eqref{operator L-estimate for intro} of $\widehat{\calL}$. Then in Subsection \ref{subsec: fixed point formula}, we introduce a fixed point equation and prove Theorem \ref{thm A} via Proposition \ref{prop 1}. Subsections \ref{sec: forcing term} - \ref{sec: nonlinear term} are devoted to the analysis of all terms in the fixed point equation through the estimations of the operators $\calT$, $\calQ$, and $\calR$.

\subsection*{Acknowledgments}{}
The author thanks Javier G\'{o}mez-Serrano and Benoit Pausader for educational discussions and comments. The author's research was partially supported by NSF under Grants DMS-2245017 and DMS-2247537.

\section{Preliminaries}\label{sec: preliminaries}
Throughout this note, we shall employ the letter $C=C(a,b,\cdots)$ to denote any constant depending on $a,b,\cdots$, which may change from line to line in a given computation. We frequently use $A\lesssim B$ and $A\approx B$, which means $A\le CB$ and $C^{-1}B\le A \le CB$, respectively, for some constant $C$. In addition, we shall use the following notations to represent the finite differences of functions: Given $x,\,\alp \in \bbR^2$ and a function $f:\bbR^2\rightarrow \bbR$, we define
\begin{equation*}
         \Delta_\alp f(x):=\frac{f(x)-f(x-\alp)}{|\alp|},\qquad
          \dlt_\alp f(x):=f(x)-f(x-\alp).
\end{equation*}

Based on the above notations, we first consider an elementary estimate followed by the mean value theorem:
\begin{lemma}
For any $p\in[1,\infty]$ and a function $f:\bbR^2\rightarrow \bbR$, there holds
    \begin{equation}\label{fundamental thm of cal}
        \nrmb{\dlt_\alp f}_{L^p(\bbR^2
        )}\le |\alp| \nrmb{\nb f}_{L^p(\bbR^2)}
    \end{equation}
\end{lemma}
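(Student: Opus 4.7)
The plan is to reduce the inequality to a pointwise integral identity via the fundamental theorem of calculus, and then pass to $L^p$ by Minkowski's integral inequality together with translation invariance of Lebesgue measure.

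First, assuming temporarily that $f$ is smooth (say $f\in C^1_c(\bbR^2)$), I would parametrize the segment from $x-\alp$ to $x$ by $\tau\mapsto x-(1-\tau)\alp$ and apply the one-dimensional fundamental theorem of calculus to $\tau\mapsto f(x-(1-\tau)\alp)$ to obtain the pointwise identity
\[
f(x)-f(x-\alp)=\int_0^1 \alp\cdot \nb f\bigl(x-(1-\tau)\alp\bigr)\,d\tau,
\]
so that $|\dlt_\alp f(x)|\le \int_0^1 |\nb f(x-(1-\tau)\alp)|\,d\tau\cdot|\alp|/|\alp|\cdot|\alp|$, i.e.\ $|\dlt_\alp f(x)|\le \int_0^1 |\nb f(x-(1-\tau)\alp)|\,d\tau$ after dividing by $|\alp|$ inside $\dlt_\alp$.

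Next, I would apply Minkowski's integral inequality (valid for all $p\in[1,\infty]$) to pull the $L^p_x$ norm inside the $\tau$-integral, giving
\[
\nrmb{\dlt_\alp f}_{L^p(\bbR^2)}\le |\alp|\int_0^1 \nrmb{\nb f(\cdot-(1-\tau)\alp)}_{L^p(\bbR^2)}\,d\tau.
\]
Translation invariance of Lebesgue measure identifies the inner norm with $\nrmb{\nb f}_{L^p(\bbR^2)}$ for every $\tau\in[0,1]$, and integrating the resulting constant over $[0,1]$ yields the claim.

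Finally, for general $f$ with $\nb f\in L^p(\bbR^2)$, I would approximate by a standard mollification $f_\veps:=f*\phi_\veps$, apply the smooth case to $f_\veps$, and pass to the limit as $\veps\to 0$, using $\nrmb{\nb f_\veps}_{L^p}\le\nrmb{\nb f}_{L^p}$ and the $L^p$-convergence of $\dlt_\alp f_\veps\to \dlt_\alp f$ (or a.e.\ convergence with uniform bounds for $p=\infty$). There is no real obstacle; the only mild point is the endpoint $p=\infty$, where Minkowski still applies verbatim and the inequality holds almost everywhere after mollification.
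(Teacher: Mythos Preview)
Your argument is essentially identical to the paper's: write $\dlt_\alp f(x)=\int_0^1 \alp\cdot\nb f(x-(1-\tau)\alp)\,d\tau$, apply Minkowski's integral inequality, and use translation invariance of $L^p$. One small slip: the line ``$|\dlt_\alp f(x)|\le \int_0^1 |\nb f(x-(1-\tau)\alp)|\,d\tau$ after dividing by $|\alp|$ inside $\dlt_\alp$'' confuses $\dlt_\alp$ with $\Dlt_\alp$ (recall $\dlt_\alp f$ is \emph{not} normalized by $|\alp|$), but your subsequent displayed inequality is correct and the proof goes through.
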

\begin{proof}
    By the mean value theorem, we have
    \begin{equation*}
        \Abs{\dlt_\alp f(x)}=\Abs{\int_{0}^1 \nb f(\tau x +(1-\tau)(x-\alp)) \cdot \alp d\tau}\le |\alp| \int_{0}^1 \Abs{\nb f(x -(1-\tau)\alp)} d\tau
    \end{equation*}
    Since the case when $p=\infty$ is trivial, we consider $p\in[1,\infty)$. Applying Minkowski's inequality, we have
    \begin{equation*}
         \nrmb{\dlt_\alp f}_{L^p}\le |\alp| \int_{0}^1 \left(\int_{\bbR^2}\left|\nb f(x -(1-\tau)\alp)\right|^p dx\right)^{\frac1p} d\tau 
         \le |\alp| \int_{0}^1 \left(\int_{\bbR^2}\left|\nb f(y)\right|^p dy\right)^{\frac1p} d\tau \le |\alp| \nrmb{\nb f}_{L^p}.
    \end{equation*}
\end{proof}
Next, we introduce estimates followed by Morrey's estimate:
\begin{lemma}
For any $p\in(2,\infty)$ and a function $f:\bbR^2\rightarrow \bbR$, there holds
    \begin{equation}\label{Morrey}
        \Abs{\dlt_\alp f}\lesssim |\alp|^{1-\frac2p} \nrmb{\nb f}_{L^p(\bbR^2)},
    \end{equation}
    \begin{equation}\label{App of Morrey}
        \Abs{\dlt_\alp f-\alp\cdot\nb f}\lesssim |\alp|^{2-\frac2p} \nrmb{\nb^2 f}_{L^p(\bbR^2)}.
    \end{equation}
\end{lemma}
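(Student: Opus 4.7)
The first inequality \eqref{Morrey} is the classical Morrey embedding $\dot{W}^{1,p}(\bbR^2) \hookrightarrow \dot{C}^{0,1-2/p}(\bbR^2)$ for $p\in(2,\infty)$, so my plan is to cite/reprove this in the usual way. Concretely, I would start from the pointwise representation
\[
f(x) - f(x-\alp) = \int_0^1 \alp \cdot \nb f\bigl(x - (1-\tau)\alp\bigr)\, d\tau,
\]
and then combine it with the standard averaged-radial-integral trick: for a radius $r = |\alp|$, one compares $f(x)$ and $f(x-\alp)$ to the mean value $\fint_{B_r(x)\cap B_r(x-\alp)} f$, bounds each difference by a Riesz potential $I_1(|\nb f|\mathbf{1}_{B_{2r}})$, and applies H\"older's inequality to that Riesz potential. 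For $p>2$ the resulting integral of the kernel $|z|^{-1}$ over $B_{2r}$ converges with the correct scaling, yielding the factor $|\alp|^{1-2/p}\nrm{\nb f}_{L^p}$. Since this is textbook Morrey, I would keep this step short.

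For \eqref{App of Morrey}, the plan is to reduce it to \eqref{Morrey} applied to $\nb f$. Using the same identity,
\[
\dlt_\alp f(x) - \alp\cdot\nb f(x) = \int_0^1 \alp\cdot\bigl(\nb f(x-(1-\tau)\alp) - \nb f(x)\bigr)\, d\tau,
\]
so that
\[
\Abs{\dlt_\alp f(x) - \alp\cdot\nb f(x)} \le |\alp|\int_0^1 \Abs{\nb f(x-(1-\tau)\alp)-\nb f(x)}\, d\tau.
\]
Applying \eqref{Morrey} to each component of $\nb f$ with the difference vector $(1-\tau)\alp$ gives the pointwise bound $\Abs{\nb f(x-(1-\tau)\alp)-\nb f(x)} \lesssim (1-\tau)^{1-2/p}|\alp|^{1-2/p}\nrm{\nb^2 f}_{L^p}$. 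Since $p>2$, the $\tau$-integral $\int_0^1 (1-\tau)^{1-2/p}\, d\tau$ is finite, and pulling everything together produces the desired factor $|\alp|^{2-2/p}\nrm{\nb^2 f}_{L^p}$.

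There is no real obstacle here: both estimates are classical and the second follows from the first by an elementary Taylor-type manipulation. The only mild point to watch is that the Morrey bound \eqref{Morrey} must be applied componentwise to $\nb f$, which requires $\nb^2 f \in L^p(\bbR^2)$ (the same hypothesis appearing in the statement), and that the resulting $\tau$-integral of $(1-\tau)^{1-2/p}$ converges, which is automatic under the assumption $p>2$.
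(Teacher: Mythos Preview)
Your proposal is correct and follows essentially the same strategy as the paper: both take \eqref{Morrey} as the standard Morrey inequality and deduce \eqref{App of Morrey} from it via a first-order Taylor remainder together with an integrable $\tau$-factor. The only cosmetic difference is that the paper introduces the auxiliary function $u(w)=f(x)-f(w)-(x-w)\cdot\nb f(x)$ and applies the localized Morrey bound to $u$ before estimating $\nb u=\nb f(x)-\nb f(\cdot)$ by the mean value theorem and Minkowski, whereas you write the Taylor integral first and then apply \eqref{Morrey} directly to $\nb f$; your ordering is slightly more streamlined but the content is the same.
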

\begin{proof}
       The estimate \eqref{Morrey} follows from the 2D version of Morrey's estimate (see e.g. \cite{Eva}): Given $x\in \bbR^2$,
\begin{equation}\label{Morrey for bounded domain}
    |u(x)-u(w)| \lesssim r^{1-\frac2p}\left(\int_{B_{2r}(x)}|\nb u(z)|^p dz \right)^{1/p}, \quad w\in B_{r}(x),
\end{equation}
valid for any $u\in W^{1,p}(B_{2r}(x))$ with $p\in(2,\infty)$. 

To obtain \eqref{App of Morrey}, we fix any $x\in\bbR^2$ and set
\begin{equation*}
    u(w):=f(x)-f(w)-(x-w)\cdot \nb f(x)
\end{equation*}
in \eqref{Morrey for bounded domain}, where $r=|x-w|$. Then
we find
\begin{equation*}\label{morrey-2}
\begin{split}
    \left|f(x)-f(w)-(x-w)\cdot \nb f(x)\right|&
    =\left|u(x)-u(w)\right|\\
    &\lesssim r^{1-\frac2p} \left( \int_{B_{2r}(x)} \left|\nb u(z)\right|^p dz\right)^{\frac1p} \\
        &= r^{1-\frac2p} \left( \int_{B_{2r}(x)} \left|\nb f(x)-\nb f(z)\right|^p dz\right)^{\frac1p} \\
        &\lesssim r^{2-\frac2p}\left(\int_{B_{2r}(x)}\Abs{\int_0^1 \nb^2 f (\tau x +(1-\tau) z) d\tau}^p dz\right)^{\frac1p} \\
        &\lesssim r^{2-\frac2p} \int_0^1 \left(\int_{B_{2r}(x)}\Abs{ \nb^2 f (\tau x +(1-\tau) z)  }^p dz \right)^{\frac1p} d\tau \\
        &\lesssim r^{2-\frac2p} \nrmb{\nb^2 f}_{L^p} \int_0^1 \frac{1}{(1-\tau)^\frac2p} d\tau \lesssim r^{2-\frac2p} \nrmb{\nb^2 f}_{L^p},
\end{split}
\end{equation*}
where we used the mean value theorem and Minkowski's inequality in the second and the third  inequalities, respectively. Now setting $w=x-\alp$, we are done.
\end{proof}

Next, we recall the definition of the fractional Laplacian $\Lambda^s:=(-\Delta)^\frac{s}{2}$ with $s\in(0,2)$ in $\bbR^2$ (see e.g. \cite{Hitchhiker}):
\begin{equation}\label{def: fractional laplacian-1}
    \Lambda^s f(x)= C(s) P.V. \int_{\bbR^2} \frac{\dlt_\alp f(x)}{|\alp|^{2+s}} d\alp,
\end{equation}
where $C(s)$ is a constant given by
\begin{equation*}
    C(s)=\left(\int_{\bbR^2} \frac{1-\cos{\zt_1}}{|\zt|^{2+s}} dh\right)^{-1}.
\end{equation*}
Using \eqref{def: fractional laplacian-1} together with the identity
\begin{equation*}
    \dlt_\alp(fg)-f(\dlt_\alp g) - g(\dlt_\alp f)=-(\dlt_\alp f) (\dlt_\alp g),
\end{equation*}
one can obtain
\begin{equation}\label{commutator identity for fractional laplacian}
    \left(\Lambda^s(fg)-f\Lambda^s g -g \Lambda^s f\right)(x) 
    =-C(s) P.V.\int_{\bbR^2} \frac{\dlt_\alp f(x) \dlt_\alp g(x)}{|\alp|^{2+s}} d\alp. 
\end{equation}
Moreover, we recall the interpolation of homogeneous Sobolev spaces (see e.g. \cite{BCD}):
\begin{equation}\label{sobolev interpolation}
    \dot{H}^{s_0}(\bbR^2) \cap \dot{H}^{s_1}(\bbR^2) \hookrightarrow \dot{H}^{s}(\bbR^2) \quad \text{for}\;\,s\in[s_0,s_1].
\end{equation}
Furthermore, we recall the following Sobolev embedding (see e.g. \cite{BCD}):
\begin{equation}\label{sobolev embedding-1}
\dot{H}^{1-\frac2p}(\bbR^2)\hookrightarrow L^p(\bbR^2) \quad \text{for}\;\,p\in(2,\infty).
\end{equation}
We can use H\"older's inequality and \eqref{sobolev embedding-1} successively to obtain
\begin{equation}\label{sobolev embedding-2}
    \nrmb{\prod_{i=1}^{n}f_i}_{L^2(\bbR^2)}\lesssim 
    \prod_{i=1}^{n}\nrmb{f_i}_{\dot{H}^{s_i}(\bbR^2)}  \quad \text{for}\;\,n\ge2,\;\,s_i\in(0,1)\;\,\text{satisfying}\;\, \sum_{i=1}^n s_i=n-1.
\end{equation}
In addition, we note another Sobolev embedding:
\begin{equation}\label{sobolev embedding-3(infty)}
    \dot{H}^{s_1}(\bbR^2) \cap \dot{H}^{s_2}(\bbR^2) \hookrightarrow L^\infty(\bbR^2) \quad \text{for} \;\, (s_1,s_2) \in [0,1)\times(1,\infty).
\end{equation}
Indeed, noticing that the double Fourier transform of $f(x)$ is just $f(-x)$, we use Hausdorff-Young's inequality to obtain
\begin{equation*}
    \nrmb{f}_{L^\infty}\le \nrmb{\widehat{f}}_{L^1}
    \le \left(\int_{|\xi|\le1} |\xi|^{-2s_1} d\xi\right)^{\frac12} \nrmb{f}_{\dot{H}^{s_1}} + \left(\int_{|\xi|>1} |\xi|^{-2s_2} d\xi\right)^{\frac12}\nrmb{f}_{\dot{H}^{s_2}}\lesssim \nrmb{f}_{\dot{H}^{s_1}}+\nrmb{f}_{\dot{H}^{s_2}}.
\end{equation*}
Next, we introduce a lemma which we will use frequently:
  \begin{lemma}[Lemma 2.9 in \cite{AN22}]
$\quad$
  
  $\bullet\;$ For all $a\in[0,\infty)$ and $b\in(0,1)$, there holds
  \begin{equation}\label{eq: prelemma-1}
      \int_{\bbR^2} \nrmb{\dlt_\alp f}^2_{\dot{H}^a(\bbR^2)}\frac{d\alp}{|\alp|^{2+2b}} \approx \nrmb{f}^2_{\dot{H}^{a+b}(\bbR^2)}.
  \end{equation}

  $\bullet\;$ For all $a\in[0,\infty)$, $\gamma \in [1,\infty)$, $\gamma<b<2\gamma$, there holds
  \begin{equation}\label{eq: prelemma-2}
       \int_{\bbR^2} \nrmb{\dlt_\alp f - \alp \cdot \nb_x f}^{2\gamma}_{\dot{H}^a(\bbR^2)}\frac{d\alp}{|\alp|^{2+2b}}\lesssim \nrmb{f}^{2\gamma}_{\dot{H}^{a+\frac{b}{\gamma}}(\bbR^2)}.
  \end{equation}
  \end{lemma}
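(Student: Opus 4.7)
My plan is to use Plancherel throughout, reducing both claims to estimates for the Fourier multipliers $|1-e^{-i\alp\cdot\xi}|^2$ and $|1-e^{-i\alp\cdot\xi}-i(\alp\cdot\xi)|^{2\gamma}$ integrated against the measure $|\alp|^{-2-2b} d\alp$. The first identity will follow from a direct computation; the second requires an interpolation trick once $\gamma>1$, since an odd-power $\dot{H}^a$ norm is no longer directly accessible via Plancherel.

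For the first identity, since $\widehat{\dlt_\alp f}(\xi)=(1-e^{-i\alp\cdot\xi})\widehat{f}(\xi)$, Plancherel and Fubini yield
\begin{equation*}
    \int_{\bbR^2} \nrmb{\dlt_\alp f}^2_{\dot{H}^a} \frac{d\alp}{|\alp|^{2+2b}} = \int_{\bbR^2} |\xi|^{2a} |\widehat{f}(\xi)|^2 \left( \int_{\bbR^2} \frac{2(1-\cos(\alp\cdot\xi))}{|\alp|^{2+2b}} d\alp \right) d\xi.
\end{equation*}
Rescaling $\alp\mapsto\alp/|\xi|$ and invoking rotation invariance, the inner integral equals $C(b)\,|\xi|^{2b}$, and the range $b\in(0,1)$ is exactly what makes $C(b)$ finite and nonzero: the $\alp$-integrand has size $|\alp|^{-2b}$ near the origin (requiring $b<1$) and $|\alp|^{-2-2b}$ at infinity (requiring $b>0$). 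Plugging back gives $\approx \nrmb{f}^2_{\dot{H}^{a+b}}$.

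For the second claim when $\gamma=1$ the same scheme applies verbatim, now with the multiplier $|1-e^{-it}-it|^2$, which is $O(|t|^4)$ near $t=0$ and $O(|t|^2)$ at infinity; these two thresholds force $b\in(1,2)$, as required. The main obstacle will be $\gamma>1$: raising $\nrmb{\cdot}_{\dot{H}^a}$ to a non-even power blocks Plancherel. My idea is to interpolate via the elementary pointwise bound $|1-e^{-it}-it|\lesssim |t|^s$ valid for every $s\in[1,2]$ (the quartic behavior at $0$ covers $s\leq 2$ and the linear growth at infinity covers $s\geq 1$); this upgrades through Plancherel to
\begin{equation*}
    \nrmb{\dlt_\alp f - \alp\cdot\nb f}_{\dot{H}^a} \lesssim |\alp|^s \nrmb{f}_{\dot{H}^{a+s}}, \qquad s\in[1,2].
\end{equation*}
I would then split $\nrmb{\dlt_\alp f - \alp\cdot\nb f}_{\dot{H}^a}^{2\gamma}$ as this pointwise bound raised to the $(2\gamma-2)$th power times the remaining square, with the choice $s=b/\gamma$, producing
\begin{equation*}
    \int \nrmb{\dlt_\alp f - \alp\cdot\nb f}_{\dot{H}^a}^{2\gamma} \frac{d\alp}{|\alp|^{2+2b}} \lesssim \nrmb{f}_{\dot{H}^{a+b/\gamma}}^{2\gamma-2} \int \nrmb{\dlt_\alp f - \alp\cdot\nb f}_{\dot{H}^a}^{2} \frac{d\alp}{|\alp|^{2+2b/\gamma}}.
\end{equation*}
The residual integral is exactly the $\gamma=1$ case of Part 2 with exponent $b/\gamma\in(1,2)$, hence bounded by $\nrmb{f}_{\dot{H}^{a+b/\gamma}}^{2}$, and multiplication gives the desired $\nrmb{f}_{\dot{H}^{a+b/\gamma}}^{2\gamma}$ bound. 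The real subtlety is calibrating the exponents: requiring both $s=b/\gamma\in[1,2]$ for the pointwise bound and $b/\gamma\in(1,2)$ for the residual Plancherel integral is precisely what pins down the hypothesis $b\in(\gamma,2\gamma)$.
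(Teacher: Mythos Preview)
Your proof is correct. Both parts are handled cleanly: Part~1 via Plancherel and scaling of the symbol $2(1-\cos(\alp\cdot\xi))$, and Part~2 by first doing the $\gamma=1$ case the same way and then bootstrapping to $\gamma>1$ using the pointwise bound $\nrmb{\dlt_\alp f-\alp\cdot\nb f}_{\dot{H}^a}\lesssim |\alp|^{b/\gamma}\nrmb{f}_{\dot{H}^{a+b/\gamma}}$ on the excess $2\gamma-2$ factors. The arithmetic in the exponent, $2+2b-(2\gamma-2)\frac{b}{\gamma}=2+\frac{2b}{\gamma}$, lands exactly on the $\gamma=1$ integral with parameter $b/\gamma\in(1,2)$, so the hypotheses are used sharply.

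The paper itself does not prove this lemma: it is quoted as Lemma~2.9 from \cite{AN22} and stated without argument. So there is no in-paper proof to compare against; your write-up supplies one, and it is the natural Fourier-side argument.
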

Finally, we introduce a Sobolev inequality for radially symmetric functions:
\begin{lemma}[Proposition 1 in \cite{CO-09}]\label{lem: radial sobolev}
    Let $s\in(1/2,1)$ and let $f\in \dot{H}^{s}(\bbR^2)$ be a radially symmetric function. Then $f$ is almost everywhere equal to a function $U(x)$, continuous for $x\neq0$ such that
    \begin{equation}\label{eq: radial sobolev for frac}
        \sup_{x\in\bbR^2\backslash\{0\}} |x|^{1-s}|U(x)|\lesssim \nrmb{f}_{\dot{H}^s(\bbR^2)}.
    \end{equation}
\end{lemma}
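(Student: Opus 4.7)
The plan is to exploit the rotational symmetry by passing to the 2D radial Fourier (Hankel) transform, reducing the pointwise bound to a one-dimensional weighted $L^2$ estimate whose finiteness is exactly what forces $s\in(1/2,1)$. Writing $f(x)=F(|x|)$ and $\widehat{f}(\xi)=\widehat{F}(|\xi|)$, radial Fourier inversion reads
\begin{equation*}
    F(r)=\frac{1}{2\pi}\int_0^\infty \widehat{F}(\sigma)\,J_0(r\sigma)\,\sigma\,d\sigma,
\end{equation*}
where $J_0$ is the Bessel function of the first kind of order zero, while Plancherel in polar coordinates gives $\nrmb{f}_{\dot{H}^s(\bbR^2)}^2=2\pi\int_0^\infty \sigma^{2s+1}|\widehat{F}(\sigma)|^2\,d\sigma$.

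Fixing $r=|x|>0$, I would apply Cauchy--Schwarz against the weight $\sigma^{2s+1}d\sigma$ to obtain
\begin{equation*}
    |F(r)|\le \frac{1}{2\pi}\left(\int_0^\infty \sigma^{2s+1}|\widehat{F}(\sigma)|^2\,d\sigma\right)^{\frac{1}{2}}\left(\int_0^\infty \sigma^{1-2s}J_0(r\sigma)^2\,d\sigma\right)^{\frac{1}{2}}.
\end{equation*}
The first factor is a constant multiple of $\nrmb{f}_{\dot{H}^s(\bbR^2)}$, and a change of variables $u=r\sigma$ turns the second into $r^{2s-2}\int_0^\infty u^{1-2s}J_0(u)^2\,du$. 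This last $u$-integral is finite precisely because of the two-sided restriction on $s$: near $u=0$ one has $J_0(u)\to 1$ so the integrand behaves like $u^{1-2s}$, which is integrable iff $s<1$; near infinity the classical asymptotic $J_0(u)\sim \sqrt{2/(\pi u)}\cos(u-\pi/4)$ makes the integrand of size $u^{-2s}$ up to oscillations, integrable iff $s>1/2$. Taking square roots then yields $|F(r)|\lesssim r^{s-1}\nrmb{f}_{\dot{H}^s(\bbR^2)}$, which is precisely \eqref{eq: radial sobolev for frac}.

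The continuity of $U(x):=F(|x|)$ on $\bbR^2\setminus\{0\}$ falls out of the same Hankel representation: on any annulus $\{0<a\le |x|\le b\}$ the Cauchy--Schwarz bound above supplies an integrable majorant of $\widehat{F}(\sigma)\,J_0(|x|\sigma)\,\sigma$ uniform in $x$, so dominated convergence gives continuity of $r\mapsto F(r)$ on $(0,\infty)$. I expect the main technical obstacle to be the sharp handling of $\int_0^\infty u^{1-2s}J_0(u)^2\,du$: both endpoints are critical, and it is the precise $u^{-1/2}$ decay of $J_0$ at infinity together with $J_0(0)=1$ that pins the admissible range to $s\in(1/2,1)$; everything else reduces to Plancherel in polar form plus one application of Cauchy--Schwarz.
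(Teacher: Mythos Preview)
The paper does not actually prove this lemma; it is quoted verbatim as Proposition~1 of \cite{CO-09} and used as a black box, so there is no in-paper argument to compare against. Your Hankel-transform/Cauchy--Schwarz proof is correct and is in fact the standard route to radial Strauss-type inequalities: the scaling $r^{s-1}$ drops out of the change of variables, and the endpoint restrictions $s>1/2$ and $s<1$ are exactly what make $\int_0^\infty u^{1-2s}J_0(u)^2\,du$ finite at infinity and at the origin, respectively.

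The only point worth tightening is the justification of the pointwise Hankel inversion formula for a general $f\in\dot{H}^s(\bbR^2)$, since a priori $\widehat{f}$ is only a tempered distribution. The clean fix is to run the Cauchy--Schwarz estimate first for radial Schwartz functions (where inversion is unproblematic), obtain the a priori bound $\sup_{r>0}r^{1-s}|F(r)|\lesssim\nrmb{f}_{\dot{H}^s}$ on that dense class, and then pass to general $f$ by approximation; the same uniform bound then also yields the continuous representative $U$ away from the origin. With that density step inserted, your argument is complete.
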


\section{Proof of the main result}\label{sec: proof of the main result}
\subsection{The inverse of the Laplacian acting on radial functions}\label{subsec: inverse laplacian}
In this subsection, we consider the Poisson equation
\begin{equation}\label{eq: poisson-radial}
    \Dlt u(|x|)=\phi(|x|),\qquad x\in\bbR^2,
\end{equation}
where $\phi(|x|)$ is a radial function which belongs to the homogeneous Sobolev space $\dot{H}^s(\bbR^2)$ with $s\in(1/2,1)$. Our goal is to prove the following proposition.
\begin{proposition}\label{prop: poisson radial}
    Given a function $\phi(|x|)\in \dot{H}^s(\bbR^2)$ with $s\in(1/2,1)$, let $\calJ$ be the operator defined by
    \begin{equation}\label{eq: inverse laplacian-1}
        \calJ[\phi](|x|):= \int_{0}^{|x|}\frac1r \int_0^r \tau \phi(\tau)d\tau dr.
    \end{equation}
    Then $\calJ[\phi](|x|)$ is a solution of \eqref{eq: poisson-radial} and satisfies the following properties:
    \begin{equation}\label{eq: L infty est for inverse laplacain-1}
        \sum_{0\le|\beta|\le 2} \nrmb{\frac{\nb^{\beta}_x\calJ[\phi](|x|)}{|x|^{s+1-|\beta|}}}_{L^\infty(\bbR^2)}\lesssim \nrmb{\phi(|\cdot|)}_{\dot{H}^s(\bbR^2)},
    \end{equation}
    \begin{equation}\label{eq: fourier transform of JC1C2}
        -|\xi|^2\widehat{\calJ[\phi](|\cdot|)}(\xi)=\widehat{\phi(|\cdot|)}(\xi).
    \end{equation}
    
    Moreover, if $u(|x|)$ is a radially symmetric solution of \eqref{eq: poisson-radial} and satisfies $\nb_xu(|x|)\in L_{loc}^{p}(\bbR^2)$ for some $p\in(2,\infty]$, then there exists a constant $C$ such that
    \begin{equation}\label{uniqueness of u}
        u(|x|)=\calJ[\phi](|x|)+C
    \end{equation}
    (up to redefinition of $u$ on a set of measure zero).
\end{proposition}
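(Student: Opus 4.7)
The proof splits naturally into four parts, one for each assertion in the proposition.

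\emph{Step 1 (Construction of $\calJ[\phi]$ as a solution).} For a radial function $u(|x|)=u(r)$, the Laplacian reads $\Delta u = r^{-1}(r u'(r))'$. Solving $\Delta u = \phi$ by integrating twice, with the constants of integration chosen so that $ru'(r)\to 0$ and $u\to 0$ as $r\to 0$, recovers precisely $u(r) = \calJ[\phi](r)$. The radial Sobolev inequality \eqref{eq: radial sobolev for frac} (applicable since $s\in(1/2,1)$) gives the pointwise bound $|\phi(\tau)| \lesssim \tau^{s-1}\nrmb{\phi(|\cdot|)}_{\dot{H}^s(\bbR^2)}$ for the continuous representative of $\phi$, so both integrands in \eqref{eq: inverse laplacian-1} are locally integrable and $\calJ[\phi] \in C^2(\bbR^2\setminus\{0\})$ satisfies the Poisson equation classically away from the origin.

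\emph{Step 2 (Weighted pointwise estimates).} I insert the bound $|\phi(\tau)| \lesssim \tau^{s-1}\nrmb{\phi(|\cdot|)}_{\dot{H}^s}$ into the explicit formula. For $|\beta|=0$, direct integration gives $|\calJ[\phi](r)| \lesssim r^{s+1}\nrmb{\phi(|\cdot|)}_{\dot{H}^s}$. For $|\beta|=1$, the radial derivative equals $\partial_r \calJ[\phi](r) = r^{-1}\int_0^r \tau \phi(\tau)\,d\tau$, so $|\nabla\calJ[\phi](x)| = |\partial_r \calJ[\phi](r)| \lesssim r^s\nrmb{\phi(|\cdot|)}_{\dot{H}^s}$. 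For $|\beta|=2$, I use the standard identity $\partial_i\partial_j u(|x|) = u''(r)\,x_i x_j/r^2 + (u'(r)/r)(\delta_{ij} - x_i x_j/r^2)$ valid for radial $u$, so $|\nabla^2 u(x)| \lesssim |u''(r)| + |u'(r)/r|$. The second term is bounded by $r^{s-1}\nrmb{\phi(|\cdot|)}_{\dot{H}^s}$ via the $|\beta|=1$ estimate, and the first by the same quantity using $u''(r) = \phi(r) - u'(r)/r$ together with the radial Sobolev bound on $\phi$. Dividing by $r^{s+1-|\beta|}$ yields \eqref{eq: L infty est for inverse laplacain-1}.

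\emph{Step 3 (Fourier identity).} The $|\beta|=0$ bound shows that $\calJ[\phi]$ grows at most polynomially, hence defines a tempered distribution. The $|\beta|=1$ bound shows $\calJ[\phi] \in C^1(\bbR^2)$, which rules out any concentrated contribution at the origin when one computes the distributional Laplacian; combined with the classical identity $\Delta \calJ[\phi] = \phi$ on $\bbR^2\setminus\{0\}$, this promotes the Poisson equation to hold in $\calS'(\bbR^2)$. Taking Fourier transforms gives \eqref{eq: fourier transform of JC1C2}.

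\emph{Step 4 (Uniqueness).} If $u(|x|)$ is another radial distributional solution with $\nabla u \in L^p_{loc}(\bbR^2)$ for some $p\in(2,\infty]$, then $w := u - \calJ[\phi]$ is a radial distributional solution of $\Delta w = 0$. By Weyl's lemma $w$ is harmonic and smooth, so as a radial harmonic function on $\bbR^2$ it must take the form $w(|x|) = A + B \log|x|$. Since $\nabla \calJ[\phi]$ is bounded near the origin by Step 2, the hypothesis forces $\nabla w \in L^p_{loc}$; but $|\nabla\log|x|| = 1/|x| \notin L^p_{loc}(\bbR^2)$ for any $p\ge 2$, so $B=0$ and $w \equiv A$, giving \eqref{uniqueness of u}.

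I expect the most delicate point to be Step 3: justifying that $\Delta \calJ[\phi] = \phi$ holds \emph{distributionally} on all of $\bbR^2$, rather than only on $\bbR^2\setminus\{0\}$. This relies crucially on the $C^1$ regularity at the origin obtained in Step 2, which is why the pointwise estimates must be established before the Fourier identity. The remaining parts are direct computations using Lemma \ref{lem: radial sobolev} and the explicit formula for $\calJ$.
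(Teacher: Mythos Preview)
Your proposal is correct and follows essentially the same strategy as the paper: both use the radial Sobolev inequality (Lemma~\ref{lem: radial sobolev}) to obtain the pointwise bound $|\phi(\tau)|\lesssim \tau^{s-1}\nrmb{\phi(|\cdot|)}_{\dot H^s}$, plug this into the explicit radial formulas for $\calJ[\phi]$ and its first and second derivatives to get \eqref{eq: L infty est for inverse laplacain-1}, and then handle uniqueness by classifying radial harmonic functions and using the $L^p_{loc}$ hypothesis to kill the $\log|x|$ (equivalently $Cx/|x|^2$) term. The only cosmetic differences are that you invoke Weyl's lemma where the paper integrates the radial ODE directly, and you are more explicit than the paper about the distributional and tempered-distribution issues in Step~3; note, however, that if you apply Weyl's lemma on all of $\bbR^2$ the resulting smooth radial harmonic function is already forced to be constant, so your $L^p_{loc}$ step is only genuinely needed if you restrict to $\bbR^2\setminus\{0\}$, which is exactly the paper's viewpoint.
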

\begin{remark}\label{rmk: radial poisson}
For future use, we note following observations.
    Under the same assumption with this proposition, we see that
    \begin{equation*}
        (\Lmb-x\cdot\nb_x+1)\calJ[\phi](|x|)\in \dot{W}_{loc}^{1,\frac{2}{1-s}}(\bbR^2).
    \end{equation*}
    Indeed, we can establish that $\nb_x(-x\cdot\nb_x+1)\calJ[\phi](|x|)\in L_{loc}^{\frac{2}{1-s}}(\bbR^2)$ using the estimate in \eqref{eq: L infty est for inverse laplacain-1}, while we can deduce that $\nb_x\Lmb \calJ[\phi](|x|)\in L^{\frac{2}{1-s}}(\bbR^2)$ by the Sobolev embedding \eqref{sobolev embedding-1}:
    \begin{equation}\label{ineq: for remark 3.2}
        \nrmb{\nb\Lmb \calJ[\phi](|\cdot|)}_{L^{\frac{2}{1-s}}(\bbR^2)}\lesssim \nrmb{\nb\Lmb \calJ[\phi](|\cdot|)}_{\dot{H}^s(\bbR^2)}\approx \nrmb{\phi(|\cdot|)}_{\dot{H}^s(\bbR^2)}.
    \end{equation}
\end{remark}
\begin{proof}
To begin with, we show that $\calJ[\phi](|x|)$ is well-defined and satisfies \eqref{eq: L infty est for inverse laplacain-1}. 
    Since $\phi(|x|)\in \dot{H}^s(\bbR^2)$ with $s\in(1/2,1)$,  Lemma \ref{lem: radial sobolev} provides us with a function $\tilde{\phi}$ almost everywhere equal to $\phi$ such that
    \begin{equation}\label{est for tilde phi}
        |\tau\tilde{\phi}(\tau)|\lesssim \tau^s\nrmb{\phi(|\cdot|)}_{\dot{H}^s(\bbR^2)} \quad \text{for any}\;  \tau>0.
    \end{equation}
    Hereafter, we identify $\phi$ with $\tilde{\phi}$.
    Using \eqref{est for tilde phi}, we have
    \begin{equation}\label{est: for J-L infty}
        \Abs{\calJ[\phi](|x|)}\lesssim \Abs{\int_{0}^{|x|}\frac1r \int_0^r \tau^s d\tau dr}\nrmb{\phi(|\cdot|)}_{\dot{H}^s(\bbR^2)} \approx  \Abs{\int_{0}^{|x|} r^s d\tau dr}\nrmb{\phi(|\cdot|)}_{\dot{H}^s(\bbR^2)} \approx |x|^{s+1}\nrmb{\phi(|\cdot|)}_{\dot{H}^s(\bbR^2)},
    \end{equation}
    which implies $\calJ[\phi](|x|)$ is well-defined.
    Moreover, we calculate
    \begin{equation}\label{calculation of derivatives of calj}
    \begin{split}
        \nb\calJ[\phi](|x|)&=\frac{x}{|x|^2} \int_0^{|x|} \tau \phi(\tau)d\tau, \\
        \nb_{x_i,x_j}^2\calJ[\phi](|x|)&=\left(\frac{\dlt_{ij}}{|x|^2}-\frac{2x_ix_j}{|x|^4}\right) \int_0^{|x|} \tau \phi(\tau)d\tau +\frac{x_ix_j}{|x|^2}\phi(|x|).
        \end{split}
    \end{equation}
    Using \eqref{est for tilde phi} again, we estimate
    \begin{equation}\label{est: for J-L infty-higher}
    \begin{split}
        \Abs{\nb\calJ[\phi](|x|)}&\lesssim \frac{1}{|x|} \Abs{\int_0^{|x|} \tau^s d\tau}\nrmb{\phi(|\cdot|)}_{\dot{H}^s(\bbR^2)} \approx |x|^{s}\nrmb{\phi(|\cdot|)}_{\dot{H}^s(\bbR^2)},\\
        \Abs{\nb_{x_i,x_j}^2\calJ[\phi](|x|)}&\lesssim \left(\frac{1}{|x|^2} \Abs{\int_0^{|x|} \tau^s d\tau} + |x|^{s-1}\right)\nrmb{\phi(|\cdot|)}_{\dot{H}^s(\bbR^2)}\approx |x|^{s-1}\nrmb{\phi(|\cdot|)}_{\dot{H}^s(\bbR^2)}.
        \end{split}
    \end{equation}
    Combining all, we obtain \eqref{eq: L infty est for inverse laplacain-1}. Moreover, the second derivatives of $\calJ[\phi](|x|)$ in \eqref{calculation of derivatives of calj} implies that $\calJ[\phi](|x|)$ is a solution of \eqref{eq: poisson-radial}, so that its Fourier transform satisfies \eqref{eq: fourier transform of JC1C2}.

    Next, suppose that $\nb u(|x|)\in L_{loc}^{p}(\bbR^2)$ is a radially symmetric solution of \eqref{eq: poisson-radial} for some $p\in(2,\infty]$. Then since
\begin{equation*}
    \Dlt(u-\calJ[\phi])(|x|)=0,
\end{equation*}
we utilize the formula
\begin{equation*}
        \Delta (u-\calJ[\phi])(|x|)=\left.\frac{1}{r}\frac{d}{dr}\left(r\frac{d}{dr}(u-\calJ[\phi])(r)\right)\right|_{r=|x|},\qquad |x|>0
\end{equation*}
to obtain
\begin{equation*}
    \nb(u-\calJ[\phi])(|x|)=\frac{Cx}{|x|^2},\qquad |x|>0
\end{equation*}
for some constant $C$. Note that  the left-hand side of this equality belongs to $L_{loc}^{p}(\bbR^2)$  by the assumption and \eqref{est: for J-L infty-higher}. However, the right-hand side does not belong to $L_{loc}^{p}(\bbR^2)$ unless $C\neq 0$, and consequently
\begin{equation*}
    \nb(u-\calJ[\phi])(|x|)=0,\quad |x|>0,
\end{equation*}
which leads to \eqref{uniqueness of u}.
\end{proof}

\subsection{Analysis of the linearized equation}\label{subsec: analysis of linearized eq}
\subsubsection{Homogeneous linearized equation}\label{subsubsec: homo}
Here, we consider the homogeneous linearized equation of \eqref{eq: reformulation}:
\begin{equation}\label{linearized eq}
    (\Lambda-y\cdot\nb_y+1) k (|y|) =0.
\end{equation}

\begin{lemma}\label{lem: klin-1}
Given any $s\in \bbR$, the function $k^{Lin}_{s}(|y|)$ defined by
\begin{equation}\label{klin}
        k^{Lin}_{s}(|y|)= s\left(\sqrt{|y|^2+1}-\log\left(\sqrt{|y|^2+1}+1\right)\right),
    \end{equation}
    there exists a constant $C$ such that $k^{Lin}_{s}(|y|)+C$ is a solution of \eqref{linearized eq}.
Furthermore, it satisfies
    \begin{equation}\label{klin laplacian}
        \Delta k^{Lin}_{s}(|y|)= \frac{s}{\sqrt{|y|^2+1}}
    \end{equation}
    whose Fourier transform is 
    \begin{equation}\label{klin fourier}
        \widehat{\Delta k^{Lin}_{s} (|\cdot|)}(\xi)=\frac{se^{-|\xi|}}{|\xi|}.
    \end{equation}
\end{lemma}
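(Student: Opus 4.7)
The plan is to first compute $\Delta k^{Lin}_{s}$ directly (which gives \eqref{klin laplacian}) and read off its Fourier transform (which gives \eqref{klin fourier}), then reduce the self-similar identity $(\Lambda - y\cdot\nabla_y + 1)(k^{Lin}_{s} + C) = 0$ to a statement about $\Delta k^{Lin}_{s}$ via a commutator argument, and finally invoke Proposition \ref{prop: poisson radial} to rule out a spurious $\log|y|$ contribution in the harmonic remainder. Writing $k^{Lin}_{s}(|y|) = s\,u(r)$ with $r = |y|$ and $u(r) = \sqrt{r^2+1} - \log(\sqrt{r^2+1}+1)$, direct differentiation and simplification give $u'(r) = (\sqrt{r^2+1}-1)/r$, and the two-dimensional radial Laplacian yields
$$u''(r) + \frac{1}{r}u'(r) = \frac{(\sqrt{r^2+1}-1)(\sqrt{r^2+1}+1)}{r^2\sqrt{r^2+1}} = \frac{1}{\sqrt{r^2+1}},$$
which is \eqref{klin laplacian}. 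The Fourier transform \eqref{klin fourier} then follows from the subordination identity $a^{-1/2} = \pi^{-1/2}\int_0^\infty t^{-1/2}e^{-at}\,dt$ with $a = 1+|y|^2$, the 2D Gaussian Fourier transform, and the standard integral $\int_0^\infty t^{-3/2}e^{-t-b/t}\,dt = \sqrt{\pi/b}\,e^{-2\sqrt{b}}$; equivalently, one recognizes $(|y|^2+1)^{-3/2}$ as the 2D Poisson kernel at height $1$, whose Fourier transform is proportional to $e^{-|\xi|}$.

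\textbf{Reduction via commutators.} The identities $[\Delta,\Lambda] = 0$ and $\Delta(y\cdot\nabla_y f) = y\cdot\nabla_y \Delta f + 2\Delta f$ yield
$$\Delta\bigl[(\Lambda - y\cdot\nabla_y + 1)k^{Lin}_{s}(|y|)\bigr] = (\Lambda - y\cdot\nabla_y - 1)\Delta k^{Lin}_{s}(|y|) = s(\Lambda - y\cdot\nabla_y - 1)\frac{1}{\sqrt{|y|^2+1}}.$$
Setting $v(|y|) := 1/\sqrt{|y|^2+1}$, a direct computation gives $y\cdot\nabla_y v = -|y|^2/(|y|^2+1)^{3/2}$ and hence $-y\cdot\nabla_y v - v = -1/(|y|^2+1)^{3/2}$; on the Fourier side $\widehat{\Lambda v}(\xi) = |\xi|\,\widehat{v}(\xi) \propto e^{-|\xi|}$, which matches (up to the same constant) the Fourier transform of $1/(|y|^2+1)^{3/2}$, so $\Lambda v = 1/(|y|^2+1)^{3/2}$ and the two contributions cancel. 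Therefore $(\Lambda - y\cdot\nabla_y + 1)k^{Lin}_{s}$ is radial and harmonic on $\bbR^2$.

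\textbf{Eliminating the logarithmic mode and conclusion.} A radial harmonic function on $\bbR^2$ has the form $a + b\log|y|$; the remaining task is to show $b = 0$. Since $v \in \dot{H}^{t}(\bbR^2)$ for every $t \in (1/2, 1)$ (read off from $\widehat{v} \propto e^{-|\xi|}/|\xi|$) and $\nabla k^{Lin}_{s} \in L^\infty(\bbR^2)$ by the explicit formula for $u'$, Proposition \ref{prop: poisson radial} gives $k^{Lin}_{s}(|y|) = s\calJ[v](|y|) + C_0$ for some constant $C_0$. Remark \ref{rmk: radial poisson} then shows $(\Lambda - y\cdot\nabla_y + 1)\calJ[v] \in \dot{W}^{1,2/(1-t)}_{loc}(\bbR^2) \hookrightarrow L^\infty_{loc}(\bbR^2)$ (by the 2D Sobolev embedding, since $2/(1-t) > 2$), so $(\Lambda - y\cdot\nabla_y + 1)k^{Lin}_{s}$ is locally bounded near the origin, forcing $b = 0$. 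Hence the remainder is a genuine constant $-C$ and $k^{Lin}_{s}(|y|) + C$ solves \eqref{linearized eq}. The main obstacle of the argument is exactly this last step: the commutator reduction only delivers harmonicity of the remainder, which a priori permits a $\log|y|$ singularity, and one must leverage the inverse-Laplacian regularity theory of Subsection \ref{subsec: inverse laplacian} to remove it.
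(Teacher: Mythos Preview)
Your proof is correct and follows the same overall architecture as the paper: verify \eqref{klin laplacian} and \eqref{klin fourier} directly, show that $(\Lambda - y\cdot\nabla_y + 1)k^{Lin}_{s}$ is radial and harmonic, and then invoke the regularity theory of Proposition~\ref{prop: poisson radial} and Remark~\ref{rmk: radial poisson} to rule out the $\log|y|$ mode. The difference lies in how harmonicity is established. The paper works entirely on the Fourier side, computing
\[
\bigl(\Delta\bigl[(\Lambda - y\cdot\nabla_y + 1)\calJ[v]\bigr]\bigr)^{\wedge}(\xi) = -e^{-|\xi|}\partial_{|\xi|}\bigl(|\xi|^3 e^{|\xi|}\varphi(|\xi|)\bigr)
\]
and then plugging in $-|\xi|^2\varphi(|\xi|) = e^{-|\xi|}/|\xi|$ to see that the right-hand side vanishes. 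You instead use the physical-space commutator identity $\Delta\bigl[(\Lambda - y\cdot\nabla_y + 1)f\bigr] = (\Lambda - y\cdot\nabla_y - 1)\Delta f$ to reduce the question to showing that $v := (|y|^2+1)^{-1/2}$ itself solves the shifted self-similar equation $(\Lambda - y\cdot\nabla_y - 1)v = 0$, which you then verify by a short mixed physical/Fourier computation. Your route is arguably more transparent---it isolates precisely which equation $\Delta k^{Lin}_{s}$ satisfies and avoids tracking the intermediate function $\varphi$---while the paper's route is a direct Fourier verification that does not require the commutator observation. One small point: when you write that $\widehat{\Lambda v}$ and the Fourier transform of $(|y|^2+1)^{-3/2}$ agree ``up to the same constant,'' this is easiest to make airtight by simply checking $(\Lambda - y\cdot\nabla_y - 1)v = 0$ on the Fourier side (it reduces to the ODE $r\widehat{v}' + (r+1)\widehat{v} = 0$, solved by $\widehat{v} = Ce^{-r}/r$), which bypasses any normalization bookkeeping.
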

\begin{remark}
    For future use, we also note 
    \begin{equation}\label{klin nabla}
        \nb_y k^{Lin}_{s}(|y|) = \frac{sy}{\sqrt{|y|^2+1}+1},
    \end{equation}
    \begin{equation}\label{klin nabla^2}
        \nb^2_{y_iy_j} k^{Lin}_{s}(|y|) = s\left(\frac{\dlt_{ij}}{\sqrt{|y|^2+1}+1}-\frac{y_iy_j}{\sqrt{|y|^2+1}\left(\sqrt{|y|^2+1}+1\right)^2}\right).
    \end{equation}
\end{remark}
\begin{proof}
To begin with, we note that the Fourier transform of $\frac{s}{\sqrt{|y|^2+1}}$ is
\begin{equation}\label{fourier transform of 1/sqrt y^2+1}
    \left(\frac{s}{\sqrt{|\cdot|^2+1}}\right)^{\wedge}(\xi)=\frac{se^{-|\xi|}}{|\xi|},
\end{equation}
which can be calculated via the Hankel transform. Moreover \eqref{fourier transform of 1/sqrt y^2+1} implies that $\frac{s}{\sqrt{|y|^2+1}}\in\dot{H}^t(\bbR^2)$ for any $t>0$.

Next, 
to prove that $k^{Lin}_s(|y|)+C$ is a solution of \eqref{linearized eq} for a constant $C$, it suffices to show the following two things: 
\begin{equation}\label{conti est for JCC1 for klin-2}
    (\Lambda-y\cdot\nb_y+1) \calJ\left[\frac{s}{\sqrt{|y|^2+1}}\right]\in \dot{W}^{1,p}_{loc}(\bbR^2) \quad\text{for some}\;\,p\in(2,\infty],
\end{equation}
and
\begin{equation}\label{another linearized equation acted laplacain}
    \Dlt \left((\Lambda-y\cdot\nb_y+1) \calJ\left[\frac{s}{\sqrt{|y|^2+1}}\right] \right)=0,
\end{equation}
where $\calJ$ is the operator defined in \eqref{eq: inverse laplacian-1}.
Indeed, \eqref{conti est for JCC1 for klin-2} and \eqref{another linearized equation acted laplacain} imply that 
$(\Lambda-y\cdot\nb_y+1) \calJ\left[\frac{s}{\sqrt{|y|^2+1}}\right]\in \dot{W}^{1,p}_{loc}(\bbR^2)$ is a radially symmetric solution of the Laplace equation, so that \eqref{uniqueness of u} in Proposition \ref{prop: poisson radial} implies that
\begin{equation*}
    (\Lambda-y\cdot\nb_y+1) \calJ\left[\frac{s}{\sqrt{|y|^2+1}}\right]=C_1 \quad\text{for a constant}\;\,C_1.
\end{equation*}
But noticing
\begin{equation*}
    (\Lambda-y\cdot\nb_y+1) \left(\calJ\left[\frac{s}{\sqrt{|y|^2+1}}\right]-C_1\right)=-C_1+(\Lambda-y\cdot\nb_y+1) \calJ\left[\frac{s}{\sqrt{|y|^2+1}}\right]=0
\end{equation*}
and
\begin{equation*}
    \calJ\left[\frac{s}{\sqrt{|y|^2+1}}\right]= k^{Lin}_{s}(|y|)-1+\log 2,
\end{equation*}
we can conclude that $k^{Lin}_{s}(|y|)-1+\log 2-C_1$
is a solution of \eqref{linearized eq}.
The reason for this approach is to address the singularity issue at the origin in the frequency space.
Note that \eqref{conti est for JCC1 for klin-2} follows from Remark \ref{rmk: radial poisson}. For \eqref{another linearized equation acted laplacain}, denoting the Fourier transform of $\calJ\left[\frac{s}{\sqrt{|y|^2+1}}\right]$ by $\varphi(|\xi|)$ for simplicity, we have
\begin{equation*}
    \left(\Dlt\left((\Lambda-y\cdot\nb_y+1) \calJ\left[\frac{s}{\sqrt{|y|^2+1}}\right]\right)\right)^{\wedge}(\xi)=-|\xi|^2(|\xi|+1)\varphi(|\xi|) - |\xi|^2\sum_{j=1}^2 \partial_{\xi_j}\left( \xi_j\varphi(|\xi|)\right).
\end{equation*}
Since $\varphi(|\xi|)$ is radially symmetric, we compute
\begin{equation*}
    \sum_{j=1}^2 \partial_{\xi_j}\left( \xi_j\varphi(|\xi|)\right)=2\varphi(|\xi|)+\sum_{j=1}^2 \xi_j \partial_{\xi_j}\left( \varphi(|\xi|)\right)
    =2\varphi(|\xi|)+\sum_{j=1}^2 \frac{\xi_j^2}{|\xi|}\partial_{|\xi|}\left( \varphi(|\xi|)\right)
    =2\varphi(|\xi|)+|\xi|\partial_{|\xi|}\left( \varphi(|\xi|)\right),
\end{equation*}
which yields
\begin{equation}\label{fourier of linearized eq}
\begin{split}
    \left(\Dlt\left((\Lambda-y\cdot\nb_y+1) \calJ\left[\frac{s}{\sqrt{|y|^2+1}}\right]\right)\right)^{\wedge}(\xi)&=-|\xi|^3\partial_{|\xi|}\left( \varphi(|\xi|)\right)-(|\xi|^3+3|\xi|^2)\varphi(|\xi|)\\
    &=-e^{-|\xi|}\partial_{|\xi|}\left(|\xi|^3 e^{|\xi|}\varphi(|\xi|)\right).
    \end{split}
\end{equation}
On the other hand, \eqref{eq: fourier transform of JC1C2}, together with \eqref{fourier transform of 1/sqrt y^2+1}, gives us
\begin{equation}\label{fourier of variphi}
        -|\xi|^2\varphi(|\xi|)=\frac{se^{-|\xi|}}{|\xi|}.
    \end{equation}
    Inserting this into \eqref{fourier of linearized eq}, we can check \eqref{another linearized equation acted laplacain}.
\end{proof}

Next we derive two estimates of $k^{Lin}_{s}$ in \eqref{klin}.
\begin{lemma}
    For $p\in(2,\infty)$ and $\alp \in \bbR^2$, there holds
    \begin{equation}\label{est: klin-1'}
\left|\dlt_\alp k^{Lin}_{s}(|y|)-\alp \cdot\nb_y k^{Lin}_{s}(|y|)\right|
        \lesssim s|\alp|^{2-\frac2p}.
    \end{equation}
\end{lemma}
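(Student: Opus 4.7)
The plan is to reduce this pointwise bound to the Morrey-type inequality \eqref{App of Morrey} applied to $f = k^{Lin}_s(|y|)$, which gives
\[
\left|\delta_\alpha k^{Lin}_s(|y|) - \alpha\cdot\nabla_y k^{Lin}_s(|y|)\right| \lesssim |\alpha|^{2-\frac{2}{p}} \,\bigl\|\nabla^2 k^{Lin}_s(|\cdot|)\bigr\|_{L^p(\mathbb{R}^2)}.
\]
Once this is in place, the entire task becomes estimating $\|\nabla^2 k^{Lin}_s\|_{L^p(\mathbb{R}^2)}$ by a constant multiple of $s$, uniformly in $s$, for every $p\in(2,\infty)$.

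For the $L^p$ bound on the Hessian, I would use the explicit formula \eqref{klin nabla^2}. Setting $\rho := \sqrt{|y|^2+1}\ge 1$, each entry of $\nabla^2 k^{Lin}_s(|y|)$ is bounded by a constant times
\[
s\left(\frac{1}{\rho+1}+\frac{|y|^2}{\rho(\rho+1)^2}\right).
\]
Using $|y|^2=\rho^2-1=(\rho-1)(\rho+1)$, the second term simplifies to $s(\rho-1)/(\rho(\rho+1))\le s/\rho$, while the first is at most $s/\rho$. Hence
\[
\left|\nabla^2 k^{Lin}_s(|y|)\right| \lesssim \frac{s}{\sqrt{|y|^2+1}}.
\]

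Raising this to the $p$-th power and integrating in polar coordinates yields
\[
\bigl\|\nabla^2 k^{Lin}_s(|\cdot|)\bigr\|_{L^p(\mathbb{R}^2)}^{p} \lesssim s^{p}\int_0^\infty \frac{r\,dr}{(r^2+1)^{p/2}},
\]
and this one-dimensional integral converges precisely because $p>2$, giving a bound independent of $s$ on the right. Combining with the Morrey estimate above yields \eqref{est: klin-1'}.

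The only subtle point is the dimensional count in the integrability of $\rho^{-p}$, which is exactly the reason the hypothesis $p>2$ is needed; everything else is routine bookkeeping from the explicit formula \eqref{klin nabla^2} and the already-established inequality \eqref{App of Morrey}, so I do not anticipate any serious obstacle here.
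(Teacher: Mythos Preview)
Your proposal is correct and follows the same approach as the paper: apply \eqref{App of Morrey} and then use the explicit Hessian formula \eqref{klin nabla^2} together with $p>2$ to get $\|\nabla^2 k^{Lin}_s\|_{L^p}\lesssim s$. You have simply written out the $L^p$ computation in more detail than the paper, which states that bound in one line.
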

\begin{proof}
 Recalling \eqref{klin nabla^2}, we have
\begin{equation*}
    \nrmb{\nb^2 k^{Lin}_{s}(|\cdot|)}_{L^p} \lesssim s,
\end{equation*}
since $p\in(2,\infty)$.
Combining this with \eqref{App of Morrey}, we are done.
\end{proof}

\begin{lemma}
    For $\alp\neq0,\,y\in\bbR^2$ , there holds
    \begin{equation}\label{est: klin-1}
    \begin{split}
        &\Abs{\left(\Delta_{\alp} k^{Lin}_{s}(|y|)\right)^2-\left(\Delta_{-\alp} k^{Lin}_{s}(|y|)\right)^2}\\
        &\quad\lesssim s^2\left(1_{\left\{|\alp|\le \frac{|y|}{2}\right\}}\left(\frac{|y||\alp|}{|y|^2+|\alp|^2+1}+\frac{|y|}{|\alp|\sqrt{|y|^2+|\alp|^2+1}}\right)+1_{\left\{|\alp|\ge \frac{|y|}{2}\right\}}\frac{|y|}{\sqrt{|y|^2+|\alp|^2+1}}\right).
        \end{split}
    \end{equation}
\end{lemma}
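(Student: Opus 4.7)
The plan is to exploit the difference-of-squares factorization and bound each factor separately. Using radial symmetry of $k^{Lin}_s$, one has
\begin{equation*}
(\Dlt_\alp k^{Lin}_s(|y|))^2 - (\Dlt_{-\alp} k^{Lin}_s(|y|))^2 = \frac{k^{Lin}_s(|y+\alp|) - k^{Lin}_s(|y-\alp|)}{|\alp|} \cdot \frac{2k^{Lin}_s(|y|) - k^{Lin}_s(|y+\alp|) - k^{Lin}_s(|y-\alp|)}{|\alp|},
\end{equation*}
so it suffices to estimate each of these two factors, splitting the symmetric one into the regimes $|\alp| \le |y|/2$ and $|\alp| \ge |y|/2$.

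For the antisymmetric factor, set $a := \sqrt{|y+\alp|^2 + 1}$ and $b := \sqrt{|y-\alp|^2 + 1}$, so that \eqref{klin} yields numerator $s\bigl[(a-b) - \log\tfrac{a+1}{b+1}\bigr]$. The crucial algebraic facts are $a^2 - b^2 = 4\,y \cdot \alp$ and $(a+b)^2 \ge a^2 + b^2 = 2(|y|^2 + |\alp|^2) + 2 \gtrsim |y|^2 + |\alp|^2 + 1$, which together give $|a-b| = \tfrac{|4 y \cdot \alp|}{a+b} \lesssim \tfrac{|y||\alp|}{\sqrt{|y|^2 + |\alp|^2 + 1}}$. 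The elementary inequality $|\log(1+x)| \le |x|$ (valid for $x > -1$) then produces $|\log\tfrac{a+1}{b+1}| \le |a-b|$, so in every regime
\begin{equation*}
\bigl|\Dlt_\alp k^{Lin}_s(|y|) - \Dlt_{-\alp} k^{Lin}_s(|y|)\bigr| \lesssim \frac{s|y|}{\sqrt{|y|^2 + |\alp|^2 + 1}}.
\end{equation*}

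For the symmetric factor, when $|\alp| \le |y|/2$ I use the second-order representation
\begin{equation*}
k^{Lin}_s(|y+\alp|) + k^{Lin}_s(|y-\alp|) - 2k^{Lin}_s(|y|) = \int_0^1 \int_{-t}^{t} \alp^{\!\top} \nb^2 k^{Lin}_s(y + \sigma\alp)\,\alp\,d\sigma\,dt
\end{equation*}
together with the pointwise bound $|\nb^2 k^{Lin}_s(|w|)| \lesssim s/\sqrt{|w|^2+1}$, which is immediate from \eqref{klin nabla^2}. Since $|y + \sigma\alp| \approx |y|$ in this regime and $|y|^2 + 1 \approx |y|^2 + |\alp|^2 + 1$, dividing by $|\alp|$ gives the bound $s|\alp| / \sqrt{|y|^2 + |\alp|^2 + 1}$ on the symmetric factor. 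When $|\alp| \ge |y|/2$, I simply use $|\Dlt_{\pm\alp} k^{Lin}_s| \le \nrmb{\nb k^{Lin}_s}_{L^\infty} \lesssim s$, obtained from \eqref{klin nabla}. Multiplying the factor estimates yields $\frac{s^2|y||\alp|}{|y|^2+|\alp|^2+1}$ on $\{|\alp|\le|y|/2\}$ and $\frac{s^2|y|}{\sqrt{|y|^2+|\alp|^2+1}}$ on $\{|\alp|\ge|y|/2\}$, which are exactly the first and third summands of the claimed inequality; the middle summand $\frac{s^2|y|}{|\alp|\sqrt{|y|^2+|\alp|^2+1}}$ is a slack term that the right-hand side automatically contains. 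The most delicate step is the careful handling of the logarithmic piece of $k^{Lin}_s$, but $|\log(1+x)| \le |x|$ reduces it directly to the $|a-b|$ estimate.
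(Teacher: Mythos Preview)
Your proof is correct and follows the same overall strategy as the paper: factor the difference of squares into an antisymmetric factor and a symmetric factor, then bound each separately with a case split on $|\alp|\le|y|/2$ versus $|\alp|\ge|y|/2$. Your treatment of the antisymmetric factor is essentially identical to the paper's (you use $|\log(1+x)|\le|x|$ where the paper uses the mean value theorem on $\log$, which is the same thing).

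Where you differ is in the symmetric factor on $\{|\alp|\le|y|/2\}$. The paper splits $k^{Lin}_s$ into its $\sqrt{|y|^2+1}$ part and its $\log(\sqrt{|y|^2+1}+1)$ part, controls the former by $\tfrac{s|\alp|^2}{\sqrt{|y|^2+|\alp|^2+1}}$ through an explicit algebraic computation, and bounds the latter only crudely by $s$ (using that the ratio inside the log stays between $1/10$ and $10$). It is precisely this crude $s$ that, after multiplication and division by $|\alp|^2$, produces the middle term $\tfrac{|y|}{|\alp|\sqrt{|y|^2+|\alp|^2+1}}$ in the lemma. You instead apply the second-order Taylor representation directly to the full $k^{Lin}_s$, using the pointwise Hessian bound $|\nb^2 k^{Lin}_s(|w|)|\lesssim s/\sqrt{|w|^2+1}$ from \eqref{klin nabla^2}. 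This is cleaner and strictly sharper: your symmetric factor is $\lesssim \tfrac{s|\alp|}{\sqrt{|y|^2+|\alp|^2+1}}$, so the middle term in the stated bound is indeed slack in your argument, exactly as you note. Both routes are valid; yours avoids the explicit manipulation of each piece of $k^{Lin}_s$ at the cost of relying on the Hessian formula, and it shows the lemma actually holds without the middle summand.
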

\begin{proof}
Our aim is to show that
\begin{equation}\label{est: klin-1--1}
        \Abs{\left(\dlt_{\alp}-\dlt_{-\alp}\right)  k^{Lin}_{s}(|y|)}\lesssim \frac{s|y||\alp|}{\sqrt{|y|^2+|\alp|^2+1}}
    \end{equation}
    and
    \begin{equation}\label{est: klin-1--2}
        \Abs{\left(\dlt_{\alp}+\dlt_{-\alp}\right)  k^{Lin}_{s}(|y|)}\lesssim s \left(1_{\left\{|\alp|\le \frac{|y|}{2}\right\}}\left(\frac{|\alp|^2}{\sqrt{|y|^2+|\alp|^2+1}}+1\right)+1_{\left\{|\alp|\ge \frac{|y|}{2}\right\}}|\alp|\right).
    \end{equation}
Then \eqref{est: klin-1} follows from
    \begin{equation*}
        \Abs{\left(\Delta_{\alp} k^{Lin}_{s}(|y|)\right)^2-\left(\Delta_{-\alp} k^{Lin}_{s}(|y|)\right)^2}\le \frac{ \Abs{\left(\dlt_{\alp}-\dlt_{-\alp}\right)  k^{Lin}_{s}(|y|)}\Abs{\left(\dlt_{\alp}+\dlt_{-\alp}\right)  k^{Lin}_{s}(|y|)}}{|\alp|^2}.
    \end{equation*}
    To begin with, for \eqref{est: klin-1--1}, it suffices to show that
    \begin{equation}\label{est: klin-1-2}
    \Abs{\sqrt{|y+\alp|^2+1}-\sqrt{|y-\alp|^2+1}}\lesssim \frac{|y||\alp|}{\sqrt{|y|^2+|\alp|^2+1}},
    \end{equation}
    \begin{equation}\label{est: klin-1-1}
    \Abs{\log\left(\sqrt{|y+\alp|^2+1}+1\right)-\log\left(\sqrt{|y-\alp|^2+1}+1\right)}\lesssim \frac{|y||\alp|}{\sqrt{|y|^2+|\alp|^2+1}}.
    \end{equation}
    For \eqref{est: klin-1-2},
    we compute
    \begin{equation*}
        \Abs{\sqrt{|y+\alp|^2+1}-\sqrt{|y-\alp|^2+1}}
        =\frac{\Abs{|y+\alp|^2-|y-\alp|^2}}{\sqrt{|y+\alp|^2+1}+\sqrt{|y-\alp|^2+1}}
        =\frac{4\Abs{y\cdot \alp}}{\sqrt{|y+\alp|^2+1}+\sqrt{|y-\alp|^2+1}}.
    \end{equation*}
    If $y\cdot \alp\ge 0$, then we estimate
    \begin{equation*}
        \frac{\Abs{y\cdot \alp}}{\sqrt{|y+\alp|^2+1}+\sqrt{|y-\alp|^2+1}}\le\frac{ |y||\alp|}{\sqrt{|y|^2+|\alp|^2+2 (y\cdot \alp) +1}}\le\frac{ |y||\alp|}{\sqrt{|y|^2+|\alp|^2 +1}}
    \end{equation*}
    while we do
    \begin{equation*}
        \frac{\Abs{y\cdot \alp}}{\sqrt{|y+\alp|^2+1}+\sqrt{|y-\alp|^2+1}}\le\frac{ |y||\alp|}{\sqrt{|y|^2+|\alp|^2-2 (y\cdot \alp) +1}}\le\frac{ |y||\alp|}{\sqrt{|y|^2+|\alp|^2 +1}}
    \end{equation*}
    whenever $y\cdot \alp < 0$.
    Next for \eqref{est: klin-1-1},
    we apply the mean value theorem to the function $F(x)=\log(x+1)$ to obtain $\tau\in(0,1)$ such that
    \begin{equation*}
            \Abs{\log\left(\sqrt{|y+\alp|^2+1}+1\right)-\log\left(\sqrt{|y-\alp|^2+1}+1\right)}=\frac{\Abs{\sqrt{|y+\alp|^2+1}-\sqrt{|y-\alp|^2+1}}}{\tau \sqrt{|y+\alp|^2+1} +(1-\tau)\sqrt{|y-\alp|^2+1}+1},
    \end{equation*}
    which satisfies \eqref{est: klin-1-1} by \eqref{est: klin-1-2}.

    Next we prove \eqref{est: klin-1--2}. For $|\alp|\ge \frac{|y|}{2}$, we recall \eqref{klin nabla} to obtain
    \begin{equation*}
    \begin{split}
        \Abs{\left(\dlt_{\alp}+\dlt_{-\alp}\right)  k^{Lin}_{s}(|y|)}&\lesssim  \Abs{k^{Lin}_{s}(|y|)-k^{Lin}_{s}(|y+\alp|)}+ \Abs{k^{Lin}_{s}(|y|)-k^{Lin}_{s}(|y-\alp|)} \\
        &\lesssim \Abs{\int_0^1 \nb_y k^{Lin}_{s} (|\tau y +(1-\tau) (y+\alp)|) \cdot \alp \, d\tau} \\
        &\quad + \Abs{\int_0^1 \nb_y k^{Lin}_{s} (|\tau y +(1-\tau) (y-\alp)|) \cdot \alp \, d\tau}\\
        &\lesssim s|\alp|.
        \end{split}
    \end{equation*}
We now consider the case when $|\alp|\le \frac{|y|}{2}$. We decompose
\begin{equation*}
    \begin{split}
        \Abs{\left(\dlt_{\alp}+\dlt_{-\alp}\right)  k^{Lin}_{s}(|y|)}
        &\le s\Abs{2\sqrt{|y|^2+1}-\sqrt{|y+\alp|^2+1}-\sqrt{|y-\alp|^2+1}} \\
        &\quad + s\Abs{2\log\left({\sqrt{|y|^2+1}}+1\right)-\log\left({\sqrt{|y+\alp|^2+1}+1}\right)-\log\left({\sqrt{|y-\alp|^2+1}+1}\right)}\\
        &= \mrI + \mrII.
        \end{split}
    \end{equation*}
    For $\mrI$, we compute
    \begin{equation*}
        \begin{split}
            \mrI
            &=\frac{2s\Abs{|y|^2+1-|\alp|^2-\sqrt{|y+\alp|^2+1}\sqrt{|y-\alp|^2+1}}}{2\sqrt{|y|^2+1}+\sqrt{|y+\alp|^2+1}+\sqrt{|y-\alp|^2+1}}\\
            & =\frac{8s\Abs{|y|^2|\alp|^2-(y\cdot\alp)^2+|\alp|^2}}{\left(2\sqrt{|y|^2+1}+\sqrt{|y+\alp|^2+1}+\sqrt{|y-\alp|^2+1}\right)\Abs{|y|^2+1-|\alp|^2+\sqrt{|y+\alp|^2+1}\sqrt{|y-\alp|^2+1}}}.
        \end{split}
    \end{equation*}
   Note that
\begin{equation*}
    \Abs{|y|^2|\alp|^2-(y\cdot\alp)^2+|\alp|^2}\le 2|\alp|^2\left(|y|^2+1\right),
\end{equation*}
    \begin{equation*}
        2\sqrt{|y|^2+1}+\sqrt{|y+\alp|^2+1}+\sqrt{|y-\alp|^2+1}\ge \sqrt{|y|^2+|\alp|^2 +1},
    \end{equation*}
    and for $|\alp|\le \frac{|y|}{2}$,
    \begin{equation*}
        \Abs{|y|^2+1-|\alp|^2+\sqrt{|y+\alp|^2+1}\sqrt{|y-\alp|^2+1}}\ge \Abs{|y|^2+1-|\alp|^2} \ge \frac{3}{4}\left(|y|^2+1\right).
    \end{equation*}
    Combining all, we arrive at
    \begin{equation*}
            \mrI \lesssim \frac{s|\alp|^2}{\sqrt{|y|^2+|\alp|^2 +1}}.
    \end{equation*}
    To estimate $\mrII$, we observe
    \begin{equation}\label{est: for log3-1}
        \frac{1}{10} \le \frac{\left(\sqrt{|y|^2+1}+1\right)^2}{\left(\sqrt{|y+\alp|^2+1}+1\right)\left(\sqrt{|y-\alp|^2+1}+1\right)} \le 10
    \end{equation}
    for $|\alp|\le \frac{|y|}{2}$. Indeed, $|\alp|\le \frac{|y|}{2}$ implies
    \begin{equation*}
        \left(\sqrt{|y+\alp|^2+1}+1\right)\left(\sqrt{|y-\alp|^2+1}+1\right)\le \left(\sqrt{(|y|+|\alp|)^2+1}+1\right)^2 \le 10\left(\sqrt{|y|^2+1}+1\right)^2
    \end{equation*}
    and
     \begin{equation*}
        10\left(\sqrt{|y+\alp|^2+1}+1\right)\left(\sqrt{|y-\alp|^2+1}+1\right)\ge 10\left(\sqrt{(|y|-|\alp|)^2+1}+1\right)^2 \ge \left(\sqrt{|y|^2+1}+1\right)^2.
    \end{equation*}
    Hence \eqref{est: for log3-1} provide us with
    \begin{equation*}
        \mrII=s\Abs{\log \left( \frac{\left(\sqrt{|y|^2+1}+1\right)^2}{\left(\sqrt{|y+\alp|^2+1}+1\right)\left(\sqrt{|y-\alp|^2+1}+1\right)}\right)}\lesssim s.
    \end{equation*}
\end{proof}

\subsubsection{Inhomogeneous linearized equation}\label{subsubsec: inhomo}
To begin with, we define the linear operator $\calL$ by
\begin{equation}\label{linear operateor L}
    \calL: f(r)\mapsto \int_{\bbR_\ge0} 1_{\left\{s\le r\right\}}\frac{s^2 e^{s-r}}{r^3}  f(s)\,ds
\end{equation}
for a locally integrable function $f:\bbR_{\ge0}\rightarrow\bbR$.
Then $\calL$ has the following property.
\begin{lemma}\label{lem: linear operator L estimate}
    Given $t_1\in[1,2)$, there holds
    \begin{equation}\label{est:2 for lemmm-1}
        \nrmb{\calL[f(r)]}_{L^2(\bbR_{\ge0};\;r^{1+2t}dr)}\lesssim \nrmb{f(r)}_{L^2(\bbR_{\ge0};\;r^{1+2t_1}dr)}
    \end{equation}
    for every $t\in(t_1, t_1+1]$.
\end{lemma}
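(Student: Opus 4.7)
The plan is to recast the weighted bound as an unweighted $L^{2}\to L^{2}$ estimate for an integral operator and then apply Schur's test. Introducing $g(r):=r^{(1+2t_{1})/2}f(r)$ and $h(r):=r^{(1+2t)/2}\calL[f](r)$ absorbs the weights into the functions, and a direct substitution identifies $h=Tg$ for the integral operator on $L^{2}(\bbR_{\ge 0};dr)$ with kernel
\begin{equation*}
    \tilde K(r,s):=1_{\{s\le r\}}\,r^{(2t-5)/2}\,s^{(3-2t_{1})/2}\,e^{s-r}.
\end{equation*}
Thus \eqref{est:2 for lemmm-1} is equivalent to boundedness of $T$ on $L^{2}(\bbR_{\ge 0};dr)$.

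Apply Schur's test with the multiplier $\omega(r):=r^{a}$, choosing $a$ in the half-open interval
\begin{equation*}
    I:=\Bigl(\max\bigl(t_{1}-\tfrac{5}{2},\,\tfrac{3}{2}-t\bigr),\,\tfrac{3}{2}-t_{1}\Bigr].
\end{equation*}
This $I$ is nonempty exactly under the hypotheses of the lemma: $t_{1}-5/2<3/2-t_{1}$ is equivalent to $t_{1}<2$, and $3/2-t<3/2-t_{1}$ is equivalent to $t>t_{1}$. Both Schur conditions reduce to quantitative bounds on incomplete Gamma-type integrals. The row sum factors as $r^{(2t-5)/2}e^{-r}\int_{0}^{r}s^{c}e^{s}\,ds$ with $c:=(3-2t_{1})/2+a>-1$ (by the choice of $a$); elementary calculus gives $\int_{0}^{r}s^{c}e^{s}\,ds\lesssim r^{c}e^{r}$ for $r\ge 1$ and $\lesssim r^{c+1}$ for $r\le 1$, producing row-sum bounds $\lesssim r^{t-t_{1}-1+a}$ (large $r$) and $\lesssim r^{t-t_{1}+a}$ (small $r$), both dominated by $r^{a}=\omega(r)$ thanks to $t_{1}<t\le t_{1}+1$. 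The column sum factors as $s^{(3-2t_{1})/2}e^{s}\int_{s}^{\infty}r^{d}e^{-r}\,dr$ with $d:=(2t-5)/2+a>-1$; the inner integral is uniformly bounded for $s\le 1$ and behaves like $s^{d}e^{-s}$ for $s\ge 1$, yielding column-sum bounds $\lesssim s^{(3-2t_{1})/2}$ (small $s$) and $\lesssim s^{t-t_{1}-1+a}$ (large $s$), both $\lesssim s^{a}=\omega(s)$ from $a\le 3/2-t_{1}$ and $t\le t_{1}+1$ respectively. Schur's test then delivers \eqref{est:2 for lemmm-1} with an implicit constant depending only on $t$ and $t_{1}$.

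The main obstacle is the simultaneous choice of the Schur exponent $a$, which is pulled in incompatible directions by three pressures: integrability of $s^{c}e^{s}$ at $s=0$ forces $a>t_{1}-5/2$; integrability of $r^{d}e^{-r}$ at $r=0$ forces $a>3/2-t$; and matching $\omega$ at small $s$ in the column sum forces $a\le 3/2-t_{1}$. The hypotheses $t_{1}\in[1,2)$ and $t\in(t_{1},t_{1}+1]$ are precisely what keep $I$ nonempty, and the endpoint $t=t_{1}+1$ is admissible because it corresponds to an equality at the level of scaling in the large-argument estimates, not to divergence.
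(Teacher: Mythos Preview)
Your Schur-test argument is correct and gives a genuinely different route from the paper's proof. The paper proceeds by direct computation: it reparametrizes (shifting $t_1\mapsto t_0=t_1-1$), splits the range of $r$ into $\{r<2\}$ and $\{r\ge 2\}$ (and further the latter into $s\in[0,1]$ and $s\in[1,r]$), and in each regime applies Cauchy--Schwarz in the $s$-integral with a regime-specific power weight, followed by Fubini; along the way it records the elementary bounds $\int_1^r s^{-t_0+1}e^{s}\,ds\le r^{-t_0+1}e^{r}$ and $\int_s^\infty r^{t_0}e^{-r}\,dr\lesssim s^{t_0}e^{-s}$, which are the same incomplete-Gamma facts you invoke. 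Your approach absorbs all of this into a single Schur-test verification with one power weight $\omega(r)=r^{a}$, the admissible range of $a$ encoding exactly the hypotheses $t_1<2$ and $t_1<t\le t_1+1$. The upshot is the same estimate with the same dependence of constants on $(t,t_1)$; your presentation is more uniform and makes the role of each hypothesis transparent, while the paper's case-by-case treatment is slightly more explicit about constants but essentially amounts to a handmade Schur test with piecewise weights.
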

\begin{remark}\label{remark for inverse operator of L}
    For a radially symmetric function $h(|y|)$ and the operator $\calL$ defined by \eqref{linear operateor L}, we consider the linear operator $\widehat{\calL}$ given by
    \begin{equation}\label{linear operator L hat}
    \widehat{\calL}[h(|\cdot|)]:=\left(\calL\left[\widehat{h(|\cdot|)}\right]\right)^\lor.
\end{equation}
Then for any $t_1\in[1,2)$, \eqref{est:2 for lemmm-1} implies that
\begin{equation}\label{est:2 for lemmm}
        \nrmb{\widehat{\calL}(h(|\cdot|))}_{\dot{H}^{t}\cap\dot{H}^{t_1+1}(\bbR^2)}\lesssim \nrmb{h(|\cdot|)}_{\dot{H}^{t_1}(\bbR^2)}
    \end{equation}
    for every $t\in(t_1, t_1+1]$.
\end{remark}
\begin{proof}
Let $t_0\in[0,1)$ be given. We aim to show that
\begin{equation*}
    \nrmb{r^{\frac32+t}\calL[f(r)]}_{L^2(\bbR_{\ge0})} \lesssim \nrmb{r^{\frac32+t_0}f(r)}_{L^2(\bbR_{\ge0})}
\end{equation*}
for every $t\in(t_0, t_0+1]$.
    Let us write $t=t_0+\tilde{t}$ for $\tilde{t}\in(0,1]$. 
    First of all, we consider the case when $0\le r<2$.
    Employing \eqref{linear operateor L} and the H\"older's inequality, we have
    \begin{equation*}
    \begin{split}
        \Abs{1_{\left\{0\le r <2\right\}}r^{\frac32+t}\calL[f(r)]}^2 
        &=1_{\left\{0\le r <2\right\}}\left(\int_{s=0}^{r} r^{-\frac32 +t_0+\tilde{t}}s^2 e^{s-r}f(s)\,ds\right)^2\\
        &\le 1_{\left\{0\le r <2\right\}}\left(\int_{s=0}^{r} r^{-\frac32 +t_0+\tilde{t}}s^2 f(s)\,ds\right)^2\\
        &\le 1_{\left\{0\le r <2\right\}}\left(\int_{s=0}^{r} s^{-2t_0+1} \,ds\right)\left(\int_{s=0}^{r} r^{-3+2t_0+2\tilde{t}} s^{2t_0+3} |f(s)|^2\,ds\right) \\
         &\approx  1_{\left\{0\le r <2\right\}}r^{-1+2\tilde{t}}\int_{\bbR} 1_{\left\{0\le s \le r\right\}}s^{2t_0+3}|f(s)|^2\,ds.
    \end{split}
    \end{equation*}
    Hence using Fubini's theorem, we obtain
    \begin{equation*}
        \nrmb{1_{\left\{0\le r <2\right\}}r^{\frac32+t}\calL[f(r)]}_{L^2(\bbR_{\ge0})}^2 \lesssim \int_{ \bbR_{\ge0}}  \left(\int_{\bbR} 1_{\left\{0 \le r<2\right\}}r^{-1+2\tilde{t}} dr\right)s^{2t_0+3}|f(s)|^2\,ds \lesssim \nrmb{s^{\frac32+t_0}f(s)}_{L^2(\bbR_{\ge0})},
    \end{equation*}
    where we used $\tilde{t}>0$ on the last line.
    
    Next, we consider the case when $r\ge 2$. We  decompose
    \begin{equation*}
    \begin{split}
        \Abs{1_{\left\{r \ge 2\right\}}r^{\frac32+t}\calL[f(r)]}^2 
        &\lesssim 1_{\left\{r\ge 2\right\}}\left(\int_{s=0}^{1} r^{-\frac32 +t_0+\tilde{t}}s^2 e^{s-r}f(s)\,ds\right)^2+1_{\left\{r\ge 2\right\}}\left(\int_{s=1}^{r} r^{-\frac32 +t_0+\tilde{t}}s^2 e^{s-r}f(s)\,ds\right)^2\\
        &=\mrI+\mrII,
    \end{split}
    \end{equation*}
    which implies
    \begin{equation*}
        \nrmb{1_{\left\{r \ge 2\right\}}r^{\frac32+t}\calL[f(r)]}_{L^2(\bbR_{\ge0})}^2\lesssim \nrmb{\mrI}_{L^1(\bbR_{\ge0})}+\nrmb{\mrII}_{L^1(\bbR_{\ge0})}.
    \end{equation*}
For $\nrmb{\mrI}_{L^1(\bbR_{\ge0})}$, we use $t_0\in[0,1)$, $\tilde{t}\in(0,1]$, and the H\"older's inequality to obtain
\begin{equation*}
    \begin{split}
        \mrI&\lesssim 1_{\left\{r\ge 2\right\}} re^{-2r} \left(\int_{s=0}^{1} s^{-2t_0+1} ds\right) \left(\int_{s=0}^{1} s^{2t_0+3} |f(s)|^2 ds\right)\\
        &\lesssim 1_{\left\{r\ge 2\right\}} re^{-2r} \int_{\bbR_{\ge0}} s^{2t_0+3} |f(s)|^2 ds,
    \end{split}
\end{equation*}
which yields
\begin{equation*}
    \nrmb{\mrI}_{L^1(\bbR_{\ge0})}\lesssim \nrmb{s^{\frac32+t_0}f(s)}_{L^2(\bbR_{\ge0})}.
\end{equation*}
For $\nrmb{\mrII}_{L^1(\bbR_{\ge0})}$, we first observe that 
    \begin{equation}\label{replacement of gamma function}
        \int_{s=1}^{r} s^{-t_0+1}e^{s-r}\,ds
        \le r^{-t_0+1}
    \end{equation}
    for $0\le t_0<1$ and $r\ge2$. Indeed, integrating the both side of the following inequality:
    \begin{equation*}
        s^{-t_0+1}e^{s}\le \left((-t_0+1)s^{-t_0}+s^{-t_0+1}\right)e^{s}=\frac{d}{ds}\left(s^{-t_0+1}e^{s}\right)
    \end{equation*}
    from $1$ to $r$ with respect to $s$,
    we have
     \begin{equation*}
        \int_{s=1}^{r} s^{-t_0+1}e^{s}\,ds
        \le r^{-t_0+1}e^{r}-e\le r^{-t_0+1}e^{r}.
    \end{equation*}
    Employing the H\"older's inequality and \eqref{replacement of gamma function}, we have
    \begin{equation*}
    \begin{split}
        \mrII 
        &\le 1_{\left\{r\ge 2\right\}}\left(\int_{s=1}^{r} s^{-t_0+1} e^{s-r}\,ds\right)\left(\int_{s=1}^{r} r^{-3+2t_0+2\tilde{t}}  e^{s-r}s^{t_0+3}|f(s)|^2\,ds\right) \\
        &\le 1_{\left\{r\ge 2\right\}}r^{-2+t_0+2\tilde{t}}e^{-r} \int_{\bbR} 1_{\left\{1\le s\le r\right\}} e^{s}s^{t_0+3}|f(s)|^2\,ds.
    \end{split}
    \end{equation*}
    Hence using Fubini's theorem, we obtain
    \begin{equation*}
    \begin{split}
        \nrmb{\mrII}_{L^1(\bbR_{\ge0})} &\lesssim \int_{s=1}^{\infty}  \left(\int_{ r=s}^{\infty} 1_{\left\{r\ge 2\right\}}r^{-2+t_0+2\tilde{t}}e^{-r} dr\right)e^{s}s^{t_0+3}|f(s)|^2\,ds \\
         &\le \int_{s=1}^{\infty}\left(\int_{ r=s}^{\infty}r^{t_0}e^{-r} dr\right) e^{s}s^{t_0+3}|f(s)|^2\,ds,
        \end{split}
    \end{equation*}
    where we used $\tilde{t}\in(0,1]$ and $r\ge2$ on the last line.
We observe that
\begin{equation}\label{replacement of gamma function-1}
    \int_{ r=s}^{\infty}r^{t_0}e^{-r} dr\lesssim s^{t_0}e^{-s} 
\end{equation}
for $0\le t_0<1$ and $s\ge1$.
Indeed, noticing the inequality
\begin{equation*}
    r^{t_0}e^{-r}\le \frac{1}{1-t_0}\left(-t_0r^{t_0-1}+r^{t_0}\right)e^{-r}=-\frac{1}{1-t_0}\frac{d}{dr}\left(r^{t_0}e^{-r}\right),
\end{equation*}
for $0\le t_0<1$ and  $r\ge1$,
 we integrate the both side of this 
from $s$ to $\infty$ with respect to $r$,
we have
\begin{equation*}
    \int_{ r=s}^{\infty}r^{t_0}e^{-r} dr\le -\frac{1}{1-t_0}\int_{ r=s}^{\infty}\frac{d}{dr}\left(r^{t_0}e^{-r}\right)\,ds\\
    = \frac{1}{1-t_0}s^{t_0}e^{-s}.
\end{equation*}
Thus using \eqref{replacement of gamma function-1}, we estimate
\begin{equation*}
        \nrmb{\mrII}_{L^1(\bbR_{\ge0})} \lesssim  \nrmb{s^{\frac32+t_0}f(s)}_{L^2(\bbR_{\ge0})}.
    \end{equation*}
\end{proof}

We are now ready to explore the inhomogeneous counterpart of \eqref{linearized eq}:
\begin{equation}\label{linearized eq-1}
    (\Lambda-y\cdot\nb_y+1) k(|y|) =h(|y|).
\end{equation}
Our goal is to solve the above equation, using the function $k^{Lin}_{s}(|y|)$ from \eqref{klin}.
\begin{lemma}\label{lem: k=klin+L(h)}
Let $t_1\in(3/2,2)$ and let $h(|y|)$ be a function satisfying $\nrmb{h(|\cdot|)}_{\dot{H}^{t_1}(\bbR^2)}<\infty$. Suppose that there holds 
\begin{equation*}
    \calJ[\phi](|y|)=\widehat{\calL}[h(|y|)] +C_1
\end{equation*}
for a constant $C_1$ and a function $\phi\in \dot{H}^{t_1-1}(\bbR^2)$, where $\calJ$ and $\widehat{L}$ are the operators given in \eqref{eq: inverse laplacian-1} and \eqref{linear operator L hat}, respectively. Then there exists a constant $C_2$ such that the function $k(|y|)$ defined by 
\begin{equation}\label{k=klin+L(h)}
    k(|y|):=k^{Lin}_{s}(|y|)+ \calJ[\phi](|y|)+C_2
\end{equation}
is a solution of \eqref{linearized eq-1}.
\end{lemma}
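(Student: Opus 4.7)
The plan is to verify directly, via a Fourier-side computation, that $\widehat{\calL}$ is a right inverse of $\Lmb - y\cdot\nb_y + 1$ on radial functions, and then combine this with Lemma \ref{lem: klin-1} to choose $C_2$ absorbing the leftover constants.

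First, I would compute how the operator $\Lmb - y\cdot\nb_y + 1$ acts, in Fourier variables, on a radial function $u(|y|)$. Using $\widehat{y_j \rd_{y_j} u} = -\hat{u} - \xi_j \rd_{\xi_j}\hat{u}$ together with the radiality of $\hat{u}(\xi) = U(|\xi|)$, one finds
\[
\widehat{(\Lmb - y\cdot\nb_y + 1)u}(\xi) = (|\xi|+3)U(|\xi|) + |\xi|U'(|\xi|).
\]
Writing $\hat{h}(\xi) = H(|\xi|)$ and letting $r = |\xi|$, the equation $(\Lmb - y\cdot\nb_y+1)u = h$ on radial $u$ becomes the first-order ODE $rU'+(r+3)U = H$. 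Multiplying by the integrating factor $r^2 e^r$ gives $(r^3 e^r U)' = r^2 e^r H$, and integrating from $0$ with the natural vanishing boundary term yields precisely $U(r) = \calL[H](r)$. Thus $\widehat{\calL}[h]$ satisfies $(\Lmb - y\cdot\nb_y+1)\widehat{\calL}[h] = h$, where Lemma \ref{lem: linear operator L estimate} and Remark \ref{remark for inverse operator of L} ensure $\widehat{\calL}[h] \in \dot{H}^t \cap \dot{H}^{t_1+1}$ for $t \in (t_1, t_1+1]$, supplying the regularity needed to justify these manipulations.

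With the inversion in hand, the rest is straightforward. By Lemma \ref{lem: klin-1} there is a constant $C$ with $(\Lmb - y\cdot\nb_y+1)(k^{Lin}_s + C) = 0$, and since $\Lmb$ and $y\cdot\nb_y$ both annihilate constants, this forces $(\Lmb - y\cdot\nb_y+1)k^{Lin}_s = -C$; the same reasoning gives $(\Lmb - y\cdot\nb_y+1)C_1 = C_1$. Using the hypothesis $\calJ[\phi] = \widehat{\calL}[h] + C_1$ and applying the operator to $k = k^{Lin}_s + \calJ[\phi] + C_2$ then yields
\[
(\Lmb - y\cdot\nb_y+1)k = -C + h + C_1 + C_2,
\]
so the choice $C_2 := C - C_1$ produces the desired identity $(\Lmb - y\cdot\nb_y+1)k = h$.

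The main obstacle I anticipate is the rigorous justification of the Fourier-side inversion, since $\calL$ has an integral kernel with a singular factor $r^{-3}$ at the origin and $\widehat{\calL}[h]$ is defined purely through its Fourier transform. One must confirm that the boundary term at $r = 0$ in the integrating-factor argument truly vanishes and that the identity $(\Lmb - y\cdot\nb_y+1)\widehat{\calL}[h] = h$ holds in a sense compatible with every ingredient living in the appropriate homogeneous Sobolev spaces. The weighted $L^2$ estimate of Lemma \ref{lem: linear operator L estimate} together with the regularity of $\calJ[\phi]$ granted by Proposition \ref{prop: poisson radial} should provide the control needed to make each step rigorous.
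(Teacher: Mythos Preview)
Your Fourier-side computation is the same one the paper performs, and your final bookkeeping with constants is correct. The difference lies precisely in how the origin singularity is handled, and here the paper does something more than what you sketch.

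You claim the identity $(\Lmb - y\cdot\nb_y+1)\widehat{\calL}[h] = h$ directly. But $\widehat{\calL}[h]$ is only known to lie in $\dot{H}^t$ for $t\in(t_1,t_1+1]\subset(3/2,3)$, so as a tempered distribution it is at best determined modulo constants; correspondingly $\calL[\hat h](|\xi|)$ can be genuinely singular at $\xi=0$, and the distributional meaning of $|\xi|\,\partial_{|\xi|}\calL[\hat h]$ at the origin is exactly where a Dirac mass (hence a constant in physical space) could appear. The ODE identity you derive is valid pointwise for $|\xi|>0$, but that alone does not rule out such a contribution. The weighted $L^2$ bound of Lemma~\ref{lem: linear operator L estimate} controls decay and integrability, not this distributional issue.

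The paper's resolution is to avoid ever asserting the identity for $\widehat{\calL}[h]$ itself. Instead it works with the concrete representative $\calJ[\phi]$ supplied by the hypothesis, applies $\Dlt$ first (equivalently multiplies by $-|\xi|^2$, which kills any origin singularity), verifies on the Fourier side that
\[
\Dlt\bigl(-h + (\Lmb - y\cdot\nb_y+1)\calJ[\phi]\bigr)=0,
\]
checks via Remark~\ref{rmk: radial poisson} and Sobolev embedding that $-h + (\Lmb - y\cdot\nb_y+1)\calJ[\phi]\in \dot W^{1,2/(2-t_1)}_{loc}$, and then invokes the Liouville-type uniqueness in Proposition~\ref{prop: poisson radial} to conclude this harmonic radial function is a constant $C_3$. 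This is exactly the missing step that closes the gap you flagged; your outline would become a complete proof once you insert this $\Dlt$-then-Liouville argument in place of the direct inversion claim.
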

\begin{proof}
Note that  $\widehat{\calL}[h(|y|)]$ is well-defined by Remark \ref{remark for inverse operator of L}. To prove that $k(|y|)$ defined by \eqref{k=klin+L(h)} is a solution of \eqref{linearized eq-1}, we shall proceed similarly to the proof of Lemma \ref{lem: klin-1}. Specifically we show that \begin{equation}\label{WLp est for Lh}
    -h(|y|)+(\Lambda-y\cdot\nb_y+1) \calJ[\phi](|y|)\in \dot{W}^{1,\frac{2}{2-t_1}}(\bbR^2),
\end{equation}
and
\begin{equation}\label{another inhomo linearized equation acted laplacain}
    \Dlt \left(-h(|y|)+(\Lambda-y\cdot\nb_y+1) \calJ[\phi](|y|)\right) =0.
\end{equation}
Then, both \eqref{WLp est for Lh} and \eqref{another inhomo linearized equation acted laplacain} yield a constant $C_3$ such that
\begin{equation}\label{eq: for calL(h)}
        (\Lambda-y\cdot\nb_y+1) \calJ[\phi](|y|) =h(|y|)+C_3
    \end{equation}
by \eqref{uniqueness of u} in  Proposition \ref{prop: poisson radial}. But since Lemma \ref{lem: klin-1} gives a constant $C$ satisfying
\begin{equation*}
        (\Lambda-y\cdot\nb_y+1) \left(k^{Lin}_{s}(|y|)+C\right) =0,
    \end{equation*}
    we can compute
    \begin{equation*}
    \begin{split}
        &(\Lambda-y\cdot\nb_y+1) \left(k^{Lin}_{s}(|y|)+\calJ[\phi](|y|)+C-C_3\right) \\
        &\quad=-C_3+(\Lambda-y\cdot\nb_y+1) \left(k^{Lin}_{s}(|y|)+C\right)+(\Lambda-y\cdot\nb_y+1) \calJ[\phi](|y|)=h(|y|),
        \end{split}
    \end{equation*}
in other words, $k^{Lin}_{s}(|y|)+\calJ[\phi](|y|)+C-C_3$ is a solution of \eqref{linearized eq-1}.

    The claim \eqref{WLp est for Lh} follows from Remark \ref{rmk: radial poisson} and the Sobolev embedding \eqref{sobolev embedding-1}:
    \begin{equation*}
        \nrmb{\nb h(|\cdot|)}_{L^{\frac{2}{2-t_1}}(\bbR^2)}\lesssim \nrmb{\nb h(|\cdot|)}_{\dot{H}^{t_1-1}(\bbR^2)}\approx \nrmb{h(|\cdot|)}_{\dot{H}^{t_1}(\bbR^2)}.
    \end{equation*}
    For \eqref{another inhomo linearized equation acted laplacain}, we note that
    \begin{equation*}
    \Dlt \left(-h(|y|)+(\Lambda-y\cdot\nb_y+1) \calJ[\phi](|y|)\right) = \Dlt \left(-h(|y|)+(\Lambda-y\cdot\nb_y+1) \widehat{\calL}[h(|y|)]\right).
\end{equation*}
    Denoting $\widehat{h(|\cdot|)}(\xi)=\widehat{h}(|\xi|)$ and computing similarly to \eqref{fourier of linearized eq}, we can observe that 
    \begin{equation*}
      \left(\Dlt\left(-h(|y|)+(\Lambda-y\cdot\nb_y+1) \widehat{\calL}[h(|y|)]\right)\right)^{\wedge}(\xi)=|\xi|^2\widehat{h}(|\xi|)-e^{-|\xi|}\partial_{|\xi|}\left( |\xi|^3e^{|\xi|}\calL\left[\widehat{h}(|\xi|)\right]\right).
    \end{equation*}
    Recalling \eqref{linear operateor L}, we obtain
    \begin{equation*}
    \begin{split}
        &|\xi|^2\widehat{h}(|\xi|)-e^{-|\xi|}\partial_{|\xi|}\left( |\xi|^3e^{|\xi|}\calL\left[\widehat{h}(|\xi|)\right]\right)\\
        &\qquad=|\xi|^2\widehat{h}(|\xi|)-e^{-|\xi|}\partial_{|\xi|}\left(|\xi|^3e^{|\xi|} \int_{|\eta|=0}^{|\xi|} \frac{|\eta|^2 e^{|\eta|-|\xi|}}{|\xi|^3} \widehat{h}(|\eta|)\,d|\eta|\right)=0,
        \end{split}
    \end{equation*}
    which gives \eqref{another inhomo linearized equation acted laplacain}.
\end{proof}

\subsection{Fixed point formulation}\label{subsec: fixed point formula}
In this subsection, we introduce a fixed point formulation and explain how we will make use of it to obtain Theorem \ref{thm A}. Let $\calT$,$\calJ$, and $\widehat{\calL}$ be operators defined in \eqref{def: nonlinear operator}, \eqref{eq: inverse laplacian-1}, and \eqref{linear operator L hat}, respectively, and let $k^{Lin}_{s}$ be the function given in \eqref{klin}.
Hereafter, we denote
\begin{equation}\label{def: subTs}
\begin{split}
    \calT_1[g(|y|)] &:=\left.\frac{d}{d\tau}\left[\calT\left(k^{Lin}_{s}(|y|) + \tau g(|y|)\right)\right]\right|_{\tau=0}=d\calT\left[k^{Lin}_{s}(|y|)\right]g(|y|), \\
    \calT_{\ge 2}[g(|y|)] &:= \calT\left[k^{Lin}_{s}(|y|) + g(|y|)\right]-\calT\left[k^{Lin}_{s}(|y|) \right]-\calT_1\left[g(|y|)\right].
\end{split}
\end{equation}
Our fixed point formulation is
\begin{equation}\label{eq: g-fixed}
   \tilde{g}(|y|)=\Phi(|y|)+\calA[\tilde{g}(|y|)]+\calN[\tilde{g}(|y|)]
\end{equation}
with forcing term
\begin{equation}\label{eq: g-fixed-forcing}
    \Phi(|y|):=\left(\Dlt \circ \widehat{\calL}\circ \calT\right)\left[k^{Lin}_{s}(|y|) \right],
\end{equation}
the linear operator
\begin{equation}\label{eq: g-fixed-linear}
    \calA[\tilde{g}(|y|)]:=\left(\Dlt\circ\widehat{\calL}\circ  \calT_1\circ\calJ\right)\left[\tilde{g}(|y|)\right],
\end{equation}
and the nonlinear operator
\begin{equation}\label{eq: g-fixed-nonlinear}
    \calN[\tilde{g}(|y|)]:=\left(\Dlt\circ\widehat{\calL}\circ  \calT_{\ge2}\circ \calJ\right)\left[\tilde{g}(|y|)\right],
\end{equation}
where $\Dlt$ denotes the Laplacian.
Here are steps detailing how we reach from \eqref{eq: g-fixed} to Theorem \ref{thm A}.

\medskip

\textbf{Step 1.} We first estimate the forcing term $\Phi(|\cdot|)$.
\begin{lemma}\label{lem: l^2 estimate for T(klin)}
    Let $s\in(0,1)$, $t_1\in (3/2,2)$ be given. Then
    \begin{equation*}
         \nrmb{\calT\left[k^{Lin}_{s}(|\cdot|) \right]}_{\dot{H}^{t_1}} \lesssim s^3.
    \end{equation*}
\end{lemma}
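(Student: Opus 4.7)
The plan is to exploit the fact that $\calT$ is effectively \emph{cubic} in $k^{Lin}_{s}$: the amplitude $(1+a^{2})^{-3/2}-1$ vanishes to second order at $a=0$, while $|\Delta_{\alp}k^{Lin}_{s}|\le\nrmb{\nb k^{Lin}_{s}}_{L^{\infty}}\lesssim s$ uniformly by \eqref{klin nabla}. This heuristic explains the cubic scaling in $s$; the task is to extract it quantitatively in $\dot H^{t_{1}}$.

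First I would perform the Taylor expansion
\begin{equation*}
(1+a^{2})^{-3/2}-1 \;=\; -\tfrac{3}{2}\,a^{2} \,+\, a^{4}R(a),
\end{equation*}
with $R$ smooth and bounded on $[-1,1]$. Setting $a=\Delta_{\alp}k^{Lin}_{s}$ and using $s<1$ so that $|a|\le 1$, this induces a splitting
$\calT[k^{Lin}_{s}]=\calT^{(3)}[k^{Lin}_{s}]+\calT^{\mathrm{rem}}[k^{Lin}_{s}]$, where
\begin{equation*}
\calT^{(3)}[f]:=-\tfrac{3}{4\pi}\int_{\bbR^{2}}\alp\cdot\nb_{y}\Delta_{\alp}f\,(\Delta_{\alp}f)^{2}\,\frac{d\alp}{|\alp|^{2}}
\end{equation*}
is genuinely cubic, and $\calT^{\mathrm{rem}}$ carries an additional factor $(\Delta_{\alp}k^{Lin}_{s})^{4}$. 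For the remainder, I would place two of the extra $\Delta_{\alp}k^{Lin}_{s}$ factors in $L^{\infty}$ to save $s^{2}$ and apply the bilinear $\dot H^{t_{1}}$ estimate for $\calT$ (Proposition~\ref{proposition: key}) to the rest; this produces at least one further power of $s$ through $\nrmb{\nb k^{Lin}_{s}}_{L^{\infty}}\lesssim s$, and gives $\nrmb{\calT^{\mathrm{rem}}[k^{Lin}_{s}]}_{\dot H^{t_{1}}}\lesssim s^{5}\le s^{3}$.

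The main contribution is $\calT^{(3)}[k^{Lin}_{s}]$. To estimate its $\dot H^{t_{1}}$ norm I would apply $\Lmb^{t_{1}}$, symmetrize the principal-value integral in $\alp\leftrightarrow -\alp$, and use the commutator identity \eqref{commutator identity for fractional laplacian} to commute $\Lmb^{t_{1}}$ past the cubic integrand, obtaining pieces analogous to the $T^{t_{1},j}$ of the introductory scheme. The three copies of $k^{Lin}_{s}$ are then allocated as follows: one factor is controlled by the uniform bound $\nrmb{\nb k^{Lin}_{s}}_{L^{\infty}}+\nrmb{\nb^{2}k^{Lin}_{s}}_{L^{\infty}}\lesssim s$ coming from \eqref{klin nabla}--\eqref{klin nabla^{2}}, giving one factor of $s$; a second Taylor-type factor is handled by \eqref{est: klin-1'} together with the integral identity \eqref{eq: prelemma-2}, yielding another factor of $s$; and the symmetrized difference of squares is controlled by \eqref{est: klin-1}, which provides both the remaining $s$ and the integrability near $\alp=0$ and decay at infinity needed to close the $L^{2}$ bound. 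Together these three allocations produce the desired $s^{3}$.

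The main obstacle is that a direct bilinear estimate of the form $\nrmb{\calT[f,f]}_{\dot H^{t_{1}}}\lesssim\nrmb{f}_{\dot H^{t_{1}+1}}^{2}$ would give only $s^{2}$ and, worse, would require $k^{Lin}_{s}\in\dot H^{t_{1}+1}$, which fails since $\nrmb{k^{Lin}_{s}}_{\dot H^{t}}$ is finite only for $t>2$. The real difficulty is therefore to extract a \emph{third} factor of $s$ while controlling an $L^{2}$-based fractional derivative at the critical scale $t_{1}\in(3/2,2)$, a range in which $k^{Lin}_{s}$ itself is not in $\dot H^{t_{1}}$. This forces the use of the sharp pointwise estimates \eqref{est: klin-1'} and \eqref{est: klin-1}, which are tailored to the explicit profile of $k^{Lin}_{s}$, rather than generic Sobolev norms.
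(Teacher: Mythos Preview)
Your ``main obstacle'' rests on a mistake: you assert that $k^{Lin}_{s}\notin\dot H^{t_{1}+1}$, but since $t_{1}\in(3/2,2)$ we have $t_{1}+1\in(5/2,3)$, which exceeds $2$, and \eqref{klin fourier} gives $\nrmb{k^{Lin}_{s}}_{\dot H^{t}}\approx s$ for every $t>2$. Once this is corrected, the lemma follows in one line from Proposition~\ref{proposition: key}, which you already cite: the bilinear estimate \eqref{key estimate-1} carries a built-in factor $s^{2}$ coming from the $k^{Lin}_{s}$ in the second slot, and setting $f=k^{Lin}_{s}$ contributes the third $s$ via $\nrmb{k^{Lin}_{s}}_{\dot H^{t^{**}}\cap\dot H^{t_{1}+1}}\approx s$ (both exponents exceed $2$). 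This is precisely the paper's argument (Remark~\ref{rmk: proposition-1}). Your Taylor expansion into $\calT^{(3)}+\calT^{\mathrm{rem}}$ is therefore unnecessary; the full nonlinear amplitude is handled at once by Proposition~\ref{proposition: key}.

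There is also a genuine gap in your handling of $\calT^{\mathrm{rem}}$. Proposition~\ref{proposition: key} as stated applies only to the bilinear form $\calT[f_{1},f_{2}]$ with the specific amplitude $(1+(\Delta_{\alp}f_{2})^{2})^{-3/2}-1$, not to an operator with amplitude $(\Delta_{\alp}k^{Lin}_{s})^{4}R(\Delta_{\alp}k^{Lin}_{s})$; and you cannot ``place two factors of $\Delta_{\alp}k^{Lin}_{s}$ in $L^{\infty}$'' prior to taking a $\dot H^{t_{1}}$ norm, since $\Lmb^{t_{1}}$ will differentiate those factors as well (this is exactly why the commutator term $T^{t_{1},7}$ appears in the paper's decomposition). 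One could of course re-run the entire $T^{t_{1},j}$ machinery for the remainder amplitude, but that simply reproduces the proof of Proposition~\ref{proposition: key}, which the paper performs once for the full nonlinearity rather than separately for its cubic and higher-order pieces.
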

The proof of Lemma \ref{lem: l^2 estimate for T(klin)} can be found in Section \ref{sec: forcing term}.
Furthermore, Lemma \ref{lem: l^2 estimate for T(klin)}, together with \eqref{est:2 for lemmm} and \eqref{eq: g-fixed-forcing}, gives us
\begin{lemma}\label{lem: estimate for phi}
    Let $s\in(0,1)$, $t_1\in (3/2,2)$ be given. Then for any  $t\in\left(t_1,t_1+1\right]$, there exists a constant $C_{\Phi}>0$ such that
    \begin{equation*}
        \nrmb{\Phi(|\cdot|)}_{\dot{H}^{t-2} \cap \dot{H}^{t_1-1}} \le C_{\Phi} s^3.
    \end{equation*}
\end{lemma}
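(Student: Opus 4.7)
The plan is to chain together the two ingredients that have just been established, unwinding the composition $\Dlt \circ \widehat{\calL} \circ \calT$ that defines $\Phi$ one operator at a time. The key observation is that the Laplacian $\Dlt$ lowers homogeneous Sobolev regularity by exactly $2$, since $\nrmb{\Dlt u}_{\dot{H}^{\sgm - 2}} \approx \nrmb{u}_{\dot{H}^{\sgm}}$ for any $\sgm \in \bbR$ by the Fourier definition of the norms. So if I can show $\widehat{\calL}\left[\calT[k^{Lin}_{s}(|\cdot|)]\right] \in \dot{H}^{t} \cap \dot{H}^{t_1+1}$ with norm $\lesssim s^3$, then applying $\Dlt$ puts the resulting function in $\dot{H}^{t-2} \cap \dot{H}^{t_1-1}$ with the same bound, which is exactly the conclusion.

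To prove the intermediate claim, I first invoke Lemma \ref{lem: l^2 estimate for T(klin)} to obtain
\begin{equation*}
  \nrmb{\calT\left[k^{Lin}_{s}(|\cdot|)\right]}_{\dot{H}^{t_1}} \lesssim s^{3},
\end{equation*}
where the hypothesis $t_1 \in (3/2,2)$ is precisely what that lemma requires. The function $\calT[k^{Lin}_s(|\cdot|)]$ is radially symmetric because $k^{Lin}_s$ is radial and $\calT$ is rotationally equivariant (the integrand in \eqref{def: nonlinear operator} transforms covariantly under rotations applied simultaneously to $y$ and the dummy variable $\alp$). This radial symmetry is what allows me to feed the output into $\widehat{\calL}$ and apply estimate \eqref{est:2 for lemmm} from Remark \ref{remark for inverse operator of L}, which yields
\begin{equation*}
  \nrmb{\widehat{\calL}\left[\calT[k^{Lin}_{s}(|\cdot|)]\right]}_{\dot{H}^{t} \cap \dot{H}^{t_1+1}} \lesssim \nrmb{\calT[k^{Lin}_{s}(|\cdot|)]}_{\dot{H}^{t_1}} \lesssim s^{3}
\end{equation*}
for every $t \in (t_1, t_1+1]$. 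The admissibility range for $t$ in Remark \ref{remark for inverse operator of L} uses $t_1 \in [1,2)$, which is satisfied since the hypothesis gives $t_1 \in (3/2,2) \subset [1,2)$.

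Finally, applying the Laplacian to both sides and using $\nrmb{\Dlt u}_{\dot{H}^{\sgm-2}} \aeq \nrmb{u}_{\dot{H}^{\sgm}}$ at the two exponents $\sgm = t$ and $\sgm = t_1+1$ gives
\begin{equation*}
  \nrmb{\Phi(|\cdot|)}_{\dot{H}^{t-2} \cap \dot{H}^{t_1-1}} = \nrmb{\Dlt \widehat{\calL}\left[\calT[k^{Lin}_{s}(|\cdot|)]\right]}_{\dot{H}^{t-2} \cap \dot{H}^{t_1-1}} \lesssim s^{3},
\end{equation*}
which is the desired bound with $C_\Phi$ absorbing the implicit constants. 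There is no real obstacle here beyond verifying that the ranges of exponents line up: the input exponent $t_1 \in (3/2,2)$ dictated by Lemma \ref{lem: l^2 estimate for T(klin)} is compatible with the requirement $t_1 \in [1,2)$ of Remark \ref{remark for inverse operator of L}, and the output exponents $t-2 < 0 < t_1 - 1$ are exactly the target regularities needed for the space $\dot{H}^{t-2} \cap \dot{H}^{t_1-1}$ that will host the fixed-point iteration. The substantive work is entirely contained in the earlier Lemma \ref{lem: l^2 estimate for T(klin)}; this lemma is just the bookkeeping that packages it together with the linear estimate on $\widehat{\calL}$.
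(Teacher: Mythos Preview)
Your proof is correct and follows exactly the paper's approach: the paper states Lemma~\ref{lem: estimate for phi} immediately after Lemma~\ref{lem: l^2 estimate for T(klin)} with the one-line justification ``Lemma~\ref{lem: l^2 estimate for T(klin)}, together with \eqref{est:2 for lemmm} and \eqref{eq: g-fixed-forcing}, gives us,'' which is precisely the chain $\Dlt \circ \widehat{\calL} \circ \calT$ you unwind. One minor inaccuracy in your concluding remarks: $t-2$ is not always negative over the full range $t \in (t_1, t_1+1]$ (it can reach $t_1-1 \in (1/2,1)$), but this side comment plays no role in the actual argument.
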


\textbf{Step 2.} Next we estimate the linear term $\calA[g(|\cdot|)]$.
\begin{lemma}\label{lem: l^2 estimate for T_1(g)}
   Let $s\in(0,1)$, $t_1\in (3/2,2)$ be given. Then there exists $t^{*}=t^*(t_1)\in (t_1,2)$ such that 
    \begin{equation*}
        \nrmb{\calT_1\left[g(|\cdot|) \right]}_{\dot{H}^{t_1}} \lesssim s^2\nrmb{g(|\cdot|)}_{\dot{H}^{t^*} \cap \dot{H}^{t_1+1}}.
    \end{equation*}
\end{lemma}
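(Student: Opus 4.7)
The plan is to first compute $\calT_1$ explicitly via the first variation of $\calT$ at $k^{Lin}_s$, then carry out a symmetrization-based decomposition analogous to the treatment of $\calQ$ described in the outline of the proof, and finally estimate each piece using the quantitative bounds on $k^{Lin}_s$ from Section \ref{subsubsec: homo}. Differentiating \eqref{def: nonlinear operator} at $\tau=0$ yields
\begin{equation*}
    \calT_1[g] = \frac{1}{2\pi}\int_{\bbR^2} \alp\cdot \nb_y \Dlt_\alp g \left( (1+(\Dlt_\alp k^{Lin}_s)^2)^{-\frac32} - 1 \right) \frac{d\alp}{|\alp|^2}
    - \frac{3}{2\pi}\int_{\bbR^2} \frac{\alp\cdot \nb_y \Dlt_\alp k^{Lin}_s \; \Dlt_\alp k^{Lin}_s\; \Dlt_\alp g}{(1+(\Dlt_\alp k^{Lin}_s)^2)^{\frac52}} \frac{d\alp}{|\alp|^2}.
\end{equation*}
A factor of order $s^2$ is visible in each term: the bracket $(1+a^2)^{-3/2}-1$ is $O(a^2)$ combined with $|\Dlt_\alp k^{Lin}_s| \lesssim s$ via \eqref{klin nabla} for the first term, and the product $\nb_y \Dlt_\alp k^{Lin}_s \cdot \Dlt_\alp k^{Lin}_s = O(s^2)$ for the second. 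This is the source of the $s^2$ prefactor in the target estimate, reflecting that $\calT_1$ is linear in $g$ but quadratic in $k^{Lin}_s$.

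Next I would apply $\Lmb^{t_1}$ to $\calT_1[g]$ and symmetrize under $\alp \mapsto -\alp$, following the construction of $\Lmb^{t_1}\calQ = \sum_{j=1}^{7} Q^{t_1,j}$ promised by Lemma \ref{lem: reformulation for linear A}. The piece analogous to $Q^{t_1,1}$ carries the quasi-linear structure, with $\Lmb^{t_1}$ falling directly on $g$ inside the integrand; the piece analogous to $Q^{t_1,2}$ contains the remainder $\Dlt_\alp k^{Lin}_s - \frac{\alp}{|\alp|}\cdot\nb k^{Lin}_s$, controlled near $\alp=0$ via the Morrey estimate \eqref{App of Morrey}; and the commutator-type pieces are rewritten via the explicit identity \eqref{commutator identity for fractional laplacian}. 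Each piece is then estimated in $L^2_y$ by pairing the $\alp$-integration identities \eqref{eq: prelemma-1} and \eqref{eq: prelemma-2} with $L^\infty$ bounds on $\nb k^{Lin}_s$ and $\nb^2 k^{Lin}_s$ coming from \eqref{klin nabla}--\eqref{klin nabla^2} and the weighted comparison estimates \eqref{est: klin-1'}, \eqref{est: klin-1}. The $\dot H^{t_1+1}$ component in the norm on $g$ is consumed precisely where the quasi-linear piece places $\Lmb^{t_1}\nb g$ under the integral, while the $\dot H^{t^*}$ component is spent in the remainder pieces where $g$ must sit in a Sobolev space of order slightly above $t_1$ in order to couple with factors of $k^{Lin}_s$ via Sobolev embeddings.

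The flexibility to pick $t^*$ strictly inside the interval $(t_1,2)$ is both essential and inevitable: on one hand, by Lemma \ref{lem: klin-1} the function $k^{Lin}_s$ barely fails to lie in $\dot H^2$, so every Sobolev norm used to control $k^{Lin}_s$ must be strictly subcritical, forcing $t^* < 2$; on the other hand, using only $\dot H^{t_1}$ on $g$ would leave a borderline-divergent term in at least one remainder piece, forcing $t^* > t_1$. A concrete admissible choice consistent with the remark following Theorem \ref{thm A} is $t^* = \frac{t_1}{2} + 1$, which lies in $(t_1,2)$ for every $t_1 \in (3/2, 2)$.

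The main obstacle I expect is extracting exactly two powers of $s$ from the $k^{Lin}_s$-factors in each piece after symmetrization. The naive bound $|\Dlt_\alp k^{Lin}_s| \lesssim s$ would lose the decay in $|\alp|$ at large scale in the pieces produced by the $\alp \mapsto -\alp$ antisymmetrization, where differences like $(\Dlt_\alp k^{Lin}_s)^2 - (\Dlt_{-\alp} k^{Lin}_s)^2$ naturally appear; here the sharper weighted comparison \eqref{est: klin-1} is required to recover the $s^2$ factor together with enough $\alp$-decay for the $\alp$-integral to converge. Once all seven pieces are bounded in $L^2_y$ by $s^2 \nrmb{g(|\cdot|)}_{\dot H^{t^*}\cap \dot H^{t_1+1}}$, summation yields the claimed $\dot H^{t_1}$ inequality.
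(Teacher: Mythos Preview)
Your proposal is correct and follows essentially the same route as the paper. Your explicit formula for $\calT_1[g]$ is exactly the paper's splitting $\calT_1[g]=\calT[g,k^{Lin}_s]-\frac{3}{2\pi}\calQ[k^{Lin}_s,g,k^{Lin}_s,k^{Lin}_s]$, and your plan to symmetrize each piece into seven terms and estimate them with \eqref{eq: prelemma-1}, \eqref{eq: prelemma-2}, \eqref{App of Morrey}, \eqref{est: klin-1'}, \eqref{est: klin-1}, \eqref{commutator identity for fractional laplacian} matches the paper's Lemmas \ref{lem: reformulation}, \ref{lem: reformulation for linear A} and the subsequent $L^2$ estimates. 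The only organizational difference is that the paper recognizes the first integral as $\calT[g,k^{Lin}_s]$ and invokes Proposition \ref{proposition: key} (already proved for the forcing term) directly, so that only the $\calQ$-piece requires new work via Proposition \ref{proposition: key for linear}; you instead describe redoing the seven-term decomposition on both pieces from scratch, which is more labor but not a different argument.
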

The proof of Lemma \ref{lem: l^2 estimate for T_1(g)} is in Section \ref{sec: linear term}.
Moreover, Proposition \ref{prop: poisson radial}, \eqref{est:2 for lemmm}, \eqref{eq: g-fixed-linear}, Lemma \ref{lem: l^2 estimate for T_1(g)},  and properties of Neumann series give us
\begin{lemma}\label{lem: estimate for A(g)}
    Let $s\in(0,1)$, $t_1\in (3/2,2)$ be given. Then there exists $t^{*}=t^*(t_1)\in (t_1,2)$ such that for any $t\in\left(t_1,t_1+1\right]$, there exists a constant $C_{\calA}>0$ such that
    \begin{equation}\label{final est: for A}
        \nrmb{\calA[\tilde{g}(|\cdot|)]
        }_{\dot{H}^{t-2} \cap \dot{H}^{t_1-1}} \le 
C_{\calA} s^2\nrmb{\tilde{g}(|\cdot|)}_{\dot{H}^{t^*-2} \cap \dot{H}^{t_1-1}}.
    \end{equation}
    Moreover, $Id-A$ is invertible whenever $s\in (0,s_*)$, where $Id$ is the identity operator and $s_*$ is a constant satisfying
    \begin{equation}\label{condition for s_* for just I-A}
        C_{\calA}s_*^2<1.
    \end{equation}
\end{lemma}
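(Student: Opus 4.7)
The plan is to unfold the definition $\calA = \Delta \circ \widehat{\calL} \circ \calT_1 \circ \calJ$ from \eqref{eq: g-fixed-linear} and chain together the four ingredients so that each operator lands in the Sobolev space required by the next. Starting from $\tilde g \in \dot{H}^{t^*-2} \cap \dot{H}^{t_1-1}$, I would first invoke Proposition \ref{prop: poisson radial}, in particular \eqref{eq: fourier transform of JC1C2} together with Plancherel, to convert $\dot{H}^{\sigma-2}$ norms of $\tilde g$ into $\dot{H}^{\sigma}$ norms of $\calJ[\tilde g]$:
\begin{equation*}
\|\calJ[\tilde g]\|_{\dot{H}^{t^*}\cap \dot{H}^{t_1+1}} \aeq \|\tilde g\|_{\dot{H}^{t^*-2}\cap \dot{H}^{t_1-1}}.
\end{equation*}
Applying Lemma \ref{lem: l^2 estimate for T_1(g)} to $g=\calJ[\tilde g]$ then gives $\|\calT_1[\calJ[\tilde g]]\|_{\dot{H}^{t_1}} \aleq s^2 \|\tilde g\|_{\dot{H}^{t^*-2}\cap \dot{H}^{t_1-1}}$; Remark \ref{remark for inverse operator of L}, i.e.\ estimate \eqref{est:2 for lemmm}, promotes this to
\begin{equation*}
\|\widehat{\calL}[\calT_1[\calJ[\tilde g]]]\|_{\dot{H}^{t}\cap \dot{H}^{t_1+1}} \aleq s^2 \|\tilde g\|_{\dot{H}^{t^*-2}\cap \dot{H}^{t_1-1}}
\end{equation*}
for every $t\in(t_1,t_1+1]$; and a final application of $\Delta$, which shifts $\dot{H}^{\sigma}$ down to $\dot{H}^{\sigma-2}$ by Plancherel, produces the claimed estimate \eqref{final est: for A}.

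For the invertibility, I would specialise the estimate to $t = t^*$. Since $t_1\in(3/2,2)$ forces $t_1+1>2>t^*$, the value $t^*$ lies in $(t_1,t_1+1]$ and the preceding bound applies, yielding
\begin{equation*}
\|\calA[\tilde g]\|_{\dot{H}^{t^*-2}\cap \dot{H}^{t_1-1}} \le C_{\calA} s^2 \|\tilde g\|_{\dot{H}^{t^*-2}\cap \dot{H}^{t_1-1}}.
\end{equation*}
Thus $\calA$ is a bounded linear operator on the Banach space $X := \dot{H}^{t^*-2}\cap \dot{H}^{t_1-1}$ with operator norm at most $C_{\calA} s^2$. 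For $s\in(0,s_*)$, condition \eqref{condition for s_* for just I-A} forces $\|\calA\|_{X\to X}<1$, so $Id-\calA$ is invertible on $X$ via the Neumann series $\sum_{n\ge 0}\calA^n$.

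The heavy lifting is entirely absorbed into Lemma \ref{lem: l^2 estimate for T_1(g)}, Lemma \ref{lem: linear operator L estimate} (via Remark \ref{remark for inverse operator of L}), and Proposition \ref{prop: poisson radial}; this lemma only sequences those ingredients with matching regularity indices. The one non-routine point to verify is that the flexibility in the range $t\in(t_1,t_1+1]$ afforded by $\widehat{\calL}$ is broad enough to accommodate the specific exponent $t^*$ produced by Lemma \ref{lem: l^2 estimate for T_1(g)}; this is precisely where the restriction $t_1>3/2$ enters, since it guarantees $t^*<2<t_1+1$ and hence that $t=t^*$ is an admissible choice. Once this compatibility is noted, the Neumann series argument closes the proof without further analysis.
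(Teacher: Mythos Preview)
Your proposal is correct and follows exactly the approach indicated in the paper, which simply cites Proposition~\ref{prop: poisson radial}, estimate~\eqref{est:2 for lemmm}, definition~\eqref{eq: g-fixed-linear}, Lemma~\ref{lem: l^2 estimate for T_1(g)}, and the Neumann series as the ingredients, without spelling out the chain; your write-up unpacks this chain faithfully. One small remark: the compatibility $t^*\in(t_1,t_1+1]$ only needs $t_1>1$ (so that $t_1+1>2>t^*$), not $t_1>3/2$; the stronger restriction $t_1>3/2$ is used elsewhere (inside the proof of Lemma~\ref{lem: l^2 estimate for T_1(g)} and in Proposition~\ref{prop: poisson radial}, where $t_1-1\in(1/2,1)$ is required), but this does not affect the correctness of your argument here.
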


\textbf{Step 3.} In this step, we estimate the nonlinear term $\calN(g(|\cdot|))$.
\begin{lemma}\label{lem: l^2 estimate for T_>2(g)}
    Let $s\in(0,1)$, $t_1\in (3/2,2)$ be given. Then $\calT_{\ge 2}\left[0\right]=0$, and there exists $t^{*}=t^*(t_1)\in (t_1,2)$ such that if
    \begin{equation}\label{ineq: g_1+g_2 < 1}
        \nrmb{g_1(|\cdot|)}_{\dot{H}^{t^*} \cap \dot{H}^{t_1+1}}+\nrmb{g_2(|\cdot|)}_{\dot{H}^{t^*} \cap \dot{H}^{t_1+1}}\le 1,
    \end{equation}
    then
    \begin{equation*}
    \begin{split}
         &\nrmb{\calT_{\ge 2}\left[g_1(|\cdot|) \right]-\calT_{\ge 2}\left[g_2(|\cdot|) \right]}_{\dot{H}^{t_1}}\\
         &\qquad\lesssim \left(s^2+\nrmb{g_1(|\cdot|)}_{\dot{H}^{t^*} \cap \dot{H}^{t_1+1}}^2+\nrmb{g_2(|\cdot|)}_{\dot{H}^{t^*} \cap \dot{H}^{t_1+1}}^2\right)\nrmb{g_1(|\cdot|)-g_2(|\cdot|)}_{\dot{H}^{t^*} \cap \dot{H}^{t_1+1}}.
         \end{split}
    \end{equation*}
\end{lemma}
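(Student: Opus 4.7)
The first claim, $\calT_{\ge 2}[0] = 0$, is immediate from \eqref{def: subTs}: since $\calT_1$ is linear in its argument, $\calT_1[0] = 0$, and $\calT[k^{Lin}_s + 0] - \calT[k^{Lin}_s] = 0$. For the main estimate, write $A := \Delta_\alp k^{Lin}_s$, $B_i := \Delta_\alp g_i$, and introduce $\phi(x) := (1+x^2)^{-3/2} - 1$, which is smooth with $\phi(0) = \phi'(0) = 0$ and uniformly bounded derivatives of every order. A direct computation from \eqref{def: nonlinear operator} and \eqref{def: subTs}, using linearity of $\calT[\cdot,\cdot]$ in its first slot, yields the splitting $\calT_{\ge 2}[g] = I_1[g] + I_2[g]$, where
\begin{equation*}
I_1[g] = \frac{1}{2\pi}\int_{\bbR^2} \alp \cdot \nb_y \Delta_\alp k^{Lin}_s \bigl[\phi(A+B) - \phi(A) - \phi'(A)\,B\bigr] \frac{d\alp}{|\alp|^2},
\end{equation*}
\begin{equation*}
I_2[g] = \frac{1}{2\pi}\int_{\bbR^2} \alp \cdot \nb_y \Delta_\alp g \bigl[\phi(A+B) - \phi(A)\bigr] \frac{d\alp}{|\alp|^2},
\end{equation*}
with $B := \Delta_\alp g$.

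The plan is to form $I_j[g_1] - I_j[g_2]$ for $j = 1,2$ and apply Taylor's theorem with integral remainder to the resulting $\phi$-differences, so as to extract one factor $\Delta_\alp(g_1 - g_2)$ (or $\nb_y\Delta_\alp(g_1 - g_2)$) while leaving exactly two additional finite-difference factors, each being $A$, $B_1$, or $B_2$, multiplied by a uniformly bounded smooth function of $(A, B_1, B_2)$. For $I_2$, linearity gives
\begin{equation*}
I_2[g_1] - I_2[g_2] = \tfrac{1}{2\pi}\!\!\int \alp \cdot \nb_y \Delta_\alp(g_1-g_2)\bigl[\phi(A+B_1)-\phi(A)\bigr]\tfrac{d\alp}{|\alp|^2} + \tfrac{1}{2\pi}\!\!\int \alp \cdot \nb_y \Delta_\alp g_2 \bigl[\phi(A+B_1)-\phi(A+B_2)\bigr]\tfrac{d\alp}{|\alp|^2}.
\end{equation*}
Since $\phi'(0) = 0$, I write $\phi(A+B_1)-\phi(A) = B_1\int_0^1\phi'(A+\tau B_1)\,d\tau$ and further $\phi'(A+\tau B_1) = (A+\tau B_1)\int_0^1\phi''(\sigma(A+\tau B_1))\,d\sigma$, isolating an additional linear factor from $(A, B_1)$. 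Similarly, $\phi(A+B_1)-\phi(A+B_2) = (B_1-B_2)\int_0^1\phi'(A+\tau B_1+(1-\tau)B_2)\,d\tau$ extracts $(B_1 - B_2)$, and another $\phi'$-expansion pulls out one more linear factor from $(A, B_1, B_2)$. For $I_1$, the $\phi'(A)(B_1 - B_2)$ piece of $\phi(A+B_1)-\phi(A+B_2)$ cancels exactly against the subtracted linearization, and the remaining second-order Taylor remainder carries $(B_1 - B_2)$ times a further linear finite-difference factor.

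Each resulting piece is then controlled in $\dot{H}^{t_1}$ by Proposition \ref{proposition: key for nonlinear} applied to the nonlinear operator $\calR$ from \eqref{def: calR}: the $(g_1 - g_2)$-slot absorbs the $\dot{H}^{t^*} \cap \dot{H}^{t_1+1}$ norm, every $g_i$-slot does the same, and every $k^{Lin}_s$-slot takes the $\dot{W}^{1,\infty}$-type control from Lemma \ref{lem: klin-1}, each such slot costing one power of $s$. The assumption \eqref{ineq: g_1+g_2 < 1}, together with the $\dot{W}^{1,\infty}$-bound on $k^{Lin}_s$, keeps the arguments of $\phi''$ and $\phi'''$ in a bounded set so the smooth multipliers contribute $O(1)$. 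The resulting sum is of terms of the form $s^2\|g_1-g_2\|$, $s\|g_i\|\|g_1-g_2\|$, and $\|g_i\|\|g_j\|\|g_1-g_2\|$ (in the $\dot{H}^{t^*}\cap\dot{H}^{t_1+1}$ norm); Young's inequality $s\|g_i\| \le \tfrac{1}{2}(s^2 + \|g_i\|^2)$ and $\|g_i\|\|g_j\| \le \tfrac{1}{2}(\|g_i\|^2 + \|g_j\|^2)$ then collects these into the stated bound.

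The main obstacle is the bookkeeping after Taylor expansion: for each resulting piece one must verify that Proposition \ref{proposition: key for nonlinear} applies with an admissible assignment of semi-norms --- crucially, never placing the divergent $\dot{H}^{t_1+1}$ norm on a $k^{Lin}_s$-slot. The decomposition $\Lambda^{t_1}\calR = \sum_{j=1}^{7} R^{t_1, j}$ from Lemma \ref{lem: reformulation for nonlinear N}, obtained via symmetrization and the commutator identity \eqref{commutator identity for fractional laplacian}, is arranged precisely so that each component admits either the Morrey-type bound \eqref{App of Morrey} on potentially singular pieces or a $\dot{W}^{1,\infty}$-assignment on the $k^{Lin}_s$-slots, so that the scheme carries through without demanding any new control on $k^{Lin}_s$.
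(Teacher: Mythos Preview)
Your strategy is essentially the paper's, just organized less efficiently. Rather than splitting $\calT_{\ge 2}[g]=I_1+I_2$ and Taylor-expanding $\phi$ by hand, the paper applies a two-parameter Taylor identity directly to $(\tau_1,\tau_2)\mapsto\calT[\omg]$ with $\omg=k^{Lin}_s+\tau_1 g_1+\tau_2(g_2-g_1)$ (equation \eqref{taylor for nonlinear}), and then computes the second derivatives exactly (equation \eqref{computation with R}): since $\phi''(x)=-3(1+x^2)^{-5/2}+15x^2(1+x^2)^{-7/2}$, these derivatives are explicit linear combinations of $\calQ[\,\cdot\,,\cdot\,,\cdot\,,\omg]$ and $\calR[\,\cdot\,,\cdot\,,\cdot\,,\omg]$, to which Propositions \ref{proposition: key for linear} and \ref{proposition: key for nonlinear} apply verbatim, uniformly in $\tau_1,\tau_2\in[0,1]$. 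Your route reaches the same place once you pull your $\sigma,\tau$ integrals outside and expand $\phi''$; until you do so, invoking Proposition \ref{proposition: key for nonlinear} is not literally justified, and the suggestion in your last paragraph of rerunning the $R^{t_1,j}$ analysis for general smooth multipliers is unnecessary extra work.

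Two concrete corrections: (i) you also need Proposition \ref{proposition: key for linear} for $\calQ$, because the $-3(1+x^2)^{-5/2}$ term in $\phi''$ produces $\calQ$-type pieces rather than $\calR$-type; (ii) when $k^{Lin}_s$ (or $\omg$) occupies the $f_1$, $f_3$, or $f_4$ slot of those propositions, what is required is the $\dot{H}^{t^{**}}\cap\dot{H}^{t_1+1}$ norm, which is $\lesssim s$ by \eqref{klin fourier} since $t^{**},\,t_1+1>2$ --- the $\dot{W}^{1,\infty}$ bound from Lemma \ref{lem: klin-1} alone is not what is invoked there.
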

The proof of Lemma \ref{lem: l^2 estimate for T_>2(g)} is in Section \ref{sec: nonlinear term}.
Furthermore, Proposition \ref{prop: poisson radial}, \eqref{est:2 for lemmm}, \eqref{eq: g-fixed-nonlinear}, Lemma \ref{lem: l^2 estimate for T_>2(g)} give us
\begin{lemma}\label{lem: estimate for N(g)}
Under the assumptions: $s\in(0,1)$ and 
\begin{equation*}
        \nrmb{\tilde{g}_1(|\cdot|)}_{\dot{H}^{t^*-2} \cap \dot{H}^{t_1-1}}+\nrmb{\tilde{g}_2(|\cdot|)}_{\dot{H}^{t^*-2} \cap \dot{H}^{t_1-1}}\le 1,
    \end{equation*}
there exists a constant $C_{\calN}>0$ such that
    \begin{equation}\label{final est: for N}
    \begin{split}
    &\nrmb{\calN\left[\tilde{g}_1(|\cdot|) \right]}_{\dot{H}^{t-2} \cap \dot{H}^{t_1-1}} \le C_{\calN} \left(s^2+\nrmb{\tilde{g}_1(|\cdot|)}_{\dot{H}^{t^*-2} \cap \dot{H}^{t_1-1}}^2\right)\nrmb{\tilde{g}_1(|\cdot|)}_{\dot{H}^{t^*-2} \cap \dot{H}^{t_1-1}},\\
        &\nrmb{\calN\left[\tilde{g}_1(|\cdot|) \right]-\calN\left[\tilde{g}_2(|\cdot|) \right]}_{\dot{H}^{t-2} \cap \dot{H}^{t_1-1}}\\
        &\qquad \le C_{\calN} \left(s^2+\nrmb{\tilde{g}_1(|\cdot|)}_{\dot{H}^{t^*-2} \cap \dot{H}^{t_1-1}}^2+\nrmb{\tilde{g}_2(|\cdot|)}_{\dot{H}^{t^*-2} \cap \dot{H}^{t_1-1}}^2\right)\nrmb{\tilde{g}_1(|\cdot|)-\tilde{g}_2(|\cdot|)}_{\dot{H}^{t^*-2} \cap \dot{H}^{t_1-1}}
        \end{split}
    \end{equation}
    for any $t\in\left(t_1,t_1+1\right]$.
\end{lemma}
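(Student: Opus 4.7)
The operator $\calN$ defined in \eqref{eq: g-fixed-nonlinear} factorizes as $\calN = \Delta\circ\widehat\calL\circ\calT_{\ge 2}\circ\calJ$, so my plan is to chain the four continuity estimates already in hand. Since $\Delta$ is the Fourier multiplier with symbol $-|\xi|^2$, one has $\|\Delta F\|_{\dot H^{t-2}\cap\dot H^{t_1-1}} = \|F\|_{\dot H^{t}\cap\dot H^{t_1+1}}$ exactly. Next, Remark \ref{remark for inverse operator of L} provides
\begin{equation*}
\|\widehat\calL[H](|\cdot|)\|_{\dot H^{t}\cap\dot H^{t_1+1}} \lesssim \|H(|\cdot|)\|_{\dot H^{t_1}}
\end{equation*}
for $t\in(t_1,t_1+1]$ on radial inputs. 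Finally, the Fourier identity \eqref{eq: fourier transform of JC1C2} gives $\widehat{\calJ[\tilde g]}(\xi) = -|\xi|^{-2}\widehat{\tilde g}(\xi)$ on radial data, hence $\calJ$ is an isometry
\begin{equation*}
\|\calJ[\tilde g](|\cdot|)\|_{\dot H^{t^*}\cap\dot H^{t_1+1}} = \|\tilde g(|\cdot|)\|_{\dot H^{t^*-2}\cap\dot H^{t_1-1}}.
\end{equation*}
In particular, the hypothesis $\|\tilde g_1\|_{\dot H^{t^*-2}\cap\dot H^{t_1-1}}+\|\tilde g_2\|_{\dot H^{t^*-2}\cap\dot H^{t_1-1}}\le 1$ transfers verbatim to the hypothesis \eqref{ineq: g_1+g_2 < 1} for $g_i:=\calJ[\tilde g_i]$.

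With these in place, both conclusions follow by composition. For the single-function estimate, I would apply Lemma \ref{lem: l^2 estimate for T_>2(g)} with $g_2=0$, using $\calT_{\ge 2}[0]=0$, to obtain the cubic bound on $\|\calT_{\ge 2}[\calJ[\tilde g_1]]\|_{\dot H^{t_1}}$ in terms of $\|\calJ[\tilde g_1]\|_{\dot H^{t^*}\cap\dot H^{t_1+1}}$. Chaining through the $\widehat\calL$ bound and the $\Delta$-identity, and then converting back via the $\calJ$-isometry, yields the first assertion in \eqref{final est: for N}. For the Lipschitz assertion, the linearity of $\Delta\circ\widehat\calL$ yields
\begin{equation*}
\calN[\tilde g_1]-\calN[\tilde g_2] = (\Delta\circ\widehat\calL)\bigl(\calT_{\ge 2}[\calJ[\tilde g_1]] - \calT_{\ge 2}[\calJ[\tilde g_2]]\bigr),
\end{equation*}
after which the full Lipschitz bound from Lemma \ref{lem: l^2 estimate for T_>2(g)} applied to $g_i=\calJ[\tilde g_i]$ closes the estimate and determines $C_{\calN}$.

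No genuine obstacle is expected: the lemma is a purely structural composition argument that packages the hard analytical work already done in Lemma \ref{lem: l^2 estimate for T_>2(g)} (the cubic bound on $\calT_{\ge 2}$), Lemma \ref{lem: linear operator L estimate} (the boundedness of $\widehat\calL$ with the crucial strict gain in regularity from $t_1$ to $t$, and up to $t_1+1$), and Proposition \ref{prop: poisson radial} (the exact Fourier inversion of $-\Delta$ by $\calJ$ on radial data). The only point worth checking carefully is that the smallness hypothesis passes through $\calJ$ without loss, which the Fourier identity makes automatic; the implicit constants in the two operator-boundedness statements are absorbed into $C_{\calN}$.
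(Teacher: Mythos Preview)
Your proposal is correct and matches the paper's approach exactly: the paper does not spell out a proof but simply states that the lemma follows from Proposition~\ref{prop: poisson radial}, \eqref{est:2 for lemmm}, \eqref{eq: g-fixed-nonlinear}, and Lemma~\ref{lem: l^2 estimate for T_>2(g)}, which is precisely the chain you describe. Your use of the Fourier identity \eqref{eq: fourier transform of JC1C2} to pass the smallness hypothesis through $\calJ$ as an isometry, and your reduction of the single-function bound to the Lipschitz bound via $g_2=0$ and $\calT_{\ge2}[0]=0$, are exactly the right mechanisms.
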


\textbf{Step 4.} With the aid of Step 1. - Step 3., we obtain the following proposition.
\begin{proposition}\label{prop 1}
    Given $t_1\in\left(3/2,2\right)$, there exists $s_*$ and $t^*\in(t_1,2)$ such that if $s\in(0,s_*)$, then there exists a unique solution $\tilde{g}_s(|y|)$ to \eqref{eq: g-fixed} satisfying
    \begin{equation*}
        \nrmb{\tilde{g}_s(|\cdot|)}_{\dot{H}^{t^*-2} \cap \dot{H}^{t_1-1}}\lesssim s^3.
    \end{equation*}
\end{proposition}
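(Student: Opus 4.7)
The plan is to rewrite \eqref{eq: g-fixed} as a fixed point problem and apply the Banach contraction principle on a closed ball in the Banach space $X := \dot{H}^{t^*-2}(\bbR^2) \cap \dot{H}^{t_1-1}(\bbR^2)$, where $t^*=t^*(t_1)\in(t_1,2)$ is the parameter supplied by Lemmas \ref{lem: l^2 estimate for T_1(g)} and \ref{lem: l^2 estimate for T_>2(g)}. Since $t_1 > 3/2$ implies $t^* \in (t_1,t_1+1]$, one may apply Lemmas \ref{lem: estimate for phi}, \ref{lem: estimate for A(g)}, and \ref{lem: estimate for N(g)} with the choice $t=t^*$. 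Note that $X$ is a genuine Banach space because $t^*-2<0<t_1-1<1$.

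First, pick $s_*$ small enough that $C_{\calA} s_*^2 < 1/2$, so that by Lemma \ref{lem: estimate for A(g)} the operator $Id-\calA$ is invertible on $X$ with $\nrmb{(Id-\calA)^{-1}}_{X\to X} \le 2$ for every $s\in(0,s_*)$. Equation \eqref{eq: g-fixed} is then equivalent to the fixed point equation $\tilde{g} = \calF[\tilde{g}]$, where
\begin{equation*}
    \calF[\tilde{g}(|y|)] := (Id-\calA)^{-1}\bigl(\Phi(|y|) + \calN[\tilde{g}(|y|)]\bigr).
\end{equation*}
Set $R := 2C_{\Phi} s^3$ and let $B_R := \{\tilde{g}\in X : \nrmb{\tilde{g}}_X \le R\}$. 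I would then shrink $s_*$ further so that $R\le 1/2$, ensuring that the hypothesis $\nrmb{\tilde{g}_1}_X+\nrmb{\tilde{g}_2}_X\le 1$ of Lemma \ref{lem: estimate for N(g)} is automatically satisfied on $B_R$.

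Next, I would verify the two hypotheses of Banach's theorem. For self-mapping, Lemmas \ref{lem: estimate for phi} and \ref{lem: estimate for N(g)} yield, for $\tilde{g}\in B_R$,
\begin{equation*}
    \nrmb{\calF[\tilde{g}]}_X \le 2\bigl(C_{\Phi}s^3 + C_{\calN}(s^2+R^2)R\bigr) = R + 4 C_{\calN}C_{\Phi}(s^2+4C_{\Phi}^2 s^6)s^3,
\end{equation*}
which is $\le R$ once $s_*$ is small enough that $4C_{\calN}(s_*^2 + 4C_{\Phi}^2 s_*^6) \le 1$. For the contraction estimate, the same lemmas give, for $\tilde{g}_1,\tilde{g}_2\in B_R$,
\begin{equation*}
    \nrmb{\calF[\tilde{g}_1]-\calF[\tilde{g}_2]}_X \le 2 C_{\calN}(s^2+2R^2)\nrmb{\tilde{g}_1-\tilde{g}_2}_X \le \tfrac{1}{2}\nrmb{\tilde{g}_1-\tilde{g}_2}_X
\end{equation*}
after a final reduction of $s_*$. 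The Banach fixed point theorem then produces a unique $\tilde{g}_s\in B_R$ with $\calF[\tilde{g}_s]=\tilde{g}_s$, which is the desired solution of \eqref{eq: g-fixed}, and the bound $\nrmb{\tilde{g}_s}_X \le R = 2C_{\Phi}s^3 \lesssim s^3$ is automatic.

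The argument is largely bookkeeping once Lemmas \ref{lem: estimate for phi}--\ref{lem: estimate for N(g)} are in hand; there is no serious obstacle at this stage. The only subtlety worth flagging is that uniqueness is obtained inside the ball $B_R$ rather than globally in $X$, which matches the scope of the statement and is consistent with how $\tilde{g}_s$ will be used in the remainder of the argument (inverting the Laplacian via $\calJ$ to recover $k_s$ and verifying \eqref{eq: thmA} from the regularity properties \eqref{eq: L infty est for inverse laplacain-1}). All genuine analytic difficulty is deferred to the proofs of Lemmas \ref{lem: l^2 estimate for T(klin)}, \ref{lem: l^2 estimate for T_1(g)}, and \ref{lem: l^2 estimate for T_>2(g)}, namely the decomposition of $\Lmb^{t_1}\calT$, $\Lmb^{t_1}\calQ$, $\Lmb^{t_1}\calR$ into the quasilinear and commutator pieces described in the introduction.
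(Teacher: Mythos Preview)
Your approach is essentially identical to the paper's: rewrite \eqref{eq: g-fixed} as $\tilde g = (Id-\calA)^{-1}(\Phi+\calN[\tilde g])$ and run the Banach fixed point theorem on a small ball in $\dot H^{t^*-2}\cap\dot H^{t_1-1}$, using Lemmas~\ref{lem: estimate for phi}--\ref{lem: estimate for N(g)} with $t=t^*$. The paper chooses the radius $\eps=10(C_\Phi+C_\calA+C_\calN)s^3$ and checks the self-map and contraction with explicit constants, but the structure is the same.

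There is, however, a small bookkeeping slip in your self-mapping step. With $R=2C_\Phi s^3$ you correctly obtain
\[
\nrmb{\calF[\tilde g]}_X \le R + 4C_\calN C_\Phi(s^2+4C_\Phi^2 s^6)s^3,
\]
but no smallness condition on $s_*$ can force the positive second term to vanish, so this is never $\le R$. The fix is immediate: enlarge the radius to, say, $R=4C_\Phi s^3$; then $2C_\Phi s^3=R/2$, and the condition $4C_\calN(s^2+R^2)\le 1$ gives $\nrmb{\calF[\tilde g]}_X\le R/2+R/2=R$. The contraction estimate and the conclusion $\nrmb{\tilde g_s}_X\lesssim s^3$ are unaffected.
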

\begin{proof}
Abusing the notation, we define $t^*$ as the smaller of the $t^*$'s appearing in Lemma \ref{lem: estimate for A(g)} and Lemma \ref{lem: estimate for N(g)}. For this newly defined $t^*$, both \eqref{final est: for A} and \eqref{final est: for N} still hold by \eqref{sobolev interpolation}. Moreover we can put $t^*$ into $t$'s in both \eqref{final est: for A} and \eqref{final est: for N} because $t^*\in(t_1,2)$. 
Now we set a small number $s_*>0$ satisfying
\begin{equation}\label{condition: s_*}
    10\left(C_{\Phi}+C_{\calA}+C_{\calN}\right)s_*^2\le 1.
\end{equation}
Then for each $s\in(0,s_*)$, we define 
\begin{equation}\label{condition: eps}
   \eps=\eps(s)= 10\left(C_{\Phi}+C_{\calA}+C_{\calN}\right)s^3.
\end{equation}
For such $s\in(0,s_*)$, $\eps>0$, we consider
\begin{equation*}
    X_\eps:=\left\{\tilde{g} \in\dot{H}^{t^*-2} \cap \dot{H}^{t_1-1}  :\; \nrmb{\tilde{g}}_{\dot{H}^{t^*-2} \cap \dot{H}^{t_1-1}}\le \eps  \right\}
\end{equation*}
and a map $\Psi$ given by
    \begin{equation*}
        \Psi: \tilde{g}(|\cdot|) \mapsto (Id-\calA)^{-1}\left[\Phi(|\cdot|)+\calN[\tilde{g}(|\cdot|)]\right].
    \end{equation*}
    This map is well defined since $s_*$ satisfies \eqref{condition for s_* for just I-A}. Moreover using \eqref{condition: s_*}, we can check
    \begin{equation}\label{est: for inverse (id-A) less than 2}
        \nrmb{\left(Id-\calA\right)^{-1}}_{\dot{H}^{t^*-2} \cap \dot{H}^{t_1-1} \rightarrow \dot{H}^{t^*-2} \cap \dot{H}^{t_1-1}}\le \frac{1}{1-\nrmb{A}_{\dot{H}^{t^*-2} \cap \dot{H}^{t_1-1} \rightarrow \dot{H}^{t^{*}-1} \cap \dot{H}^{t_1}}}\le \frac{1}{1-C_{\calA}s^2}\le 2.
    \end{equation}
    Hence for $\tilde{g}(|\cdot|)\in X_\eps$, we use Lemma \ref{lem: estimate for phi}, Lemma \ref{lem: estimate for N(g)}, \eqref{condition: s_*} - \eqref{est: for inverse (id-A) less than 2} to estimate
    \begin{equation*}
    \begin{split}
\nrmb{\Psi\left[\tilde{g}(|\cdot|)\right]}_{\dot{H}^{t^*-2} \cap \dot{H}^{t_1-1}}&=\nrmb{(Id-\calA)^{-1}\left[\Phi(|\cdot|)+\calN[\tilde{g}(|\cdot|)]\right]}_{\dot{H}^{t^*-2} \cap \dot{H}^{t_1-1}}\\
        &\le 2\left(C_\Phi + C_\calN\right)\left(s^3+(s^2+\eps^2)\eps\right) \\
        &= 2\left(C_\Phi + C_\calN\right)s^3\left(1+10\left(C_{\Phi}+C_{\calA}+C_{\calN}\right)s^2+\left(10\left(C_{\Phi}+C_{\calA}+C_{\calN}\right)s^2\right)^3\right)\\
        &\le 6\left(C_\Phi + C_\calN\right)s^3\le \eps.
        \end{split}
    \end{equation*}
    This implies $\Psi: X_\eps \rightarrow X_\eps$. Furthermore, for $\tilde{g}_1(|\cdot|),\,\tilde{g}_2(|\cdot|)\in X_\eps$, we again use Lemma \ref{lem: estimate for N(g)}, \eqref{condition: s_*} - \eqref{est: for inverse (id-A) less than 2} to estimate
    \begin{equation*}
    \begin{split}
        &\nrmb{\Psi\left[\tilde{g}_1(|\cdot|)\right]-\Psi\left[\tilde{g}_2(|\cdot|)\right]}_{\dot{H}^{t^*-2} \cap \dot{H}^{t_1-1}}\\
        &\qquad=\nrmb{(Id-\calA)^{-1}\left[\calN\left[\tilde{g}_1(|\cdot|)\right]-\calN\left[\tilde{g}_2(|\cdot|)\right]\right]}_{\dot{H}^{t^*-2} \cap \dot{H}^{t_1-1}}\\
        &\qquad\le 2C_{\calN}\left(s^2+2\eps^2\right)\nrmb{\tilde{g}_1(|\cdot|)-\tilde{g}_2(|\cdot|)}_{\dot{H}^{t^*-2} \cap \dot{H}^{t_1-1}}\\
        &\qquad= 2C_{\calN}s^2\left(1+2\left(10\left(C_{\Phi}+C_{\calA}+C_{\calN}\right)s^2\right)^2\right)\nrmb{\tilde{g}_1(|\cdot|)-\tilde{g}_2(|\cdot|)}_{\dot{H}^{t^*-2} \cap \dot{H}^{t_1-1}}\\
        &\qquad \le 6C_{\calN}s_*^2\nrmb{\tilde{g}_1(|\cdot|)-\tilde{g}_2(|\cdot|)}_{\dot{H}^{t^*-2} \cap \dot{H}^{t_1-1}}\\
        &\qquad \le \frac{3}{5}\nrmb{\tilde{g}_1(|\cdot|)-\tilde{g}_2(|\cdot|)}_{\dot{H}^{t^*-2} \cap \dot{H}^{t_1-1}},
        \end{split}
    \end{equation*}
    which implies $\Psi$ is a contraction on $X_\eps$. Since $X_\eps$ is a Banach space, the Banach fixed point theorem ensures the existence of $\tilde{g}_s\in X_\eps$ satisfying $\tilde{g}_s=\Psi(\tilde{g}_s)$.
\end{proof}

\textbf{Step 5.} Finally, we show the existence of a solution to our original equation \eqref{eq: reformulation}, using $\tilde{g}_s(|y|)$ obtained in Proposition \ref{prop 1}.
To do so, we recall \eqref{def: subTs} and the fact that $\widehat{\calL}$ is the linear operator, which yield
\begin{equation*}
\Phi(|y|)+\calA[\tilde{g}_s(|y|)]+\calN[\tilde{g}_s(|y|)]=\Dlt\widehat{\calL}\left[ \calT\left[k^{Lin}_{s}(|y|)+\calJ[\tilde{g}_s](|y|) \right]\right].
\end{equation*}
Hence, we see
\begin{equation}\label{eq: for step 5}
    \tilde{g}_s(|y|)=\Dlt\widehat{\calL}\left[ \calT\left[k^{Lin}_{s}(|y|)+ \calJ[\tilde{g}_s](|y|) \right]\right],
\end{equation}
since $\tilde{g}_s(|y|)$ is a solution to \eqref{eq: g-fixed}. 
On the other hand, the fact that $\tilde{g}_s\in X_\eps$, together with \eqref{condition: eps}, yields
\begin{equation*}\label{est: final g H t_1-1}
    \nrmb{\tilde{g}_{s}(|y|)}_{\dot{H}^{t_1-1}}\lesssim s^3.
\end{equation*}
Therefore, Proposition \ref{prop: poisson radial} implies that $\calJ[\tilde{g}_s](|y|)$ is a solution of the Poisson equation 
$\tilde{g}_s(|y|)=\Dlt u(|x|)$ satisfying
\begin{equation*}
\sum_{0\le|\beta|\le 2} \nrmb{\frac{\nb^{\beta}_y\calJ[\tilde{g}_s(|y|)]}{|y|^{t_1-|\beta|}}}_{L^\infty}\lesssim s^3.
    \end{equation*}
Now, we compare $\widehat{\calL}\left[ \calT\left[k^{Lin}_{s}(|y|)+ \calJ[\tilde{g}_s](|y|) \right]\right]$ with $\calJ[\tilde{g}_s](|y|)$, utilizing Proposition \ref{prop: poisson radial}. 
To do so, we claim that
\begin{equation*}
    \widehat{\calL}\left[ \calT\left[k^{Lin}_{s}(|y|)+ \calJ[\tilde{g}_s](|y|) \right]\right]\in \dot{W}^{1,\infty}(\bbR^2).
\end{equation*}
Indeed, the Sobolev embedding \eqref{sobolev embedding-3(infty)}, together with \eqref{eq: for step 5} and the fact that $\tilde{g}_s \in X_\eps$, gives
\begin{equation*}
    \begin{split}
        \nrmb{\nb \widehat{\calL}\left[ \calT\left[k^{Lin}_{s}(|y|)+ \calJ[\tilde{g}_s](|y|) \right]\right]}_{L^{\infty}}&\lesssim \nrmb{\widehat{\calL}\left[ \calT\left[k^{Lin}_{s}(|y|)+ \calJ[\tilde{g}_s](|y|) \right]\right]}_{\dot{H}^{t^*}\cap\dot{H}^{t_1+1}}\\
        &\approx \nrmb{\tilde{g}_{s}(|y|)}_{\dot{H}^{t^*-2}\cap\dot{H}^{t_1-1}}\lesssim \eps.
    \end{split}
\end{equation*}
Hence, \eqref{uniqueness of u} in Proposition \ref{prop: poisson radial} ensures the existence of a constant $C_1$ such that
\begin{equation*}
    \widehat{\calL}\left[ \calT\left[k^{Lin}_{s}(|y|)+ \calJ[\tilde{g}_s](|y|) \right]\right]=\calJ[\tilde{g}_s](|y|)+C_1.
\end{equation*}
This, together with Lemma \ref{lem: k=klin+L(h)}, guarantees the existence of a constant $C_2$ such that 
\begin{equation*}
    (\Lambda-y\cdot\nb_y+1) \left(k^{Lin}_{s}(|y|)+\calJ[\tilde{g}_s](|y|)+C_2\right) = \calT\left[k^{Lin}_{s}(|y|)+ \calJ[\tilde{g}_s(|y|)]\right]
\end{equation*}
for every $s\in(0,s_*)$. But recalling the definition of the operator $\calT$ in \eqref{def: nonlinear operator}, we have
\begin{equation*}
    \calT\left[k^{Lin}_{s}(|y|)+ \calJ[\tilde{g}_s(|y|)]\right]=\calT\left[k^{Lin}_{s}(|y|)+ \calJ[\tilde{g}_s(|y|)]+C_2\right],
\end{equation*}
so that the function $k(|y|)$ defined by
\begin{equation*}\label{k=klin+g_s}
    k_s(|y|):=k^{Lin}_{s}(|y|)+\calJ[\tilde{g}_s](|y|)+C_2 \qquad (0<s<s_*)
\end{equation*}
is a solution to our original equation \eqref{eq: reformulation}.

\subsection{The forcing term $\Phi$}\label{sec: forcing term}
Here, we aim to prove the following Proposition \ref{proposition: key}, whcih leads to Lemma \ref{lem: l^2 estimate for T(klin)}.
\begin{proposition}\label{proposition: key}
    Let $t_1\in (3/2,2)$ be given, and let $\calT$ and $k^{Lin}_{s}$ be the operator and function defined in \eqref{def: nonlinear operator} and \eqref{klin}, respectively. Then for any  $t_1\in (3/2,2)$, there exists $t^{**}=t^{**}(t_1)\in(2,t_1+1)$ such that
    \begin{equation}\label{key estimate-1}
\nrmb{\calT\left[f(\cdot),k^{Lin}_{s}(|\cdot|)\right]}_{\dot{H}^{t_1}}\lesssim s^2\nrmb{f}_{\dot{H}^{t^{**}}\cap \dot{H}^{t_1+1}}.
    \end{equation}
\end{proposition}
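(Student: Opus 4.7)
The plan is to prove \eqref{key estimate-1} by applying $\Lmb^{t_1}$ to $\calT[f, k^{Lin}_s]$, splitting the resulting expression into seven pieces $T^{t_1, j}$ $(1 \le j \le 7)$ along the lines announced in the introduction, and estimating each piece in $L^2(\bbR^2)$. The crucial gain of $s^2$ will come from the factor $G(x) := (1+x^2)^{-3/2} - 1$ appearing inside the integrand of $\calT$, which vanishes quadratically at the origin, combined with the pointwise bound $|\Delta_\alp k^{Lin}_s| \le \nrmb{\nb_y k^{Lin}_s}_{L^\infty(\bbR^2)} \le s$ recorded in \eqref{klin nabla}.

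The decomposition is produced by first symmetrizing the integrand under $\alp \mapsto -\alp$ and then introducing the two quasilinear substitutions $\Delta_\alp f \to \frac{\alp}{|\alp|} \cdot \nb_y f$ and $\Delta_\alp k^{Lin}_s \to \frac{\alp}{|\alp|} \cdot \nb_y k^{Lin}_s$ inside the integrand. The leading piece $T^{t_1, 1}$ is the fully substituted term (playing the role of the quasilinear principal part, in the spirit of \cite{AN22}); after the angular integration in $\alp$, it reduces to a pointwise multiplier in $\nb_y k^{Lin}_s$ acting on a first-order derivative of $\Lmb^{t_1} f$, whose coefficient is a smooth function of $\nb k^{Lin}_s$ with a quadratic zero at the origin and therefore bounded pointwise by $s^2$. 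Its contribution is thus $\lesssim s^2 \nrmb{f}_{\dot{H}^{t_1 + 1}}$. The error pieces $T^{t_1, 2}, \ldots, T^{t_1, 6}$ collect the contributions removed by the two substitutions; for each one I will combine the Morrey-type inequality \eqref{App of Morrey} applied to $f$ (where $\nb^2 f \in L^p$ for some $p > 2$ via \eqref{sobolev embedding-1}, which is precisely what forces $t^{**} > 2$) and to $k^{Lin}_s$ (where \eqref{est: klin-1'} contributes a bound $\lesssim s |\alp|^{2-2/p}$, yielding one factor of $s$ per use), with the $L^2$-valued $\alp$-integral estimates \eqref{eq: prelemma-1}--\eqref{eq: prelemma-2}. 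Pairing each Morrey factor from $k^{Lin}_s$ with the quadratic vanishing of $G$ delivers the overall $s^2 \nrmb{f}_{\dot{H}^{t^{**}} \cap \dot{H}^{t_1 + 1}}$ bound.

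The final piece $T^{t_1, 7}$ accounts for $\Lmb^{t_1}$ not commuting with the multiplicative structure in the integrand and is handled via the commutator identity \eqref{commutator identity for fractional laplacian}: splitting $\Lmb^{t_1}(F H) = F \Lmb^{t_1} H + H \Lmb^{t_1} F + (\text{bilinear commutator})$ with $F$ built from $k^{Lin}_s$ (hence in $L^\infty$ by \eqref{klin nabla}) and $H$ built from $f$, the two single-factor terms are absorbed into the earlier estimates, while the bilinear commutator is controlled in $L^2$ by \eqref{eq: prelemma-1} paired with the explicit pointwise bound \eqref{est: klin-1'}. A consistent choice of $t^{**} \in (2, t_1 + 1)$, for concreteness $t^{**} = 1 + t_1/2$ (compatible with the remark following Theorem \ref{thm A}), keeps every Morrey exponent $p$ strictly above $2$ and every fractional index inside the admissible windows of \eqref{eq: prelemma-1}--\eqref{eq: prelemma-2} and \eqref{sobolev interpolation}. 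Lemma \ref{lem: l^2 estimate for T(klin)} then follows at once by specializing $f = k^{Lin}_s$, whose norm in $\dot{H}^{t^{**}} \cap \dot{H}^{t_1 + 1}$ is $O(s)$ since both indices exceed $2$.

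The main obstacle is the exponent bookkeeping across the seven pieces. For each piece the Morrey exponent $p$, the singular-integral exponent $b$ in \eqref{eq: prelemma-1}--\eqref{eq: prelemma-2}, and the fractional regularity index acting on $f$ must be chosen simultaneously so that every $\alp$-integral converges both at $\alp = 0$ and at infinity, so that the norm on $f$ stays inside $\dot{H}^{t^{**}} \cap \dot{H}^{t_1 + 1}$, and so that exactly two factors of $s$ — no more, no less — are extracted from $k^{Lin}_s$. The asymmetry between the two slots of $\calT$ (requiring more-than-critical regularity on $f$ to absorb the losses of \eqref{App of Morrey}, while only the flat bound $\nb k^{Lin}_s \in L^\infty$ and the Morrey estimate \eqref{est: klin-1'} are exploited on the other slot) is what makes the decomposition both possible and delicate, and is the source of the technical restriction $t_1 > 3/2$ flagged in the introduction.
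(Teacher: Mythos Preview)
Your plan matches the paper's approach closely: the same seven-piece decomposition (Lemma~\ref{lem: reformulation}), with $T^{t_1,1}$ the quasilinear principal term, $T^{t_1,7}$ handled via the commutator identity \eqref{commutator identity for fractional laplacian}, and the intermediate pieces controlled by Morrey-type bounds together with \eqref{eq: prelemma-1}--\eqref{eq: prelemma-2}.

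Two points deserve care. First, your concrete choice $t^{**} = 1 + t_1/2$ lies in $(7/4,2)$, not in $(2,t_1+1)$; you have confused $t^{**}$ with the exponent $t^{*}$ of Theorem~\ref{thm A}. The paper's working values are $t^{**}\in\{t_1+3/4,\; t_1/2+5/4,\; (t_1+3)/2\}$, each strictly above $2$ precisely because $t_1>3/2$. Second, the far-field piece $T^{t_1,3}$ (the integral over $\{|\alp|>1\}$ carrying the antisymmetrized difference $(1+(\Delta_\alp k^{Lin}_s)^2)^{-3/2}-(1+(\Delta_{-\alp} k^{Lin}_s)^2)^{-3/2}$) is \emph{not} covered by the generic Morrey bound \eqref{est: klin-1'}, which grows in $|\alp|$ and gives no decay at infinity; the paper instead proves and uses the explicit pointwise estimate \eqref{est: klin-1}, derived from the closed formula for $k^{Lin}_s$, to make that integral converge. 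Your description of $T^{t_1,2},\ldots,T^{t_1,6}$ as uniformly Morrey-type omits this ingredient.
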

\begin{remark}\label{rmk: proposition-1}
    Inserting $f(\cdot)=k^{Lin}_{s}(|\cdot|)$ in \eqref{key estimate-1}, recalling \eqref{klin fourier} and using $t^{**}>2$, we can prove Lemma \ref{lem: l^2 estimate for T(klin)}.
\end{remark}

As the first step toward Proposition \ref{proposition: key}, we reformulate $\Lambda^{t_1}\calT[f_1,f_2]$:
\begin{lemma}\label{lem: reformulation}
    For any  $t_1\in (3/2,2)$, there holds
    \begin{equation*}
        \Lambda^{t_1}\calT [f_1,f_2] 
        = \sum_{j=1}^7 T^{t_1,j}[f_1,f_2],
    \end{equation*}
    where $T^{t_1,j}[f_1,f_2]$ $(1\le j \le 7)$ are defined by 
    \begin{equation}\label{eq: reformulation- definitions}
    \begin{split}
    &T^{t_1,1}[f_1,f_2]=-\frac{1}{4\pi}\int_{\bbR^2}\left( \Delta_\alp \Lambda^{t_1} f_1+\Delta_{-\alp} \Lambda^{t_1} f_1\right)\left(\left(1+ \left(\frac{\alp}{|\alp|}\cdot \nb_y f_2\right)^2\right)^{-\frac32}-1\right)\frac{d\alp}{|\alp|^2}\\
        &T^{t_1,2}[f_1,f_2]= \frac{1}{2\pi}\int_{B_1(0)}\alp \cdot \nabla_y \Lambda^{t_1} f_1 \left(\left(1+ (\Delta_\alp f_2)^2\right)^{-\frac32}-\left(1+ \left(\frac{\alp}{|\alp|}\cdot \nb_y  f_2\right)^2\right)^{-\frac32} \right)\frac{d\alp}{|\alp|^3},\\ 
        &T^{t_1,3}[f_1,f_2]= \frac{1}{4\pi}\int_{\bbR^2\backslash B_1(0)}\alp \cdot \nabla_y \Lambda^{t_1} f_1 \left(\left(1+ (\Delta_\alp f_2)^2\right)^{-\frac32}- \left(1+ (\Delta_{-\alp} f_2)^2\right)^{-\frac32} \right)\frac{d\alp}{|\alp|^3}, \\
        &T^{t_1,4}[f_1,f_2]= -\frac{1}{2\pi}\int_{\bbR^2} \Delta_\alp \Lambda^{t_1} f_1\left(\left(1+ (\Delta_\alp f_2)^2\right)^{-\frac32}-\left(1+ \left(\frac{\alp}{|\alp|}\cdot \nb_y  f_2\right)^2\right)^{-\frac32}\right)\frac{d\alp}{|\alp|^2},\\ 
        &T^{t_1,5}[f_1,f_2]= \frac{3}{2\pi}\int_{\bbR^2}\frac{\Delta_\alp \Lambda^{t_1} f_1\;\Delta_\alp f_2}{\left(1+ (\Delta_\alp f_2)^2\right)^{\frac52}}\left(\Delta_\alp f_2-\frac{\alp}{|\alp|}\cdot \nb_y  f_2 + \alp \cdot \nb_y \Delta_\alp f_2\right)\frac{d\alp}{|\alp|^2}, \\
        &T^{t_1,6}[f_1,f_2]= \frac{1}{2\pi}\int_{\bbR^2} \alp \cdot \nabla_y \Delta_\alp f_1\;\Lambda^{t_1}\left(\left(1+ \left(\Delta_\alp  f_2\right)^2\right)^{-\frac32} \right)\frac{d\alp}{|\alp|^2},\\
        &T^{t_1,7}[f_1,f_2] =\frac{1}{2\pi}\int_{\bbR^2} \Lambda^{t_1}\left( \alp \cdot \nabla_y \Delta_\alp  f_1\left(\left(1+ \left(\Delta_\alp  f_2\right)^2\right)^{-\frac32}-1 \right)\right) \\
        &\qquad\qquad\qquad\qquad -\alp \cdot \nabla_y \Delta_\alp \Lambda^{t_1} f_1\left(\left(1+ \left(\Delta_\alp  f_2\right)^2\right)^{-\frac32}-1 \right) - \alp \cdot \nabla_y \Delta_\alp f_1\,\Lambda^{t_1}\left(\left(1+ \left(\Delta_\alp  f_2\right)^2\right)^{-\frac32} \right)\frac{d\alp}{|\alp|^2}.
    \end{split}
    \end{equation}
\end{lemma}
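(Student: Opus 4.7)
The decomposition is a purely algebraic identity; no estimates are needed here. My plan has three stages: (i) commute $\Lambda^{t_1}$ past the $\alp$-integral and apply the commutator identity \eqref{commutator identity for fractional laplacian} to peel off $T^{t_1,6}$ and $T^{t_1,7}$; (ii) for the remaining ``main'' piece, use a single pointwise identity to split the factor $\alp\cdot\nb_y\Delta_\alp\Lambda^{t_1}f_1$ into three cleaner parts; (iii) carry out $\alp$-symmetrizations and one integration by parts in $\alp$ to identify each part with $T^{t_1,j}$ for $j=1,\ldots,5$.

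Concretely, write $F(\alp,y):=\alp\cdot\nb_y\Delta_\alp f_1$ and $G(\alp,y):=\left(1+(\Delta_\alp f_2)^2\right)^{-3/2}-1$, so that
\begin{equation*}
\Lambda^{t_1}\calT[f_1,f_2]=\frac{1}{2\pi}\int_{\bbR^2}\Lambda^{t_1}(FG)\,\frac{d\alp}{|\alp|^2}.
\end{equation*}
The identity \eqref{commutator identity for fractional laplacian} writes $\Lambda^{t_1}(FG)=G\,\Lambda^{t_1}F+F\,\Lambda^{t_1}G+\left(\Lambda^{t_1}(FG)-F\Lambda^{t_1}G-G\Lambda^{t_1}F\right)$; the last parenthesis integrated against $|\alp|^{-2}d\alp$ is by definition $T^{t_1,7}$, and since $\Lambda^{t_1}$ commutes with $\alp\cdot\nb_y\Delta_\alp$ (which acts in $y$ only), the middle piece is $T^{t_1,6}$. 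For the remaining main piece $M:=\frac{1}{2\pi}\int G\,\alp\cdot\nb_y\Delta_\alp\Lambda^{t_1}f_1\,|\alp|^{-2}d\alp$, the key identity is
\begin{equation*}
\alp\cdot\nb_y\Delta_\alp h \;=\; \frac{\alp}{|\alp|}\cdot\nb h \;-\; \Delta_\alp h \;-\; \alp\cdot\nb_\alp\Delta_\alp h,
\end{equation*}
which follows from $\nb h(y-\alp)=-\nb_\alp h(y-\alp)$ together with the Leibniz rule $\nb_\alp(\dlt_\alp h/|\alp|)=\nb_\alp\dlt_\alp h/|\alp|-\dlt_\alp h\,\alp/|\alp|^3$. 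With $h=\Lambda^{t_1}f_1$ this splits $M=M_1-M_2-M_3$ in the obvious way.

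Setting $\tilde G_{\rm lin}(\alp,y):=\left(1+(\tfrac{\alp}{|\alp|}\cdot\nb f_2(y))^2\right)^{-3/2}$ (which is even in $\alp$), each $M_i$ is identified by a distinct symmetrization. For $M_1$ split the integration into $B_1(0)$ and its complement: on $B_1(0)$ decompose $G=(G+1-\tilde G_{\rm lin})+(\tilde G_{\rm lin}-1)$, where the second term is even in $\alp$ and therefore vanishes in the principal-value sense when paired with the odd kernel $\alp/|\alp|^3$, leaving exactly $T^{t_1,2}$; on $\bbR^2\setminus B_1(0)$ extract the odd-in-$\alp$ part of $G+1$ (the even part again cancels against $\alp/|\alp|^3$), producing $T^{t_1,3}$. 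For $M_2$ the same decomposition yields $-T^{t_1,4}$ from $G+1-\tilde G_{\rm lin}$, and in the $\tilde G_{\rm lin}-1$ contribution the change of variable $\alp\mapsto-\alp$ symmetrizes $\Delta_\alp\Lambda^{t_1}f_1$ to $\tfrac12(\Delta_\alp+\Delta_{-\alp})\Lambda^{t_1}f_1$, giving $-T^{t_1,1}$. For $M_3$ integrate by parts in $\alp$: on $\bbR^2\setminus\{0\}$ one has $\nb_\alp\cdot(\alp|\alp|^{-2})=0$ in dimension two, so only the derivative of $G$ survives, and using $\nb_\alp G=-3\Delta_\alp f_2\left(1+(\Delta_\alp f_2)^2\right)^{-5/2}\nb_\alp\Delta_\alp f_2$ together with the key identity applied to $f_2$, namely $\alp\cdot\nb_\alp\Delta_\alp f_2=\tfrac{\alp}{|\alp|}\cdot\nb f_2-\Delta_\alp f_2-\alp\cdot\nb_y\Delta_\alp f_2$, one recognizes $M_3=-T^{t_1,5}$. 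Summing, $\Lambda^{t_1}\calT=M_1-M_2-M_3+T^{t_1,6}+T^{t_1,7}=\sum_{j=1}^{7}T^{t_1,j}$.

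The main obstacle is that most of these integrals are only conditionally convergent — $|\alp|^{-3}$ is supercritical at $\alp=0$ in $\bbR^2$, and the $|\alp|^{-2}$ weight is borderline at infinity — so every symmetrization, every use of ``odd $\times$ even $\to$ zero,'' and the integration by parts in $M_3$ must be justified by a standard regularization: truncate to an annulus $\varepsilon<|\alp|<R$, carry out the algebra rigorously there, and let $\varepsilon\to 0^+$, $R\to\infty$; equivalently, prove the identity for Schwartz $f_1,f_2$ and extend by density once the continuity estimates of Proposition \ref{proposition: key} are in place. The individual $T^{t_1,j}$ are themselves understood in the appropriate principal-value sense.
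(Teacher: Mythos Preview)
Your argument is correct. The overall architecture --- peel off $T^{t_1,6}$ and $T^{t_1,7}$ via \eqref{commutator identity for fractional laplacian}, then decompose the remaining ``main'' integral and recognize $T^{t_1,1}$--$T^{t_1,5}$ through $\alp\mapsto-\alp$ symmetrizations --- matches the paper's. Where you differ is in how the main piece is split. The paper writes $\alp\cdot\nb_y\Delta_\alp\Lambda^{t_1}f_1=\tfrac{\alp}{|\alp|}\cdot\nb\Lambda^{t_1}f_1-\tfrac{\alp}{|\alp|}\cdot\nb\Lambda^{t_1}f_1(\cdot-\alp)$, rewrites the shifted gradient as $\tfrac{\alp}{|\alp|}\cdot\nb_\alp\dlt_\alp\Lambda^{t_1}f_1$ via \eqref{change y to alpha derivative}, and integrates by parts \emph{twice} against the kernel $\alp/|\alp|^3$ (once for the $\tilde G_{\mathrm{lin}}-1$ part to produce $T^{t_1,1}$, once for $\mathrm{II}_2$ to produce $T^{t_1,4}+T^{t_1,5}$), using $\nb_\alp\cdot(\alp/|\alp|^3)=-|\alp|^{-3}$ and \eqref{derivatives of alpha functions-2}--\eqref{derivatives of alpha functions-3}. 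Your route instead packages everything into the single pointwise identity $\alp\cdot\nb_y\Delta_\alp h=\tfrac{\alp}{|\alp|}\cdot\nb h-\Delta_\alp h-\alp\cdot\nb_\alp\Delta_\alp h$, so that $T^{t_1,1}$ and $T^{t_1,4}$ fall out of $-M_2$ with \emph{no} integration by parts, and only $M_3$ requires one, exploiting the two--dimensional identity $\nb_\alp\cdot(\alp/|\alp|^2)=0$. Your organization is slightly more economical (one IBP instead of two) and makes the dimensional cancellation explicit; the paper's has the minor advantage of not relying on $d=2$ for the divergence identity, though the rest of the analysis is two--dimensional anyway. Your discussion of the principal-value/annular-truncation justification is appropriate and matches what the paper leaves implicit.
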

\begin{proof}
Noticing $\Lambda^{t_1}\nabla_y=\nabla_y \Lambda^{t_1}$, we have
\begin{equation*}
    \Lambda^{t_1}\calT [f_1,f_2]=\frac{1}{2\pi}\int_{\bbR^2} \alp \cdot \nabla_y \Delta_\alp \Lambda^{t_1} f_1\left(\left(1+ \left(\Delta_\alp  f_2\right)^2\right)^{-\frac32}-1 \right)\frac{d\alp}{|\alp|^2} +\sum_{j=6}^7 T^{t_1,j}[f_1,f_2].
\end{equation*}
We compute
\begin{equation*}
    \begin{split}
        &\frac{1}{2\pi}\int_{\bbR^2} \alp \cdot \nabla_y \Delta_\alp \Lambda^{t_1} f_1\left(\left(1+ \left(\Delta_\alp  f_2\right)^2\right)^{-\frac32}-1 \right)\frac{d\alp}{|\alp|^2} \\
        &\quad= \frac{1}{2\pi}\int_{\bbR^2}\alp \cdot \nabla_y \Delta_\alp \Lambda^{t_1} f_1 \left(\left(1+ \left(\frac{\alp}{|\alp|}\cdot \nb_y  f_2\right)^2\right)^{-\frac32}-1\right)\frac{d\alp}{|\alp|^2}\\
        &\qquad+\frac{1}{2\pi}\int_{\bbR^2}\alp \cdot \nabla_y \Delta_\alp \Lambda^{t_1} f_1 \left(\left(1+ (\Delta_\alp f_2)^2\right)^{-\frac32}-\left(1+ \left(\frac{\alp}{|\alp|}\cdot \nb_y  f_2\right)^2\right)^{-\frac32}\right)\frac{d\alp}{|\alp|^2}\\
        &\quad= \mrI + \mrII.
    \end{split}
\end{equation*}
Before reformulating $\mrI$ and $\mrII$, we note
\begin{equation}\label{change y to alpha derivative}
    \nb_y \left(\Lambda^{t_1} f_1(y-\alp)\right)= -\nb_\alp \left(\Lambda^{t_1}f_1(y-\alp)\right)=  \nb_\alp \left(\Lambda^{t_1} f_1(y)-\Lambda^{t_1} f_1(y-\alp)\right),
\end{equation}
\begin{equation}\label{derivatives of alpha functions-1}
        \nb_\alp \cdot \left( \frac{\alp}{|\alp|^3}\right)= -\frac{1}{|\alp|^3},
\end{equation}
\begin{equation}\label{derivatives of alpha functions-2}
    \alp \cdot \nb_\alp  \left( \left(1+ \left(\frac{\alp}{|\alp|}\cdot \nb_y f_2\right)^2\right)^{-\frac32}\right) =0,
\end{equation}
\begin{equation}\label{derivatives of alpha functions-3}
    \begin{split}
    \alp \cdot \nb_\alp \left(\left(1+ (\Delta_\alp f_2)^2\right)^{-\frac32} \right)&=
    \frac{3\Delta_\alp f_2}{\left(1+ (\Delta_\alp f_2)^2\right)^{\frac52}}\left(\Delta_\alp f_2-\frac{\alp\cdot \nb_y f_2(|y-\alp|)}{|\alp|} \right) \\
    &=\frac{3\Delta_\alp f_2}{\left(1+ (\Delta_\alp f_2)^2\right)^{\frac52}}\left(\Delta_\alp f_2-\frac{\alp \cdot \nb_y f_2}{|\alp|} + \alp \cdot \nb_y \Delta_\alp f_2\right).
    \end{split}
\end{equation}
For $\mrI$, noticing
\begin{equation*}
        \int_{\bbR^2}\alp \cdot \nabla_y \Lambda^{t_1} f_1\frac{d\alp}{|\alp|^3}=\int_{\bbR^2}\frac{\alp \cdot \nabla_y  \Lambda^{t_1} f_1}{\left(1+ \left(\frac{\alp}{|\alp|}\cdot \nb_y f_2\right)^2\right)^{\frac32}} \frac{d\alp}{|\alp|^3}=0
\end{equation*}
by symmetry,  we have
\begin{equation*}
\begin{split}
        \mrI &= -\frac{1}{2\pi}\int_{\bbR^2}\alp \cdot \nabla_y \Lambda^{t_1} f_1(y-\alp)\left(\left(1+ \left(\frac{\alp}{|\alp|}\cdot \nb_y f_2\right)^2\right)^{-\frac32}-1\right)\frac{d\alp}{|\alp|^3} \\
        &=-\frac{1}{2\pi}\int_{\bbR^2}\alp \cdot \nabla_\alp \delta_\alp \Lambda^{t_1} f_1\left(\left(1+ \left(\frac{\alp}{|\alp|}\cdot \nb_y f_2\right)^2\right)^{-\frac32}-1\right)\frac{d\alp}{|\alp|^3},
        \end{split}
\end{equation*}
where in the second equality we used \eqref{change y to alpha derivative}. We integrate by parts and employ \eqref{derivatives of alpha functions-1}, \eqref{derivatives of alpha functions-2} to obtain
\begin{equation*}
    \begin{split}
        \mrI &=  \frac{1}{2\pi}\int_{\bbR^2}\dlt_\alp\Lambda^{t_1} f_1 \,\nb_\alp \cdot \left(\frac{\alp}{|\alp|^3} \left(\left(1+ \left(\frac{\alp}{|\alp|}\cdot \nb_y f_2\right)^2\right)^{-\frac32} -1 \right)\right)d\alp \\
        &=- \frac{1}{2\pi}\int_{\bbR^2}\dlt_\alp\Lambda^{t_1} f_1 \left(\left(1+ \left(\frac{\alp}{|\alp|}\cdot \nb_y f_2\right)^2\right)^{-\frac32} -1 \right)\frac{d\alp}{|\alp|^3}.
    \end{split}
\end{equation*}
Making a change of variables $\alp \mapsto -\alp$, we observe
\begin{equation*}
\begin{split}
    \int_{\bbR^2}&\dlt_\alp\Lambda^{t_1} f_1  \left(\left(1+ \left(\frac{\alp}{|\alp|}\cdot \nb_y f_2\right)^2\right)^{-\frac32} -1 \right)\frac{d\alp}{|\alp|^3}=\int_{\bbR^2}\dlt_{-\alp}\Lambda^{t_1} f_1  \left(\left(1+ \left(\frac{\alp}{|\alp|}\cdot \nb_y f_2\right)^2\right)^{-\frac32} -1 \right)\frac{d\alp}{|\alp|^3},
\end{split}
\end{equation*}
which implies $\mrI=T^{t_1,1}[f_1,f_2]$.
We decompose $\mrII$ into
\begin{equation*}
    \begin{split}
        \mrII &= T^{t_1,2}[f_1,f_2] + \mrII_1 + \mrII_2,
    \end{split}
\end{equation*}
where
\begin{equation*}
    \begin{split}
        \mrII_1 &= \frac{1}{2\pi}\int_{\bbR^2\backslash B_1(0)}\alp \cdot \nabla_y  \Lambda^{t_1} f_1\left(\left(1+ (\Delta_\alp f_2)^2\right)^{-\frac32}-\left(1+ \left(\frac{\alp}{|\alp|}\cdot \nb_y f_2\right)^2\right)^{-\frac32} \right)\frac{d\alp}{|\alp|^3},\\
        \mrII_2 &= -\frac{1}{2\pi}\int_{\bbR^2}\alp \cdot \nabla_y  \Lambda^{t_1} f_1(y-\alp) \left(\left(1+ (\Delta_\alp f_2)^2\right)^{-\frac32}-\left(1+ \left(\frac{\alp}{|\alp|}\cdot \nb_y f_2\right)^2\right)^{-\frac32} \right)\frac{d\alp}{|\alp|^3}.
    \end{split}
\end{equation*}
For $\mrII_1$, we observe
\begin{equation*}
        \int_{\bbR^2\backslash B_1(0)}\frac{\alp \cdot \nabla_y  \Lambda^{t_1} f_1}{\left(1+ \left(\frac{\alp}{|\alp|}\cdot \nb_y f_2\right)^2\right)^{\frac32}} \frac{d\alp}{|\alp|^3}=0
\end{equation*}
by symmetry, so that 
\begin{equation*}
        \mrII_1 = \frac{1}{2\pi}\int_{\bbR^2\backslash B_1(0)} \frac{\alp \cdot \nabla_y  \Lambda^{t_1} f_1}{\left(1+ (\Delta_\alp f_2)^2\right)^{\frac32}} \frac{d\alp}{|\alp|^3}.
\end{equation*}
Moreover, making a change of variables $\alp \mapsto -\alp$, we see
\begin{equation*}
    \int_{\bbR^2\backslash B_1(0)} \frac{\alp \cdot \nabla_y  \Lambda^{t_1} f_1}{\left(1+ (\Delta_\alp f_2)^2\right)^{\frac32}} \frac{d\alp}{|\alp|^3}=-\int_{\bbR^2\backslash B_1(0)} \frac{\alp \cdot \nabla_y  \Lambda^{t_1} f_1}{\left(1+ (\Delta_{-\alp} f_2)^2\right)^{\frac32}} \frac{d\alp}{|\alp|^3},
\end{equation*}
which yields $\mrII_1=T^{t_1,3}[f_1,f_2]$.
For $\mrII_2$, recalling \eqref{change y to alpha derivative}, we integrate by parts to obtain
\begin{equation*}
    \mrII_2=\frac{1}{2\pi}\int_{\bbR^2}\dlt_\alp\Lambda^{t_1} f_1 \,\nb_\alp \cdot \left(\frac{\alp}{|\alp|^3} \left(\left(1+ (\Delta_\alp f_2)^2\right)^{-\frac32}-\left(1+ \left(\frac{\alp}{|\alp|}\cdot \nb_y f_2\right)^2\right)^{-\frac32} \right)\right)d\alp.
\end{equation*}
Computing with the aids of \eqref{derivatives of alpha functions-1} - \eqref{derivatives of alpha functions-3}, we arrive at
\begin{equation*}
    \mrII_2 = T^{t_1,4}[f_1,f_2]+T^{t_1,5}[f_1,f_2].
\end{equation*}
\end{proof}

Now we estimate $L^2$-norm of each $T^{t_1,j}[f,k^{Lin}_{s}]$ $(1\le j \le 7)$ over Lemma \ref{lem: T^1(klin)} - Lemma \ref{lem: T^7(klin)}, which proves Proposition \ref{proposition: key}.
\begin{lemma}\label{lem: T^1(klin)}
    For any  $t_1\in (3/2,2)$, there holds
    \begin{equation*}
\nrmb{T^{t_1,1}[f(\cdot),k^{Lin}_{s}(|\cdot|)]}_{L^2} \lesssim s^2\nrmb{f}_{\dot{H}^{t_1+1}}.
    \end{equation*}
\end{lemma}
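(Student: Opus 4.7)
The plan is to exploit the fact that the weight $(1+(\alp/|\alp|\cdot\nb_y k^{Lin}_s)^2)^{-3/2}-1$ depends on $\alp$ only through its direction and is uniformly $O(s^2)$, so that after symmetrization the double integral in $\alp$ reduces to a weighted superposition of directional half-Laplacians acting on $\Lambda^{t_1}f$. I will set $g:=\Lambda^{t_1}f$ and
\begin{equation*}
G(\omega,y) := \left(1+(\omega\cdot\nb_y k^{Lin}_s(y))^2\right)^{-3/2}-1,\qquad \omega\in S^1.
\end{equation*}
Rewriting $\Delta_\alp g=\dlt_\alp g/|\alp|$ in the definition \eqref{eq: reformulation- definitions} of $T^{t_1,1}$ and passing to polar coordinates $\alp=r\omega$ with $d\alp=r\,dr\,d\omega$, I obtain
\begin{equation*}
T^{t_1,1}[f,k^{Lin}_s](y) = -\frac{1}{4\pi}\int_{S^1} G(\omega,y)\int_0^\infty \frac{2g(y)-g(y+r\omega)-g(y-r\omega)}{r^2}\,dr\,d\omega.
\end{equation*}

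Next, I will identify the inner integral $(A_\omega g)(y)$ as the Fourier multiplier on $\bbR^2$ with symbol $\pi|\xi\cdot\omega|$. This follows from taking the Fourier transform in $y$ and invoking the classical identity $\int_0^\infty(1-\cos(rt))/r^2\,dr=\pi|t|/2$, which gives
\begin{equation*}
\|A_\omega g\|_{L^2(\bbR^2)} \le \pi\|g\|_{\dot{H}^1(\bbR^2)} = \pi\|f\|_{\dot{H}^{t_1+1}(\bbR^2)}
\end{equation*}
uniformly in $\omega\in S^1$, since $|\xi\cdot\omega|\le|\xi|$. Combining Minkowski's inequality in $\omega$ with the pointwise estimate $\|G(\omega,\cdot)\|_{L^\infty(\bbR^2)}\lesssim s^2$, which comes from the bound $|\nb_y k^{Lin}_s|\le s$ in \eqref{klin nabla} together with the elementary inequality $|(1+a^2)^{-3/2}-1|\lesssim a^2$, I arrive at
\begin{equation*}
\|T^{t_1,1}[f,k^{Lin}_s]\|_{L^2} \lesssim s^2\,\|f\|_{\dot{H}^{t_1+1}},
\end{equation*}
as required.

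The main delicacy will be justifying rigorously the polar/Fourier exchange, since for a general $g\in\dot{H}^1(\bbR^2)$ the inner integral in $r$ is only conditionally convergent near $r=0$. This is handled by treating the integral as a principal value or, equivalently, by proving the multiplier identity first for Schwartz $g$ via Fubini and then extending by density to $\dot{H}^1$. Beyond this standard point the argument is essentially algebraic: the scale invariance of $G$ in $\alp$ produces the angular/radial decoupling, after which one only needs the $\dot{H}^1\to L^2$ boundedness of the directional half-Laplacian and the $O(s^2)$ pointwise control of the weight.
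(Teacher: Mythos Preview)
Your proposal is correct and follows essentially the same approach as the paper: both exploit that the weight depends only on $\alp/|\alp|$ and is $O(s^2)$ (via \eqref{klin nabla}), pass to polar coordinates to separate the angular weight from the radial integral, and then control the radial integral through Plancherel, the paper via the bound $|2-2\cos(r\sigma\cdot\xi)|\le\min\{|r\sigma\cdot\xi|^2,2\}$ and you via the equivalent closed-form evaluation $\int_0^\infty(1-\cos(rt))/r^2\,dr=\pi|t|/2$. The density/Fubini justification you flag is the same standard point implicit in the paper's use of Plancherel.
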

\begin{proof}
To begin with, we claim that
\begin{equation}\label{est: 1- mean nabla klin}
    \Abs{\left(1+ \left(\frac{\alp}{|\alp|}\cdot \nb_y  k^{Lin}_{s}(|y|)\right)^2\right)^{-\frac32}-1} \lesssim s^2.
\end{equation}
Applying the mean value theorem to the function $F(x)=(1+x^2)^\frac32$, we have $\tau\in(0,1)$ such that
    \begin{equation*}
            \left|\frac{1}{F\left(\frac{\alp}{|\alp|}\cdot \nb_y  k^{Lin}_{s}(|y|)\right)}-1 \right|
            =
            \frac{\left|1-F\left(\frac{\alp}{|\alp|}\cdot \nb_y  k^{Lin}_{s}(|y|)\right)\right|}{F\left(\frac{\alp}{|\alp|}\cdot \nb_y  k^{Lin}_{s}(|y|)\right)} =\frac{\left|F'\left( \tau\frac{\alp}{|\alp|}\cdot \nb_y  k^{Lin}_{s}(|y|)\right)\right|\left|\frac{\alp}{|\alp|}\cdot \nb_y  k^{Lin}_{s}(|y|)\right|}{F\left(\frac{\alp}{|\alp|}\cdot \nb_y  k^{Lin}_{s}(|y|)\right)}.
    \end{equation*}
Since $\left|\frac{F'(\tau a)}{F(a)}\right| \lesssim |a|$
for $\tau \in (0,1)$ and $a\in\bbR$, we see
\begin{equation*}
    \left|\frac{1}{F\left(\frac{\alp}{|\alp|}\cdot \nb_y  k^{Lin}_{s}(|y|)\right)}-1 \right| \lesssim \left|\frac{\alp}{|\alp|}\cdot \nb_y  k^{Lin}_{s}(|y|)\right|^2 \lesssim s^2,
\end{equation*}
where in the last inequality, we used \eqref{klin nabla}.

Next, we observe that the factor
$\left(\frac{1}{\left(1+ \left(\frac{\alp}{|\alp|}\cdot \nb_y  k^{Lin}_{s}(|y|)\right)^2\right)^{\frac32}}-1\right)$ is independent of the length $|\alp|$, so that \eqref{est: 1- mean nabla klin} gives for $\alp=r\sigma$
\begin{equation*}
    |T^{t_1,1}[f(y),k^{Lin}_{s}(|y|)]| \lesssim s^2\int_{\bbS^1} \Abs{\int_{0}^\infty \Delta_{r\sigma} \Lambda^{t_1} f(y)+\Delta_{-r\sigma} \Lambda^{t_1} f(y)\frac{dr}{r}} d\sigma.
\end{equation*}
Thus using the Minkowski's inequality, we have
\begin{equation*}
    \nrmb{T^{t_1,1}[f(y),k^{Lin}_{s}(|y|)]}_{L^2}\lesssim s^2\int_{\bbS^1}\nrmb{\int_{0}^\infty \Delta_{r\sigma} \Lambda^{t_1} f(y)+\Delta_{-r\sigma} \Lambda^{t_1} f (y)\frac{dr}{r}}_{L^2_y}  d\sigma.
\end{equation*}
Since the Fourier transform of $(\Delta_{\alp}+\Delta_{-\alp})\Lambda^{t_1} f$ is
\begin{equation*}\label{Fourier transform of (Delta alp + Delta -alp)f}
    \frac{2-e^{-i\alp\cdot\xi}-e^{i\alp\cdot\xi}}{|\alp|}|\xi|^{t_1}\widehat{f}(\xi)=\frac{2-2\cos(\alp\cdot \xi)}{|\alp|}|\xi|^{t_1}\widehat{f}(\xi),
\end{equation*}
the Plancheral's theorem provides us with
\begin{equation*}
\begin{split}
    \nrmb{T^{t_1,1}[f(y),k^{Lin}_{s}(|y|)]}_{L^2}&\lesssim s^2\int_{\bbS^1}\normb{\int_{0}^\infty  \left|2-2\cos\left(r\sigma\cdot \xi\right)\right||\xi|^{t_1}\widehat{f} (\xi)\frac{dr}{r^2}} d\sigma \\
    &=s^2\int_{\bbS^1}\normb{|\xi|^{t_1}\widehat{f} (\xi)\left(\int_{0}^\frac{1}{|\sigma \cdot \xi|} \left|2-2\cos\left(r\sigma\cdot \xi\right)\right|\frac{dr}{r^2}+\int_\frac{1}{|\sigma \cdot \xi|}^\infty \left|2-2\cos\left(r\sigma\cdot \xi\right)\right|\frac{dr}{r^2}\right)} d\sigma.
\end{split}
\end{equation*}
Using
\begin{equation*}\label{est: fourier cosin}
    \left|2-2\cos\left(r\sigma\cdot \xi\right)\right|\le \min\left\{|r\sigma\cdot \xi|^2,2\right\},
\end{equation*}
we have
\begin{equation*}
    \int_{0}^\frac{1}{|\sigma \cdot \xi|} \left|2-2\cos\left(r\sigma\cdot \xi\right)\right|\frac{dr}{r^2}+\int_\frac{1}{|\sigma \cdot \xi|}^\infty \left|2-2\cos\left(r\sigma\cdot \xi\right)\right|\frac{dr}{r^2} \lesssim \int_{0}^\frac{1}{|\sigma \cdot \xi|} |r\sigma\cdot \xi|^2\frac{dr}{r^2}+\int_\frac{1}{|\sigma \cdot \xi|}^\infty\frac{dr}{r^2} \approx |\sigma \cdot \xi|\lesssim |\xi|
\end{equation*}
for every $\sigma \in \bbS^1$,
so that
\begin{equation*}
\nrmb{T^{t_1,1}[f(y),k^{Lin}_{s}(|y|)]}_{L^2}\lesssim s^2\int_{\bbS^1}\normb{|\xi|^{t_1+1}\widehat{f} (\xi)} d\sigma \lesssim s^2\nrmb{f}_{\dot{H}^{t_1+1}}.
\end{equation*}
\end{proof}

\begin{lemma}
     For any  $t_1\in (3/2,2)$, there holds
    \begin{equation*}
\nrmb{T^{t_1,2}[f(\cdot),k^{Lin}_{s}(|\cdot|)]}_{L^2} \lesssim s^2\nrmb{f}_{\dot{H}^{t_1+1}}.
    \end{equation*}
\end{lemma}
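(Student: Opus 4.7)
The plan is to exploit the symmetrization already present in the definition of $T^{t_1,2}$ (namely, the factor comparing $\Delta_\alp k^{Lin}_{s}$ with its tangent approximation $\frac{\alp}{|\alp|}\cdot\nb k^{Lin}_{s}$) in order to absorb the singularity $d\alp/|\alp|^3$. More precisely, applying the mean value theorem to $F(x)=(1+x^2)^{-3/2}$ gives $|F(a)-F(b)|\lesssim |a-b|(|a|+|b|)$. With $a=\Delta_\alp k^{Lin}_{s}(y)$ and $b=\frac{\alp}{|\alp|}\cdot\nb_y k^{Lin}_{s}(y)$, the bound $|\nb k^{Lin}_{s}|\lesssim s$ from \eqref{klin nabla} immediately gives $|a|+|b|\lesssim s$, while the Morrey-type estimate \eqref{est: klin-1'} (valid for any $p\in(2,\infty)$) yields $|a-b|\lesssim s|\alp|^{1-\frac2p}$. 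Combining these,
\begin{equation*}
\left|\left(1+(\Delta_\alp k^{Lin}_{s})^2\right)^{-\frac32}-\left(1+\left(\tfrac{\alp}{|\alp|}\cdot\nb_y k^{Lin}_{s}\right)^2\right)^{-\frac32}\right|\lesssim s^2|\alp|^{1-\frac2p}.
\end{equation*}

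The second step is to convert this into a pointwise bound on $T^{t_1,2}$. Since $\nb_y\Lambda^{t_1}f$ is evaluated at $y$ (not at a shifted point), we may pull it outside the $\alp$-integral, estimating $|\alp\cdot\nb_y\Lambda^{t_1}f(y)|\le |\alp|\,|\nb_y\Lambda^{t_1}f(y)|$. Hence
\begin{equation*}
\left|T^{t_1,2}[f,k^{Lin}_{s}](y)\right|\lesssim s^2\,|\nb_y\Lambda^{t_1}f(y)|\int_{B_1(0)}|\alp|^{-1-\frac2p}\,d\alp \lesssim s^2\,|\nb_y\Lambda^{t_1}f(y)|,
\end{equation*}
where the last integral is finite because $\int_0^1 r^{-\frac2p}\,dr<\infty$ for $p>2$. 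Finally, taking the $L^2_y$ norm and using Plancherel gives
\begin{equation*}
\nrmb{T^{t_1,2}[f,k^{Lin}_{s}]}_{L^2}\lesssim s^2\,\nrmb{\nb\Lambda^{t_1}f}_{L^2}=s^2\,\nrmb{f}_{\dot{H}^{t_1+1}},
\end{equation*}
which is the desired bound.

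There is no real obstacle here: the $B_1(0)$ truncation in the definition of $T^{t_1,2}$ exists precisely so that the local analysis via \eqref{est: klin-1'} closes (the large-$|\alp|$ region is handled separately by $T^{t_1,3}$). The only point requiring minor attention is that we must choose $p$ strictly greater than $2$ so that $|\alp|^{-1-2/p}$ is integrable at the origin in $\bbR^2$; any fixed such $p$ works and does not interact with the restriction $t_1\in(3/2,2)$.
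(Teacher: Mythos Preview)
Your proposal is correct and follows essentially the same route as the paper: first establish the pointwise bound $\bigl|(1+(\Delta_\alp k^{Lin}_{s})^2)^{-3/2}-(1+(\tfrac{\alp}{|\alp|}\cdot\nb k^{Lin}_{s})^2)^{-3/2}\bigr|\lesssim s^2|\alp|^{1-2/p}$ via the mean value theorem together with \eqref{klin nabla} and \eqref{est: klin-1'}, then pull $|\nb\Lambda^{t_1}f|$ out of the $\alp$-integral over $B_1(0)$ and integrate the remaining $|\alp|^{-1-2/p}$. The only cosmetic difference is that the paper applies the mean value theorem to $F(x)=(1+x^2)^{3/2}$ and works with $1/F$, whereas you apply it directly to $(1+x^2)^{-3/2}$; the resulting inequality is the same.
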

\begin{proof}
    To begin with, we show that for $p\in(2,\infty)$,
    \begin{equation}\label{est: diff-1-1}
        \left|\left(1+ (\Delta_\alp k^{Lin}_{s}(|y|))^2\right)^{-\frac32}-\left(1+ \left(\frac{\alp}{|\alp|}\cdot \nb_y  k^{Lin}_{s}(|y|)\right)^2\right)^{-\frac32} \right| \lesssim s^2|\alp|^{1-\frac2p}.
    \end{equation}
    Applying the mean value theorem to the function $F(x)=(1+x^2)^\frac32$, we have $\tau\in(0,1)$ such that
    \begin{equation*}
        \begin{split}
            &\left|\frac{1}{F\left(\Delta_\alp k^{Lin}_{s}(|y|))\right)}-\frac{1}{F\left(\frac{\alp}{|\alp|}\cdot \nb_y  k^{Lin}_{s}(|y|)\right)} \right| \\
            &\qquad=
            \frac{\left|F\left(\frac{\alp}{|\alp|}\cdot \nb_y  k^{Lin}_{s}(|y|)\right)-F\left(\Delta_\alp k^{Lin}_{s}(|y|))\right)\right|}{F\left(\Delta_\alp k^{Lin}_{s}(|y|))\right)F\left(\frac{\alp}{|\alp|}\cdot \nb_y  k^{Lin}_{s}(|y|)\right)} \\
            &\qquad=\frac{\left|F'\left(\tau \left(\frac{\alp}{|\alp|}\cdot \nb_y  k^{Lin}_{s}(|y|)\right) +(1-\tau)\left(\Delta_\alp k^{Lin}_{s}(|y|)\right)\right)\right|\left|\frac{\alp}{|\alp|}\cdot \nb_y  k^{Lin}_{s}(|y|)-\Delta_\alp k^{Lin}_{s}(|y|) \right|}{F\left(\Delta_\alp k^{Lin}_{s}(|y|))\right)F\left(\frac{\alp}{|\alp|}\cdot \nb_y  k^{Lin}_{s}(|y|)\right)}.
        \end{split}
    \end{equation*}
    Since 
\begin{equation*}\label{est: F-1}
 \left|\frac{F'(\tau a + (1-\tau)b)}{F(a)F(b)}\right| \lesssim |a|+|b|
\end{equation*}
for $\tau \in (0,1)$ and $a,b\in\bbR$, we obtain
\begin{equation}\label{est: F-2}
\begin{split}
    &\left|\frac{1}{F\left(\Delta_\alp k^{Lin}_{s}(|y|))\right)}-\frac{1}{F\left(\frac{\alp}{|\alp|}\cdot \nb_y  k^{Lin}_{s}(|y|)\right)} \right| \\
    &\qquad\lesssim \left|\Abs{\partial_r k^{Lin}_{s}(|y|)}+\Abs{\Delta_\alp k^{Lin}_{s}(|y|)} \right|\left|\frac{\alp}{|\alp|}\cdot \nb_y  k^{Lin}_{s}(|y|)-\Delta_\alp k^{Lin}_{s}(|y|) \right|.
    \end{split}
\end{equation}
Hence  \eqref{est: diff-1-1} follows from \eqref{est: klin-1'}, \eqref{klin nabla}, and 
\begin{equation}\label{est: nb klin for s}
        \Abs{\Delta_\alp k^{Lin}_{s}(|y|)} \lesssim s,
\end{equation}
which is guaranteed by \eqref{fundamental thm of cal} and  \eqref{klin nabla}.
The estimation of $\nrmb{T^{t_1,2}[f(y),k^{Lin}_{s}(|y|)]}_{ L^2}$ follows from \eqref{eq: reformulation- definitions} and  \eqref{est: diff-1-1}: 
\begin{equation*}
    \nrmb{T^{t_1,2}[f(y),k^{Lin}_{s}(|y|)]}_{ L^2} \lesssim s^2 \nrmb{\int_{B_1(0)}\Abs{\nb_y \Lambda^{t_1} f(y)}\frac{d\alp}{|\alp|^{1+\frac2p}}}_{L^2} \lesssim  s^2 \nrmb{\nb \Lambda^{t_1} f}_{L^2}.
\end{equation*}
\end{proof}

\begin{lemma}
     For any  $t_1\in (3/2,2)$, there holds
    \begin{equation*}
\nrmb{T^{t_1,3}[f(\cdot),k^{Lin}_{s}(|\cdot|)]}_{L^2} \lesssim s^2\nrmb{f}_{\dot{H}^{t_1+1}}.
    \end{equation*}
\end{lemma}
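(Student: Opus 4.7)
My plan is to show that the difference inside the integrand is pointwise controlled by $s^2$ times a kernel in $(y,\alp)$, and then that the remaining $\alp$-integral is $O(1)$ uniformly in $y$. Setting $G(x)=(1+x)^{-3/2}$, we have $|G'(x)| \leq 3/2$ on $[0,\infty)$, so by the mean value theorem
\begin{equation*}
\left|\left(1+ (\Delta_\alp k^{Lin}_s(|y|))^2\right)^{-\frac32} - \left(1+ (\Delta_{-\alp} k^{Lin}_s(|y|))^2\right)^{-\frac32}\right| \lesssim \left|(\Delta_\alp k^{Lin}_s(|y|))^2 - (\Delta_{-\alp} k^{Lin}_s(|y|))^2\right|,
\end{equation*}
and the right-hand side is bounded by the estimate \eqref{est: klin-1}. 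Combining this with the pointwise inequality $|\alp \cdot \nb_y \Lmb^{t_1} f(y)| \leq |\alp|\,|\nb_y \Lmb^{t_1} f(y)|$ in the definition of $T^{t_1,3}[f,k^{Lin}_s]$, I obtain
\begin{equation*}
|T^{t_1,3}[f,k^{Lin}_s](y)| \lesssim s^2\, |\nb_y \Lmb^{t_1} f(y)| \int_{\bbR^2\backslash B_1(0)} \frac{H(y,\alp)}{|\alp|^2}\,d\alp,
\end{equation*}
where $H(y,\alp)$ denotes the kernel appearing on the right-hand side of \eqref{est: klin-1}.

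The heart of the argument is to verify that $\int_{|\alp|\ge 1} H(y,\alp)\,|\alp|^{-2}\,d\alp \lesssim 1$ uniformly in $y$. I will split according to the two indicator functions in $H$. In the regime $|\alp|\ge |y|/2$ (which covers the entire integration domain whenever $|y|\le 2$), the bound $\frac{|y|}{\sqrt{|y|^2+|\alp|^2+1}} \lesssim \min(1,|y|/|\alp|)$ followed by a direct polar-coordinate computation delivers an $O(1)$ constant independent of $y$. In the complementary regime $1\le |\alp|\le |y|/2$, which is non-empty only when $|y|>2$, I use $\frac{|y||\alp|}{|y|^2+|\alp|^2+1} \lesssim |\alp|/|y|$ and $\frac{|y|}{|\alp|\sqrt{|y|^2+|\alp|^2+1}} \lesssim 1/|\alp|$; integrating each against $|\alp|^{-2}d\alp$ over the annulus $1\le |\alp|\le |y|/2$ again yields a uniform $O(1)$ bound, with the factor $1/|y|$ in the first term cancelling the area growth of the annulus.

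Putting the two regimes together and invoking Plancherel's theorem then gives
\begin{equation*}
\nrmb{T^{t_1,3}[f,k^{Lin}_s]}_{L^2} \lesssim s^2 \nrmb{\nb \Lmb^{t_1} f}_{L^2} \approx s^2 \nrmb{f}_{\dot H^{t_1+1}},
\end{equation*}
which is the desired estimate. The main potential obstacle, although technically elementary, is ensuring in the region $1\le |\alp|\le |y|/2$ that the $|y|$-dependence inside $H$ genuinely cancels against the $|y|$-dependent upper limit of integration so that no logarithmic-in-$|y|$ factor appears; the restriction $|\alp|\ge 1$ built into the definition of $T^{t_1,3}$ is what keeps the integrand away from the singularity $|\alp|=0$ that would otherwise prevent the $1/|\alp|$ term from being integrable.
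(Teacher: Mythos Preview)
Your proof is correct and follows essentially the same approach as the paper: both reduce the difference of the $(1+\cdot)^{-3/2}$ factors via the mean value theorem to the quadratic difference $|(\Delta_\alp k^{Lin}_s)^2-(\Delta_{-\alp} k^{Lin}_s)^2|$, invoke \eqref{est: klin-1}, and then verify that the resulting $\alp$-integral over $\{|\alp|\ge 1\}$ is bounded uniformly in $y$. The only cosmetic difference is that the paper evaluates the $\alp$-integrals via explicit antiderivatives (arctan, logarithms), whereas you use the cruder but entirely sufficient upper bounds $\frac{|y||\alp|}{|y|^2+|\alp|^2+1}\lesssim |\alp|/|y|$ and $\frac{|y|}{|\alp|\sqrt{|y|^2+|\alp|^2+1}}\lesssim 1/|\alp|$ on the inner region and $\frac{|y|}{\sqrt{|y|^2+|\alp|^2+1}}\lesssim |y|/|\alp|$ on the outer region; both routes yield the same uniform $O(1)$ bound.
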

\begin{proof}
Applying the mean value theorem to the function $F(x)=(1+x)^\frac32$ with $x>0$, we have $\tau\in(0,1)$ such that
    \begin{equation*}
        \begin{split}
        &\left|\left(1+ (\Delta_\alp k^{Lin}_{s}(|y|))^2\right)^{-\frac32}-\left(1+ (\Delta_{-\alp} k^{Lin}_{s}(|y|))^2\right)^{-\frac32} \right|\\
            &\qquad=
            \frac{\left|F\left((\Delta_{-\alp} k^{Lin}_{s}(|y|))^2\right)-F\left((\Delta_\alp k^{Lin}_{s}(|y|))^2\right)\right|}{F\left((\Delta_\alp k^{Lin}_{s}(|y|))^2\right)F\left((\Delta_{-\alp} k^{Lin}_{s}(|y|))^2\right)} \\
            &\qquad=\frac{\left|F'\left(\tau \left(\Delta_{-\alp} k^{Lin}_{s}(|y|)\right)^2 +(1-\tau)\left(\Delta_\alp k^{Lin}_{s}(|y|)\right)^2\right)\right|\left|(\Delta_{\alp} k^{Lin}_{s}(|y|))^2-(\Delta_{-\alp} k^{Lin}_{s}(|y|))^2 \right|}{F\left((\Delta_\alp k^{Lin}_{s}(|y|))^2\right)F\left((\Delta_{-\alp} k^{Lin}_{s}(|y|))^2\right)}\\
            &\qquad \lesssim \left|(\Delta_{\alp} k^{Lin}_{s}(|y|))^2-(\Delta_{-\alp} k^{Lin}_{s}(|y|))^2 \right|,
        \end{split}
    \end{equation*}
    where on the last line, we used
    \begin{equation}\label{new F est abtau}
        \frac{\Abs{F'(\tau a + (1-\tau)b)}}{F(a)F(b)}\lesssim 1
    \end{equation}
    for any $a,b>0$ and $\tau\in(0,1)$.
    Then \eqref{eq: reformulation- definitions}, \eqref{est: klin-1} gives us
    \begin{equation*}
\begin{split}
    &\nrmb{T^{t_1,3}[f(y),k^{Lin}_{s}(|y|)]}_{L^2} \\
    &\quad\lesssim s^2 \nrmb{\Abs{\nb_y \Lambda^{t_1} f(y)}\int_{\bbR^2\backslash B_1(0)}1_{\left\{|\alp|\le \frac{|y|}{2}\right\}}\left(\frac{|y||\alp|}{|y|^2+|\alp|^2+1}+\frac{|y|}{|\alp|\sqrt{|y|^2+|\alp|^2+1}}\right)\frac{d\alp}{|\alp|^2}}_{L^2}\\
    &\qquad+ s^2 \nrmb{\Abs{\nb_y \Lambda^{t_1} f(y)}\int_{\bbR^2\backslash B_1(0)}1_{\left\{|\alp|\ge \frac{|y|}{2}\right\}}\frac{|y|}{\sqrt{|y|^2+|\alp|^2+1}}\frac{d\alp}{|\alp|^2}}_{L^2}.
    \end{split}
\end{equation*}
Note that for any $y\in\bbR^2\backslash \{0\}$,
\begin{equation*}
    \int_{\left\{|\alp|\ge \frac{|y|}{2}\right\}}\frac{|y|}{\sqrt{|y|^2+|\alp|^2+1}}\frac{d\alp}{|\alp|^2}=\frac{|y|}{\sqrt{|y|^2+1}}\log \left(\frac{2\sqrt{|y|^2+1}}{|y|}+\sqrt{\left(\frac{2\sqrt{|y|^2+1}}{|y|}\right)^2+1}\right)\lesssim 1,
\end{equation*}
and moreover, 
\begin{equation*}
    \int_{\bbR^2\backslash B_1(0)}1_{\left\{|\alp|\le \frac{|y|}{2}\right\}}\left(\frac{|y||\alp|}{|y|^2+|\alp|^2+1}+\frac{|y|}{|\alp|\sqrt{|y|^2+|\alp|^2+1}}\right)\frac{d\alp}{|\alp|^2}=0
\end{equation*}
for $|y|\le2$ while
\begin{equation*}
\begin{split}
    &\int_{\bbR^2\backslash B_1(0)}1_{\left\{|\alp|\le \frac{|y|}{2}\right\}}\left(\frac{|y||\alp|}{|y|^2+|\alp|^2+1}+\frac{|y|}{|\alp|\sqrt{|y|^2+|\alp|^2+1}}\right)\frac{d\alp}{|\alp|^2}\\
&\quad=\frac{|y|\left(\sqrt{|y|^2+1}\left(\arctan\left(\frac{|y|}{2\sqrt{|y|^2+1}}\right)-\arctan\left(\frac{1}{\sqrt{|y|^2+1}}\right)\right)+\sqrt{|y|^2+2}\right)-\sqrt{5|y|^2+4}}{|y|^2+1}\lesssim 1
    \end{split}
\end{equation*}
for $|y|>2$. Hence we arrive at
\begin{equation*}
    \nrmb{T^{t_1,3}[f(y),k^{Lin}_{s}(|y|)]}_{L^2}\lesssim s^2 \nrmb{\nb \Lambda^{t_1} k^{Lin}_{s}(|\cdot|)}_{L^2}.
\end{equation*}
\end{proof}

\begin{lemma}\label{lem: T^4(klin)}
For any  $t_1\in (3/2,2)$, there holds
    \begin{equation*}
\nrmb{T^{t_1,4}[f(\cdot),k^{Lin}_{s}(|\cdot|)]}_{L^2} \lesssim s^2\nrmb{f}_{\dot{H}^{t_1+\frac34}}.
    \end{equation*}
\end{lemma}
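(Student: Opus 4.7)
The plan is to bound the difference factor
\begin{equation*}
B(y,\alp):=\left(1+(\Delta_\alp k^{Lin}_{s}(|y|))^2\right)^{-\frac32}-\left(1+\left(\tfrac{\alp}{|\alp|}\cdot\nb_y k^{Lin}_{s}(|y|)\right)^2\right)^{-\frac32}
\end{equation*}
pointwise in $y$ by a function of $|\alp|$ alone, and then exploit the characterization of the $\dot{H}^{t_1+\frac34}$ seminorm provided by \eqref{eq: prelemma-1} through a Cauchy--Schwarz argument in $\alp$.

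The first step is to establish, for any $p\in(2,\infty)$, the uniform pointwise bound $|B(y,\alp)|\lesssim s^2\min\{|\alp|^{1-2/p},\,1\}$. For the $|\alp|^{1-2/p}$ contribution, the mean-value reasoning that produced \eqref{est: F-2}, combined with the Morrey-type estimate \eqref{est: klin-1'}, the uniform bound \eqref{klin nabla} on $\nb_y k^{Lin}_s$, and \eqref{est: nb klin for s}, yields $|B(y,\alp)|\lesssim s^2|\alp|^{1-2/p}$ exactly as in the proof for $T^{t_1,2}$. To cover $|\alp|\gtrsim 1$ I would use instead the uniform bounds $|\Delta_\alp k^{Lin}_s|,\,|\tfrac{\alp}{|\alp|}\cdot\nb_y k^{Lin}_s|\lesssim s$, together with the elementary inequality $|(1+x^2)^{-3/2}-1|\lesssim x^2$ valid for $|x|\le 1$, to conclude $|B(y,\alp)|\lesssim s^2$.

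With this control in hand, Minkowski's integral inequality in $y$ followed by Cauchy--Schwarz in $\alp$ against the splitting $\frac{1}{|\alp|^{3}}=\frac{1}{|\alp|^{1+3/4}}\cdot\frac{1}{|\alp|^{5/4}}$ gives
\begin{equation*}
\nrmb{T^{t_1,4}[f,k^{Lin}_{s}]}_{L^2}\lesssim s^2\left(\int_{\bbR^2}\frac{\nrmb{\dlt_\alp \Lambda^{t_1}f}_{L^2}^2}{|\alp|^{2+3/2}}\,d\alp\right)^{\!1/2}\left(\int_{\bbR^2}\frac{\min\{|\alp|^{2-4/p},1\}}{|\alp|^{5/2}}\,d\alp\right)^{\!1/2}.
\end{equation*}
By \eqref{eq: prelemma-1} with $a=0$ and $b=3/4$, the first factor equals $\nrmb{f}_{\dot{H}^{t_1+3/4}}$ up to a constant. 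For the second factor I would fix any $p\in(8/3,\infty)$, for instance $p=4$: near the origin the integrand behaves like $|\alp|^{-1/2-4/p}$, which is integrable in $\bbR^2$ precisely when $p>8/3$, and at infinity it behaves like $|\alp|^{-5/2}$, which is integrable in $\bbR^2$. Combining the two factors closes the estimate.

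The main obstacle — and the source of the exponent $3/4$ appearing on the right-hand side — is the tension in the auxiliary integral between its two tails. On one side, the Morrey exponent $p$ must exceed $2$, which caps the allowed singularity of the weight at the origin; on the other, the bracket $B$ does not decay as $|\alp|\to\infty$ because $k^{Lin}_s$ is only Lipschitz with $\nrmb{\nb k^{Lin}_s}_{L^\infty}\approx s$, which caps the allowed weight at infinity. The choice $3/4$ is precisely the one that makes the weight $|\alp|^{-5/2}$ integrable in $\bbR^2$ at infinity while keeping the range $p>8/3$ available near the origin.
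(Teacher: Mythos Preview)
Your argument is correct, but it differs from the paper's. The paper does not bound $B(y,\alp)$ in $L^\infty_y$; instead it uses the product Sobolev estimate \eqref{sobolev embedding-2} to write
\[
\nrmb{\Delta_\alp\Lambda^{t_1}f\cdot\big(\tfrac{\alp}{|\alp|}\cdot\nb_y k^{Lin}_s-\Delta_\alp k^{Lin}_s\big)}_{L^2_y}\lesssim \nrmb{\Lambda^{t_1}\dlt_\alp f}_{\dot H^{1/2}_y}\nrmb{\tfrac{\alp}{|\alp|}\cdot\nb_y k^{Lin}_s-\Delta_\alp k^{Lin}_s}_{\dot H^{1/2}_y},
\]
and then applies Cauchy--Schwarz in $\alp$, closing with \eqref{eq: prelemma-1} on the first factor and \eqref{eq: prelemma-2} on the second (yielding $\nrmb{k^{Lin}_s}_{\dot H^{9/4}}\lesssim s$). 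Your route is more elementary---no fractional product rule and no \eqref{eq: prelemma-2}---because you exploit the specific pointwise structure of $k^{Lin}_s$ (namely $\nrmb{\nb k^{Lin}_s}_{L^\infty}\lesssim s$ and the Morrey bound \eqref{est: klin-1'}). The paper's route, on the other hand, works verbatim when the second slot is a general function in $\dot H^{9/4}$, which is exactly what is needed later for the analogous terms $Q^{t_1,4}$ and $R^{t_1,4}$ in Sections~\ref{sec: linear term}--\ref{sec: nonlinear term}.
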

\begin{remark}
    Since $t_1\in (3/2,2)$, we can check $t_1+3/4 \in (2,t_1+1)$.
\end{remark}
\begin{proof}
Recalling \eqref{est: F-2}, we have
\begin{equation*}
    \Abs{T^{t_1,4}[f(y),k^{Lin}_{s}(|y|)]}\lesssim s \int_{\bbR^2} \Abs{\Delta_\alp \Lambda^{t_1} f(y)} \left|\frac{\alp}{|\alp|}\cdot \nb_y  k^{Lin}_{s}(|y|)-\Delta_\alp k^{Lin}_{s}(|y|) \right| \frac{d\alp}{|\alp|^2}.
\end{equation*}
Noticing \eqref{sobolev embedding-2}, \eqref{eq: prelemma-1}, \eqref{eq: prelemma-2}, and \eqref{klin fourier}, we estimate
\begin{equation*}
\begin{split}
    &\normb{T^{t_1,4}[f(y),k^{Lin}_{s}(|y|)]}\\
    &\quad\lesssim s \int_{\bbR^2} \nrmb{ \Lambda^{t_1} \dlt_\alp f(y)}_{\dot{H}^\frac12_y} \nrmb{\frac{\alp}{|\alp|}\cdot \nb_y  k^{Lin}_{s}(|y|)-\Delta_\alp k^{Lin}_{s}(|y|)}_{\dot{H}^\frac12_y} \frac{d\alp}{|\alp|^3} \\
    &\quad\lesssim s \left(\int_{\bbR^2}\nrmb{ \Lambda^{t_1} \dlt_\alp f(y)}^2_{\dot{H}^\frac12} \frac{d\alp}{|\alp|^\frac52}\right)^\frac12 \left(\int_{\bbR^2}\nrmb{\frac{\alp}{|\alp|}\cdot \nb_y  k^{Lin}_{s}(|y|)-\Delta_\alp k^{Lin}_{s}(|y|)}^2_{\dot{H}^\frac12} \frac{d\alp}{|\alp|^\frac72}\right)^\frac12 \\
    &\quad \lesssim s \nrmb{\Lambda^{t_1} f}_{\dot{H}^\frac34}\nrmb{k^{Lin}_{s}}_{\dot{H}^\frac94} \lesssim s^2\nrmb{f}_{\dot{H}^{t_1+\frac34}}.
\end{split}
\end{equation*}
\end{proof}

\begin{lemma}\label{lem: T^5(klin)}
For any  $t_1\in (3/2,2)$, there holds
    \begin{equation*}
\nrmb{T^{t_1,5}[f(\cdot),k^{Lin}_{s}(|\cdot|)]}_{L^2} \lesssim s^2\nrmb{f}_{\dot{H}^{t_1+\frac34}}.
    \end{equation*}
\end{lemma}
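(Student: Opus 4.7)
The plan is to mimic the strategy used for $T^{t_1,4}$ (Lemma \ref{lem: T^4(klin)}), with two small twists: first, the factor $\Delta_\alp k^{Lin}_{s}/(1+(\Delta_\alp k^{Lin}_{s})^2)^{5/2}$ now plays the role played by the difference of $F^{-1}$'s, and it is trivially $O(s)$ by \eqref{est: nb klin for s}; second, the awkward parenthetical $\Delta_\alp f_2-\frac{\alp}{|\alp|}\cdot\nb_y f_2+\alp\cdot\nb_y\Delta_\alp f_2$ must be recognized as a shifted Taylor remainder so that the estimate \eqref{eq: prelemma-2} applies.

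First, I would carry out the elementary identity
\begin{equation*}
\Delta_\alp f_2-\frac{\alp}{|\alp|}\cdot\nb_y f_2+\alp\cdot\nb_y\Delta_\alp f_2=\frac{1}{|\alp|}\bigl(f_2(y)-f_2(y-\alp)-\alp\cdot\nb_y f_2(y-\alp)\bigr)=:\frac{G(\alp,y)}{|\alp|},
\end{equation*}
obtained by combining the three terms over the common denominator $|\alp|$. Note that $G(\alp,y)=-(\dlt_{-\alp}f_2-(-\alp)\cdot\nb_y f_2)(y-\alp)$, so by translation invariance $\|G(\alp,\cdot)\|_{\dot{H}^{1/2}}=\|\dlt_{\alp'}f_2-\alp'\cdot\nb_y f_2\|_{\dot{H}^{1/2}}$ with $|\alp'|=|\alp|$. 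Combined with the bound $|\Delta_\alp k^{Lin}_{s}/(1+(\Delta_\alp k^{Lin}_{s})^2)^{5/2}|\lesssim s$ from \eqref{est: nb klin for s}, this is the structural input we need.

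Next, I would apply Minkowski's inequality in $\alp$, followed by \eqref{sobolev embedding-2} with $n=2$ and $s_1=s_2=1/2$, to reduce the $L^2$ norm to a product integrand:
\begin{equation*}
\nrmb{T^{t_1,5}[f,k^{Lin}_{s}]}_{L^2_y}\lesssim s\int_{\bbR^2}\nrmb{\dlt_\alp\Lmb^{t_1}f}_{\dot{H}^{1/2}}\,\nrmb{G(\alp,\cdot)}_{\dot{H}^{1/2}}\,\frac{d\alp}{|\alp|^4}.
\end{equation*}
Cauchy--Schwarz in $\alp$ then splits the weight $|\alp|^{-4}$ as $|\alp|^{-5/4}\cdot|\alp|^{-11/4}$, producing two squared integrals with weights $|\alp|^{-5/2}$ and $|\alp|^{-11/2}$ respectively.

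Finally, I would apply \eqref{eq: prelemma-1} (with $a=1/2$, $b=1/4$) to the $f$ integral, yielding $\|f\|^2_{\dot{H}^{t_1+3/4}}$, and \eqref{eq: prelemma-2} with $\gamma=1$, $a=1/2$, $b=7/4$ (valid since $1<7/4<2$) to the $G$ integral, yielding $\|k^{Lin}_{s}\|^2_{\dot{H}^{9/4}}$. The latter is finite and of size $s$ by \eqref{klin fourier}, since $\widehat{k^{Lin}_{s}}(\xi)=-se^{-|\xi|}|\xi|^{-3}$ and $9/4>2$. Combining everything gives
\begin{equation*}
\nrmb{T^{t_1,5}[f,k^{Lin}_{s}]}_{L^2}\lesssim s\cdot\nrmb{f}_{\dot{H}^{t_1+3/4}}\cdot s=s^2\nrmb{f}_{\dot{H}^{t_1+3/4}},
\end{equation*}
and since $t_1\in(3/2,2)$ forces $t_1+3/4\in(9/4,11/4)\subset(2,t_1+1)$, the target regularity index lies in the allowed range. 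The only place requiring care is the translation step that converts $G$ into a genuine Taylor remainder $\dlt_\alp f_2-\alp\cdot\nb f_2$ so that \eqref{eq: prelemma-2} can be invoked; otherwise the proof is a direct parallel of Lemma \ref{lem: T^4(klin)}.
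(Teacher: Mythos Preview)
Your proof is correct. The only difference from the paper's argument is in how you treat the parenthetical factor $\Delta_\alp f_2-\frac{\alp}{|\alp|}\cdot\nb_y f_2+\alp\cdot\nb_y\Delta_\alp f_2$: the paper splits it into the two pieces $\Delta_\alp k^{Lin}_s-\frac{\alp}{|\alp|}\cdot\nb_y k^{Lin}_s$ and $\alp\cdot\nb_y\Delta_\alp k^{Lin}_s$, then estimates the first piece exactly as in Lemma~\ref{lem: T^4(klin)} (via \eqref{eq: prelemma-2}) and the second via \eqref{eq: prelemma-1}, both producing a factor $\nrmb{k^{Lin}_s}_{\dot{H}^{9/4}}$. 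You instead observe the identity collapsing the parenthetical into a single translated Taylor remainder $-(\dlt_{-\alp}k^{Lin}_s-(-\alp)\cdot\nb k^{Lin}_s)(y-\alp)$ and invoke \eqref{eq: prelemma-2} once after a change of variables $\alp\mapsto-\alp$. Your route is slightly more economical (one application of \eqref{eq: prelemma-2} instead of one each of \eqref{eq: prelemma-1} and \eqref{eq: prelemma-2}), at the price of the translation observation; both arrive at the same indices and the same bound.
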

\begin{proof}
Using \eqref{est: nb klin for s},
we have
    \begin{equation*}
    \begin{split}
        \Abs{T^{t_1,5}[f(y),k^{Lin}_{s}(|y|)]} &\lesssim s \int_{\bbR^2} \Abs{\Delta_\alp \Lambda^{t_1} f(y)} \left|\frac{\alp}{|\alp|}\cdot \nb_y  k^{Lin}_{s}(|y|)-\Delta_\alp k^{Lin}_{s}(|y|) \right| \frac{d\alp}{|\alp|^2} \\
        &\quad + s\int_{\bbR^2} \Abs{\Delta_\alp \Lambda^{t_1} f(y)}\left|\nb_y \dlt_\alp k^{Lin}_{s}(|y|)\right|\frac{d\alp}{|\alp|^2}\\
        &=\mrI+\mrII.
    \end{split}
    \end{equation*}
    $\normb{\mrI}$ can be estimated with the same argument as the proof of last lemma. For $\normb{\mrII}$, we again use \eqref{sobolev embedding-2}, \eqref{eq: prelemma-1}, and \eqref{klin fourier} to obtain
\begin{equation*}
\begin{split}
    \normb{\mrII}
    &\lesssim  s\int_{\bbR^2} \nrmb{ \Lambda^{t_1} \dlt_\alp f(y)}_{\dot{H}^\frac12_y} \nrmb{\nb_y \dlt_\alp k^{Lin}_{s}(|y|)}_{\dot{H}^\frac12_y} \frac{d\alp}{|\alp|^3} \\
    &\lesssim  s\left(\int_{\bbR^2}\nrmb{ \Lambda^{t_1} \dlt_\alp f(y)}^2_{\dot{H}^\frac12} \frac{d\alp}{|\alp|^\frac52}\right)^\frac12 \left(\int_{\bbR^2}\nrmb{\nb_y \dlt_\alp k^{Lin}_{s}(|y|)}^2_{\dot{H}^\frac12} \frac{d\alp}{|\alp|^\frac72}\right)^\frac12 \\
    &\lesssim  s\nrmb{\Lambda^{t_1} f}_{\dot{H}^\frac34}\nrmb{k^{Lin}_{s}}_{\dot{H}^\frac94} \lesssim s^2\nrmb{f}_{\dot{H}^{t_1+\frac34}}.
\end{split}
\end{equation*}
\end{proof}

\begin{lemma}
  For any  $t_1\in (3/2,2)$, there holds
    \begin{equation*}
\nrmb{T^{t_1,6}[f(\cdot),k^{Lin}_{s}(|\cdot|)]}_{L^2} \lesssim s^2\nrmb{f}_{\dot{H}^{\frac{t_1}{2}+\frac54}\cap\dot{H}^{t_1+\frac34}}.
    \end{equation*}
\end{lemma}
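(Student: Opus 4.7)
The plan is to extract a factor of $s^{2}$ from $\Lmb^{t_1}(V(\Delta_\alp k^{Lin}_{s}))$, where $V(x):=(1+x^{2})^{-3/2}$, and then apply the Sobolev--H\"older framework used in the preceding proofs for $T^{t_1,4}$ and $T^{t_1,5}$. Since $\Lmb^{t_1}1=0$, I would first rewrite $\Lmb^{t_1}V(\Delta_\alp k^{Lin}_{s})=\Lmb^{t_1}(V(\Delta_\alp k^{Lin}_{s})-1)$ and note the Taylor expansion $V(x)-1=-\tfrac{3}{2}x^{2}+O(x^{4})$, so that the kernel is quadratically small in $s$ by the pointwise bound $|\Delta_\alp k^{Lin}_{s}|\lesssim s$ from \eqref{est: nb klin for s}.

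Taking the $L^{2}_{y}$ norm via Minkowski and applying the Sobolev product estimate \eqref{sobolev embedding-2} with $s_{1}=s_{2}=\tfrac12$, one obtains the master bound
\begin{equation*}
\nrmb{T^{t_1,6}[f,k^{Lin}_{s}]}_{L^{2}}\lesssim \int_{\bbR^{2}}\frac{\nrmb{\dlt_\alp\nb f}_{\dot{H}^{1/2}}\,\nrmb{V(\Delta_\alp k^{Lin}_{s})-1}_{\dot{H}^{t_1+1/2}}}{|\alp|^{2}}\,d\alp.
\end{equation*}
The crux is a pointwise-in-$\alp$ estimate of the form $\nrmb{V(\Delta_\alp k^{Lin}_{s})-1}_{\dot{H}^{t_1+1/2}}\lesssim s^{2}\min(1,|\alp|^{1/2-t_1})$. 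To prove it I would factor $V(u)-1=u^{2}W(u)$ with $W$ smooth and bounded, and then use the commutator identity \eqref{commutator identity for fractional laplacian} together with Moser-type estimates for the composition $W(u)$ to reduce the estimate to $\nrmb{u}_{L^{\infty}}\nrmb{u}_{\dot{H}^{t_1+1/2}}$ plus harmless lower-order pieces. The $L^{\infty}$ factor contributes one power of $s$ via \eqref{est: nb klin for s}, while a direct Fourier computation based on \eqref{klin fourier} gives $\nrmb{\Delta_\alp k^{Lin}_{s}}_{\dot{H}^{t_1+1/2}}\lesssim s\min(1,|\alp|^{1/2-t_1})$, supplying the second power of $s$ and the required decay in $|\alp|$.

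With this pointwise bound in hand, a Cauchy--Schwarz in $\alp$ combined with Lemma \eqref{eq: prelemma-1} gives, for $b\in(0,1)$,
\begin{equation*}
\biggl(\int_{\bbR^{2}}\frac{\nrmb{\dlt_\alp\nb f}_{\dot{H}^{1/2}}^{2}}{|\alp|^{2+2b}}\,d\alp\biggr)^{1/2}\approx \nrmb{f}_{\dot{H}^{3/2+b}}.
\end{equation*}
Splitting the $\alp$-integration into $\{|\alp|\le 1\}$ and $\{|\alp|>1\}$, I would choose $b=t_{1}/2-\tfrac14\in(\tfrac12,\tfrac34)$ in one region to recover $\nrmb{f}_{\dot{H}^{t_1/2+5/4}}$, and in the other region either take $b=t_1-\tfrac34$ (when $t_1<\tfrac74$) or invoke Lemma \eqref{eq: prelemma-2} with $\gamma=1$ and $b\in(1,2)$, using the second-order cancellation $\dlt_\alp\nb f-\alp\cdot\nb^{2}f$ (when $t_1\ge\tfrac74$), to recover $\nrmb{f}_{\dot{H}^{t_1+3/4}}$. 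These two weight choices are precisely what produces the intersection norm in the target.

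The main obstacle is the composition estimate $\nrmb{V(\Delta_\alp k^{Lin}_{s})-1}_{\dot{H}^{t_1+1/2}}\lesssim s^{2}\min(1,|\alp|^{1/2-t_1})$: the order $t_{1}+1/2$ exceeds $2$, where direct chain-rule inequalities in $\dot{H}^{s}$ are delicate, and $V(\Delta_\alp k^{Lin}_{s})-1$ is bounded but does not decay at infinity (since $\Delta_\alp k^{Lin}_{s}$ approaches a direction-dependent constant as $|y|\to\infty$), so its low-frequency behavior must be tracked with care. The commutator identity \eqref{commutator identity for fractional laplacian} applied to the factorization of $V(u)-1$ converts this into product-type estimates involving only quantities for which Fourier-level control of $\Delta_\alp k^{Lin}_{s}$ is already available via \eqref{klin fourier}.
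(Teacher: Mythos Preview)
Your route differs from the paper's, and as written it has two concrete problems.

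First, the claimed pointwise decay is wrong. From \eqref{klin fourier} one has $\widehat{k^{Lin}_{s}}(\xi)=-se^{-|\xi|}/|\xi|^{3}$, so
\[
\nrmb{\Delta_\alp k^{Lin}_{s}}_{\dot{H}^{t_1+1/2}}^{2}=\frac{s^{2}}{|\alp|^{2}}\int_{\bbR^{2}}|\xi|^{2t_1-5}\,|1-e^{-i\alp\cdot\xi}|^{2}\,e^{-2|\xi|}\,d\xi.
\]
For large $|\alp|$ the oscillatory factor contributes only $O(1)$ after angular averaging (the integral $\int|\xi|^{2t_1-5}e^{-2|\xi|}d\xi$ is finite precisely because $t_1>3/2$), so the decay is $|\alp|^{-1}$, not $|\alp|^{1/2-t_1}$. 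With the correct rate your large-$|\alp|$ weight integral only converges when $b<1$, which rules out your choice $b=t_1-3/4$ for $t_1\ge 7/4$; the fix you suggest via the second-order cancellation requires first rewriting $T^{t_1,6}$ itself, not just the $f$-norm, and you have not done that.

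Second, and more serious, the composition estimate $\nrmb{V(\Delta_\alp k^{Lin}_{s})-1}_{\dot{H}^{t_1+1/2}}\lesssim s^{2}\min(1,|\alp|^{-1})$ is the whole difficulty and you do not prove it. A Moser-type estimate at fractional order $t_1+1/2>2$ for a function that does not decay at infinity is nonstandard; the commutator identity \eqref{commutator identity for fractional laplacian} alone does not close the iteration that arises from repeatedly expanding $W(u)$, and you explicitly leave the low-frequency issue open.

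The paper bypasses both issues by a different H\"older splitting: it pairs $\nb_y\dlt_\alp f$ in $\dot{H}^{t_1-1}$ against $\Lmb^{t_1}V(\Delta_\alp k^{Lin}_{s})$ in $\dot{H}^{2-t_1}$, so that the second factor is controlled by the \emph{integer-order} norm $\nrmb{V(\Delta_\alp k^{Lin}_{s})}_{\dot{H}^{2}}=\nrmb{\Delta V(\Delta_\alp k^{Lin}_{s})}_{L^{2}}$, which is computed explicitly by the chain rule. This yields
\[
\nrmb{V(\Delta_\alp k^{Lin}_{s})}_{\dot{H}^{2}}\lesssim \frac{s}{|\alp|}\Bigl(\nrmb{\dlt_\alp\nb k^{Lin}_{s}}_{\dot{H}^{1/2}}+\nrmb{\dlt_\alp\Delta k^{Lin}_{s}}_{L^{2}}\Bigr),
\]
so only one power of $s$ comes out pointwise; the second power of $s$ is produced after Cauchy--Schwarz in $\alp$, by bounding the $k^{Lin}_{s}$-factors through \eqref{eq: prelemma-1} in terms of $\nrmb{k^{Lin}_{s}}_{\dot{H}^{t}}\lesssim s$ for appropriate $t>2$. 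The two exponents $t_1/2+5/4$ and $t_1+3/4$ then drop out of the two terms above with the weight choices $|\alp|^{-(3\pm\eps_1)}$ and $|\alp|^{-7/2\,\text{or}\,-5/2}$, exactly as in Lemmas \ref{lem: T^4(klin)}--\ref{lem: T^5(klin)}. The key simplification is to avoid any fractional chain rule and to keep the $k^{Lin}_{s}$-dependence inside the $\alp$-integral rather than extracting a pointwise-in-$\alp$ bound.
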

\begin{remark}
    Since $t_1\in (3/2,2)$, we can check
    $t_1/2+5/4 \in(2,t_1+1)$.
\end{remark}
\begin{proof}
To begin with, we show
\begin{equation}\label{est: laplacian of klin in denominator}
    \nrmb{\left(1+ \left(\Delta_\alp  k_s^{Lin}(|y|)\right)^2\right)^{-\frac32}}_{\dot{H}^2_y} \lesssim \frac{s}{|\alp|}\left(\nrmb{\dlt_\alp\nabla_y  k^{Lin}_{s}(|y|)}_{\dot{H}^{\frac12}_y} + \nrmb{\dlt_\alp\Delta k^{Lin}_{s}(|y|)}_{L^2_y}\right).
\end{equation}
Recalling \eqref{est: nb klin for s}, we have
\begin{equation*}
    \begin{split}
    \Abs{\Delta \left(\left(1+ \left(\Delta_\alp  k_s^{Lin}(|y|)\right)^2\right)^{-\frac32}\right)}&\lesssim \Abs{\frac{\left(\Delta_\alp k_s^{Lin}(|y|)\right)^2 \Delta_\alp \nabla_y k_s^{Lin}(|y|)\cdot \Delta_\alp \nabla_y k_s^{Lin}(|y|) }{\left(1+ \left(\Delta_\alp k_s^{Lin}(|y|)\right)^2\right)^{\frac72}}}\\
    &\quad+\Abs{\frac{\Delta_\alp \nabla_y k_s^{Lin}(|y|)\cdot \Delta_\alp \nabla_y k_s^{Lin}(|y|) }{\left(1+ \left(\Delta_\alp k_s^{Lin}(|y|)\right)^2\right)^{\frac52}}} +\Abs{\frac{\Delta_\alp k_s^{Lin}(|y|)\Delta_\alp \Delta k_s^{Lin}(|y|) }{\left(1+ \left(\Delta_\alp k_s^{Lin}(|y|)\right)^2\right)^{\frac52}}}\\
        &\lesssim \Abs{\Delta_\alp \nb_y k^{Lin}_{s}(|y|)}^2+ s\Abs{\Delta_\alp \Delta k^{Lin}_{s}(|y|)},
    \end{split}
\end{equation*}
so that
\begin{equation*}
    \nrmb{\left(1+ \left(\Delta_\alp  k_s^{Lin}(|y|)\right)^2\right)^{-\frac32}}_{\dot{H}^2_y} \lesssim \frac{1}{|\alp|^2}\nrmb{\dlt_\alp\nabla_y   k^{Lin}_{s}(|y|)}^2_{L^4_y} + \frac{s}{|\alp|}\nrmb{\dlt_\alp\Delta k^{Lin}_{s}(|y|)}_{L^2_y}.
\end{equation*}
Note that \eqref{fundamental thm of cal} and \eqref{klin nabla^2} imply
\begin{equation*}
    \frac{\nrmb{ \dlt_\alp\nabla_y  k^{Lin}_{s}(|y|)}_{L^4_y}}{|\alp|}\lesssim \nrmb{\nb^2 k^{Lin}_{s}(|y|)}_{L^4_y} \lesssim s,
\end{equation*}
so that this together with the Sobolev embedding: $\dot{H}^{\frac12}(\bbR^2)\hookrightarrow L^4(\bbR^2)$ give us \eqref{est: laplacian of klin in denominator}.
Next, employing \eqref{est: laplacian of klin in denominator},  \eqref{sobolev embedding-2}, \eqref{eq: prelemma-1}, and \eqref{klin fourier}, we have
\begin{equation*}
\begin{split}
    &\normb{T^{t_1,6}[f(y),k^{Lin}_{s}(|y|)]}\\
    &\quad\lesssim  \int_{\bbR^2} \nrmb{ \nabla_y  \dlt_\alp f(y)}_{\dot{H}^{t_1-1}_y} \nrmb{\Lambda^{t_1}\left(\left(1+ \left(\Delta_\alp  k_s^{Lin}(|y|)\right)^2\right)^{-\frac32}\right)}_{\dot{H}^{2-t_1}_y} \frac{d\alp}{|\alp|^2} \\
    &\quad\lesssim  s\int_{\bbR^2} \nrmb{ \nabla_y   \dlt_\alp f(y)}_{\dot{H}^{t_1-1}} \nrmb{\nabla_y  \dlt_\alp k^{Lin}_{s}(|y|)}_{\dot{H}^\frac12} \frac{d\alp}{|\alp|^3} +s\int_{\bbR^2} \nrmb{ \nabla_y  \dlt_\alp f(y)}_{\dot{H}^{t_1-1}} \normb{\Delta \dlt_\alp  k^{Lin}_{s}(|y|)} \frac{d\alp}{|\alp|^3}.
\end{split}
\end{equation*}
Since we assumed $t_1\in (3/2,2)$, we have a positive constant $\eps_1:=t_1-\frac32>0$. Using \eqref{eq: prelemma-1} and \eqref{klin fourier}, we obtain
\begin{equation*}
\begin{split}
    &\normb{T^{t_1,6}[f(y),k^{Lin}_{s}(|y|)]}\\
    &\quad \lesssim s\left(\int_{\bbR^2}\nrmb{ \nabla_y \dlt_\alp f(y)}^2_{\dot{H}^{t_1-1}} \frac{d\alp}{|\alp|^{3-\eps_1}}\right)^\frac12 \left(\int_{\bbR^2}\nrmb{\nb_y \dlt_\alp  k^{Lin}_{s}(|y|)}_{\dot{H}^\frac12} \frac{d\alp}{|\alp|^{3+\eps_1}}\right)^\frac12\\
    &\qquad+  s\left(\int_{\bbR^2}\nrmb{ \nabla_y \dlt_\alp  f(y)}^2_{\dot{H}^{t_1-1}} \frac{d\alp}{|\alp|^\frac72}\right)^\frac12 \left(\int_{\bbR^2}\normb{\Delta \dlt_\alp   k^{Lin}_{s}(|y|)} \frac{d\alp}{|\alp|^\frac52}\right)^\frac12 \\
    &\quad \lesssim s\left(\nrmb{f}_{\dot{H}^{t_1+\frac{1-\eps_1}{2}}}\nrmb{k^{Lin}_{s}}_{\dot{H}^{2+\frac{\eps_1}{2}}}+ \nrmb{f}_{\dot{H}^{t_1+\frac{3}{4}}}\nrmb{k^{Lin}_{s}}_{\dot{H}^\frac94}\right) \lesssim s^2\left(\nrmb{f}_{\dot{H}^{\frac{t_1}{2}+\frac54}}+ \nrmb{f}_{\dot{H}^{t_1+\frac{3}{4}}}\right).
\end{split}
\end{equation*}

\end{proof}

\begin{lemma}\label{lem: T^7(klin)}
  For any  $t_1\in (3/2,2)$, there holds
    \begin{equation*}
\nrmb{T^{t_1,7}[f(\cdot),k^{Lin}_{s}(|\cdot|)]}_{L^2} \lesssim s^2 \nrmb{f}_{\dot{H}^{\frac{t_1+3}{2}}}
    \end{equation*}
\end{lemma}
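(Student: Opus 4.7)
The plan is to recognize $T^{t_1,7}$ as a fractional-Laplacian commutator and then carry out a bilinear $\dot H^{1/2}$--$\dot H^{1/2}$ estimate by means of the preliminary tools \eqref{eq: prelemma-1} and \eqref{sobolev embedding-2}. Set
$$F_\alp(y) := \alp \cdot \nabla_y \Delta_\alp f(y), \qquad \tilde G_\alp(y) := \bigl(1 + (\Delta_\alp k^{Lin}_s(|y|))^2\bigr)^{-3/2}.$$
Because $\Lambda^{t_1}$ annihilates constants, the integrand defining $T^{t_1,7}$ coincides with $\Lambda^{t_1}(F_\alp \tilde G_\alp) - \tilde G_\alp\,\Lambda^{t_1}F_\alp - F_\alp\,\Lambda^{t_1}\tilde G_\alp$, and the commutator identity \eqref{commutator identity for fractional laplacian} rewrites
$$T^{t_1,7}[f, k^{Lin}_s](y) = -\frac{C(t_1)}{2\pi}\iint_{\bbR^2\times\bbR^2}\frac{\dlt_\beta F_\alp(y)\,\dlt_\beta \tilde G_\alp(y)}{|\beta|^{2+t_1}\,|\alp|^2}\,d\beta\,d\alp.$$

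I would then take the $L^2_y$ norm by Minkowski, estimate the pointwise product by the 2D Sobolev law $\nrmb{uv}_{L^2}\lesssim\nrmb{u}_{\dot H^{1/2}}\nrmb{v}_{\dot H^{1/2}}$ (a special case of \eqref{sobolev embedding-2}), and apply Cauchy--Schwarz in $(\alp,\beta)$ with weights that split the squared kernel as $(|\beta|^{2+2a}|\alp|^c)^{-1}(|\beta|^{2+2b}|\alp|^d)^{-1}$, where $a+b = t_1$ and $c+d = 4$. Using \eqref{eq: prelemma-1} in each $\beta$-integral (valid for $a,b\in(0,1)$) reduces matters to controlling
$$\biggl(\int_{\bbR^2}\frac{\nrmb{F_\alp}^2_{\dot H^{1/2+a}}}{|\alp|^c}\,d\alp\biggr)^{1/2}\biggl(\int_{\bbR^2}\frac{\nrmb{\tilde G_\alp}^2_{\dot H^{1/2+b}}}{|\alp|^d}\,d\alp\biggr)^{1/2}.$$

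For the first factor I use $\nrmb{F_\alp}_{\dot H^\sigma_y}\leq \nrmb{\nabla_y \dlt_\alp f}_{\dot H^\sigma_y}$ and invoke \eqref{eq: prelemma-1} once more (legal for $c\in(2,4)$) to obtain $\nrmb{f}_{\dot H^{1/2+a+c/2}}$. For the second, write $\tilde G_\alp = 1 + H(\Delta_\alp k^{Lin}_s)$ with $H(x) := (1+x^2)^{-3/2} - 1$ vanishing to second order at $0$: factoring $H(x) = x\,h(x)$ with $|h(x)|\lesssim|x|$ and combining the pointwise bound $\nrmb{\Delta_\alp k^{Lin}_s}_{L^\infty}\lesssim s$ (from \eqref{fundamental thm of cal} and \eqref{klin nabla}) with a fractional Leibniz/Moser estimate yields $\nrmb{\tilde G_\alp}_{\dot H^{1/2+b}}\lesssim s\,|\alp|^{-1}\nrmb{\dlt_\alp k^{Lin}_s}_{\dot H^{1/2+b}}$, and a third use of \eqref{eq: prelemma-1} (valid for $d\in(0,2)$) converts the second factor into $s\,\nrmb{k^{Lin}_s}_{\dot H^{1/2+b+d/2}}$.

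It remains to balance the parameters. Any choice $a\in(t_1-1,\,t_1/2)$ (an interval nonempty precisely when $t_1<2$), together with $b = t_1-a$, $c = t_1+2-2a$, $d = 2+2a-t_1$, forces $1/2+a+c/2 = 1/2+b+d/2 = (t_1+3)/2$ and obeys the constraints $a,b\in(0,1)$, $c\in(2,4)$, $d\in(0,2)$. Since $(t_1+3)/2>2$, the explicit Fourier formula \eqref{klin fourier} gives $\nrmb{k^{Lin}_s}_{\dot H^{(t_1+3)/2}}\lesssim s$, which closes the argument as $\nrmb{T^{t_1,7}}_{L^2}\lesssim s^2\nrmb{f}_{\dot H^{(t_1+3)/2}}$. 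The main delicate step is the fractional Moser estimate for $\tilde G_\alp$ at an index $1/2+b\in(1,3/2)$ strictly greater than $1$: extracting the full factor of $s$ (rather than only $s^{1/2}$ or similar) relies on the double vanishing of $H$ at the origin, which I handle by the splitting $H(u) = u\,h(u)$ and by distributing the fractional regularity across the two factors via the Leibniz rule.
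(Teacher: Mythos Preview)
Your proof is correct and arrives at the same exponent as the paper, following essentially the same skeleton (commutator identity \eqref{commutator identity for fractional laplacian}, a Sobolev product law from \eqref{sobolev embedding-2}, Cauchy--Schwarz in $(\alp,\beta)$, and iterated use of \eqref{eq: prelemma-1}). The parameter balance you write down is exactly right.

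The one point where you diverge from the paper is in how the factor of $s$ is extracted from $\tilde G_\alp$. The paper does this \emph{first}, via the elementary pointwise mean-value bound
\[
\bigl|\dlt_\beta \tilde G_\alp(y)\bigr|\;\lesssim\; s\,\bigl|\Dlt_\alp\dlt_\beta k^{Lin}_s(|y|)\bigr|,
\]
so that the double integrand becomes the simple product $|\nabla_y\dlt_\alp\dlt_\beta f|\cdot|\Dlt_\alp\dlt_\beta k^{Lin}_s|$; only then does it apply the Sobolev product (with the $\dot H^{1/4}$--$\dot H^{3/4}$ split), followed by Cauchy--Schwarz with a symmetric $\beta$-weight. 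You instead postpone the extraction of $s$ until after the $\beta$-integration, which forces you to control $\nrmb{H(\Dlt_\alp k^{Lin}_s)}_{\dot H^{1/2+b}}$ with $1/2+b\in(1,3/2)$ by a composition estimate. That estimate is true---it follows from Kato--Ponce combined with the power-series expansion of $H$ (using $H(0)=H'(0)=0$ and $\nrmb{\Dlt_\alp k^{Lin}_s}_{L^\infty}\lesssim s<1$)---but your justification (``distributing the fractional regularity across the two factors via the Leibniz rule'') is a bit telegraphic at an index strictly above $1$, and the paper's route avoids this issue entirely with a one-line MVT computation. The trade-off: your approach is more flexible (it would generalize to nonlinearities not of the explicit MVT-friendly form), while the paper's is more self-contained.
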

\begin{remark}
    Since $t_1\in (3/2,2)$, we can check
    $t_1/2+3/2 \in(2,t_1+1)$.
\end{remark}
\begin{proof}
We observe for $\bt\in\bbR^2$,
\begin{equation}\label{alp bt est-1}
    \Abs{\dlt_{\beta} \left(\left(1+(\Delta_\alp k^{Lin}_{s}(|y|))^2\right)^{-\frac32}\right)}\lesssim s\Abs{\Dlt_\alp \dlt_\beta k^{Lin}(|y|)}.
\end{equation}
Indeed, applying the mean value theorem to the function $F(x)=(1+x)^\frac32$ with $x>0$, we have $\tau\in(0,1)$ such that
    \begin{equation*}
        \begin{split}
        &\Abs{\dlt_{\beta} \left(\left(1+(\Delta_\alp k^{Lin}_{s}(|y|))^2\right)^{-\frac32}\right)}\\
        &\qquad=\left|\left(1+ (\Delta_\alp k^{Lin}_{s}(|y|))^2\right)^{-\frac32}-\left(1+ (\Delta_{\alp} k^{Lin}_{s}(|y-\bt|))^2\right)^{-\frac32} \right|\\
            &\qquad=
            \frac{\left|F\left((\Delta_{\alp} k^{Lin}_{s}(|y|))^2\right)-F\left((\Delta_\alp k^{Lin}_{s}(|y-\bt|))^2\right)\right|}{F\left((\Delta_\alp k^{Lin}_{s}(|y|))^2\right)F\left((\Delta_{\alp} k^{Lin}_{s}(|y-\bt|))^2\right)} \\
            &\qquad=\frac{\left|F'\left(\tau \left(\Delta_{\alp} k^{Lin}_{s}(|y|)\right)^2 +(1-\tau)\left(\Delta_\alp k^{Lin}_{s}(|y-\bt|)\right)^2\right)\right|\left|(\Delta_{\alp} k^{Lin}_{s}(|y|))^2-(\Delta_{\alp} k^{Lin}_{s}(|y-\bt|))^2 \right|}{F\left((\Delta_\alp k^{Lin}_{s}(|y|))^2\right)F\left((\Delta_{\alp} k^{Lin}_{s}(|y-\bt|))^2\right)}\\
            &\qquad \lesssim \left|(\Delta_{\alp} k^{Lin}_{s}(|y|))^2-(\Delta_{\alp} k^{Lin}_{s}(|y-\bt|))^2 \right|\lesssim s\left|\Delta_{\alp} k^{Lin}_{s}(|y|)-\Delta_{\alp} k^{Lin}_{s}(|y-\bt|) \right|,
        \end{split}
    \end{equation*}
where we employed \eqref{new F est abtau} and \eqref{est: nb klin for s}.
Now using \eqref{commutator identity for fractional laplacian} and \eqref{alp bt est-1}, we have
\begin{equation*}
    \begin{split}
        \Abs{T^{t_1,7}[f(y),k^{Lin}_{s}(|y|)]} &\lesssim \int_{\bbR^2} \int_{\bbR^2} \Abs{\alp \cdot \nabla_y \Delta_\alp \dlt_\bt f(y)}\Abs{\dlt_{\beta} \left(\left(1+(\Delta_\alp k^{Lin}_{s}(|y|))^2\right)^{-\frac32}\right)}\frac{d\bt}{|\bt|^{2+t_1}}\frac{d\alp}{|\alp|^{2}} \\
        &\lesssim s \int_{\bbR^2} \int_{\bbR^2} \Abs{\nabla_y \delta_\alp \dlt_\bt f(y)}\Abs{\Dlt_\alp \dlt_\beta k^{Lin}(|y|)}\frac{d\bt}{|\bt|^{2+t_1}}\frac{d\alp}{|\alp|^{2}}.
    \end{split}
\end{equation*}
Thus using \eqref{sobolev embedding-2}, \eqref{eq: prelemma-1}, and \eqref{klin fourier}, we estimate
\begin{equation*}
    \begin{split}
       &\nrmb{T^{t_1,7}[f(y),k^{Lin}_{s}(|y|)]}_{L^2}  \\
       &\quad\lesssim  s\int_{\bbR^2}\int_{\bbR^2} \nrmb{\nabla_y \dlt_\alp \dlt_\bt f(y)}_{\dot{H}^\frac14} \nrmb{\dlt_\alp \dlt_\beta k^{Lin}_{s}(|y|)}_{\dot{H}^\frac34}  \frac{d\bt}{|\bt|^{2+t_1}}\frac{d\alp}{|\alp|^{3}} \\
       &\quad\lesssim s\left( \int_{\bbR^2}\int_{\bbR^2} \nrmb{\dlt_\alp \dlt_\bt f(y)}^2_{\dot{H}^\frac54}   \frac{d\bt}{|\bt|^{2+t_1}}\frac{d\alp}{|\alp|^{\frac{5}{2}}}\right)^\frac12
       \left(\int_{\bbR^2}\int_{\bbR^2} \nrmb{\dlt_\alp \dlt_\beta k^{Lin}_{s}(|y|)}^2_{\dot{H}^\frac34}  \frac{d\bt}{|\bt|^{2+t_1}}\frac{d\alp}{|\alp|^{\frac{7}{2}}}\right)^\frac12 \\
       &\quad\lesssim s \nrmb{f}_{\dot{H}^{\frac{t_1+3}{2}}}\nrmb{k^{Lin}_{s}}_{\dot{H}^{\frac{t_1+3}{2}}}\lesssim s^2 \nrmb{f}_{\dot{H}^{\frac{t_1+3}{2}}},
    \end{split}
\end{equation*}
where for the last inequality, we used $\frac{t_1+3}{2}>2$.
\end{proof}

\subsection{The linear operator $\calA[g]$}\label{sec: linear term}
In this subsection, we prove Lemma \ref{lem: l^2 estimate for T_1(g)}.  We recall \eqref{def: nonlinear operator} and \eqref{def: subTs} to compute
\begin{equation*}
\calT_1[g(|y|)]=\calT\left[g(|y|),k^{Lin}_{s}(|y|)\right]-\frac{3}{2\pi} \calQ[k^{Lin}_{s}(|y|),g(|y|),k^{Lin}_{s}(|y|),k^{Lin}_{s}(|y|)].
\end{equation*}
where $\calQ$ is the operator defined by
\begin{equation}\label{def: calQ}
       \calQ[f_1,f_2,f_3,f_4]=\int_{\bbR^2} \alp \cdot \nb_y \Dlt_\alp f_1 \frac{\Dlt_\alp f_2 \Dlt_\alp f_3 }{\left(1+\left(\Dlt_\alp f_4\right)^2\right)^\frac52} \frac{d\alp}{|\alp|^2}.
\end{equation}
Given $t_1\in (3/2,2)$, Proposition \ref{proposition: key} ensures the existence of $t^{**}\in(2,t_1+1)$ satisfying
\begin{equation}\label{est: calT_1-1}
\normb{\Lambda^{t_1}\calT\left[g(|y|),k^{Lin}_{s}(|y|)\right]}\lesssim s^2 \nrmb{g(|\cdot|)}_{\dot{H}^{t^{**}}\cap\dot{H}^{t_1+1}}.
\end{equation}
Hence, it suffices to show the following proposition:
\begin{proposition}\label{proposition: key for linear}
     For any  $t_1\in (3/2,2)$, there exist $t^{*}=t^*(t_1)\in(t_1,2)$ and $t^{**}=t^{**}(t_1)\in(2,t_1+1)$ such that
    \begin{equation}\label{key estimate-linear}
    \begin{split}
&\nrmb{ \calQ[f_1,f_2,f_3,f_4]}_{\dot{H}^{t_1}}\\
&\quad\lesssim \nrmb{f_1}_{\dot{H}^{t^{**}}\cap\dot{H}^{t_1+1}}\nrmb{f_2}_{\dot{W}^{1,\infty}\cap\dot{H}^{t^{*}}\cap\dot{H}^{t_1+1}}\nrmb{f_3}_{\dot{W}^{1,\infty}\cap\dot{H}^{t^{**}}\cap\dot{H}^{t_1+1}}\left( 1 +\nrmb{f_4}_{\dot{H}^{t^{**}}\cap\dot{H}^{t_1+1}}\right)
\end{split}
    \end{equation}
\end{proposition}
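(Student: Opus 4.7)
The plan is to follow the blueprint of Proposition \ref{proposition: key} line by line. The first step is to produce a seven-term decomposition
\[
\Lambda^{t_1}\calQ[f_1,f_2,f_3,f_4]=\sum_{j=1}^{7} Q^{t_1,j}[f_1,f_2,f_3,f_4],
\]
in the spirit of Lemma \ref{lem: reformulation}. After commuting $\Lambda^{t_1}$ past $\nabla_y$, symmetrizing in $\alp\mapsto -\alp$, and integrating by parts against $\nabla_\alp\cdot(\alp/|\alp|^3)=-|\alp|^{-3}$, one isolates the quasilinear piece $Q^{t_1,1}$ in which every occurrence of $\Dlt_\alp f_i$ for $i\in\{2,3,4\}$ is replaced by $\frac{\alp}{|\alp|}\cdot\nabla_y f_i$; the pieces $Q^{t_1,2}$--$Q^{t_1,5}$ collect the errors of this substitution (each carrying at least one Morrey-type factor $\Dlt_\alp f_j-\frac{\alp}{|\alp|}\cdot\nabla_y f_j$ or $\nabla_y\Dlt_\alp f_j$, and so a gain of regularity); $Q^{t_1,6}$ is the term in which $\Lambda^{t_1}$ has been moved entirely onto the nonlinear factor $\Dlt_\alp f_2\,\Dlt_\alp f_3\,(1+(\Dlt_\alp f_4)^2)^{-5/2}$; and $Q^{t_1,7}$ is the associated commutator remainder, written through \eqref{commutator identity for fractional laplacian}.

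Next I would estimate each $Q^{t_1,j}$ in $L^2$ using the toolkit of Lemmas \ref{lem: T^1(klin)}--\ref{lem: T^7(klin)}. The key observation is that $F(x):=(1+x^2)^{-5/2}$ and all its derivatives are globally bounded, so that by the mean value theorem the denominator factor $(1+(\Dlt_\alp f_4)^2)^{-5/2}$ contributes $\lesssim 1$ whenever it is not differentiated, and contributes bounds of the type $|\Dlt_\alp f_4|\,|\nabla_y\Dlt_\alp f_4|+|\Dlt\Dlt_\alp f_4|$ (resp.\ $|\Dlt_\alp f_4-\frac{\alp}{|\alp|}\cdot\nabla f_4|$ after symmetrization) when it is. Paired with \eqref{sobolev embedding-2}--\eqref{sobolev embedding-3(infty)} and the identities \eqref{eq: prelemma-1}--\eqref{eq: prelemma-2}, this precisely produces the $(1+\nrmb{f_4}_{\dot{H}^{t^{**}}\cap\dot{H}^{t_1+1}})$ factor of \eqref{key estimate-linear}. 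The numerator factors $\Dlt_\alp f_2$ and $\Dlt_\alp f_3$ are handled either by the pointwise bound $|\Dlt_\alp f_j|\le \nrmb{\nabla f_j}_{L^\infty}$ via \eqref{fundamental thm of cal}, which explains the $\dot W^{1,\infty}$ contributions, or through \eqref{eq: prelemma-1}--\eqref{eq: prelemma-2} which extracts $\dot H^{t^*}$ or $\dot H^{t^{**}}$ control; the outer factor $\alp\cdot\nabla_y\Dlt_\alp\Lambda^{t_1}f_1$ absorbs the $\dot H^{t_1+1}$ or $\dot H^{t^{**}}$ regularity on $f_1$, as in the proof of Lemma \ref{lem: T^1(klin)}.

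The asymmetry between $f_2$ and $f_3$ in \eqref{key estimate-linear} comes from a bookkeeping choice inside $Q^{t_1,4}$ and $Q^{t_1,5}$: after symmetrization, exactly one of the two difference factors in the numerator has to be converted into a Morrey-type remainder. Consistently placing that conversion on $f_2$ lets me invoke \eqref{eq: prelemma-2} with $\dot H^{t^*}$ control (forcing only $t^*\in(t_1,2)$ on $f_2$), while the other factor, still being $\Dlt_\alp f_3$, is handled by \eqref{eq: prelemma-1} and requires the strictly supercritical exponent $t^{**}\in(2,t_1+1)$ on $f_3$ (and on $f_1$, $f_4$). The main obstacle will be $Q^{t_1,7}$: via \eqref{commutator identity for fractional laplacian}, it is rewritten as a double integral in $(\alp,\bt)$ that must be closed by applying Lemma 2.9 twice at balanced exponents, exactly as in the proof of Lemma \ref{lem: T^7(klin)}; this is the step that pins down the lower bound $t^{**}\ge \frac{t_1+3}{2}$, which lies in $(2,t_1+1)$ precisely because $t_1\in(3/2,2)$. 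Taking $t^*$ and $t^{**}$ to be the worst indices produced by the seven subestimates and using the interpolation \eqref{sobolev interpolation} to simplify intermediate indices, \eqref{key estimate-linear} follows. Combining with \eqref{est: calT_1-1} then yields Lemma \ref{lem: l^2 estimate for T_1(g)}.
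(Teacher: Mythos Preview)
Your plan is essentially the paper's proof: the seven-term decomposition (Lemma~\ref{lem: reformulation for linear A}) and the term-by-term $L^2$ estimates (Lemmas~\ref{lem: Q^1-5}--\ref{lem: Q^7}) follow exactly the blueprint you describe, and your identification of $Q^{t_1,7}$ as the term that pins down $t^{**}\ge\frac{t_1+3}{2}$ is correct.

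One correction worth making before you execute: the subcritical exponent $t^{*}\in(t_1,2)$ on $f_2$ does \emph{not} originate from $Q^{t_1,4}$ or $Q^{t_1,5}$. In those terms the total regularity budget after applying \eqref{sobolev embedding-2} and \eqref{eq: prelemma-1}--\eqref{eq: prelemma-2} is $t_1+3$, so keeping $f_1$ at or below $\dot H^{t_1+1}$ forces $f_2$ to at least $\dot H^{2}$; the paper simply uses $\dot H^{9/4}$ on $f_2$ there, interpolated between $t^{*}$ and $t_1+1$. The genuine source of the subcritical requirement is $Q^{t_1,3}$, where Morrey's inequality \eqref{Morrey} applied to $\Dlt_\alp f_2$ produces the decay $|\alp|^{t_1/2-1}$ needed on $\{|\alp|\ge 1\}$ and lands $f_2$ in $\dot H^{t_1/2+1}$, together with $Q^{t_1,6}$, where the $\dot H^2$ estimate of the product $\Dlt_\alp f_2\,\Dlt_\alp f_3\,(1+(\Dlt_\alp f_4)^2)^{-5/2}$ naturally places $f_2$ at exponents such as $\frac{3t_1}{4}+\frac12$ and $\frac{t_1}{2}+1$, both lying in $(t_1,2)$.
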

\begin{remark}\label{rmk: proposition-2}
    Inserting $f_1=f_3=f_4=k^{Lin}_{s}(|\cdot|)$ and $f_2=g(|\cdot|)$ in \eqref{key estimate-linear}, recalling \eqref{sobolev embedding-3(infty)} and \eqref{klin fourier}, and using $t^{**}>2$, we can prove 
    \begin{equation*}
        \nrmb{ \calQ[k^{Lin}_{s}(|y|),g(|y|),k^{Lin}_{s}(|y|),k^{Lin}_{s}(|y|)]}_{\dot{H}^{t_1}}\lesssim s^2 \nrmb{g(|\cdot|)}_{\dot{H}^{t^{**}}\cap\dot{H}^{t_1+1}}.
    \end{equation*}
    Hence combining this with \eqref{est: calT_1-1} and noticing \eqref{sobolev interpolation}, we arrive at Lemma \ref{lem: l^2 estimate for T_1(g)}.
\end{remark}
As the first step toward Proposition \ref{proposition: key for linear}, we reformulate $\Lmb^{t_1}\calQ$:
\begin{lemma}\label{lem: reformulation for linear A}
For any $t_1\in (3/2,2)$, there holds
\begin{equation*}
    \Lmb^{t_1}\calQ[f_1,f_2,f_3,f_4]=\sum_{j=1}^7 Q^{t_1,j}[f_1,f_2,f_3,f_4],
\end{equation*}
where
\begin{equation*}\label{splitting of Q_t_1}
    \begin{split}
        &Q^{t_1,1}[f_1,f_2,f_3,f_4]=-\frac{1}{2}\int_{\bbR^2}\left( \Delta_\alp \Lambda^{t_1} f_1+\Delta_{-\alp} \Lambda^{t_1} f_1\right)\frac{\frac{\alp}{|\alp|}\cdot \nb_y  f_2 \frac{\alp}{|\alp|}\cdot \nb_y  f_3}{\left(1+\left(\frac{\alp}{|\alp|}\cdot \nb_y f_4\right)^2\right)^\frac52}\frac{d\alp}{|\alp|^2},\\
        &Q^{t_1,2}[f_1,f_2,f_3,f_4]= \int_{B_1(0)}\alp \cdot \nabla_y \Lambda^{t_1} f_1 \left(\frac{\Dlt_\alp f_2 \Dlt_\alp f_3}{\left(1+\left(\Dlt_\alp f_4\right)^2\right)^\frac52} -\frac{\frac{\alp}{|\alp|}\cdot \nb_y  f_2 \frac{\alp}{|\alp|}\cdot \nb_y  f_3}{\left(1+\left(\frac{\alp}{|\alp|}\cdot \nb_y f_4\right)^2\right)^\frac52}\right)\frac{d\alp}{|\alp|^3},\\ 
        &Q^{t_1,3}[f_1,f_2,f_3,f_4]= \int_{\bbR^2\backslash B_1(0)}\alp \cdot \nabla_y \Lambda^{t_1} f_1 \frac{\Dlt_\alp f_2 \Dlt_\alp f_3}{\left(1+\left(\Dlt_\alp f_4\right)^2\right)^\frac52}\frac{d\alp}{|\alp|^3}, \\
        &Q^{t_1,4}[f_1,f_2,f_3,f_4]= -\int_{\bbR^2} \Delta_\alp \Lambda^{t_1} f_1\left(\frac{\Dlt_\alp f_2 \Dlt_\alp f_3}{\left(1+\left(\Dlt_\alp f_4\right)^2\right)^\frac52}-\frac{\frac{\alp}{|\alp|}\cdot \nb_y  f_2 \frac{\alp}{|\alp|}\cdot \nb_y  f_3}{\left(1+\left(\frac{\alp}{|\alp|}\cdot \nb_y f_4\right)^2\right)^\frac52}\right)\frac{d\alp}{|\alp|^2},\\ 
        &Q^{t_1,5}[f_1,f_2,f_3,f_4]= \int_{\bbR^2}\Dlt_\alp \Lmb^{t_1}f_1\alp \cdot \nb_\alp \left(\frac{\Dlt_\alp f_2 \Dlt_\alp f_3}{\left(1+\left(\Dlt_\alp f_4\right)^2\right)^\frac52} \right)\frac{d\alp}{|\alp|^2}.\\
        &Q^{t_1,6}[f_1,f_2,f_3,f_4]=\int_{\bbR^2} \alp \cdot \nb_y  \Dlt_\alp  f_1 \Lmb^{t_1}\left(\frac{\Dlt_\alp f_2 \Dlt_\alp f_3}{\left(1+\left(\Dlt_\alp f_4\right)^2\right)^\frac52} \right)\frac{d\alp}{|\alp|^2},\\
        &Q^{t_1,7}[f_1,f_2,f_3,f_4]=\int_{\bbR^2} \Lmb^{t_1}\left(\alp \cdot \nb_y \Dlt_\alp f_1 \frac{\Dlt_\alp f_2 \Dlt_\alp f_3}{\left(1+\left(\Dlt_\alp f_4\right)^2\right)^\frac52}\right) \\
        &\quad\qquad\quad\qquad\quad\qquad- \alp \cdot \nb_y \Dlt_\alp \Lambda^{t_1} f_1 \frac{\Dlt_\alp f_2 \Dlt_\alp f_3}{\left(1+\left(\Dlt_\alp f_4\right)^2\right)^\frac52} -\alp \cdot \nb_y  \Dlt_\alp  f_1 \Lmb^{t_1}\left(\frac{\Dlt_\alp f_2 \Dlt_\alp f_3}{\left(1+\left(\Dlt_\alp f_4\right)^2\right)^\frac52} \right)\frac{d\alp}{|\alp|^2}.
    \end{split}
\end{equation*}
\end{lemma}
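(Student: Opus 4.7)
The plan is to follow the symmetrization and integration-by-parts scheme already carried out for $\Lmb^{t_1}\calT$ in Lemma \ref{lem: reformulation}, with the single-argument kernel $(1+(\Dlt_\alp f_2)^2)^{-3/2}-1$ replaced throughout by the three-argument kernel $G(\alp,y):=\Dlt_\alp f_2\,\Dlt_\alp f_3(1+(\Dlt_\alp f_4)^2)^{-5/2}$. Commuting $\Lmb^{t_1}$ with the outer gradient via $\Lmb^{t_1}\nb_y=\nb_y\Lmb^{t_1}$, and then peeling off the fractional Laplacian commutator against $G$ by the explicit identity \eqref{commutator identity for fractional laplacian}, extracts exactly $Q^{t_1,6}+Q^{t_1,7}$ and leaves the principal piece $\int \alp\cdot\nb_y\Dlt_\alp\Lmb^{t_1} f_1 \cdot G \,|\alp|^{-2}\,d\alp$.

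Next, split $G = G_0 + (G-G_0)$, where $G_0$ is the homogeneous-of-degree-zero kernel obtained by replacing each $\Dlt_\alp f_j$ by the directional derivative $(\alp/|\alp|)\cdot\nb_y f_j$, and let $\mrI$, $\mrII$ denote the two resulting integrals. For $\mrI$, the identity $\int\alp\cdot\nb_y\Lmb^{t_1} f_1\,G_0\,|\alp|^{-3}\,d\alp=0$ (odd integrand over a symmetric domain, since $G_0$ is even in $\alp$) lets us rewrite $\alp\cdot\nb_y\Dlt_\alp\Lmb^{t_1}f_1$ effectively as $-\alp\cdot\nb_y\Lmb^{t_1}f_1(y-\alp)/|\alp|$; converting $\nb_y\mapsto-\nb_\alp$ via \eqref{change y to alpha derivative}, integrating by parts, and using $\nb_\alp\cdot(\alp/|\alp|^3)=-|\alp|^{-3}$ together with $\alp\cdot\nb_\alp G_0=0$ (by degree-zero homogeneity) reduces $\mrI$ to the integral of $\dlt_\alp\Lmb^{t_1}f_1$ against $G_0/|\alp|^3$. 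A final symmetrization $\alp\mapsto-\alp$, which preserves $G_0$, produces $Q^{t_1,1}$.

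For $\mrII$, expand $\nb_y\Dlt_\alp\Lmb^{t_1}f_1=|\alp|^{-1}(\nb_y\Lmb^{t_1}f_1(y)-\nb_y\Lmb^{t_1}f_1(y-\alp))$ and treat the two contributions separately. The $\nb_y\Lmb^{t_1}f_1(y)$ contribution restricted to $B_1(0)$ is precisely $Q^{t_1,2}$; on $\bbR^2\setminus B_1(0)$, the $G_0$ subtrahend again integrates to zero by the same odd-symmetry argument, leaving only $G$ and producing $Q^{t_1,3}$. The $\nb_y\Lmb^{t_1}f_1(y-\alp)$ contribution is converted to $-\nb_\alp\dlt_\alp\Lmb^{t_1}f_1$ and integrated by parts on all of $\bbR^2$; the part where $\nb_\alp$ hits $\alp/|\alp|^3$ yields $Q^{t_1,4}$ (the difference kernel $G-G_0$ is retained to preserve integrability near the origin), while the part where $\alp\cdot\nb_\alp$ hits $G-G_0$, equivalently $G$ itself since $\alp\cdot\nb_\alp G_0=0$, yields $Q^{t_1,5}$.

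The main technical point, as in the proof of Lemma \ref{lem: reformulation}, is ensuring absolute convergence so the integrations by parts are legitimate. Near $\alp=0$ this rests on the vanishing $|G-G_0|=O(|\alp|^{1-2/p})$ coming from \eqref{App of Morrey} combined with the $O(|\alp|)$ growth of $\alp\cdot\nb_y\Lmb^{t_1}f_1$; at infinity the uniform boundedness of $G$ and the decay of $\dlt_\alp\Lmb^{t_1}f_1/|\alp|^{3}$-type integrands suffice. No structural idea beyond Lemma \ref{lem: reformulation} is needed; the argument is essentially bookkeeping on the four arguments $f_1,\ldots,f_4$, and unlike the $\calT$ case we do not explicitly differentiate the quintic denominator, leaving $\alp\cdot\nb_\alp G$ intact as the definition of $Q^{t_1,5}$.
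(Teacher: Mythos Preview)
Your proposal is correct and follows essentially the same approach as the paper: extract $Q^{t_1,6}+Q^{t_1,7}$ via the commutator identity, split the remaining principal term by $G=G_0+(G-G_0)$, reduce the $G_0$ piece to $Q^{t_1,1}$ using odd symmetry, \eqref{change y to alpha derivative}, integration by parts with $\alp\cdot\nb_\alp G_0=0$, and the $\alp\mapsto-\alp$ symmetrization, and handle the $G-G_0$ piece exactly as you describe to produce $Q^{t_1,2},\ldots,Q^{t_1,5}$. Your closing remark that $Q^{t_1,5}$ is left as $\Dlt_\alp\Lmb^{t_1}f_1\,\alp\cdot\nb_\alp G$ without expanding the derivative (in contrast to $T^{t_1,5}$) is also how the paper presents it.
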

\begin{proof}
    Before starting the proof, we note
    \begin{equation}\label{derivatives of alpha functions-for linear}
        \alp \cdot \nb_\alp  \left( \frac{\frac{\alp}{|\alp|}\cdot \nb_y  f_2 \frac{\alp}{|\alp|}\cdot \nb_y  f_3}{\left(1+\left(\frac{\alp}{|\alp|}\cdot \nb_y f_4\right)^2\right)^\frac52}\right) =0.
\end{equation}
    We split
\begin{equation*}
    \begin{split}
        \Lmb^{t_1}\calQ[f_1,f_2,f_3,f_4]&=\int_{\bbR^2} \alp \cdot \nabla_y \Delta_\alp \Lambda^{t_1} f_1\frac{\frac{\alp}{|\alp|}\cdot \nb_y  f_2 \frac{\alp}{|\alp|}\cdot \nb_y  f_3}{\left(1+\left(\frac{\alp}{|\alp|}\cdot \nb_y f_4\right)^2\right)^\frac52}\frac{d\alp}{|\alp|^2} \\
        &\quad+ \int_{\bbR^2}\alp \cdot \nabla_y \Dlt_\alp\Lambda^{t_1} f_1 \left(\frac{\Dlt_\alp f_2 \Dlt_\alp f_3}{\left(1+\left(\Dlt_\alp f_4\right)^2\right)^\frac52} -\frac{\frac{\alp}{|\alp|}\cdot \nb_y  f_2 \frac{\alp}{|\alp|}\cdot \nb_y  f_3}{\left(1+\left(\frac{\alp}{|\alp|}\cdot \nb_y f_4\right)^2\right)^\frac52}\right)\frac{d\alp}{|\alp|^3}\\
        &\quad+\sum_{j=6}^7 Q^{t_1,j}[f_1,f_2,f_3,f_4]\\
        &=\mrI+\mrII+\sum_{j=6}^7Q^{t_1,j}[f_1,f_2,f_3,f_4].
    \end{split}
\end{equation*}
For $\mrI$,  noticing
\begin{equation*}
       \int_{\bbR^2} \alp \cdot \nabla_y \Lambda^{t_1} f_1\frac{\frac{\alp}{|\alp|}\cdot \nb_y  f_2 \frac{\alp}{|\alp|}\cdot \nb_y  f_3}{\left(1+\left(\frac{\alp}{|\alp|}\cdot \nb_y f_4\right)^2\right)^\frac52}\frac{d\alp}{|\alp|^3} =0
\end{equation*}
by symmetry and applying \eqref{change y to alpha derivative}, we have
\begin{equation*}
    \mrI=-\int_{\bbR^2} \alp \cdot \nabla_\alp \dlt_\alp \Lambda^{t_1} f_1\frac{\frac{\alp}{|\alp|}\cdot \nb_y  f_2 \frac{\alp}{|\alp|}\cdot \nb_y  f_3}{\left(1+\left(\frac{\alp}{|\alp|}\cdot \nb_y f_4\right)^2\right)^\frac52}\frac{d\alp}{|\alp|^3}.
\end{equation*}
We integrate by parts and employ \eqref{derivatives of alpha functions-1}, \eqref{derivatives of alpha functions-for linear} to obtain
\begin{equation*}
        \mrI =  \int_{\bbR^2}\dlt_\alp\Lambda^{t_1} f_1 \,\nb_\alp \cdot \left(\frac{\alp}{|\alp|^3} \frac{\frac{\alp}{|\alp|}\cdot \nb_y  f_2 \frac{\alp}{|\alp|}\cdot \nb_y  f_3}{\left(1+\left(\frac{\alp}{|\alp|}\cdot \nb_y f_4\right)^2\right)^\frac52}\right)d\alp =- \int_{\bbR^2}\dlt_\alp\Lambda^{t_1} f_1 \frac{\frac{\alp}{|\alp|}\cdot \nb_y  f_2 \frac{\alp}{|\alp|}\cdot \nb_y  f_3}{\left(1+\left(\frac{\alp}{|\alp|}\cdot \nb_y f_4\right)^2\right)^\frac52}\frac{d\alp}{|\alp|^3}.
\end{equation*}
Making a change of variables $\alp \mapsto -\alp$, we observe
\begin{equation*}
\int_{\bbR^2}\dlt_\alp\Lambda^{t_1} f_1 \frac{\frac{\alp}{|\alp|}\cdot \nb_y  f_2 \frac{\alp}{|\alp|}\cdot \nb_y  f_3}{\left(1+\left(\frac{\alp}{|\alp|}\cdot \nb_y f_4\right)^2\right)^\frac52}\frac{d\alp}{|\alp|^3}=\int_{\bbR^2}\dlt_{-\alp}\Lambda^{t_1} f_1 \frac{\frac{\alp}{|\alp|}\cdot \nb_y  f_2 \frac{\alp}{|\alp|}\cdot \nb_y  f_3}{\left(1+\left(\frac{\alp}{|\alp|}\cdot \nb_y f_4\right)^2\right)^\frac52}\frac{d\alp}{|\alp|^3}
\end{equation*}
which implies $\mrI= Q^{t_1,1}[f_1,f_2,f_3,f_4]$.
We decompose $\mrII$ into
\begin{equation*}
    \begin{split}
        \mrII &= Q^{t_1,2}[f_1,f_2,f_3,f_4] + \mrII_1 + \mrII_2,
    \end{split}
\end{equation*}
where
\begin{equation*}
    \begin{split}
        \mrII_1 &= \int_{\bbR^2\backslash B_1(0)}\alp \cdot \nabla_y  \Lambda^{t_1} f_1\left(\frac{\Dlt_\alp f_2 \Dlt_\alp f_3}{\left(1+\left(\Dlt_\alp f_4\right)^2\right)^\frac52} -\frac{\frac{\alp}{|\alp|}\cdot \nb_y  f_2 \frac{\alp}{|\alp|}\cdot \nb_y  f_3}{\left(1+\left(\frac{\alp}{|\alp|}\cdot \nb_y f_4\right)^2\right)^\frac52}\right)\frac{d\alp}{|\alp|^3},\\
        \mrII_2 &= -\int_{\bbR^2}\alp \cdot \nabla_y  \Lambda^{t_1} f_1(y-\alp) \left(\frac{\Dlt_\alp f_2 \Dlt_\alp f_3}{\left(1+\left(\Dlt_\alp f_4\right)^2\right)^\frac52} -\frac{\frac{\alp}{|\alp|}\cdot \nb_y  f_2 \frac{\alp}{|\alp|}\cdot \nb_y  f_3}{\left(1+\left(\frac{\alp}{|\alp|}\cdot \nb_y f_4\right)^2\right)^\frac52}\right)\frac{d\alp}{|\alp|^3}.
    \end{split}
\end{equation*}
We see $\mrII_1=Q^{t_1,3}[f_1,f_2,f_3,f_4]$ since
\begin{equation*}
        \int_{\bbR^2\backslash B_1(0)}\alp \cdot \nabla_y \Lambda^{t_1} f_1\frac{\frac{\alp}{|\alp|}\cdot \nb_y  f_2 \frac{\alp}{|\alp|}\cdot \nb_y  f_3}{\left(1+\left(\frac{\alp}{|\alp|}\cdot \nb_y f_4\right)^2\right)^\frac52}\frac{d\alp}{|\alp|^3}=0
\end{equation*}
by symmetry.
For $\mrII_2$, we apply \eqref{change y to alpha derivative} and then integrate by parts to obtain
\begin{equation*}
    \mrII_2=\frac{1}{2\pi}\int_{\bbR^2}\dlt_\alp\Lambda^{t_1} f_1 \,\nb_\alp \cdot \left(\frac{\alp}{|\alp|^3} \left(\frac{\Dlt_\alp f_2 \Dlt_\alp f_3}{\left(1+\left(\Dlt_\alp f_4\right)^2\right)^\frac52} -\frac{\frac{\alp}{|\alp|}\cdot \nb_y  f_2 \frac{\alp}{|\alp|}\cdot \nb_y  f_3}{\left(1+\left(\frac{\alp}{|\alp|}\cdot \nb_y f_4\right)^2\right)^\frac52} \right)\right)d\alp.
\end{equation*}
Computing with the aids of \eqref{derivatives of alpha functions-1} and \eqref{derivatives of alpha functions-for linear}, we arrive at
\begin{equation*}
    \mrII_2 = Q^{t_1,4}[f_1,f_2,f_3,f_4]+Q^{t_1,5}[f_1,f_2,f_3,f_4].
\end{equation*}
\end{proof}

The estimates of $Q^{t_1,j}$ $(1\le j \le7)$ range over Lemma \ref{lem: Q^1-5} - Lemma \ref{lem: Q^7}, which leads to Proposition \ref{proposition: key for linear} by choosing the smallest $t^*$ and $t^{**}$ among those in Lemma \ref{lem: Q^1-5} - Lemma \ref{lem: Q^7} due to \eqref{sobolev interpolation}.
\begin{lemma}\label{lem: Q^1-5}
     For any  $t_1\in (3/2,2)$, there exist $t^{*}=t^*(t_1)\in(t_1,2)$ and $t^{**}=t^{**}(t_1)\in(2,t_1+1)$ such that
  \begin{equation*}
  \begin{split}
    \sum_{j=1}^5&\normb{Q^{t_1,j}[f_1,f_2,f_3,f_4]}\\
    &\lesssim \nrmb{f_1}_{\dot{H}^{t^{**}}\cap\dot{H}^{t_1+1}}\nrmb{f_2}_{\dot{W}^{1,\infty}\cap\dot{H}^{t^{*}} \cap \dot{H}^{t_1+1} }\nrmb{f_3}_{\dot{W}^{1,\infty}\cap\dot{H}^{t^{**}}\cap\dot{H}^{t_1+1}}\left(1+\nrmb{f_4}_{\dot{H}^{t^{**}}\cap\dot{H}^{t_1+1}}\right).
    \end{split}
\end{equation*}
\end{lemma}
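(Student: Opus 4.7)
The plan is to bound each $Q^{t_1,j}$ $(1\le j\le 5)$ separately, adapting the corresponding arguments for $T^{t_1,j}$ in Lemmas \ref{lem: T^1(klin)}--\ref{lem: T^5(klin)} to four functions. The role of $k^{Lin}_s$ (there handled via the explicit bounds \eqref{klin nabla}, \eqref{klin nabla^2}) is now played by $f_2,f_3,f_4$: any $L^\infty$--type contribution of their gradients is controlled by $\|\nabla f_i\|_{L^\infty}$ (hence the $\dot{W}^{1,\infty}$ in the assertion), while any fractional derivatives produced by the pre-lemmas \eqref{eq: prelemma-1}--\eqref{eq: prelemma-2} are absorbed by the $\dot{H}^{t^*}\cap\dot{H}^{t^{**}}\cap\dot{H}^{t_1+1}$ norms. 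The function $f_1$ always carries $\Lambda^{t_1}$ and so is naturally placed in $\dot{H}^{t_1+1}$ (or a slightly lower fractional space when Sobolev exponent-balancing is needed).

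For $Q^{t_1,1}$, the factor $\frac{\frac{\alp}{|\alp|}\cdot\nb f_2\,\frac{\alp}{|\alp|}\cdot\nb f_3}{(1+(\frac{\alp}{|\alp|}\cdot\nb f_4)^2)^{5/2}}$ is $|\alp|$-independent and bounded pointwise by $\|\nabla f_2\|_{L^\infty}\|\nabla f_3\|_{L^\infty}$, so after writing $\alp=r\sigma$ the radial integral of $(\Delta_{r\sigma}+\Delta_{-r\sigma})\Lambda^{t_1}f_1$ against $dr/r$ is dispatched by the Plancherel/elementary cosine bound exactly as in Lemma \ref{lem: T^1(klin)}, yielding $\|f_1\|_{\dot{H}^{t_1+1}}$. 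For $Q^{t_1,3}$, whose $\alp$-integration is restricted to $|\alp|\ge 1$, I would take the $L^2_y$-norm under the integral and apply the product inequality \eqref{sobolev embedding-2} with three factors (exponents $s_1+s_2+s_3=2$, all in $(0,1)$) to redistribute $\dot{H}^{s_1}_y$ onto $\nabla\Lambda^{t_1}f_1$ and $\dot{H}^{s_i}_y$ onto $\Delta_\alp f_i$ ($i=2,3$); Cauchy--Schwarz in $\alp$ together with \eqref{eq: prelemma-1} then converts the $\alp$-integrals into $\|f_i\|_{\dot{H}^{s_i+b_i}}$, and choosing $s_i,b_i$ close to $1$ lands us in $\dot{H}^{t^{**}}\cap\dot{H}^{t_1+1}$ for $f_2,f_3$. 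The bounded denominator is controlled by $1$ or, if its derivative is needed, by $\|\nabla f_4\|_{L^\infty}$.

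The remaining pieces $Q^{t_1,2}$, $Q^{t_1,4}$, $Q^{t_1,5}$ all contain the ``defect'' factor $\frac{\Delta_\alp f_2\,\Delta_\alp f_3}{(1+(\Delta_\alp f_4)^2)^{5/2}}-\frac{\frac{\alp}{|\alp|}\cdot\nb f_2\,\frac{\alp}{|\alp|}\cdot\nb f_3}{(1+(\frac{\alp}{|\alp|}\cdot\nb f_4)^2)^{5/2}}$, which I would split algebraically into three pieces, each isolating exactly one defect $\Delta_\alp f_i-\frac{\alp}{|\alp|}\cdot\nb f_i$ ($i=2,3,4$) so that the Morrey-type estimate \eqref{App of Morrey} contributes an $|\alp|^{1-2/p}$ decay factor. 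For $Q^{t_1,2}$ this restores integrability on $B_1(0)$ against $d\alp/|\alp|^3$. For $Q^{t_1,4}$ I would follow Lemma \ref{lem: T^4(klin)}: distribute fractional derivatives $1/2+1/2$ on $\Delta_\alp\Lmb^{t_1}f_1$ and on the defect using \eqref{sobolev embedding-2}, then close via \eqref{eq: prelemma-1}--\eqref{eq: prelemma-2}, which forces one of $f_2,f_3,f_4$ into $\dot{H}^{t^{**}}$ with $t^{**}$ slightly above $2$. The main obstacle will be $Q^{t_1,5}$: applying $\alp\cdot\nb_\alp$ to the product-fraction produces, as in \eqref{derivatives of alpha functions-3}, several terms of the shape $(\Delta_\alp f_i-\frac{\alp}{|\alp|}\cdot\nb f_i+\alp\cdot\nb\Delta_\alp f_i)\cdot\Delta_\alp f_j$ (or the analogous expression with $f_4$ coming from differentiating the denominator), each divided by the $5/2$-power. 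I would estimate these piece by piece along the lines of Lemma \ref{lem: T^5(klin)}, assigning $\dot{W}^{1,\infty}$ to the ``leftover'' gradient and $\dot{H}^{t^*}\cap\dot{H}^{t^{**}}$ to the two differentiated factors. The delicate point is the budgeting of fractional exponents across three finite-difference factors so that \eqref{eq: prelemma-1}--\eqref{eq: prelemma-2} close on both ends — this is precisely where the hypotheses $t_1>3/2$, $t^*\in(t_1,2)$, $t^{**}\in(2,t_1+1)$ are forced, and where most of the technical care is concentrated.
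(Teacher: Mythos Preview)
Your plan for $Q^{t_1,1}$, $Q^{t_1,2}$, $Q^{t_1,4}$, $Q^{t_1,5}$ is essentially the paper's proof: the same ``$|\alp|$-independent factor + Plancherel'' argument for $j=1$, the same defect-splitting for $j=2,4$ (the paper uses the symmetric four-term identity \eqref{formular: difference of three product} rather than a three-term telescoping, but the effect is identical), and the same explicit computation of $\alp\cdot\nb_\alp(\cdots)$ followed by the Lemma~\ref{lem: T^5(klin)} argument for $j=5$.

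The gap is in your treatment of $Q^{t_1,3}$. Applying the triple-product inequality \eqref{sobolev embedding-2} with exponents $s_1+s_2+s_3=2$, $s_i\in(0,1)$, and placing $\dot H^{s_1}$ on $\nb\Lambda^{t_1}f_1$ forces $f_1\in\dot H^{\,t_1+1+s_1}$ with $s_1>0$; but the statement only provides $\nrm{f_1}_{\dot H^{t^{**}}\cap\dot H^{t_1+1}}$ with $t^{**}<t_1+1$, so this overshoots the available regularity of $f_1$. (There is a secondary issue: after Cauchy--Schwarz in $\alp$ the weights must satisfy $b_2+b_3=2$, while \eqref{eq: prelemma-1} needs each $b_i\in(0,1)$; this can be patched on $\{|\alp|\ge1\}$ by the trivial bound $\nrm{\delta_\alp f}\le 2\nrm{f}$, but the $f_1$-regularity problem cannot.)

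The paper avoids this entirely by not splitting the $L^2_y$-norm at all: it uses the pointwise Morrey bound \eqref{Morrey} (with Sobolev embedding \eqref{sobolev embedding-1}) to get
\[
\bigl|\Delta_\alp f_2\bigr|\lesssim |\alp|^{\frac{t_1}{2}-1}\nrm{\nb f_2}_{\dot H^{t_1/2}},\qquad
\bigl|\Delta_\alp f_3\bigr|\lesssim \nrm{\nb f_3}_{L^\infty},
\]
so that the whole $L^2_y$-norm lands on $\nb\Lambda^{t_1}f_1$ and gives exactly $\nrm{f_1}_{\dot H^{t_1+1}}$. The $|\alp|^{t_1/2-1}$ decay then makes $\int_{|\alp|\ge1}|\alp|^{t_1/2-1}\,d\alp/|\alp|^2<\infty$ (since $t_1<2$), and one reads off $t^*=\tfrac{t_1}{2}+1\in(t_1,2)$ from the norm on $f_2$. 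This is both simpler and the only place where $t^*$ (as opposed to $t^{**}$) is actually produced in the $j\le5$ estimates.
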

\begin{proof}
To estimate $Q^{t_1,1}[f_1,f_2,f_3,f_4]$, we observe that the factor
$\frac{\frac{\alp}{|\alp|}\cdot \nb_y  f_2 \frac{\alp}{|\alp|}\cdot \nb_y  f_3}{\left(1+\left(\frac{\alp}{|\alp|}\cdot \nb_y f_4\right)^2\right)^\frac52}$ is independent of the length $|\alp|$, so that for $\alp=r\sigma$
\begin{equation*}
    |Q^{t_1,1}[f_1,f_2,f_3,f_4]| \lesssim |\nb_y f_2| |\nb_y f_3|\int_{\bbS^1} \Abs{\int_{0}^\infty \Delta_{r\sigma} \Lambda^{t_1} f_1(y)+\Delta_{-r\sigma} \Lambda^{t_1} f_1(y)\frac{dr}{r}} d\sigma.
\end{equation*}
Thus using the Minkowski's inequality, we have
\begin{equation*}
    \nrmb{Q^{t_1,1}[f_1,f_2,f_3,f_4]}_{L^2_y}\lesssim \nrmb{\nb_y f_2}_{L^{\infty}_y}\nrmb{\nb_y f_3}_{L^{\infty}_y}\int_{\bbS^1}\nrmb{\int_{0}^\infty \Delta_{r\sigma} \Lambda^{t_1} f_1(y)+\Delta_{-r\sigma} \Lambda^{t_1} f_1 (y)\frac{dr}{r}}_{L^2_y}  d\sigma.
\end{equation*}
Now applying the same argument with the proof of Lemma \ref{lem: T^1(klin)}, we obtain
\begin{equation}\label{key est: for T^1(klin) to make t_1+1}
    \int_{\bbS^1}\nrmb{\int_{0}^\infty \Delta_{r\sigma} \Lambda^{t_1} f_1(y)+\Delta_{-r\sigma} \Lambda^{t_1} f_1 (y)\frac{dr}{r}}_{L^2_y}  d\sigma \lesssim \nrmb{f_1}_{\dot{H}^{t_1+1}},
\end{equation}
so that
\begin{equation*}
    \nrmb{Q^{t_1,1}[f_1,f_2,f_3,f_4]}_{L^2_y}\lesssim \nrmb{f_1}_{\dot{H}^{t_1+1}} \nrmb{ f_2}_{\dot{W}^{1,\infty}}\nrmb{f_3}_{\dot{W}^{1,\infty}}.
\end{equation*}

For $Q^{t_1,2}[f_1,f_2,f_3,f_4]$, applying the mean value theorem to the function $F(x)=(1+x^2)^{-\frac52}$, we can see 
    \begin{equation*}
        \left|\left(1+ (\Delta_\alp f_4)^2\right)^{-\frac52}-\left(1+ \left(\frac{\alp}{|\alp|}\cdot \nb_y  f_4\right)^2\right)^{-\frac52} \right| \lesssim \Abs{\Delta_\alp f_4-\frac{\alp}{|\alp|}\cdot \nb_y  f_4}.
    \end{equation*}
Based on this inequality and an elementary formula
\begin{equation}\label{formular: difference of three product}
\begin{split}
    4(a_+b_+c_+ -\,a_-b_-c_-)&=(a_+ -\, a_-)(b_+ +\, b_-)(c_+ +\, c_-) + (a_+ + \,a_-)(b_+ - \,b_-)(c_+ + \,c_-)\\
    &\quad + (a_+ +\, a_-)(b_+ +\, b_-)(c_+ -\, c_-) + (a_+\, - a_-)(b_+ - \,b_-)(c_+ -\, c_-),
    \end{split}
\end{equation}
we deduce
\begin{equation}\label{spliting f_2f_3/(1+w)}
\begin{split}
    \Abs{\frac{\Dlt_\alp f_2 \Dlt_\alp f_3 }{\left(1+\left(\Dlt_\alp f_4\right)^2\right)^\frac52} -\frac{\frac{\alp}{|\alp|}\cdot \nb_y  f_2 \frac{\alp}{|\alp|}\cdot \nb_y  f_3}{\left(1+\left(\frac{\alp}{|\alp|}\cdot \nb_y f_4\right)^2\right)^\frac52}}
    &\lesssim \Abs{\Delta_\alp f_2-\frac{\alp}{|\alp|}\cdot \nb_y  f_2}\Abs{\Delta_\alp f_3+\frac{\alp}{|\alp|}\cdot \nb_y  f_3} \\
    &\quad+\Abs{\Delta_\alp f_2+\frac{\alp}{|\alp|}\cdot \nb_y  f_2}\Abs{\Delta_\alp f_3-\frac{\alp}{|\alp|}\cdot \nb_y  f_3} \\
    &\quad+\Abs{\Delta_\alp f_2+\frac{\alp}{|\alp|}\cdot \nb_y  f_2}\Abs{\Delta_\alp f_3+\frac{\alp}{|\alp|}\cdot \nb_y  f_3}\Abs{\Delta_\alp f_4-\frac{\alp}{|\alp|}\cdot \nb_y  f_4}\\
    &\quad+\Abs{\Delta_\alp f_2-\frac{\alp}{|\alp|}\cdot \nb_y  f_2}\Abs{\Delta_\alp f_3-\frac{\alp}{|\alp|}\cdot \nb_y  f_3}\Abs{\Delta_\alp f_4-\frac{\alp}{|\alp|}\cdot \nb_y  f_4}.
    \end{split}
\end{equation}
Since \eqref{fundamental thm of cal} and \eqref{App of Morrey} toghether with \eqref{sobolev embedding-1} yield
\begin{equation*}
    \begin{split}
        \Abs{\Delta_\alp f_2-\frac{\alp}{|\alp|}\cdot \nb_y  f_2}\Abs{\Delta_\alp f_3+\frac{\alp}{|\alp|}\cdot \nb_y  f_3} &\lesssim |\alp|^{t_1-1}\nrmb{\nb^2 f_2}_{\dot{H}^{t_1-1}}\nrmb{\nb f_3}_{L^{\infty}}, \\
        \Abs{\Delta_\alp f_2+\frac{\alp}{|\alp|}\cdot \nb_y  f_2}\Abs{\Delta_\alp f_3-\frac{\alp}{|\alp|}\cdot \nb_y  f_3} &\lesssim |\alp|^{t_1-1}\nrmb{\nb f_2}_{L^{\infty}}\nrmb{\nb^2 f_3}_{\dot{H}^{t_1-1}},\\
       \Abs{\Delta_\alp f_2+\frac{\alp}{|\alp|}\cdot \nb_y  f_2}\Abs{\Delta_\alp f_3+\frac{\alp}{|\alp|}\cdot \nb_y  f_3}\Abs{\Delta_\alp f_4-\frac{\alp}{|\alp|}\cdot \nb_y  f_4}&\lesssim |\alp|^{t_1-1}\nrmb{\nb f_2}_{L^{\infty}}\nrmb{\nb f_3}_{L^{\infty}}\nrmb{\nb^2 f_4}_{\dot{H}^{t_1-1}},\\
       \Abs{\Delta_\alp f_2-\frac{\alp}{|\alp|}\cdot \nb_y  f_2}\Abs{\Delta_\alp f_3-\frac{\alp}{|\alp|}\cdot \nb_y  f_3}\Abs{\Delta_\alp f_4-\frac{\alp}{|\alp|}\cdot \nb_y  f_4}&\lesssim |\alp|^{t_1-1}\nrmb{\nb f_2}_{L^{\infty}}\nrmb{\nb f_3}_{L^{\infty}}\nrmb{\nb^2 f_4}_{\dot{H}^{t_1-1}},
    \end{split}
\end{equation*}
we obtain
\begin{equation*}
\begin{split}
    \nrmb{Q^{t_1,2}[f_1,f_2,f_3,f_4]}_{L^2_y}&\lesssim \normb{\nabla_y \Lambda^{t_1} f_1}\nrmb{f_2}_{\dot{W}^{1,\infty}\cap\dot{H}^{t_1+1}}\nrmb{f_3}_{\dot{W}^{1,\infty}\cap\dot{H}^{t_1+1}}\left(1+\nrmb{f_4}_{\dot{H}^{t_1+1}}\right)\left(\int_{B_1(0)} \frac{d\alp}{|\alp|^{3-t_1}}\right) \\
    &\lesssim \nrmb{f_1}_{\dot{H}^{t_1+1}}\nrmb{f_2}_{\dot{W}^{1,\infty}\cap\dot{H}^{t_1+1}}\nrmb{f_3}_{\dot{W}^{1,\infty}\cap\dot{H}^{t_1+1}}\left(1+\nrmb{f_4}_{\dot{H}^{t_1+1}}\right),
    \end{split}
\end{equation*}
where we used $t_1\in(3/2,2)$ in the last inequality.

For $Q^{t_1,3}[f_1,f_2,f_3,f_4]$, we use \eqref{fundamental thm of cal} and \eqref{Morrey} to observe
\begin{equation*}
    \Abs{\frac{\Dlt_\alp f_2 \Dlt_\alp f_3}{\left(1+\left(\Dlt_\alp f_4\right)^2\right)^\frac52}}\lesssim |\alp|^{\frac{t_1}{2}-1}\nrmb{\nb_y f_2}_{\dot{H}^{\frac{t_1}{2}}}\nrmb{\nb_y f_3}_{L^{\infty}},
\end{equation*}
which gives
\begin{equation*}
\begin{split}
    \nrmb{Q^{t_1,3}[f_1,f_2,f_3,f_4]}_{L^2_y}&\lesssim \nrmb{f_1}_{\dot{H}^{t_1+1}}\nrmb{f_2}_{\dot{H}^{\frac{t_1}{2}+1}}\nrmb{f_3}_{\dot{W}^{1,\infty}}\left(\int_{\bbR^2\backslash B_1(0)} \frac{d\alp}{|\alp|^{3-\frac{t_1}{2}}}\right)\\
    &\lesssim \nrmb{f_1}_{\dot{H}^{t_1+1}}\nrmb{f_2}_{\dot{H}^{\frac{t_1}{2}+1}}\nrmb{f_3}_{\dot{W}^{1,\infty}},
    \end{split}
\end{equation*}
where we used $t_1\in(3/2,2)$ in the last inequality.

For $Q^{t_1,4}[f_1,f_2,f_3,f_4]$, we use \eqref{fundamental thm of cal} and \eqref{spliting f_2f_3/(1+w)} to obtain
\begin{equation*}
    \begin{split}
        \Abs{Q^{t_1,4}[f_1,f_2,f_3,f_4]}&\lesssim \nrmb{\nb_y f_3}_{L^\infty} \int_{\bbR^2} \Abs{\Delta_\alp \Lambda^{t_1} f_1(y)} \left|\frac{\alp}{|\alp|}\cdot \nb_y f_2(y)-\Delta_\alp f_2(y) \right| \frac{d\alp}{|\alp|^2}\\
        &\quad + \nrmb{\nb_y f_2}_{L^\infty} \int_{\bbR^2} \Abs{\Delta_\alp \Lambda^{t_1} f_1(y)} \left|\frac{\alp}{|\alp|}\cdot \nb_y f_3(y)-\Delta_\alp f_3(y) \right| \frac{d\alp}{|\alp|^2}\\
        &\quad + \nrmb{\nb_y f_2}_{L^\infty} \nrmb{\nb_y f_3}_{L^\infty}\int_{\bbR^2} \Abs{\Delta_\alp \Lambda^{t_1} f_1(y)} \left|\frac{\alp}{|\alp|}\cdot \nb_y f_4(y)-\Delta_\alp f_4(y) \right| \frac{d\alp}{|\alp|^2}.
    \end{split}
\end{equation*}
Now we proceed similarly to the proof of Lemma \ref{lem: T^4(klin)} to have
\begin{equation*}
    \normb{Q^{t_1,4}[f_1,f_2,f_3,f_4]}\lesssim \nrmb{f_1}_{\dot{H}^{t_1+\frac34}}\nrmb{f_2}_{\dot{W}^{1,\infty}\cap\dot{H}^{\frac94}}\nrmb{f_3}_{\dot{W}^{1,\infty}\cap\dot{H}^{\frac94}}\left(1+\nrmb{f_4}_{\dot{H}^{\frac94}}\right).
\end{equation*}

For $Q^{t_1,5}[f_1,f_2,f_3,f_4]$, we compute
\begin{equation*}
    \begin{split}
        \alp \cdot \nb_\alp \left(\frac{\Dlt_\alp f_2 \Dlt_\alp f_3}{\left(1+\left(\Dlt_\alp f_4\right)^2\right)^\frac52} \right)
    &\quad=-\frac{ \Dlt_\alp f_3}{\left(1+\left(\Dlt_\alp f_4\right)^2\right)^\frac52}\left(\Delta_\alp f_2-\frac{\alp \cdot \nb_y f_2}{|\alp|} + \alp \cdot \nb_y \Delta_\alp f_2\right) \\
    &\qquad-\frac{ \Dlt_\alp f_2}{\left(1+\left(\Dlt_\alp f_4\right)^2\right)^\frac52}\left(\Delta_\alp f_3-\frac{\alp \cdot \nb_y f_3}{|\alp|} + \alp \cdot \nb_y \Delta_\alp f_3\right)\\
    &\qquad+\frac{5\Dlt_\alp f_2\Dlt_\alp f_3\Delta_\alp f_4}{\left(1+ (\Delta_\alp f_4)^2\right)^{\frac72}}\left(\Delta_\alp f_4-\frac{\alp \cdot \nb_y f_4}{|\alp|} + \alp \cdot \nb_y \Delta_\alp f_4 \right),
    \end{split}
\end{equation*}
so that
\eqref{fundamental thm of cal} yields
\begin{equation*}
    \begin{split}
        \Abs{\alp \cdot \nb_\alp \left(\frac{\Dlt_\alp f_2 \Dlt_\alp f_3}{\left(1+\left(\Dlt_\alp f_4\right)^2\right)^\frac52} \right)}
    &\lesssim \nrmb{\nb f_3}_{L^\infty}\Abs{\Delta_\alp f_2-\frac{\alp \cdot \nb_y f_2}{|\alp|} + \alp \cdot \nb_y \Delta_\alp f_2} \\
    &\quad\nrmb{\nb f_2}_{L^\infty}\Abs{\Delta_\alp f_3-\frac{\alp \cdot \nb_y f_3}{|\alp|} + \alp \cdot \nb_y \Delta_\alp f_3}\\
    &\quad+\nrmb{\nb f_2}_{L^\infty}\nrmb{\nb f_3}_{L^\infty}\Abs{\Delta_\alp f_4-\frac{\alp \cdot \nb_y f_4}{|\alp|} + \alp \cdot \nb_y \Delta_\alp f_4 }.
    \end{split}
\end{equation*}
Thus we proceed similarly to the proof of Lemma \ref{lem: T^5(klin)} to have
\begin{equation*}
    \normb{Q^{t_1,5}[f_1,f_2,f_3,f_4]}\lesssim \nrmb{f_1}_{\dot{H}^{t_1+\frac34}}\nrmb{f_2}_{\dot{W}^{1,\infty}\cap\dot{H}^{\frac94}}\nrmb{f_3}_{\dot{W}^{1,\infty}\cap\dot{H}^{\frac94}}\left(1+\nrmb{f_4}_{\dot{H}^{\frac94}}\right).
\end{equation*}
Note that since $t_1\in(3/2,2)$, the exponents of Sobolev spaces appeared above estimates satisfy
\begin{equation}\label{sobolev exponet for Q^1-Q^5}
    t_1+\frac34,\;\,\frac94 \in (2,t_1+1),\qquad \frac{t_1}{2}+1\in(t_1,2).
\end{equation}
Therefore, combining all, we are done.
\end{proof}

\begin{lemma}\label{lem: Q^6}
    For any  $t_1\in (3/2,2)$, there exist $t^{*}=t^*(t_1)\in(t_1,2)$ and $t^{**}=t^{**}(t_1)\in(2,t_1+1)$ such that
    \begin{equation*}
    \begin{split}
&\nrmb{Q^{t_1,6}[f_1,f_2,f_3,f_4]}_{L^2} \\
&\qquad\lesssim \nrmb{f_1}_{\dot{H}^{t^{**}}\cap\dot{H}^{t_1+1}}\nrmb{f_2}_{\dot{H}^{t^{*}}\cap\dot{H}^{t_1+1}}\nrmb{f_3}_{\dot{W}^{1,\infty}\cap\dot{H}^{t^{**}}\cap\dot{H}^{t_1+1}}\left(1 +\nrmb{f_4}_{\dot{H}^{t^{**}}\cap\dot{H}^{t_1+1}}\right).
\end{split}
    \end{equation*}
\end{lemma}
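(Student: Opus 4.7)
The plan mirrors the proof of the $T^{t_1,6}$ bound in Section \ref{sec: forcing term}, but with two sources of extra difficulty: the quotient now carries the additional numerator $\Dlt_\alp f_2\,\Dlt_\alp f_3$, and $f_2$ is controlled only in $\dot H^{t^*}\cap\dot H^{t_1+1}$, not in $\dot W^{1,\infty}$.

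\emph{Step 1 (setup).} Using $|\alp\cdot\nb_y\Dlt_\alp f_1| \le |\nb_y\dlt_\alp f_1|$, Minkowski's inequality in $\alp$, and the product Sobolev estimate \eqref{sobolev embedding-2} with $(t_1-1)+(2-t_1)=1$, I reduce the claim to bounding
\[
\int_{\bbR^2} \|\nb_y\dlt_\alp f_1\|_{\dot H^{t_1-1}}\,\|F_\alp\|_{\dot H^2}\,\frac{d\alp}{|\alp|^2},
\]
where $F_\alp := \Dlt_\alp f_2\,\Dlt_\alp f_3\,(1+(\Dlt_\alp f_4)^2)^{-5/2}$ and $\|\Lmb^{t_1}F_\alp\|_{\dot H^{2-t_1}}\aeq\|F_\alp\|_{\dot H^2}$. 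The remaining task is then to bound $\|F_\alp\|_{\dot H^2}$ pointwise in $\alp$ and to integrate.

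\emph{Step 2 (the $\dot H^2$ bound for $F_\alp$).} I expand $\Dlt F_\alp$ by the product and quotient rules, noting that every rational factor involving $\Dlt_\alp f_4$ produced by differentiating the denominator is uniformly bounded because $|\Dlt_\alp f_4/\sqrt{1+(\Dlt_\alp f_4)^2}|\le 1$. The resulting pieces are grouped by which of $f_2, f_3, f_4$ carries the top-order derivative. For terms with top derivative on $f_2$, I use $|\Dlt_\alp f_3|\le\|\nb f_3\|_{L^\infty}$ (via \eqref{fundamental thm of cal}) to get a pointwise bound of order $\|\nb f_3\|_{L^\infty}\,\|\Dlt\dlt_\alp f_2\|_{L^2}/|\alp|$. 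For terms with top derivative on $f_3$ or $f_4$, the lack of $\dot W^{1,\infty}(f_2)$ forces a Morrey-type decomposition $\Dlt_\alp f_2 = (\alp/|\alp|)\cdot\nb f_2 + R_\alp$ with $|R_\alp|\lesssim|\alp|^{1-2/p}\|\nb^2 f_2\|_{L^p}$ via \eqref{App of Morrey}. The principal part is handled by H\"older with $1/p+1/q=1/2$, using $\|\nb f_2\|_{L^p}\lesssim\|f_2\|_{\dot H^{2-2/p}}$ and a Sobolev embedding for the other factor (both from \eqref{sobolev embedding-1}); choosing $p=2/(2-t^*)$ makes the $f_2$ regularity land exactly at $t^*$ and pulls $\|f_2\|_{\dot H^{t^*}}$ outside the $\alp$-integral as a constant. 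The remainder $R_\alp$ contributes a milder $|\alp|^{-1/2}$ singularity, treated analogously. Cross terms $\nb\Dlt_\alp f_j\cdot\nb\Dlt_\alp f_k$ for $j\ne k$ are handled by the same $L^\infty/\text{Sobolev}$ and Morrey splittings.

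\emph{Step 3 (the $\alp$-integration).} Inserting each bound on $\|F_\alp\|_{\dot H^2}$ into the $\alp$-integral leaves a two-factor integrand (after pulling out the $\alp$-free Sobolev norms of $f_2$, $f_3$, $f_4$). A Cauchy-Schwarz in $\alp$ with a small shift $\dlt>0$ followed by \eqref{eq: prelemma-1} (and \eqref{eq: prelemma-2} for the remainder) converts the surviving integrals into homogeneous Sobolev norms of $f_1$ and one of $f_2, f_3, f_4$. The shift $\dlt$ and the H\"older exponent $p$ are then balanced so that the resulting Sobolev index on $f_1$ lies in $(2, t_1+1)$, the index on $f_3$ (or $f_4$) lies in $(2, t_1+1)$, and the index on $f_2$ lies in $[t^*, t_1+1]$. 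The positive slack $\eps_1 := t_1 - 3/2 > 0$ is exactly what makes all of these ranges simultaneously non-empty: concretely, one can pick $t^*\in(\max(t_1, 7/2-t_1), 2)$ and $t^{**}$ close enough to $2$ in $(2,t_1+1)$, and interpolation \eqref{sobolev interpolation} then absorbs any auxiliary intermediate norms under $\dot H^{t^{**}}\cap\dot H^{t_1+1}$ or $\dot H^{t^*}\cap\dot H^{t_1+1}$.

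The main obstacle is the piece with the top derivative on $f_3$ (or $f_4$): without $\dot W^{1,\infty}(f_2)$ one cannot put $\Dlt_\alp f_2$ in $L^\infty_y$, and the naive three-factor integrand $\|\nb\dlt_\alp f_1\|\,\|\dlt_\alp f_2\|\,\|\dlt_\alp f_3\|$ integrated against $d\alp/|\alp|^4$ does not admit a direct Cauchy-Schwarz compatible with \eqref{eq: prelemma-1}. The Morrey-type splitting of $\Dlt_\alp f_2$ is the key device: the principal part yields only a single $1/|\alp|$ and pulls the $f_2$-norm out of the integral as $\|f_2\|_{\dot H^{t^*}}$, while the remainder gains an extra $|\alp|^{1/2}$ of decay, leaving only two factors to integrate.
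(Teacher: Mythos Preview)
Your setup in Step~1 and the Leibniz expansion of $\Delta_y F_\alp$ in Step~2 match the paper, but your ``key device'' for the three-factor terms is a genuinely different route. You diagnose the obstacle correctly: after the Leibniz expansion, the pieces carrying the top derivative on $f_3$ or $f_4$ leave a three-factor integrand $\nrmb{\nb_y\dlt_\alp f_1}\cdot\nrmb{\dlt_\alp f_2}\cdot\nrmb{\dlt_\alp f_3}$ against $d\alp/|\alp|^4$. You propose to Morrey-split $\Dlt_\alp f_2=(\alp/|\alp|)\cdot\nb f_2+R_\alp$ so as to pull $\nrmb{f_2}_{\dot H^{t^*}}$ out of the $\alp$-integral. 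The paper instead pulls $f_1$ out: it uses the simple mean-value bound \eqref{fundamental thm of cal} in the form $\nrmb{\nb_y\dlt_\alp f_1}_{L^{2/(2-t_1)}}\lesssim|\alp|\,\nrmb{f_1}_{\dot H^{t_1+1}}$, which gains a full power of $|\alp|$ and reduces every three-factor piece uniformly to a two-factor integral $\int\nrmb{\dlt_\alp f_2}_{\dot H^{a}}\nrmb{\dlt_\alp f_j}_{\dot H^{b}}\,d\alp/|\alp|^3$ amenable to Cauchy--Schwarz and \eqref{eq: prelemma-1}. No Morrey decomposition is needed in this lemma; the only place the paper uses Morrey-type input is the pointwise bound $|\nb_y\dlt_\alp f_4|\lesssim|\alp|^{1/2}\nrmb{f_4}_{\dot H^{5/2}}$ to tame the derivatives of the denominator.

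Your Morrey route can be made to work for the pieces where $\Dlt_\alp f_2$ appears undifferentiated, but your treatment of the cross terms $\nb\Dlt_\alp f_2\cdot\nb\Dlt_\alp f_3$ is underspecified: the Morrey splitting does not apply to $\nb\Dlt_\alp f_2$, and you would instead need $\nrmb{\nb\dlt_\alp f_2}_{L^p}\le|\alp|\nrmb{\nb^2 f_2}_{L^p}$ (again \eqref{fundamental thm of cal}) to pull $f_2$ out---which is exactly the paper's device applied to $f_2$ rather than $f_1$. Similarly, for the $I_2$-piece with $\nb\Dlt_\alp f_4\cdot\Dlt_\alp f_2\cdot\nb\Dlt_\alp f_3$, your Morrey principal part still leaves three $\alp$-dependent factors and you must burn an additional $|\alp|$ from one of them. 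So your proposal is workable but forces a case-by-case cascade of H\"older exponents; the paper's single move of sacrificing $f_1$ handles all three-factor terms at once and directly explains why the weaker norm $\nrmb{f_2}_{\dot H^{t^*}}$ (with $t^*=\tfrac{3t_1}{4}+\tfrac12$ etc.) emerges from the Cauchy--Schwarz split.
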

\begin{proof}
    To begin with, we show
    \begin{equation}\label{est: laplacian for T_1}
        \begin{split}
            &\nrmb{\frac{\Dlt_\alp f_2 \Dlt_\alp f_3}{\left(1+\left(\Dlt_\alp f_4\right)^2\right)^{\frac52}}}_{\dot{H}^2_y}\\
            &\quad\lesssim \frac{1}{|\alp|^2}\left(\nrmb{  \dlt_\alp f_3}_{\dot{H}^{3-\frac{t_1}{2}}_y}\nrmb{\dlt_\alp f_2}_{\dot{H}^{\frac{t_1}{2}}_y} + \nrmb{ \dlt_\alp f_3}_{\dot{H}^{\frac{3}{2}}_y}\nrmb{ \dlt_\alp f_2}_{\dot{H}^{\frac{3}{2}}_y} + \nrmb{f_3}_{\dot{W}^{1,\infty}}\nrmb{  \dlt_\alp f_4}_{\dot{H}^{\frac{3}{2}}_y}\nrmb{ \dlt_\alp f_2}_{\dot{H}^{\frac{3}{2}}_y} \right.\\
&\qquad\left.+\nrmb{f_3}_{\dot{W}^{1,\infty}}\nrmb{\dlt_\alp f_2}_{\dot{H}^{\frac{t_1}{2}}_y}\nrmb{ \dlt_\alp f_4}_{\dot{H}^{3-\frac{t_1}{2}}_y} \right) + \frac{\nrmb{ f_3}_{\dot{W}^{1,\infty}}}{|\alp|}\nrmb{ \dlt_\alp f_2}_{\dot{H}^2_y} \\
&\qquad+\frac{\nrmb{ f_3}_{\dot{W}^{1,\infty}\cap\dot{H}^{3-\frac{t_1}{2}}}\nrmb{ f_4}_{\dot{H}^{3-\frac{t_1}{2}} \cap \dot{H}^{\frac{5}{2}}}}{|\alp|^\frac32}\nrmb{\dlt_\alp f_2}_{\dot{H}^{\frac{t_1}{2}}_y}.\\
        \end{split}
    \end{equation}
    Using the identity $\Dlt(fg)=f\Dlt g + 2\nb f \cdot \nb g + g\Dlt f$, we split
    \begin{equation*}
        \begin{split}
            \Dlt\left(\frac{\Dlt_\alp f_2 \Dlt_\alp f_3}{\left(1+\left(\Dlt_\alp f_4\right)^2\right)^{\frac52}}\right)&=\left(1+\left(\Dlt_\alp f_4\right)^2\right)^{-\frac52}\Dlt\left(\Dlt_\alp f_2 \Dlt_\alp f_3\right)\\
            &\quad+2\nb_y \left(\left(1+\left(\Dlt_\alp f_4\right)^2\right)^{-\frac52}\right)\cdot\nb_y \left(\Dlt_\alp f_2 \Dlt_\alp f_3\right) \\
            &\quad+ \Dlt_\alp f_2 \Dlt_\alp f_3 \Dlt \left(\left(1+\left(\Dlt_\alp f_4\right)^2\right)^{-\frac52}\right) \\
            &=\mrI_1+\mrI_2+\mrI_3.
        \end{split}
    \end{equation*}
    We recall \eqref{fundamental thm of cal}, \eqref{Morrey}, and \eqref{sobolev embedding-1} to obtain
    \begin{equation}\label{Morrey for T_1}
        \Abs{ \dlt_\alp f_3}\lesssim |\alp|\nrmb{\nb f_3}_{L^\infty}, \quad\;  \quad \Abs{\nb_y \dlt_\alp f_4}\lesssim  |\alp|^{\frac12} \nrmb{\nb_y^2 f_4 }_{L^4}\lesssim |\alp|^{\frac12} \nrmb{f_4 }_{\dot{H}^{\frac{5}{2}}},
    \end{equation}
    which yield
    \begin{equation}\label{three cases of laplacian - linear}
    \begin{split}
        \Abs{\mrI_1}&\lesssim \frac{1}{|\alp|^2} \left(\Abs{ \Dlt \dlt_\alp f_3}\Abs{\dlt_\alp f_2} + \Abs{ \nb_y \dlt_\alp f_3}\Abs{\nb_y \dlt_\alp f_2} + \Abs{ \dlt_\alp f_3}\Abs{ \Dlt \dlt_\alp f_2}\right) \\
        &\lesssim \frac{1}{|\alp|^2} \left(\Abs{ \Dlt \dlt_\alp f_3}\Abs{\dlt_\alp f_2} + \Abs{ \nb_y \dlt_\alp f_3}\Abs{\nb_y \dlt_\alp f_2}\right) +  \frac{\nrmb{\nb f_3}_{L^\infty}}{|\alp|}\Abs{ \Dlt \dlt_\alp f_2},\\
        \Abs{\mrI_2}&\lesssim \frac{\Abs{ \nb_y \dlt_\alp f_4}}{|\alp|^3}\left(\Abs{ \nb_y \dlt_\alp f_3}\Abs{\dlt_\alp f_2} +\Abs{ \dlt_\alp f_3}\Abs{\nb_y \dlt_\alp f_2}\right) \\
            &\lesssim \frac{\nrmb{f_4 }_{\dot{H}^{\frac{5}{2}}}}{|\alp|^\frac52}\Abs{ \nb_y \dlt_\alp f_3}\Abs{\dlt_\alp f_2} + \frac{\nrmb{\nb f_3}_{L^\infty}}{|\alp|^2}\Abs{ \nb_y \dlt_\alp f_4}\Abs{\nb_y \dlt_\alp f_2},\\
            \Abs{\mrI_3}&\lesssim \frac{\Abs{ \dlt_\alp f_3}\Abs{ \dlt_\alp f_2}}{|\alp|^4}\Abs{ \nb_y \dlt_\alp f_4}^2  +\frac{\Abs{ \dlt_\alp f_3}\Abs{ \dlt_\alp f_2}}{|\alp|^3}\Abs{ \Dlt \dlt_\alp f_4}\\
            &\lesssim \frac{\nrmb{\nb f_3}_{L^\infty}\nrmb{f_4 }_{\dot{H}^{\frac{5}{2}}}}{|\alp|^\frac52}\Abs{ \nb_y \dlt_\alp f_4}\Abs{\dlt_\alp f_2} + \frac{\nrmb{\nb f_3}_{L^\infty}}{|\alp|^2}\Abs{ \dlt_\alp f_2}\Abs{ \Dlt \dlt_\alp f_4}.
        \end{split}
    \end{equation}
    Hence using \eqref{sobolev embedding-2} and H\"older's inequality, we have
    \begin{equation*}
        \begin{split}
            &\nrmb{\frac{\Dlt_\alp f_2 \Dlt_\alp f_3}{\left(1+\left(\Dlt_\alp f_4\right)^2\right)^{\frac52}}}_{\dot{H}^2_y}\\
            &\quad\lesssim \frac{1}{|\alp|^2}\left(\nrmb{ \Dlt \dlt_\alp f_3}_{\dot{H}^{1-\frac{t_1}{2}}_y}\nrmb{\dlt_\alp f_2}_{\dot{H}^{\frac{t_1}{2}}_y} + \nrmb{ \nb_y \dlt_\alp f_3}_{\dot{H}^{\frac{1}{2}}_y}\nrmb{\nb_y \dlt_\alp f_2}_{\dot{H}^{\frac{1}{2}}_y} + \nrmb{\nb f_3}_{L^\infty} \nrmb{ \nb_y \dlt_\alp f_4}_{\dot{H}^{\frac{1}{2}}_y}\nrmb{\nb_y \dlt_\alp f_2}_{\dot{H}^{\frac{1}{2}}_y} \right.\\
            &\qquad\left.+\nrmb{\nb f_3}_{L^\infty}\nrmb{\dlt_\alp f_2}_{\dot{H}^{\frac{t_1}{2}}_y}\nrmb{ \Dlt \dlt_\alp f_4}_{\dot{H}^{1-\frac{t_1}{2}}_y} \right) + \frac{\nrmb{\nb f_3}_{L^\infty}}{|\alp|}\nrmb{ \Dlt \dlt_\alp f_2}_{L^2_y} \\
&\qquad+\frac{\nrmb{f_4}_{\dot{H}^\frac52}}{|\alp|^\frac52}\nrmb{ \nb_y \dlt_\alp f_3}_{L^{\frac{4}{t_1}}_y}\nrmb{\dlt_\alp f_2}_{L^{\frac{4}{2-t_1}}_y} +\frac{\nrmb{\nb f_3}_{L^\infty}\nrmb{f_4}_{\dot{H}^\frac52}}{|\alp|^\frac52}\nrmb{ \nb_y \dlt_\alp f_4}_{L^{\frac{4}{t_1}}_y}\nrmb{\dlt_\alp f_2}_{L^{\frac{4}{2-t_1}}_y} 
        \end{split}
    \end{equation*}
    Employing \eqref{fundamental thm of cal} together with \eqref{sobolev embedding-1}, we can bound
    \begin{equation*}
        \nrmb{ \nb_y \dlt_\alp f_j}_{L^{\frac{4}{t_1}}_y} \lesssim |\alp|\nrmb{\nb^2_y f_j}_{L^{\frac{4}{t_1}}}\lesssim |\alp|\nrmb{\nb^2_y f_j}_{\dot{H}^{1-\frac{t_1}{2}}} \;\,(j=3,4), \quad\; \nrmb{\dlt_\alp f_2}_{L^{\frac{4}{2-t_1}}_y} \lesssim \nrmb{\dlt_\alp f_2}_{\dot{H}^{\frac{t_1}{2}}_y},
    \end{equation*}
    so that we arrive at \eqref{est: laplacian for T_1}.
    
    Now using the Sobolev embeddings: $\dot{H}^{t_1-1}(\bbR^2)\hookrightarrow L^\frac{2}{2-t_1}(\bbR^2)$, $\dot{H}^{2-t_1}(\bbR^2)\hookrightarrow L^\frac{2}{t_1-1}(\bbR^2)$ and \eqref{est: laplacian for T_1}, we have
    \begin{equation*}
\begin{split}
    \nrmb{Q^{t_1,6}[f_1,f_2,f_3,f_4]}_{L^2}&\lesssim  \int_{\bbR^2} \nrmb{ \nabla_y \dlt_\alp  f_1}_{L^\frac{2}{2-t_1}_y} \nrmb{\Lmb^{t_1}\left(\frac{\Dlt_\alp f_2 \Dlt_\alp f_3}{\left(1+\left(\Dlt_\alp f_4\right)^2\right)^\frac52} \right)}_{\dot{H}^{2-t_1}_y} \frac{d\alp}{|\alp|^2} \\
    &\lesssim \int_{\bbR^2} \nrmb{ \nabla_y   \dlt_\alp f_1}_{L^\frac{2}{2-t_1}} \nrmb{  \dlt_\alp f_3}_{\dot{H}^{3-\frac{t_1}{2}}_y}\nrmb{\dlt_\alp f_2}_{\dot{H}^{\frac{t_1}{2}}_y}\frac{d\alp}{|\alp|^4} \\
    &\quad+ \int_{\bbR^2} \nrmb{ \nabla_y   \dlt_\alp f_1}_{L^\frac{2}{2-t_1}}  \nrmb{ \dlt_\alp f_3}_{\dot{H}^{\frac{3}{2}}_y}\nrmb{ \dlt_\alp f_2}_{\dot{H}^{\frac{3}{2}}_y}\frac{d\alp}{|\alp|^4}\\
    &\quad+ \nrmb{f_3}_{\dot{W}^{1,\infty}}\int_{\bbR^2} \nrmb{ \nabla_y   \dlt_\alp f_1}_{L^\frac{2}{2-t_1}}  \nrmb{ \dlt_\alp f_4}_{\dot{H}^{\frac{3}{2}}_y}\nrmb{ \dlt_\alp f_2}_{\dot{H}^{\frac{3}{2}}_y}\frac{d\alp}{|\alp|^4}\\
    &\quad+ \nrmb{f_3}_{\dot{W}^{1,\infty}}\int_{\bbR^2} \nrmb{ \nabla_y   \dlt_\alp f_1}_{L^\frac{2}{2-t_1}} \nrmb{\dlt_\alp f_2}_{\dot{H}^{\frac{t_1}{2}}_y}\nrmb{ \dlt_\alp f_4}_{\dot{H}^{3-\frac{t_1}{2}}_y}\frac{d\alp}{|\alp|^4}\\
    &\quad+ \nrmb{ f_3}_{\dot{W}^{1,\infty}}\int_{\bbR^2} \nrmb{ \nabla_y   \dlt_\alp f_1}_{\dot{H}^{t_1-1}} \nrmb{ \dlt_\alp f_2}_{\dot{H}^{2}_y} \frac{d\alp}{|\alp|^3}\\
    &\quad+ \nrmb{ f_3}_{\dot{W}^{1,\infty}\cap\dot{H}^{3-\frac{t_1}{2}}}\nrmb{ f_4}_{\dot{H}^{3-\frac{t_1}{2}} \cap \dot{H}^{\frac{5}{2}}}\int_{\bbR^2} \nrmb{ \nabla_y   \dlt_\alp f_1}_{\dot{H}^{t_1-1}} \nrmb{\dlt_\alp f_2}_{\dot{H}^{\frac{t_1}{2}}_y} \frac{d\alp}{|\alp|^\frac72} \\
    &= \mrII_1+\mrII_2+\mrII_3+\mrII_4+\mrII_5+\mrII_6.
\end{split}
\end{equation*}
Noticing \eqref{fundamental thm of cal} together with \eqref{sobolev embedding-1}, we can bound
    \begin{equation*}\label{nabla^2 for s - complex exponent}
        \nrmb{ \nabla_y   \dlt_\alp f_1}_{L^\frac{2}{2-t_1}_y} \lesssim |\alp|\nrmb{\nb^2 f_1}_{L^\frac{2}{2-t_1}_y}\lesssim |\alp|\nrmb{f_1}_{\dot{H}^{t_1+1}}.
    \end{equation*}
Using this inequality and \eqref{eq: prelemma-1}, we estimate
\begin{equation*}
    \begin{split}
        \mrII_1+&\mrII_2+\mrII_3+\mrII_4 \\
        &\lesssim \nrmb{f_1}_{\dot{H}^{t_1+1}}\int_{\bbR^2} \nrmb{  \dlt_\alp f_3}_{\dot{H}^{3-\frac{t_1}{2}}_y}\nrmb{\dlt_\alp f_2}_{\dot{H}^{\frac{t_1}{2}}_y} \frac{d\alp}{|\alp|^3} \\
        &\quad+ \nrmb{f_1}_{\dot{H}^{t_1+1}}\int_{\bbR^2} \nrmb{ \dlt_\alp f_3}_{\dot{H}^{\frac{3}{2}}_y}\nrmb{ \dlt_\alp f_2}_{\dot{H}^{\frac{3}{2}}_y} \frac{d\alp}{|\alp|^3}\\
        &\quad+ \nrmb{f_1}_{\dot{H}^{t_1+1}}\nrmb{f_3}_{\dot{W}^{1,\infty}}\int_{\bbR^2} \nrmb{ \dlt_\alp f_4}_{\dot{H}^{\frac{3}{2}}_y}\nrmb{ \dlt_\alp f_2}_{\dot{H}^{\frac{3}{2}}_y}\frac{d\alp}{|\alp|^3}\\
        &\quad+ \nrmb{f_1}_{\dot{H}^{t_1+1}}\nrmb{f_3}_{\dot{W}^{1,\infty}}\int_{\bbR^2} \nrmb{\dlt_\alp f_2}_{\dot{H}^{\frac{t_1}{2}}_y}\nrmb{ \dlt_\alp f_4}_{\dot{H}^{3-\frac{t_1}{2}}_y} \frac{d\alp}{|\alp|^3}\\
        &\lesssim \nrmb{f_1}_{\dot{H}^{t_1+1}}\left(\int_{\bbR^2}\nrmb{ \dlt_\alp f_3}^2_{\dot{H}^{3-\frac{t_1}{2}}_y}\frac{d\alp}{|\alp|^{3-\frac{t_1}{2}}}\right)^\frac12 \left(\int_{\bbR^2}\nrmb{\dlt_\alp f_2}^2_{\dot{H}^{\frac{t_1}{2}}_y}\frac{d\alp}{|\alp|^{3+\frac{t_1}{2}}}\right)^\frac12 \\
        &\quad +\nrmb{f_1}_{\dot{H}^{t_1+1}}\left(\int_{\bbR^2}\nrmb{ \dlt_\alp f_3}^2_{\dot{H}^{\frac{3}{2}}_y}\frac{d\alp}{|\alp|^{5-t_1}}\right)^\frac12 \left(\int_{\bbR^2}\nrmb{\dlt_\alp f_2}_{\dot{H}^{\frac{3}{2}}_y}^2\frac{d\alp}{|\alp|^{1+t_1}}\right)^\frac12 \\
        &\quad +\nrmb{f_1}_{\dot{H}^{t_1+1}}\nrmb{f_3}_{\dot{W}^{1,\infty}} \left(\int_{\bbR^2}\nrmb{ \dlt_\alp f_4}^2_{\dot{H}^{\frac{3}{2}}_y}\frac{d\alp}{|\alp|^{5-t_1}}\right)^\frac12 \left(\int_{\bbR^2}\nrmb{\dlt_\alp f_2}_{\dot{H}^{\frac{3}{2}}_y}^2\frac{d\alp}{|\alp|^{1+t_1}}\right)^\frac12 \\
        &\quad+ \nrmb{f_1}_{\dot{H}^{t_1+1}}\nrmb{f_3}_{\dot{W}^{1,\infty}}\left(\int_{\bbR^2}\nrmb{ \dlt_\alp f_4}^2_{\dot{H}^{3-\frac{t_1}{2}}_y}\frac{d\alp}{|\alp|^{3-\frac{t_1}{2}}}\right)^\frac12 \left(\int_{\bbR^2}\nrmb{\dlt_\alp f_2}^2_{\dot{H}^{\frac{t_1}{2}}_y}\frac{d\alp}{|\alp|^{3+\frac{t_1}{2}}}\right)^\frac12 \\
        &\lesssim \nrmb{f_1}_{\dot{H}^{t_1+1}}\nrmb{f_2}_{\dot{H}^{\frac{3t_1}{4}+\frac12}\cap \dot{H}^{\frac{t_1}{2}+1}}\nrmb{f_3}_{\dot{W}^{1,\infty}\cap\dot{H}^{\frac72-\frac{3t_1}{4}} \cap \dot{H}^{3-\frac{t_1}{2}}}\left(1+ +\nrmb{f_4}_{\dot{H}^{\frac72-\frac{3t_1}{4}} \cap \dot{H}^{3-\frac{t_1}{2}}}\right).
    \end{split}
\end{equation*}
For $\mrII_5$, we have
\begin{equation*}
    \begin{split}
        \mrII_5 &\lesssim \nrmb{ f_3}_{\dot{W}^{1,\infty}}\int_{\bbR^2} \nrmb{ \dlt_\alp f_1}_{\dot{H}^{t_1}_y}\nrmb{\dlt_\alp f_2}_{\dot{H}^{2}_y} \frac{d\alp}{|\alp|^3}\\
        &\lesssim \nrmb{ f_3}_{\dot{W}^{1,\infty}} \left(\int_{\bbR^2}\nrmb{ \dlt_\alp f_1}^2_{\dot{H}^{t_1}_y}\frac{d\alp}{|\alp|^{5-t_1}}\right)^\frac12 \left(\int_{\bbR^2}\nrmb{\dlt_\alp f_2}_{\dot{H}^{2}_y}^2\frac{d\alp}{|\alp|^{1+t_1}}\right)^\frac12 \\
        &\lesssim \nrmb{ f_3}_{\dot{W}^{1,\infty}} \nrmb{f_1}_{\dot{H}^{\frac{t_1}{2}+\frac32}} \nrmb{f_2}_{\dot{H}^{\frac{t_1}{2}+\frac32}}.
    \end{split}
\end{equation*}
Finally we estimate $\mrII_6$:
\begin{equation*}
    \begin{split}
        \mrII_6 &\lesssim \nrmb{ f_3}_{\dot{W}^{1,\infty}\cap\dot{H}^{3-\frac{t_1}{2}}}\nrmb{ f_4}_{\dot{H}^{3-\frac{t_1}{2}} \cap \dot{H}^{\frac{5}{2}}}\int_{\bbR^2} \nrmb{ \dlt_\alp f_1}_{\dot{H}^{t_1}_y}\nrmb{\dlt_\alp f_2}_{\dot{H}^{\frac{t_1}{2}}_y} \frac{d\alp}{|\alp|^\frac72}\\
        &\lesssim \nrmb{ f_3}_{\dot{W}^{1,\infty}\cap\dot{H}^{3-\frac{t_1}{2}}}\nrmb{ f_4}_{\dot{H}^{3-\frac{t_1}{2}} \cap \dot{H}^{\frac{5}{2}}} \left(\int_{\bbR^2}\nrmb{ \dlt_\alp f_1}^2_{\dot{H}^{t_1}_y}\frac{d\alp}{|\alp|^{4-\frac{t_1}{2}}}\right)^\frac12 \left(\int_{\bbR^2}\nrmb{\dlt_\alp f_2}_{\dot{H}^{\frac{t_1}{2}}_y}^2\frac{d\alp}{|\alp|^{3+\frac{t_1}{2}}}\right)^\frac12 \\
        &\lesssim \nrmb{ f_3}_{\dot{W}^{1,\infty}\cap\dot{H}^{3-\frac{t_1}{2}}}\nrmb{ f_4}_{\dot{H}^{3-\frac{t_1}{2}} \cap \dot{H}^{\frac{5}{2}}}\nrmb{f_1}_{\dot{H}^{\frac{3t_1}{4}+1}} \nrmb{f_2}_{\dot{H}^{\frac{3t_1}{4}+\frac12}}.
    \end{split}
\end{equation*}
Note that since $t_1\in(3/2,2)$, the exponents of Sobolev spaces appeared above estimates satisfy
\begin{equation*}
    \frac{5}{2},\;\,\frac{t_1}{2}+\frac{3}{2},\;\,\frac{3t_1}{4}+1,\;\,\frac72-\frac{3t_1}{4},\;\,3-\frac{t_1}{2} \in (2,t_1+1),\quad \frac{t_1}{2}+1,\;\, \frac{3t_1}{4}+\frac{1}{2}\in(t_1,2).
\end{equation*}
Therefore, combining all, we are done.

\end{proof}

\begin{lemma}\label{lem: Q^7}
    For any  $t_1\in (3/2,2)$, there exists $t^{**}=t^{**}(t_1)\in(2,t_1+1)$ such that
    \begin{equation*}
    \begin{split}
&\nrmb{Q^{t_1,7}[f_1,f_2,f_3,f_4]}_{L^2} \\
&\quad\lesssim \nrmb{f_1}_{\dot{H}^{t^{**}}\cap\dot{H}^{t_1+1}}\nrmb{f_2}_{\dot{W}^{1,\infty}\cap \dot{H}^{t^{**}}\cap\dot{H}^{t_1+1}}\nrmb{f_3}_{\dot{W}^{1,\infty}\cap\dot{H}^{t^{**}}\cap\dot{H}^{t_1+1}} \left(1 +\nrmb{f_4}_{\dot{H}^{t^{**}}\cap\dot{H}^{t_1+1}}\right).
\end{split}
    \end{equation*}
\end{lemma}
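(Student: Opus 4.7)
The strategy is to mimic the proof of Lemma~\ref{lem: T^7(klin)}: the integrand of $Q^{t_1,7}$ is precisely the commutator $\Lmb^{t_1}(AB)-(\Lmb^{t_1}A)B-A\,\Lmb^{t_1}B$ with
$$A:=\alp\cdot\nabla_y\Dlt_\alp f_1,\qquad B:=\frac{\Dlt_\alp f_2\,\Dlt_\alp f_3}{(1+(\Dlt_\alp f_4)^2)^{5/2}},$$
so by the commutator identity \eqref{commutator identity for fractional laplacian},
$$Q^{t_1,7}[f_1,f_2,f_3,f_4]=-C(t_1)\int_{\bbR^2}\!\int_{\bbR^2}\frac{\dlt_\bt A\,\dlt_\bt B}{|\bt|^{2+t_1}}\,d\bt\,\frac{d\alp}{|\alp|^2}.$$
Since $\dlt_\bt$ commutes with $\alp\cdot\nabla_y$, $|\dlt_\bt A|\le|\nabla_y\dlt_\alp\dlt_\bt f_1|$. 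For $|\dlt_\bt B|$, apply the three-factor discrete Leibniz identity
$$\dlt_\bt(XYZ)(y)=(\dlt_\bt X)(y)Y(y)Z(y)+X(y-\bt)(\dlt_\bt Y)(y)Z(y)+X(y-\bt)Y(y-\bt)(\dlt_\bt Z)(y)$$
with $X=\Dlt_\alp f_2$, $Y=\Dlt_\alp f_3$, $Z=(1+(\Dlt_\alp f_4)^2)^{-5/2}$; using $|X|\le\nrm{\nabla f_2}_{L^\infty}$, $|Y|\le\nrm{\nabla f_3}_{L^\infty}$, $|Z|\le 1$, and the global Lipschitz bound $|\dlt_\bt Z|\lesssim|\dlt_\bt\Dlt_\alp f_4|$ (the derivative $-5x(1+x^2)^{-7/2}$ is uniformly bounded, so no smallness on $f_4$ is needed), one obtains
$$|\dlt_\bt B|\lesssim\frac{1}{|\alp|}\bigl[\nrm{\nabla f_3}_{L^\infty}|\dlt_\alp\dlt_\bt f_2|+\nrm{\nabla f_2}_{L^\infty}|\dlt_\alp\dlt_\bt f_3|+\nrm{\nabla f_2}_{L^\infty}\nrm{\nabla f_3}_{L^\infty}|\dlt_\alp\dlt_\bt f_4|\bigr].$$

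Substituting these bounds and absorbing $1/|\alp|$ into the outer measure reduces the proof to estimating, for each $j\in\{2,3,4\}$, the $L^2_y$-norm of
$$\int_{\bbR^2}\!\int_{\bbR^2}|\nabla_y\dlt_\alp\dlt_\bt f_1|\,|\dlt_\alp\dlt_\bt f_j|\,\frac{d\bt\,d\alp}{|\bt|^{2+t_1}\,|\alp|^3},$$
weighted by the appropriate $\dot W^{1,\infty}$-norms. Following the template of Lemma~\ref{lem: T^7(klin)} verbatim, I would take $L^2_y$ inside via Minkowski, apply the Sobolev product \eqref{sobolev embedding-2} with exponents $(1/4,3/4)$ to bound the integrand in $L^2_y$ by $\nrm{\dlt_\alp\dlt_\bt f_1}_{\dot H^{5/4}}\nrm{\dlt_\alp\dlt_\bt f_j}_{\dot H^{3/4}}$, then Cauchy--Schwarz in $(\alp,\bt)$ splitting $d\alp/|\alp|^3=(d\alp/|\alp|^{5/2})^{1/2}(d\alp/|\alp|^{7/2})^{1/2}$, and finally invoke \eqref{eq: prelemma-1} twice (first in $\bt$ with $b=t_1/2\in(0,1)$, then in $\alp$ with $b=1/4$ for $f_1$ and $b=3/4$ for $f_j$). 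Both resulting Sobolev indices equal $\tfrac{5}{4}+\tfrac{t_1}{2}+\tfrac{1}{4}=\tfrac{3}{4}+\tfrac{t_1}{2}+\tfrac{3}{4}=\tfrac{t_1+3}{2}$, so setting $t^{**}=(t_1+3)/2$---which lies in $(9/4,5/2)\subset(2,t_1+1)$ since $t_1\in(3/2,2)$---and invoking \eqref{sobolev interpolation} delivers the stated inequality.

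The only real subtlety is that $f_4$ is not assumed to lie in $\dot W^{1,\infty}$: any attempt to control $|\dlt_\bt Z|$ using an $L^\infty$ factor of $\nabla f_4$ would fail, since the hypotheses only place $\nabla f_4$ in $\dot H^{t^{**}-1}\cap\dot H^{t_1}$, both of whose exponents exceed $1$, so that \eqref{sobolev embedding-3(infty)} does not apply. This obstacle is circumvented precisely because $(1+x^2)^{-5/2}$ has a uniformly bounded derivative on all of $\bbR$, so the increment of the denominator is linear in $|\dlt_\bt\Dlt_\alp f_4|$ alone, with no Lipschitz norm of $f_4$ appearing. With that observation in hand the remainder is bookkeeping, running exactly parallel to the analysis of $T^{t_1,7}$.
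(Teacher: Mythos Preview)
Your proposal is correct and follows essentially the same route as the paper: apply the commutator identity \eqref{commutator identity for fractional laplacian}, bound $\dlt_\bt B$ by the three terms in \eqref{splitting for beta- linear} (the paper uses the algebraic identity \eqref{formular: difference of three product} rather than your telescoping Leibniz rule, but the resulting pointwise bound is the same), then run the $(1/4,3/4)$ Sobolev product, Cauchy--Schwarz with the $|\alp|^{-5/2}$/$|\alp|^{-7/2}$ split, and two applications of \eqref{eq: prelemma-1} to land on $t^{**}=(t_1+3)/2$. Your observation about why no $\dot W^{1,\infty}$ control on $f_4$ is needed is exactly the point, though the paper does not make it explicit.
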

\begin{proof}
Using the mean value theorem to the function $F(x)=(1+x^2)^{-\frac52}$, we can show
\begin{equation*}
    \Abs{\dlt_{\beta} \left(\left(1+(\Delta_\alp f_4)^2\right)^{-\frac52}\right)}\lesssim \Abs{\Dlt_\alp \dlt_\beta f_4}
\end{equation*}
for $\bt\in\bbR^2$. Using this inequality and \eqref{fundamental thm of cal}, and recalling \eqref{formular: difference of three product},
we deduce
\begin{equation}\label{splitting for beta- linear}
    \Abs{\dlt_{\beta} \left(\frac{\Dlt_\alp f_2 \Dlt_\alp f_3}{\left(1+\left(\Dlt_\alp f_4\right)^2\right)^{\frac52}}\right)}\lesssim \nrmb{\nb f_3}_{L^\infty}\Abs{\Dlt_\alp \dlt_\beta f_2} +\nrmb{\nb f_2}_{L^\infty}\Abs{\Dlt_\alp \dlt_\beta f_3} + \nrmb{\nb f_2}_{L^\infty}\nrmb{\nb f_3}_{L^\infty}\Abs{\Dlt_\alp \dlt_\beta f_4}.
\end{equation}
Thus with the aid of \eqref{commutator identity for fractional laplacian}, we have
\begin{equation*}
    \begin{split}
        \Abs{Q^{t_1,7}[f_1,f_2,f_3,f_4]} &\lesssim \int_{\bbR^2} \int_{\bbR^2} \Abs{\alp \cdot \nabla_y \Delta_\alp \dlt_\bt f_1 }\Abs{\dlt_{\beta} \left(\frac{\Dlt_\alp k^{Lin}_{s}(|y|) \Dlt_\alp g(|y|)}{\left(1+\left(\Dlt_\alp k^{Lin}_{s}(|y|)\right)^2\right)^{\frac52}}\right)}\frac{d\bt}{|\bt|^{2+t_1}}\frac{d\alp}{|\alp|^2} \\
        &\lesssim  \nrmb{\nb f_3}_{L^\infty}\int_{\bbR^2} \int_{\bbR^2} \Abs{\nabla_y \delta_\alp \dlt_\bt f_1 }\Abs{\dlt_\alp \dlt_\beta f_2}\frac{d\bt}{|\bt|^{2+t_1}}\frac{d\alp}{|\alp|^3} \\
        &\quad+\nrmb{\nb f_2}_{L^\infty}\int_{\bbR^2} \int_{\bbR^2} \Abs{\nabla_y \delta_\alp \dlt_\bt f_1 }\Abs{\dlt_\alp \dlt_\beta f_3}\frac{d\bt}{|\bt|^{2+t_1}}\frac{d\alp}{|\alp|^3} \\
        &\quad+\nrmb{\nb f_2}_{L^\infty}\nrmb{\nb f_3}_{L^\infty}\int_{\bbR^2} \int_{\bbR^2} \Abs{\nabla_y \delta_\alp \dlt_\bt f_1 }\Abs{\dlt_\alp \dlt_\beta f_4}\frac{d\bt}{|\bt|^{2+t_1}}\frac{d\alp}{|\alp|^3} \\
        &= \mrI + \mrII+ \mrIII.
    \end{split}
\end{equation*}
For $\mrI$, using \eqref{sobolev embedding-2} and \eqref{eq: prelemma-1}, we estimate
\begin{equation*}
    \begin{split}
       \normb{\mrI}&\lesssim  \nrmb{\nb f_3}_{L^\infty}\int_{\bbR^2}\int_{\bbR^2} \nrmb{\nabla_y \dlt_\alp \dlt_\bt f_1}_{\dot{H}^\frac14} \nrmb{\dlt_\alp \dlt_\beta f_2}_{\dot{H}^\frac34}  \frac{d\bt}{|\bt|^{2+t_1}}\frac{d\alp}{|\alp|^{3}} \\
       &\lesssim \nrmb{\nb f_3}_{L^\infty}\left( \int_{\bbR^2}\int_{\bbR^2} \nrmb{\dlt_\alp \dlt_\bt f_1}^2_{\dot{H}^\frac54}   \frac{d\bt}{|\bt|^{2+t_1}}\frac{d\alp}{|\alp|^{\frac{5}{2}}}\right)^\frac12
       \left(\int_{\bbR^2}\int_{\bbR^2} \nrmb{\dlt_\alp \dlt_\beta f_2}^2_{\dot{H}^\frac34}  \frac{d\bt}{|\bt|^{2+t_1}}\frac{d\alp}{|\alp|^{\frac{7}{2}}}\right)^\frac12 \\
       &\lesssim \nrmb{\nb f_3}_{L^\infty} \nrmb{f_1}_{\dot{H}^{\frac{t_1+3}{2}}}\nrmb{f_2}_{\dot{H}^{\frac{t_1+3}{2}}}.
    \end{split}
\end{equation*}
With the same argument, we can show
\begin{equation*}
        \normb{\mrII}\lesssim \nrmb{\nb f_2}_{L^\infty} \nrmb{f_1}_{\dot{H}^{\frac{t_1+3}{2}}}\nrmb{f_3}_{\dot{H}^{\frac{t_1+3}{2}}}, \quad \normb{\mrIII}\lesssim \nrmb{\nb f_2}_{L^\infty} \nrmb{\nb f_3}_{L^\infty}\nrmb{f_1}_{\dot{H}^{\frac{t_1+3}{2}}}\nrmb{f_4}_{\dot{H}^{\frac{t_1+3}{2}}}.
\end{equation*}
Here, noticing $t_1\in(3/2,2)$, we see
\begin{equation*}
    \frac{t_1+3}{2}\in(2,t_1+1).
\end{equation*}
\end{proof}

\subsection{The nonlinear operator $\calN[g]$}\label{sec: nonlinear term}
In this subsection, we prove Lemma \ref{lem: l^2 estimate for T_>2(g)}. 
Recalling \eqref{def: subTs}, we can observe
\begin{equation}\label{taylor for nonlinear}
\begin{split}
    \calT_{\ge 2}[g_2]-\calT_{\ge 2}[g_1]&= \int_{\tau_1=0}^1 \left.\frac{d^2}{d\tau_1 d\tau_2} \calT[k^{Lin}_{s}+\tau_1g_1+\tau_2(g_2-g_1)]\right|_{\tau_2=0} d\tau_1 \\
    &\quad+ \int_{\tau_2=0}^1 (1-\tau_2)\left.\frac{d^2}{ d\tau_2^2} \calT[k^{Lin}_{s}+\tau_1g_1+\tau_2(g_2-g_1)]\right|_{\tau_1=1} d\tau_2.
    \end{split}
\end{equation}
by Taylor's Theorem. Hence we aim to estimate of $\dot{H}^{t_1}$-norms ($3/2<t_1<2$) of integrands in the above equation. Hereafter, we denote
\begin{equation}\label{h=klin+g_1+g_2-g_1}
\omg=\omg(k^{Lin}_{s},g_1,g_2,\tau_1,\tau_2)=k^{Lin}_{s}+\tau_1g_1+\tau_2(g_2-g_1)
\end{equation}
for simplicity. Then recalling \eqref{def: calQ}, we compute
\begin{equation}\label{computation with R}
\begin{split}
\frac{d^2}{d\tau_1 d\tau_2} \calT[\omg]&=-\frac{3}{2\pi}\left(\calQ[g_2-g_1,g_1,\omg,\omg]+\calQ[g_1,g_2-g_1,\omg,\omg]+\calQ[\omg,g_1,g_2-g_1\omg]-5\calR[\omg,g_1,g_2-g_1,\omg]\right),\\
\frac{d^2}{d\tau_2^2} \calT[\omg]&=-\frac{3}{2\pi}\left(2\calQ[g_2-g_1,g_2-g_1,\omg,\omg]+\calQ[\omg,g_2-g_1,g_2-g_1,\omg]-5\calR[\omg,g_2-g_1,g_2-g_1,\omg]\right),
\end{split}
\end{equation}
where 
\begin{equation}\label{def: calR}
       \calR[f_1,f_2,f_3,f_4]=\int_{\bbR^2} \alp \cdot \nb_y \Dlt_\alp f_1 \frac{\Dlt_\alp f_2 \Dlt_\alp f_3 \left(\Dlt_\alp f_4\right)^2 }{\left(1+\left(\Dlt_\alp f_4\right)^2\right)^\frac72} \frac{d\alp}{|\alp|^2}.
\end{equation}

Thus, Lemma \ref{lem: l^2 estimate for T_>2(g)} follows from Proposition \ref{proposition: key for linear} and Proposition \ref{proposition: key for nonlinear}:

\begin{proposition}\label{proposition: key for nonlinear}
     For any  $t_1\in (3/2,2)$, there exist $t^{*}=t^*(t_1)\in(t_1,2)$ and $t^{**}=t^{**}(t_1)\in(2,t_1+1)$ such that
    \begin{equation*}\label{key estimate-nonlinear}
    \begin{split}
&\nrmb{ \calR[f_1,f_2,f_3,f_4]}_{\dot{H}^{t_1}}\\
&\quad\lesssim \nrmb{f_1}_{\dot{H}^{t^{**}}\cap\dot{H}^{t_1+1}}\nrmb{f_2}_{\dot{W}^{1,\infty}\cap\dot{H}^{t^{*}}\cap\dot{H}^{t_1+1}}\nrmb{f_3}_{\dot{W}^{1,\infty}\cap\dot{H}^{t^{**}}\cap\dot{H}^{t_1+1}}\left( 1 +\nrmb{f_4}_{\dot{H}^{t^{**}}\cap\dot{H}^{t_1+1}}\right)
\end{split}
    \end{equation*}
\end{proposition}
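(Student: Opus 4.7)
The plan is to mirror the proof of Proposition \ref{proposition: key for linear} step by step, exploiting the fact that $\calR$ differs from $\calQ$ only by raising the denominator to the $7/2$ power instead of $5/2$ and by adjoining an extra factor $(\Dlt_\alp f_4)^2$ in the numerator. The key structural observations used for $\calQ$ all carry over: the ``flat'' kernel
\begin{equation*}
\mathfrak{k}_{\mathrm{flat}}(\alp) := \frac{\frac{\alp}{|\alp|} \cdot \nb_y f_2 \; \frac{\alp}{|\alp|} \cdot \nb_y f_3 \; \left(\frac{\alp}{|\alp|} \cdot \nb_y f_4\right)^2}{\left(1 + \left(\frac{\alp}{|\alp|} \cdot \nb_y f_4\right)^2\right)^{7/2}}
\end{equation*}
is homogeneous of degree $0$ in $\alp$, so $\alp \cdot \nb_\alp \mathfrak{k}_{\mathrm{flat}} = 0$, and the symmetrization $\alp \mapsto -\alp$ arguments remain available.

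First I would state and prove an analogue of Lemma \ref{lem: reformulation for linear A}: $\Lmb^{t_1} \calR = \sum_{j=1}^7 R^{t_1,j}$, where each $R^{t_1,j}$ is obtained from the corresponding $Q^{t_1,j}$ by replacing $\frac{\Dlt_\alp f_2 \Dlt_\alp f_3}{(1+(\Dlt_\alp f_4)^2)^{5/2}}$ by $\frac{\Dlt_\alp f_2 \Dlt_\alp f_3 (\Dlt_\alp f_4)^2}{(1+(\Dlt_\alp f_4)^2)^{7/2}}$ and $\frac{\alp}{|\alp|}\cdot\nb_y f_2 \frac{\alp}{|\alp|}\cdot\nb_y f_3 / (1+(\frac{\alp}{|\alp|}\cdot\nb_y f_4)^2)^{5/2}$ by the analogous flat expression $\mathfrak{k}_{\mathrm{flat}}$. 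The derivation is a verbatim repetition of the argument in Lemma \ref{lem: reformulation for linear A}, using that $\alp \cdot \nb_\alp \mathfrak{k}_{\mathrm{flat}} = 0$ in place of \eqref{derivatives of alpha functions-for linear}.

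Next, I would estimate each $R^{t_1,j}$ following the blueprint of Lemma \ref{lem: Q^1-5}, Lemma \ref{lem: Q^6}, and Lemma \ref{lem: Q^7}. For $R^{t_1,1}$, the factor $\frac{(\alp/|\alp|)\cdot \nb_y f_4)^2}{(1+((\alp/|\alp|)\cdot \nb_y f_4)^2)^{7/2}}$ is bounded by $\nrmb{\nb f_4}_{L^\infty}^2$, so the radial Mikhlin-type estimate \eqref{key est: for T^1(klin) to make t_1+1} still yields $\nrmb{f_1}_{\dot{H}^{t_1+1}}$ times $\nrmb{\nb f_2}_{L^\infty}\nrmb{\nb f_3}_{L^\infty}\nrmb{\nb f_4}_{L^\infty}^2$. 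For $R^{t_1,2}, R^{t_1,3}, R^{t_1,4}$, the analogue of the pointwise splitting \eqref{spliting f_2f_3/(1+w)} must include one more factor; by applying \eqref{formular: difference of three product} twice (or by a direct mean value argument in four variables) one obtains a bound by sums of products of four differences $|\Dlt_\alp f_i - \frac{\alp}{|\alp|}\cdot \nb_y f_i|$ and $|\Dlt_\alp f_i + \frac{\alp}{|\alp|}\cdot \nb_y f_i|$, each estimated through \eqref{App of Morrey} and \eqref{fundamental thm of cal} exactly as before. For $R^{t_1,5}$, the computation of $\alp\cdot\nb_\alp$ of the kernel produces four analogous terms (one per factor of $\Dlt_\alp f_i$), each controlled by \eqref{App of Morrey} as in Lemma \ref{lem: Q^1-5}.

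The main technical obstacle I expect is the $\dot{H}^2_y$ estimate of the new kernel for $R^{t_1,6}$, the analogue of \eqref{est: laplacian for T_1}. Applying $\Dlt_y$ to $\frac{\Dlt_\alp f_2 \Dlt_\alp f_3 (\Dlt_\alp f_4)^2}{(1+(\Dlt_\alp f_4)^2)^{7/2}}$ produces more cross terms than in \eqref{three cases of laplacian - linear} because we must distribute two derivatives across four factors. However, each new cross term is of the same form as those already appearing, up to the replacement of one $\Dlt_\alp f_j$ by $\Dlt_\alp f_4$ (bounded via $\nrmb{\nb f_4}_{L^\infty}$) or by an extra $|\nb_y \Dlt_\alp f_4|$ (controlled using $\nrmb{f_4}_{\dot{H}^{5/2}}$ just as in \eqref{Morrey for T_1}). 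Careful bookkeeping then yields a bound of the shape \eqref{est: laplacian for T_1} with the additional multiplicative factor $\nrmb{f_4}_{\dot{W}^{1,\infty} \cap \dot{H}^{t^{**}}}$ on the right. Pairing this against $\nrmb{\nb_y \dlt_\alp f_1}_{L^{2/(2-t_1)}}$ via H\"older and \eqref{eq: prelemma-1} exactly as in Lemma \ref{lem: Q^6} closes the estimate. Finally, for $R^{t_1,7}$ the splitting \eqref{splitting for beta- linear} is replaced by its analogue with an extra $\nrmb{\nb f_4}_{L^\infty}$ (coming from differentiating the bounded function $u \mapsto u^2/(1+u^2)^{7/2}$ at $u = \Dlt_\alp f_4$), after which the commutator identity \eqref{commutator identity for fractional laplacian} and the argument of Lemma \ref{lem: Q^7} apply verbatim. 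Taking $t^*$ and $t^{**}$ to be the smallest values appearing in these seven estimates and invoking \eqref{sobolev interpolation} completes the proof.
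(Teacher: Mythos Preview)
Your plan matches the paper's proof almost exactly: the paper states the decomposition $\Lmb^{t_1}\calR = \sum_{j=1}^7 R^{t_1,j}$ (Lemma \ref{lem: reformulation for nonlinear N}) and then estimates each piece by mimicking the corresponding $Q^{t_1,j}$ argument (Lemmas \ref{lem: R^1-5}--\ref{lem: R^7}).

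There is one recurring slip. In several places you introduce extra factors of $\nrmb{\nb f_4}_{L^\infty}$ that are \emph{not} present in the target bound (the proposition only allows $\nrmb{f_4}_{\dot{H}^{t^{**}}\cap\dot{H}^{t_1+1}}$, not $\nrmb{f_4}_{\dot{W}^{1,\infty}}$). For $R^{t_1,1}$ you claim the extra factor is bounded by $\nrmb{\nb f_4}_{L^\infty}^2$; for $R^{t_1,6}$ you predict an ``additional multiplicative factor $\nrmb{f_4}_{\dot{W}^{1,\infty}\cap\dot{H}^{t^{**}}}$''; for $R^{t_1,7}$ you expect ``an extra $\nrmb{\nb f_4}_{L^\infty}$.'' With these bounds your estimate would not reduce to the proposition as stated. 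The paper avoids this by observing that the scalar function $F(x)=x^2(1+x^2)^{-7/2}$ is uniformly bounded together with $F'$ and $F''$; hence the substitution $(1+(\Dlt_\alp f_4)^2)^{-5/2}\mapsto (\Dlt_\alp f_4)^2(1+(\Dlt_\alp f_4)^2)^{-7/2}$ changes only absolute constants and produces \emph{no} additional $f_4$ norms. With this observation the $\mrI_1,\mrI_2,\mrI_3$ bounds in \eqref{three cases of laplacian - linear} and the $\dlt_\bt$ splitting \eqref{splitting for beta- linear} hold verbatim for $\calR$, and the rest of your argument goes through unchanged.
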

\begin{remark}\label{rmk: proposition-3}
Proposition \ref{proposition: key for linear} and Proposition \ref{proposition: key for nonlinear} together with \eqref{h=klin+g_1+g_2-g_1}, \eqref{computation with R},  \eqref{sobolev embedding-3(infty)}, and $t^{**}>2$ imply that for any $\tau_1,\tau_2\in[0,1]$,
\begin{equation*}
    \begin{split}
        \nrmb{\frac{d^2}{d\tau_1 d\tau_2} \calT[\omg]}_{\dot{H}^{t_1}}&\lesssim \left(s^2+\nrmb{g_1}_{\dot{H}^{t^*} \cap \dot{H}^{t_1+1}}^2+\nrmb{g_2}_{\dot{H}^{t^*} \cap \dot{H}^{t_1+1}}^2\right)\nrmb{g_1-g_2}_{\dot{H}^{t^*} \cap \dot{H}^{t_1+1}},\\
        \nrmb{\frac{d^2}{d\tau_2^2} \calT[\omg]}_{\dot{H}^{t_1}}&\lesssim \left(s^2+\nrmb{g_1}_{\dot{H}^{t^*} \cap \dot{H}^{t_1+1}}^2+\nrmb{g_2}_{\dot{H}^{t^*} \cap \dot{H}^{t_1+1}}^2\right)\nrmb{g_1-g_2}_{\dot{H}^{t^*} \cap \dot{H}^{t_1+1}}
    \end{split}
\end{equation*}
under the assumption \eqref{ineq: g_1+g_2 < 1}. Hence these estimates with \eqref{taylor for nonlinear} lead us to Lemma \ref{lem: l^2 estimate for T_>2(g)}.
\end{remark}
As the first step toward Proposition \ref{proposition: key for nonlinear}, we reformulate $\Lmb^{t_1}\calR$:
\begin{lemma}\label{lem: reformulation for nonlinear N}
For any $t_1\in (3/2,2)$, there holds
\begin{equation*}
    \Lmb^{t_1}\calR[f_1,f_2,f_3,f_4]=\sum_{j=1}^7 R^{t_1,j}[f_1,f_2,f_3,f_4],
\end{equation*}
where
\begin{equation*}\label{splitting of R_t_1}
    \begin{split}
        &R^{t_1,1}[f_1,f_2,f_3,f_4]=-\frac{1}{2}\int_{\bbR^2}\left( \Delta_\alp \Lambda^{t_1} f_1+\Delta_{-\alp} \Lambda^{t_1} f_1\right)\frac{\frac{\alp}{|\alp|}\cdot \nb_y  f_2 \frac{\alp}{|\alp|}\cdot \nb_y  f_3 \left(\frac{\alp}{|\alp|}\cdot \nb_y  f_4\right)^2}{\left(1+\left(\frac{\alp}{|\alp|}\cdot \nb_y f_4\right)^2\right)^\frac72}\frac{d\alp}{|\alp|^2},\\
        &R^{t_1,2}[f_1,f_2,f_3,f_4]= \int_{B_1(0)}\alp \cdot \nabla_y \Lambda^{t_1} f_1 \left(\frac{\Dlt_\alp f_2 \Dlt_\alp f_3 \left(\Dlt_\alp f_4\right)^2}{\left(1+\left(\Dlt_\alp f_4\right)^2\right)^\frac72} -\frac{\frac{\alp}{|\alp|}\cdot \nb_y  f_2 \frac{\alp}{|\alp|}\cdot \nb_y  f_3 \left(\frac{\alp}{|\alp|}\cdot \nb_y  f_4\right)^2}{\left(1+\left(\frac{\alp}{|\alp|}\cdot \nb_y f_4\right)^2\right)^\frac72}\right)\frac{d\alp}{|\alp|^3},\\ 
        &R^{t_1,3}[f_1,f_2,f_3,f_4]= \int_{\bbR^2\backslash B_1(0)}\alp \cdot \nabla_y \Lambda^{t_1} f_1 \frac{\Dlt_\alp f_2 \Dlt_\alp f_3 \left(\Dlt_\alp f_4\right)^2}{\left(1+\left(\Dlt_\alp f_4\right)^2\right)^\frac72}\frac{d\alp}{|\alp|^3}, \\
        &R^{t_1,4}[f_1,f_2,f_3,f_4]= -\int_{\bbR^2} \Delta_\alp \Lambda^{t_1} f_1\left(\frac{\Dlt_\alp f_2 \Dlt_\alp f_3 \left(\Dlt_\alp f_4\right)^2}{\left(1+\left(\Dlt_\alp f_4\right)^2\right)^\frac72}-\frac{\frac{\alp}{|\alp|}\cdot \nb_y  f_2 \frac{\alp}{|\alp|}\cdot \nb_y  f_3 \left(\frac{\alp}{|\alp|}\cdot \nb_y  f_4\right)^2}{\left(1+\left(\frac{\alp}{|\alp|}\cdot \nb_y f_4\right)^2\right)^\frac72}\right)\frac{d\alp}{|\alp|^2},\\ 
        &R^{t_1,5}[f_1,f_2,f_3,f_4]= \int_{\bbR^2}\Dlt_\alp \Lmb^{t_1}f_1\alp \cdot \nb_\alp \left(\frac{\Dlt_\alp f_2 \Dlt_\alp f_3 \left(\Dlt_\alp f_4\right)^2}{\left(1+\left(\Dlt_\alp f_4\right)^2\right)^\frac72} \right)\frac{d\alp}{|\alp|^2}.\\
        &R^{t_1,6}[f_1,f_2,f_3,f_4]=\int_{\bbR^2} \alp \cdot \nb_y  \Dlt_\alp  f_1 \Lmb^{t_1}\left(\frac{\Dlt_\alp f_2 \Dlt_\alp f_3 \left(\Dlt_\alp f_4\right)^2}{\left(1+\left(\Dlt_\alp f_4\right)^2\right)^\frac72} \right)\frac{d\alp}{|\alp|^2},\\
        &R^{t_1,7}[f_1,f_2,f_3,f_4]=\int_{\bbR^2} \Lmb^{t_1}\left(\alp \cdot \nb_y \Dlt_\alp f_1 \frac{\Dlt_\alp f_2 \Dlt_\alp f_3 \left(\Dlt_\alp f_4\right)^2}{\left(1+\left(\Dlt_\alp f_4\right)^2\right)^\frac72}\right) \\
        &\quad\qquad\quad\qquad\qquad- \alp \cdot \nb_y \Dlt_\alp \Lambda^{t_1} f_1 \frac{\Dlt_\alp f_2 \Dlt_\alp f_3 \left(\Dlt_\alp f_4\right)^2}{\left(1+\left(\Dlt_\alp f_4\right)^2\right)^\frac72} -\alp \cdot \nb_y  \Dlt_\alp  f_1 \Lmb^{t_1}\left(\frac{\Dlt_\alp f_2 \Dlt_\alp f_3 \left(\Dlt_\alp f_4\right)^2}{\left(1+\left(\Dlt_\alp f_4\right)^2\right)^\frac72} \right)\frac{d\alp}{|\alp|^2}.
    \end{split}
\end{equation*}
\end{lemma}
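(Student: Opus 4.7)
The proof of Lemma~\ref{lem: reformulation for nonlinear N} is a direct transcription of the proof of Lemma~\ref{lem: reformulation for linear A}. The operator $\calR$ defined by \eqref{def: calR} has exactly the same architectural form as $\calQ$ in \eqref{def: calQ}: both are of the shape $\int \alpha \cdot \nabla_y \Delta_\alpha f_1 \cdot K(\alpha,y)\, d\alpha/|\alpha|^2$, with $K$ replaced by
\begin{equation*}
K^{\calR}(\alpha,y) := \frac{\Delta_\alpha f_2\, \Delta_\alpha f_3\, (\Delta_\alpha f_4)^2}{(1+(\Delta_\alpha f_4)^2)^{7/2}}
\end{equation*}
in the present case. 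All that matters for the symmetrization argument are two structural features of its angular analogue $\widetilde K^{\calR}(\alpha,y) := \frac{(\frac{\alpha}{|\alpha|}\cdot \nabla_y f_2)(\frac{\alpha}{|\alpha|}\cdot \nabla_y f_3)(\frac{\alpha}{|\alpha|}\cdot \nabla_y f_4)^2}{(1+(\frac{\alpha}{|\alpha|}\cdot \nabla_y f_4)^2)^{7/2}}$, namely: (a) $\widetilde K^{\calR}$ is $\alpha$-homogeneous of degree $0$ (each of its factors is), so $\alpha \cdot \nabla_\alpha \widetilde K^{\calR}=0$, the analogue of \eqref{derivatives of alpha functions-for linear}; and (b) $\widetilde K^{\calR}$ is even in $\alpha$, because the product of the two odd factors $\frac{\alpha}{|\alpha|}\cdot \nabla_y f_j$ is even, while $(\frac{\alpha}{|\alpha|}\cdot \nabla_y f_4)^2$ and the denominator are even.

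With (a) and (b) in hand, the plan is to repeat the four-step breakdown from Lemma~\ref{lem: reformulation for linear A} verbatim. First, I would apply the commutator trick $\Lambda^{t_1}(ab) = a\Lambda^{t_1} b + b\Lambda^{t_1} a + [\Lambda^{t_1}(ab)-a\Lambda^{t_1} b - b\Lambda^{t_1} a]$ with $a = \alpha \cdot \nabla_y \Delta_\alpha f_1$ and $b = K^{\calR}$ to peel off $R^{t_1,6}$ and $R^{t_1,7}$, leaving the main piece $\calR_0 := \int \alpha \cdot \nabla_y \Delta_\alpha \Lambda^{t_1} f_1 \cdot K^{\calR}\, d\alpha/|\alpha|^2$. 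Second, I would add and subtract $\widetilde K^{\calR}$ inside $\calR_0$, producing a \emph{linearized} piece $\mrI$ with kernel $\widetilde K^{\calR}$ and a \emph{difference} piece $\mrII$.

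For $\mrI$, property (b) kills $\int \alpha \cdot \nabla_y \Lambda^{t_1} f_1(y) \cdot \widetilde K^{\calR}/|\alpha|^3\, d\alpha$ by the $\alpha \mapsto -\alpha$ symmetry, reducing $\mrI$ to a $\nabla_y \Lambda^{t_1} f_1(y-\alpha)$ integral. Converting $\nabla_y$ to $\nabla_\alpha$ via \eqref{change y to alpha derivative}, integrating by parts, and using \eqref{derivatives of alpha functions-1} together with (a) collapses this to $-\int \Delta_\alpha \Lambda^{t_1} f_1 \cdot \widetilde K^{\calR}\, d\alpha/|\alpha|^2$, which upon a final $\alpha \mapsto -\alpha$ symmetrization (again via (b)) becomes $R^{t_1,1}$. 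For $\mrII$, split over $B_1(0)$ (which gives exactly $R^{t_1,2}$) and $\bbR^2 \setminus B_1(0)$; on the exterior, property (b) again eliminates the $\widetilde K^{\calR}$ contribution from $\nabla_y \Lambda^{t_1} f_1(y)$, leaving $R^{t_1,3}$. The remaining $\nabla_y \Lambda^{t_1} f_1(y-\alpha)$ contribution is again handled by \eqref{change y to alpha derivative} and integration by parts: the term coming from $\nabla_\alpha \cdot (\alpha/|\alpha|^3) = -1/|\alpha|^3$ is $R^{t_1,4}$, while the term in which $\alpha \cdot \nabla_\alpha$ hits $K^{\calR} - \widetilde K^{\calR}$ equals (by (a)) $\alpha \cdot \nabla_\alpha$ applied to $K^{\calR}$ alone, yielding $R^{t_1,5}$.

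The expected obstacle is essentially none of substance: this lemma is a purely algebraic reformulation rather than an estimate, and the only new verification beyond the $\calQ$-proof is checking properties (a) and (b) for the richer kernel $\widetilde K^{\calR}$, which is immediate from the angular-only homogeneity of each factor $\frac{\alpha}{|\alpha|}\cdot \nabla_y f_j$. The real content of the argument lies in how the same symmetrization + integration-by-parts mechanism extracts a quasilinear piece $R^{t_1,1}$ that is then amenable to the $\dot H^{t_1}$ estimates of the following subsection.
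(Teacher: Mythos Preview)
Your proposal is correct and follows essentially the same approach as the paper's own proof: the paper likewise peels off $R^{t_1,6}$ and $R^{t_1,7}$ via the commutator identity, adds and subtracts the angular kernel $\widetilde K^{\calR}$, and then uses exactly the two structural properties you isolate---homogeneity of degree zero (the analogue of \eqref{derivatives of alpha functions-for linear}) and evenness in $\alp$---to run the symmetrization and integration-by-parts steps that produce $R^{t_1,1}$ through $R^{t_1,5}$. Your identification of (a) and (b) as the only new verifications beyond the $\calQ$-case is precisely the point the paper makes by transcribing the proof of Lemma~\ref{lem: reformulation for linear A}.
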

\begin{proof}
    We split
\begin{equation*}
    \begin{split}
        \Lmb^{t_1}\calR[f_1,f_2,f_3,f_4]&=\int_{\bbR^2} \alp \cdot \nabla_y \Delta_\alp \Lambda^{t_1} f_1\frac{\frac{\alp}{|\alp|}\cdot \nb_y  f_2 \frac{\alp}{|\alp|}\cdot \nb_y  f_3 \left(\frac{\alp}{|\alp|}\cdot \nb_y  f_4\right)^2}{\left(1+\left(\frac{\alp}{|\alp|}\cdot \nb_y f_4\right)^2\right)^\frac72}\frac{d\alp}{|\alp|^2} \\
        &\quad+ \int_{\bbR^2}\alp \cdot \nabla_y \Dlt_\alp\Lambda^{t_1} f_1 \left(\frac{\Dlt_\alp f_2 \Dlt_\alp f_3 \left(\Dlt_\alp f_4\right)^2}{\left(1+\left(\Dlt_\alp f_4\right)^2\right)^\frac72} -\frac{\frac{\alp}{|\alp|}\cdot \nb_y  f_2 \frac{\alp}{|\alp|}\cdot \nb_y  f_3 \left(\frac{\alp}{|\alp|}\cdot \nb_y  f_4\right)^2}{\left(1+\left(\frac{\alp}{|\alp|}\cdot \nb_y f_4\right)^2\right)^\frac72}\right)\frac{d\alp}{|\alp|^3}\\
        &\quad+\sum_{j=6}^7 R^{t_1,j}[f_1,f_2,f_3,f_4]\\
        &=\mrI+\mrII+\sum_{j=6}^7R^{t_1,j}[f_1,f_2,f_3,f_4].
    \end{split}
\end{equation*}
For $\mrI$,  noticing
\begin{equation*}
       \int_{\bbR^2} \alp \cdot \nabla_y \Lambda^{t_1} f_1\frac{\frac{\alp}{|\alp|}\cdot \nb_y  f_2 \frac{\alp}{|\alp|}\cdot \nb_y  f_3 \left(\frac{\alp}{|\alp|}\cdot \nb_y  f_4\right)^2}{\left(1+\left(\frac{\alp}{|\alp|}\cdot \nb_y f_4\right)^2\right)^\frac72}\frac{d\alp}{|\alp|^3} =0
\end{equation*}
by symmetry and applying \eqref{change y to alpha derivative}, we have
\begin{equation*}
    \mrI=-\int_{\bbR^2} \alp \cdot \nabla_\alp \dlt_\alp \Lambda^{t_1} f_1\frac{\frac{\alp}{|\alp|}\cdot \nb_y  f_2 \frac{\alp}{|\alp|}\cdot \nb_y  f_3 \left(\frac{\alp}{|\alp|}\cdot \nb_y  f_4\right)^2}{\left(1+\left(\frac{\alp}{|\alp|}\cdot \nb_y f_4\right)^2\right)^\frac72}\frac{d\alp}{|\alp|^3}.
\end{equation*}
We integrate by parts and employ 
    \begin{equation}\label{derivatives of alpha functions-nonlinear}
        \alp \cdot \nb_\alp  \left( \frac{\frac{\alp}{|\alp|}\cdot \nb_y  f_2 \frac{\alp}{|\alp|}\cdot \nb_y  f_3 \left(\frac{\alp}{|\alp|}\cdot \nb_y  f_4\right)^2}{\left(1+\left(\frac{\alp}{|\alp|}\cdot \nb_y f_4\right)^2\right)^\frac72}\right) =0.
\end{equation}
to obtain
\begin{equation*}
\begin{split}
        \mrI &=  \int_{\bbR^2}\dlt_\alp\Lambda^{t_1} f_1 \,\nb_\alp \cdot \left(\frac{\alp}{|\alp|^3} \frac{\frac{\alp}{|\alp|}\cdot \nb_y  f_2 \frac{\alp}{|\alp|}\cdot \nb_y  f_3 \left(\frac{\alp}{|\alp|}\cdot \nb_y  f_4\right)^2}{\left(1+\left(\frac{\alp}{|\alp|}\cdot \nb_y f_4\right)^2\right)^\frac72}\right)d\alp \\
        &=- \int_{\bbR^2}\dlt_\alp\Lambda^{t_1} f_1 \frac{\frac{\alp}{|\alp|}\cdot \nb_y  f_2 \frac{\alp}{|\alp|}\cdot \nb_y  f_3\left(\frac{\alp}{|\alp|}\cdot \nb_y  f_4\right)^2}{\left(1+\left(\frac{\alp}{|\alp|}\cdot \nb_y f_4\right)^2\right)^\frac72}\frac{d\alp}{|\alp|^3}.
        \end{split}
\end{equation*}
Making a change of variables $\alp \mapsto -\alp$, we observe
\begin{equation*}
\int_{\bbR^2}\dlt_\alp\Lambda^{t_1} f_1 \frac{\frac{\alp}{|\alp|}\cdot \nb_y  f_2 \frac{\alp}{|\alp|}\cdot \nb_y  f_3 \left(\frac{\alp}{|\alp|}\cdot \nb_y  f_4\right)^2}{\left(1+\left(\frac{\alp}{|\alp|}\cdot \nb_y f_4\right)^2\right)^\frac72}\frac{d\alp}{|\alp|^3}=\int_{\bbR^2}\dlt_{-\alp}\Lambda^{t_1} f_1 \frac{\frac{\alp}{|\alp|}\cdot \nb_y  f_2 \frac{\alp}{|\alp|}\cdot \nb_y  f_3 \left(\frac{\alp}{|\alp|}\cdot \nb_y  f_4\right)^2}{\left(1+\left(\frac{\alp}{|\alp|}\cdot \nb_y f_4\right)^2\right)^\frac72}\frac{d\alp}{|\alp|^3}
\end{equation*}
which implies $\mrI= R^{t_1,1}[f_1,f_2,f_3,f_4]$.
We decompose $\mrII$ into
\begin{equation*}
    \begin{split}
        \mrII &= R^{t_1,2}[f_1,f_2,f_3,f_4] + \mrII_1 + \mrII_2,
    \end{split}
\end{equation*}
where
\begin{equation*}
    \begin{split}
        \mrII_1 &= \int_{\bbR^2\backslash B_1(0)}\alp \cdot \nabla_y  \Lambda^{t_1} f_1\left(\frac{\Dlt_\alp f_2 \Dlt_\alp f_3 \left(\Dlt_\alp f_4\right)^2}{\left(1+\left(\Dlt_\alp f_4\right)^2\right)^\frac72} -\frac{\frac{\alp}{|\alp|}\cdot \nb_y  f_2 \frac{\alp}{|\alp|}\cdot \nb_y  f_3 \left(\frac{\alp}{|\alp|}\cdot \nb_y  f_4\right)^2}{\left(1+\left(\frac{\alp}{|\alp|}\cdot \nb_y f_4\right)^2\right)^\frac72}\right)\frac{d\alp}{|\alp|^3},\\
        \mrII_2 &= -\int_{\bbR^2}\alp \cdot \nabla_y  \Lambda^{t_1} f_1(y-\alp) \left(\frac{\Dlt_\alp f_2 \Dlt_\alp f_3 \left(\Dlt_\alp f_4\right)^2}{\left(1+\left(\Dlt_\alp f_4\right)^2\right)^\frac72} -\frac{\frac{\alp}{|\alp|}\cdot \nb_y  f_2 \frac{\alp}{|\alp|}\cdot \nb_y  f_3 \left(\frac{\alp}{|\alp|}\cdot \nb_y  f_4\right)^2}{\left(1+\left(\frac{\alp}{|\alp|}\cdot \nb_y f_4\right)^2\right)^\frac72}\right)\frac{d\alp}{|\alp|^3}.
    \end{split}
\end{equation*}
We see $\mrII_1=R^{t_1,3}[f_1,f_2,f_3,f_4]$ since
\begin{equation*}
        \int_{\bbR^2\backslash B_1(0)}\alp \cdot \nabla_y \Lambda^{t_1} f_1\frac{\frac{\alp}{|\alp|}\cdot \nb_y  f_2 \frac{\alp}{|\alp|}\cdot \nb_y  f_3 \left(\frac{\alp}{|\alp|}\cdot \nb_y  f_4\right)^2}{\left(1+\left(\frac{\alp}{|\alp|}\cdot \nb_y f_4\right)^2\right)^\frac72}\frac{d\alp}{|\alp|^3}=0
\end{equation*}
by symmetry.
For $\mrII_2$, we apply \eqref{change y to alpha derivative} and then integrate by parts to obtain
\begin{equation*}
    \mrII_2=\frac{1}{2\pi}\int_{\bbR^2}\dlt_\alp\Lambda^{t_1} f_1 \,\nb_\alp \cdot \left(\frac{\alp}{|\alp|^3} \left(\frac{\Dlt_\alp f_2 \Dlt_\alp f_3 \left(\Dlt_\alp f_4\right)^2}{\left(1+\left(\Dlt_\alp f_4\right)^2\right)^\frac72} -\frac{\frac{\alp}{|\alp|}\cdot \nb_y  f_2 \frac{\alp}{|\alp|}\cdot \nb_y  f_3 \left(\frac{\alp}{|\alp|}\cdot \nb_y  f_4\right)^2}{\left(1+\left(\frac{\alp}{|\alp|}\cdot \nb_y f_4\right)^2\right)^\frac72} \right)\right)d\alp.
\end{equation*}
Computing with the aids of \eqref{derivatives of alpha functions-1} and \eqref{derivatives of alpha functions-nonlinear}, we arrive at
\begin{equation*}
    \mrII_2 = R^{t_1,4}[f_1,f_2,f_3,f_4]+R^{t_1,5}[f_1,f_2,f_3,f_4].
\end{equation*}
\end{proof}

The estimates of $R^{t_1,j}$ $(1\le j \le7)$ range over Lemma \ref{lem: R^1-5} - Lemma \ref{lem: R^7}, which leads to Proposition \ref{proposition: key for nonlinear} by choosing the smallest $t^*$ and $t^{**}$ among those in Lemma \ref{lem: R^1-5} - Lemma \ref{lem: R^7} due to \eqref{sobolev interpolation}.
\begin{lemma}\label{lem: R^1-5}
  For any  $t_1\in (3/2,2)$, there exist $t^{*}=t^*(t_1)\in(t_1,2)$ and $t^{**}=t^{**}(t_1)\in(2,t_1+1)$ such that
  \begin{equation*}
  \begin{split}
    \sum_{j=1}^5&\normb{R^{t_1,j}[f_1,f_2,f_3,f_4]}\\
    &\lesssim \nrmb{f_1}_{\dot{H}^{t^{**}}\cap\dot{H}^{t_1+1}}\nrmb{f_2}_{\dot{W}^{1,\infty}\cap\dot{H}^{t^{*}} \cap \dot{H}^{t_1+1} }\nrmb{f_3}_{\dot{W}^{1,\infty}\cap\dot{H}^{t^{**}}\cap\dot{H}^{t_1+1}}\left(1+\nrmb{f_4}_{\dot{H}^{t^{**}}\cap\dot{H}^{t_1+1}}\right).
    \end{split}
\end{equation*}
\end{lemma}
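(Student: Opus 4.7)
The proof will closely mirror that of Lemma \ref{lem: Q^1-5}, since each $R^{t_1,j}$ differs from the corresponding $Q^{t_1,j}$ only by replacing the weight $(1+(\Delta_\alp f_4)^2)^{-5/2}$ with $(\Delta_\alp f_4)^2(1+(\Delta_\alp f_4)^2)^{-7/2}$. The crucial elementary observation is that the map $\Psi(x) := x^2/(1+x^2)^{7/2}$ satisfies $|\Psi(x)| \le 1$ on $\mathbb{R}$ and is Lipschitz; in particular the companion map $\Phi(x) := x/(1+x^2)^{7/2}$ is also Lipschitz and bounded. These two facts will play the role that the bounded/Lipschitz structure of $(1+x^2)^{-5/2}$ played in Lemma \ref{lem: Q^1-5}.

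For $R^{t_1,1}$ and $R^{t_1,3}$, the plan is to use the pointwise bound $|\Psi|\le 1$ to dominate the extra factor and then transplant the arguments used for $Q^{t_1,1}$ and $Q^{t_1,3}$ essentially verbatim. In particular, $R^{t_1,1}$ reduces to the radial-frequency integral controlled by \eqref{key est: for T^1(klin) to make t_1+1}, and $R^{t_1,3}$ to a mild singular integral on $\bbR^2\setminus B_1(0)$ handled by \eqref{fundamental thm of cal} and \eqref{Morrey}.

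For $R^{t_1,2}$ and $R^{t_1,4}$, the central task is to bound the difference
\[
\frac{\Delta_\alp f_2 \Delta_\alp f_3 (\Delta_\alp f_4)^2}{(1+(\Delta_\alp f_4)^2)^{7/2}} - \frac{\bigl(\tfrac{\alp}{|\alp|} \cdot \nb_y f_2\bigr)\bigl(\tfrac{\alp}{|\alp|} \cdot \nb_y f_3\bigr)\bigl(\tfrac{\alp}{|\alp|} \cdot \nb_y f_4\bigr)^2}{\bigl(1+(\tfrac{\alp}{|\alp|} \cdot \nb_y f_4)^2\bigr)^{7/2}}.
\]
I plan to rewrite each side as a product of four factors $a\cdot b \cdot c \cdot \Phi(c)$ with $a=\Delta_\alp f_2$, $b=\Delta_\alp f_3$, $c=\Delta_\alp f_4$ (and their $\tfrac{\alp}{|\alp|}\cdot\nb_y$ counterparts), then expand the difference by a four-term telescoping analogue of \eqref{formular: difference of three product}; the Lipschitz property of $\Phi$ ensures that the fourth term costs an extra difference of $c$'s that can again be treated by \eqref{App of Morrey}. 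Each resulting summand is then controlled by combining \eqref{fundamental thm of cal} with \eqref{App of Morrey}, producing the factor $|\alp|^{t_1-1}$ that absorbs the singular measure on $B_1(0)$, exactly as for $Q^{t_1,2}$ and $Q^{t_1,4}$.

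For $R^{t_1,5}$, I will apply $\alp \cdot \nb_\alp$ to the weight via the product rule, which produces three pieces where the $\alp$-derivative falls respectively on $\Delta_\alp f_2$, on $\Delta_\alp f_3$, and on the $f_4$-dependent factor; each piece parallels the corresponding one in $Q^{t_1,5}$ up to a bounded factor involving $\nb f_2$ or $\nb f_3$, and is controlled by the same combination of \eqref{sobolev embedding-2} and \eqref{eq: prelemma-1}--\eqref{eq: prelemma-2} used in Lemma \ref{lem: T^5(klin)}. The main obstacle I anticipate is the extra algebraic bookkeeping introduced by the four-product structure in $R^{t_1,2}, R^{t_1,4}, R^{t_1,5}$, specifically tracking which of the four factors absorbs the Morrey or Lipschitz gain so that the resulting Sobolev exponents stay inside the admissible intervals \eqref{sobolev exponet for Q^1-Q^5}; once this is arranged, the same choice of $t^*,t^{**}$ as in Lemma \ref{lem: Q^1-5} yields the claimed estimate.
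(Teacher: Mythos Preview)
Your proposal is correct and follows essentially the same approach as the paper. The only cosmetic difference is that for $R^{t_1,2}$ and $R^{t_1,4}$ the paper groups the $f_4$-dependent factor as the single function $\Psi(x)=x^2(1+x^2)^{-7/2}$, applies the mean value theorem to $\Psi$, and then invokes the three-product identity \eqref{formular: difference of three product} verbatim, whereas you split $\Psi(c)=c\,\Phi(c)$ and telescope over four factors; both routes yield the same bound \eqref{spliting f_2f_3(w)^2/(1+w)} and the same Sobolev exponents \eqref{sobolev exponet for Q^1-Q^5}.
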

\begin{proof}
To estimate $R^{t_1,1}[f_1,f_2,f_3,f_4]$, we observe that the factor
$\frac{\frac{\alp}{|\alp|}\cdot \nb_y  f_2 \frac{\alp}{|\alp|}\cdot \nb_y  f_3 \left(\frac{\alp}{|\alp|}\cdot \nb_y  f_4\right)^2}{\left(1+\left(\frac{\alp}{|\alp|}\cdot \nb_y f_4\right)^2\right)^\frac72}$ is independent of the length $|\alp|$, so that for $\alp=r\sigma$
\begin{equation*}
    |R^{t_1,1}[f_1,f_2,f_3,f_4]| \lesssim |\nb_y f_2| |\nb_y f_3|\int_{\bbS^1} \Abs{\int_{0}^\infty \Delta_{r\sigma} \Lambda^{t_1} f_1(y)+\Delta_{-r\sigma} \Lambda^{t_1} f_1(y)\frac{dr}{r}} d\sigma.
\end{equation*}
Thus using the Minkowski's inequality, we have
\begin{equation*}
    \nrmb{R^{t_1,1}[f_1,f_2,f_3,f_4]}_{L^2_y}\lesssim \nrmb{\nb_y f_2}_{L^{\infty}_y}\nrmb{\nb_y f_3}_{L^{\infty}_y}\int_{\bbS^1}\nrmb{\int_{0}^\infty \Delta_{r\sigma} \Lambda^{t_1} f_1(y)+\Delta_{-r\sigma} \Lambda^{t_1} f_1 (y)\frac{dr}{r}}_{L^2_y}  d\sigma.
\end{equation*}
Now using \eqref{key est: for T^1(klin) to make t_1+1}, we obtain
\begin{equation*}
    \nrmb{R^{t_1,1}[f_1,f_2,f_3,f_4]}_{L^2_y}\lesssim \nrmb{f_1}_{\dot{H}^{t_1+1}} \nrmb{ f_2}_{\dot{W}^{1,\infty}}\nrmb{f_3}_{\dot{W}^{1,\infty}}.
\end{equation*}

For $R^{t_1,2}[f_1,f_2,f_3,f_4]$, applying the mean value theorem to the function $F(x)=x^2(1+x^2)^{-\frac72}$, we can see 
    \begin{equation*}
        \left|(\Delta_\alp f_4)^2\left(1+ (\Delta_\alp f_4)^2\right)^{-\frac72}-\left(\frac{\alp}{|\alp|}\cdot \nb_y  f_4\right)^2\left(1+ \left(\frac{\alp}{|\alp|}\cdot \nb_y  f_4\right)^2\right)^{-\frac72} \right| \lesssim \Abs{\Delta_\alp f_4-\frac{\alp}{|\alp|}\cdot \nb_y  f_4}.
    \end{equation*}
Based on this inequality and \eqref{formular: difference of three product},
we deduce
\begin{equation}\label{spliting f_2f_3(w)^2/(1+w)}
\begin{split}
    &\Abs{\frac{\Dlt_\alp f_2 \Dlt_\alp f_3 \left(\Dlt_\alp f_4\right)^2}{\left(1+\left(\Dlt_\alp f_4\right)^2\right)^\frac72} -\frac{\frac{\alp}{|\alp|}\cdot \nb_y  f_2 \frac{\alp}{|\alp|}\cdot \nb_y  f_3 \left(\frac{\alp}{|\alp|}\cdot \nb_y  f_4\right)^2}{\left(1+\left(\frac{\alp}{|\alp|}\cdot \nb_y f_4\right)^2\right)^\frac72}}\\
    &\quad\lesssim \Abs{\Delta_\alp f_2-\frac{\alp}{|\alp|}\cdot \nb_y  f_2}\Abs{\Delta_\alp f_3+\frac{\alp}{|\alp|}\cdot \nb_y  f_3} +\Abs{\Delta_\alp f_2+\frac{\alp}{|\alp|}\cdot \nb_y  f_2}\Abs{\Delta_\alp f_3-\frac{\alp}{|\alp|}\cdot \nb_y  f_3} \\
    &\qquad+\Abs{\Delta_\alp f_2+\frac{\alp}{|\alp|}\cdot \nb_y  f_2}\Abs{\Delta_\alp f_3+\frac{\alp}{|\alp|}\cdot \nb_y  f_3}\Abs{\Delta_\alp f_4-\frac{\alp}{|\alp|}\cdot \nb_y  f_4}\\
    &\qquad+\Abs{\Delta_\alp f_2-\frac{\alp}{|\alp|}\cdot \nb_y  f_2}\Abs{\Delta_\alp f_3-\frac{\alp}{|\alp|}\cdot \nb_y  f_3}\Abs{\Delta_\alp f_4-\frac{\alp}{|\alp|}\cdot \nb_y  f_4}.
    \end{split}
\end{equation}
Hence applying the same argument with the estimation of $\nrmb{Q^{t_1,2}[f_1,f_2,f_3,f_4]}_{L^2_y}$ in Lemma \ref{lem: Q^1-5}, we have
\begin{equation*}
    \nrmb{R^{t_1,2}[f_1,f_2,f_3,f_4]}_{L^2_y}\lesssim \nrmb{f_1}_{\dot{H}^{t_1+1}}\nrmb{f_2}_{\dot{W}^{1,\infty}\cap\dot{H}^{t_1+1}}\nrmb{f_3}_{\dot{W}^{1,\infty}\cap\dot{H}^{t_1+1}}\left(1+\nrmb{f_4}_{\dot{H}^{t_1+1}}\right).
\end{equation*}

For $R^{t_1,3}[f_1,f_2,f_3,f_4]$, we use \eqref{fundamental thm of cal} and \eqref{Morrey} to observe
\begin{equation*}
    \Abs{\frac{\Dlt_\alp f_2 \Dlt_\alp f_3 \left(\Dlt_\alp f_4\right)^2}{\left(1+\left(\Dlt_\alp f_4\right)^2\right)^\frac72}}\lesssim |\alp|^{\frac{t_1}{2}-1}\nrmb{\nb f_2}_{\dot{H}^{\frac{t_1}{2}}}\nrmb{\nb_y f_3}_{L^{\infty}},
\end{equation*}
which gives
\begin{equation*}
    \nrmb{R^{t_1,3}[f_1,f_2,f_3,f_4]}_{L^2_y}\lesssim \nrmb{f_1}_{\dot{H}^{t_1+1}}\nrmb{f_2}_{\dot{H}^{\frac{t_1}{2}+1}}\nrmb{f_3}_{\dot{W}^{1,\infty}}.
\end{equation*}

For $R^{t_1,4}[f_1,f_2,f_3,f_4]$, we use \eqref{fundamental thm of cal} and \eqref{spliting f_2f_3(w)^2/(1+w)} and proceed similarly to the the estimation of $\nrmb{Q^{t_1,4}[f_1,f_2,f_3,f_4]}_{L^2_y}$ in Lemma \ref{lem: Q^1-5}, we obtain
\begin{equation*}
    \normb{R^{t_1,4}[f_1,f_2,f_3,f_4]}\lesssim \nrmb{f_1}_{\dot{H}^{t_1+\frac34}}\nrmb{f_2}_{\dot{W}^{1,\infty}\cap\dot{H}^{\frac94}}\nrmb{f_3}_{\dot{W}^{1,\infty}\cap\dot{H}^{\frac94}}\left(1+\nrmb{f_4}_{\dot{H}^{\frac94}}\right).
\end{equation*}

For $R^{t_1,5}[f_1,f_2,f_3,f_4]$, we compute
\begin{equation*}
    \begin{split}
        \alp \cdot \nb_\alp \left(\frac{\Dlt_\alp f_2 \Dlt_\alp f_3 \left(\Dlt_\alp f_4\right)^2}{\left(1+\left(\Dlt_\alp f_4\right)^2\right)^\frac72} \right)
    &\quad=-\frac{ \Dlt_\alp f_3 \left(\Dlt_\alp f_4\right)^2}{\left(1+\left(\Dlt_\alp f_4\right)^2\right)^\frac72}\left(\Delta_\alp f_2-\frac{\alp \cdot \nb_y f_2}{|\alp|} + \alp \cdot \nb_y \Delta_\alp f_2\right) \\
    &\qquad-\frac{ \Dlt_\alp f_2 \left(\Dlt_\alp f_4\right)^2}{\left(1+\left(\Dlt_\alp f_4\right)^2\right)^\frac72}\left(\Delta_\alp f_3-\frac{\alp \cdot \nb_y f_3}{|\alp|} + \alp \cdot \nb_y \Delta_\alp f_3\right)\\
    &\qquad-\frac{2 \Dlt_\alp f_2 \Dlt_\alp f_3 \Dlt_\alp f_4}{\left(1+\left(\Dlt_\alp f_4\right)^2\right)^\frac72}\left(\Delta_\alp f_4-\frac{\alp \cdot \nb_y f_4}{|\alp|} + \alp \cdot \nb_y \Delta_\alp f_4\right)\\
    &\qquad+\frac{7\Dlt_\alp f_2\Dlt_\alp f_3\left(\Delta_\alp f_4\right)^3}{\left(1+ (\Delta_\alp f_4)^2\right)^{\frac92}}\left(\Delta_\alp f_4-\frac{\alp \cdot \nb_y f_4}{|\alp|} + \alp \cdot \nb_y \Delta_\alp f_4 \right),
    \end{split}
\end{equation*}
so that
\eqref{fundamental thm of cal} yields
\begin{equation*}
    \begin{split}
        \Abs{\alp \cdot \nb_\alp \left(\frac{\Dlt_\alp f_2 \Dlt_\alp f_3}{\left(1+\left(\Dlt_\alp f_4\right)^2\right)^\frac72} \right)}
    &\lesssim \nrmb{\nb f_3}_{L^\infty}\Abs{\Delta_\alp f_2-\frac{\alp \cdot \nb_y f_2}{|\alp|} + \alp \cdot \nb_y \Delta_\alp f_2} \\
    &\quad\nrmb{\nb f_2}_{L^\infty}\Abs{\Delta_\alp f_3-\frac{\alp \cdot \nb_y f_3}{|\alp|} + \alp \cdot \nb_y \Delta_\alp f_3}\\
    &\quad+\nrmb{\nb f_2}_{L^\infty}\nrmb{\nb f_3}_{L^\infty}\Abs{\Delta_\alp f_4-\frac{\alp \cdot \nb_y f_4}{|\alp|} + \alp \cdot \nb_y \Delta_\alp f_4 }.
    \end{split}
\end{equation*}
Thus we proceed similarly  to the the estimation of $\nrmb{Q^{t_1,5}[f_1,f_2,f_3,f_4]}_{L^2_y}$ in Lemma \ref{lem: Q^1-5} to obtain
\begin{equation*}
    \normb{R^{t_1,5}[f_1,f_2,f_3,f_4]}\lesssim \nrmb{f_1}_{\dot{H}^{t_1+\frac34}}\nrmb{f_2}_{\dot{W}^{1,\infty}\cap\dot{H}^{\frac94}}\nrmb{f_3}_{\dot{W}^{1,\infty}\cap\dot{H}^{\frac94}}\left(1+\nrmb{f_4}_{\dot{H}^{\frac94}}\right).
\end{equation*}
Note that since $t_1\in(3/2,2)$, the exponents of Sobolev spaces appeared above estimates satisfy \eqref{sobolev exponet for Q^1-Q^5}, which gives the desired result.
\end{proof}

\begin{lemma}\label{lem: R^6}
    For any  $t_1\in (3/2,2)$, there exist $t^{*}=t^*(t_1)\in(t_1,2)$ and $t^{**}=t^{**}(t_1)\in(2,t_1+1)$ such that
    \begin{equation*}
    \begin{split}
&\nrmb{R^{t_1,6}[f_1,f_2,f_3,f_4]}_{L^2} \\
&\qquad\lesssim \nrmb{f_1}_{\dot{H}^{t^{**}}\cap\dot{H}^{t_1+1}}\nrmb{f_2}_{\dot{H}^{t^{*}}\cap\dot{H}^{t_1+1}}\nrmb{f_3}_{\dot{W}^{1,\infty}\cap\dot{H}^{t^{**}}\cap\dot{H}^{t_1+1}}\left(1 +\nrmb{f_4}_{\dot{H}^{t^{**}}\cap\dot{H}^{t_1+1}}\right).
\end{split}
    \end{equation*}
\end{lemma}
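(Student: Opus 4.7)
The plan is to mirror the proof of Lemma \ref{lem: Q^6} almost verbatim, exploiting the observation that $R^{t_1,6}$ differs from $Q^{t_1,6}$ only by the presence of an additional factor
\begin{equation*}
G(\Dlt_\alp f_4) := \frac{(\Dlt_\alp f_4)^2}{\left(1+(\Dlt_\alp f_4)^2\right)}
\end{equation*}
inside the composition. A quick computation shows $|G|,|G'|,|G''| \lesssim 1$ uniformly, so $G(\Dlt_\alp f_4)$ is a bounded factor whose derivatives are controlled by corresponding derivatives of $\Dlt_\alp f_4$ with harmless constant prefactors. Hence the extra factor does not worsen any of the pointwise bounds used to derive \eqref{est: laplacian for T_1}.

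The first step is to establish the $\dot{H}^2$-analog of \eqref{est: laplacian for T_1} for the inner function
\begin{equation*}
H_\alp(y) := \frac{\Dlt_\alp f_2 \,\Dlt_\alp f_3\,(\Dlt_\alp f_4)^2}{\left(1+(\Dlt_\alp f_4)^2\right)^{7/2}}.
\end{equation*}
Writing $H_\alp = (\Dlt_\alp f_2)(\Dlt_\alp f_3) \cdot \widetilde{G}(\Dlt_\alp f_4)$ where $\widetilde{G}(u) = u^2(1+u^2)^{-7/2}$, I will apply the Leibniz rule for $\Dlt$ acting on a triple product and obtain exactly the same three-term decomposition as in \eqref{three cases of laplacian - linear}, plus one or two additional cross-terms where derivatives fall on $\widetilde{G}(\Dlt_\alp f_4)$. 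Using the uniform bounds on $\widetilde{G}$, $\widetilde{G}'$, $\widetilde{G}''$ together with the Morrey-type estimates \eqref{Morrey for T_1}, every new term admits a bound of the same structure already appearing on the right-hand side of \eqref{est: laplacian for T_1}.

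With this $\dot{H}^2$ estimate in hand, the rest of the proof follows the template of Lemma \ref{lem: Q^6} with no modification: use duality of $\Lmb^{t_1}$ via $\dot{H}^{2-t_1}$, apply the Sobolev embedding $\dot{H}^{t_1-1}(\bbR^2)\hookrightarrow L^{\frac{2}{2-t_1}}(\bbR^2)$ to the factor $\nb_y\dlt_\alp f_1$, split the resulting double integral into the analogues of $\mrII_1,\ldots,\mrII_6$, and apply \eqref{eq: prelemma-1} together with H\"older's inequality. Since no new Sobolev exponent is introduced, the same choices $t^{*}\in(t_1,2)$ and $t^{**}\in(2,t_1+1)$ that worked for Lemma \ref{lem: Q^6} (namely the ones generated by $\frac{t_1}{2}+1$, $\frac{3t_1}{4}+\frac12$, $\frac{3t_1}{4}+1$, $\frac{7}{2}-\frac{3t_1}{4}$, $3-\frac{t_1}{2}$, $\frac{t_1}{2}+\frac{3}{2}$, $\frac{5}{2}$) will give the stated bound.

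The main potential obstacle is purely bookkeeping: verifying that every additional term arising from the Leibniz differentiation of $\widetilde{G}(\Dlt_\alp f_4)$ can be absorbed into an existing term in \eqref{est: laplacian for T_1} rather than forcing a new Sobolev exponent outside $(t_1,t_1+1]$. The key check is that whenever a derivative hits $\widetilde{G}(\Dlt_\alp f_4)$, the resulting factor is $\widetilde{G}'(\Dlt_\alp f_4)\nb_y\dlt_\alp f_4/|\alp|$ (or the analogous second-order expression), which is pointwise controlled by $|\nb_y\dlt_\alp f_4|/|\alp|$ up to a bounded multiplier, exactly as in the treatment of $\mrI_2, \mrI_3$ of Lemma \ref{lem: Q^6}. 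Once this check is carried out, the estimate follows identically.
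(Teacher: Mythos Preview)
Your proposal is correct and matches the paper's own proof essentially line for line: the paper also groups the inner function as $(\Dlt_\alp f_2\,\Dlt_\alp f_3)$ times $(\Dlt_\alp f_4)^2(1+(\Dlt_\alp f_4)^2)^{-7/2}$, applies the Leibniz rule for $\Dlt$, observes that the resulting pieces $\mrI_1,\mrI_2,\mrI_3$ obey the same pointwise bounds \eqref{three cases of laplacian - linear} (with adjusted constants) thanks to the uniform boundedness of $\widetilde{G},\widetilde{G}',\widetilde{G}''$, and then invokes the argument of Lemma~\ref{lem: Q^6} verbatim. Your more explicit bookkeeping of the extra terms is a faithful expansion of what the paper compresses into a single sentence.
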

\begin{proof}
    Using the identity $\Dlt(fg)=f\Dlt g + 2\nb f \cdot \nb g + g\Dlt f$, we compute
    \begin{equation*}
        \begin{split}
            \Dlt\left(\frac{\Dlt_\alp f_2 \Dlt_\alp f_3 \left(\Dlt_\alp f_4\right)^2}{\left(1+\left(\Dlt_\alp f_4\right)^2\right)^{\frac72}}\right)&=\left(\Dlt_\alp f_4\right)^2\left(1+\left(\Dlt_\alp f_4\right)^2\right)^{-\frac72}\Dlt\left(\Dlt_\alp f_2 \Dlt_\alp f_3\right)\\
            &\quad+2\nb_y \left(\left(\Dlt_\alp f_4\right)^2\left(1+\left(\Dlt_\alp f_4\right)^2\right)^{-\frac72}\right)\cdot\nb_y \left(\Dlt_\alp f_2 \Dlt_\alp f_3\right) \\
            &\quad+ \Dlt_\alp f_2 \Dlt_\alp f_3 \Dlt \left(\left(\Dlt_\alp f_4\right)^2\left(1+\left(\Dlt_\alp f_4\right)^2\right)^{-\frac72}\right) \\
            &=\mrI_1+\mrI_2+\mrI_3.
        \end{split}
    \end{equation*}
Using \eqref{Morrey for T_1}, we can see that $\Abs{\mrI_1}$, $\Abs{\mrI_2}$, and $\Abs{\mrI_3}$ satisfy \eqref{three cases of laplacian - linear} with constant $C$'s in the inequalities `$\lesssim=\;\le C$' adjusted if necessary.
Thus we apply the same argument with the proof of  Lemma \ref{lem: Q^6} to obtain the desired result.
\end{proof}

\begin{lemma}\label{lem: R^7}
    For any  $t_1\in (3/2,2)$, there exists $t^{**}=t^{**}(t_1)\in(2,t_1+1)$ such that
    \begin{equation*}
    \begin{split}
&\nrmb{R^{t_1,7}[f_1,f_2,f_3,f_4]}_{L^2} \\
&\quad\lesssim \nrmb{f_1}_{\dot{H}^{t^{**}}\cap\dot{H}^{t_1+1}}\nrmb{f_2}_{\dot{W}^{1,\infty}\cap \dot{H}^{t^{**}}\cap\dot{H}^{t_1+1}}\nrmb{f_3}_{\dot{W}^{1,\infty}\cap\dot{H}^{t^{**}}\cap\dot{H}^{t_1+1}} \left(1 +\nrmb{f_4}_{\dot{H}^{t^{**}}\cap\dot{H}^{t_1+1}}\right).
\end{split}
    \end{equation*}
\end{lemma}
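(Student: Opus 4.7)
The proof will mirror that of Lemma \ref{lem: Q^7} almost verbatim; the only genuinely new ingredient is the handling of the extra $(\Dlt_\alp f_4)^2$ factor in the numerator of the integrand. My plan is: (i) establish a pointwise bound on $\dlt_\bt$ applied to the nonlinear factor, analogous to \eqref{splitting for beta- linear}; (ii) apply the commutator identity \eqref{commutator identity for fractional laplacian} to convert $R^{t_1,7}$ into a double integral in $(\alp,\bt)$; (iii) conclude via the product Sobolev embedding \eqref{sobolev embedding-2}, H\"older in $(\alp,\bt)$, and \eqref{eq: prelemma-1}.

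The heart of step (i) is the observation that the scalar function $G(x) := x^2(1+x^2)^{-7/2}$ has bounded derivative on $\bbR$ (indeed $G'(x)=(2x-5x^3)(1+x^2)^{-9/2}$ vanishes at $0$ and at $\infty$, and is continuous), so the mean value theorem yields $|G(a)-G(b)|\lesssim|a-b|$ for all $a,b\in\bbR$. This is the nonlinear analogue of the bound $|\dlt_\bt((1+(\Dlt_\alp f_4)^2)^{-5/2})|\lesssim|\Dlt_\alp\dlt_\bt f_4|$ used in the proof of Lemma \ref{lem: Q^7}. Factoring the integrand as $\Dlt_\alp f_2\cdot\Dlt_\alp f_3\cdot G(\Dlt_\alp f_4)$, applying the elementary identity \eqref{formular: difference of three product} for differences of products of three factors, and bounding $|\Dlt_\alp f_j|\le\nrmb{\nb f_j}_{L^\infty}$ via \eqref{fundamental thm of cal}, I expect to obtain
\begin{equation*}
\Abs{\dlt_\bt\left(\frac{\Dlt_\alp f_2\,\Dlt_\alp f_3\,(\Dlt_\alp f_4)^2}{(1+(\Dlt_\alp f_4)^2)^{7/2}}\right)}\lesssim \nrmb{\nb f_3}_{L^\infty}\Abs{\Dlt_\alp\dlt_\bt f_2} + \nrmb{\nb f_2}_{L^\infty}\Abs{\Dlt_\alp\dlt_\bt f_3} + \nrmb{\nb f_2}_{L^\infty}\nrmb{\nb f_3}_{L^\infty}\Abs{\Dlt_\alp\dlt_\bt f_4},
\end{equation*}
which is the precise nonlinear counterpart of \eqref{splitting for beta- linear}.

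For steps (ii) and (iii), once the pointwise bound is in hand, the commutator identity \eqref{commutator identity for fractional laplacian} expresses $R^{t_1,7}$ as a double integral in $(\alp,\bt)$, producing three pieces (one for each of $f_2,f_3,f_4$) whose structure is identical to $\mrI,\mrII,\mrIII$ in the proof of Lemma \ref{lem: Q^7}. I would then apply \eqref{sobolev embedding-2} with the splitting $\nb_y\dlt_\alp\dlt_\bt f_1\in\dot{H}^{1/4}$ and $\dlt_\alp\dlt_\bt f_j\in\dot{H}^{3/4}$, H\"older in $(\alp,\bt)$, and finally \eqref{eq: prelemma-1} to collapse the double integrals into $\dot{H}^{(t_1+3)/2}$ semi-norms. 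Since $t_1\in(3/2,2)$, we have $(t_1+3)/2\in(9/4,5/2)\subset(2,t_1+1)$, so choosing $t^{**}=(t_1+3)/2$ and invoking \eqref{sobolev interpolation} to absorb $\dot{H}^{(t_1+3)/2}$ into $\dot{H}^{t^{**}}\cap\dot{H}^{t_1+1}$ yields the stated estimate. I do not anticipate any essential obstacle; the only subtle point is verifying Lipschitz continuity of $G$, after which the argument is essentially the same as for $Q^{t_1,7}$, with no degradation in exponents compared with the linear case.
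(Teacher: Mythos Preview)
Your proposal is correct and follows essentially the same approach as the paper: the paper also applies the mean value theorem to $G(x)=x^2(1+x^2)^{-7/2}$ to obtain $\Abs{\dlt_\bt G(\Dlt_\alp f_4)}\lesssim\Abs{\Dlt_\alp\dlt_\bt f_4}$, then uses \eqref{formular: difference of three product} and \eqref{fundamental thm of cal} to verify that the $\dlt_\bt$-bound on the full nonlinear factor coincides with \eqref{splitting for beta- linear}, and finally repeats the argument of Lemma~\ref{lem: Q^7} verbatim with $t^{**}=(t_1+3)/2$.
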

\begin{proof}
Using the mean value theorem to the function $F(x)=x^2(1+x^2)^{-\frac72}$, we can show
\begin{equation*}
    \Abs{\dlt_{\beta} \left((\Delta_\alp f_4)^2\left(1+(\Delta_\alp f_4)^2\right)^{-\frac72}\right)}\lesssim \Abs{\Dlt_\alp \dlt_\beta f_4}
\end{equation*}
for $\bt\in\bbR^2$. Using this inequality and \eqref{fundamental thm of cal}, and recalling \eqref{formular: difference of three product},
we can check that
$\Abs{\dlt_{\beta} \left(\frac{\Dlt_\alp f_2 \Dlt_\alp f_3 \left(\Dlt_\alp f_4\right)^2}{\left(1+\left(\Dlt_\alp f_4\right)^2\right)^{\frac72}}\right)}$ satisfies \eqref{splitting for beta- linear} with constant $C$ in the inequality `$\lesssim=\;\le C$' adjusted if necessary. Therefore we apply the same argument with the proof of  Lemma \ref{lem: Q^7} to obtain the desired result.
\end{proof}

\bibliographystyle{amsplain}
\bibliography{Muskat_revised_1}

\providecommand{\bysame}{\leavevmode\hbox to3em{\hrulefill}\thinspace}
\providecommand{\MR}{\relax\ifhmode\unskip\space\fi MR }
\providecommand{\MRhref}[2]{%
  \href{http://www.ams.org/mathscinet-getitem?mr=#1}{#2}
}
\providecommand{\href}[2]{#2}
\begin{thebibliography}{10}

\bibitem{AM-22}
H.~Abels and B.-V. Matioc, \emph{Well-posedness of the {M}uskat problem in subcritical {$L_p$}-{S}obolev spaces}, European J. Appl. Math. \textbf{33} (2022), no.~2, 224--266. \MR{4390287}

\bibitem{AL-20}
Thomas Alazard and Omar Lazar, \emph{Paralinearization of the {M}uskat equation and application to the {C}auchy problem}, Arch. Ration. Mech. Anal. \textbf{237} (2020), no.~2, 545--583. \MR{4097324}

\bibitem{AN-21-series2}
Thomas Alazard and Quoc-Hung Nguyen, \emph{On the {C}auchy problem for the {M}uskat equation. {II}: {C}ritical initial data}, Ann. PDE \textbf{7} (2021), no.~1, Paper No. 7, 25. \MR{4242131}

\bibitem{AN-21-series1}
\bysame, \emph{On the {C}auchy problem for the {M}uskat equation with non-{L}ipschitz initial data}, Comm. Partial Differential Equations \textbf{46} (2021), no.~11, 2171--2212. \MR{4313450}

\bibitem{AN22}
\bysame, \emph{Quasilinearization of the 3{D} {M}uskat equation, and applications to the critical {C}auchy problem}, Adv. Math. \textbf{399} (2022), no.~108278.

\bibitem{AN-23-series3}
\bysame, \emph{Endpoint {S}obolev theory for the {M}uskat equation}, Comm. Math. Phys. \textbf{397} (2023), no.~3, 1043--1102. \MR{4541917}

\bibitem{Ambrose-04}
David~M. Ambrose, \emph{Well-posedness of two-phase {H}ele-{S}haw flow without surface tension}, European J. Appl. Math. \textbf{15} (2004), no.~5, 597--607. \MR{2128613}

\bibitem{Ambrose-07}
\bysame, \emph{Well-posedness of two-phase {D}arcy flow in 3{D}}, Quart. Appl. Math. \textbf{65} (2007), no.~1, 189--203. \MR{2313156}

\bibitem{BCD}
Hajer Bahouri, Jean-Yves Chemin, and Raphaë Danchin, \emph{Fourier analysis and nonlinear partial differential equations}, Grundlehren der mathematischen Wissenschaften, vol. 343, Springer Berlin, Heidelberg, 2011.

\bibitem{C-19}
Stephen Cameron, \emph{Global well-posedness for the two-dimensional {M}uskat problem with slope less than 1}, Anal. PDE \textbf{12} (2019), no.~4, 997--1022. \MR{3869383}

\bibitem{C-20-ev}
\bysame, \emph{Eventual regularization for the 3{D} {M}uskat problem: {L}ipschitz for finite time implies global existence}, arXiv: 2007.03099 (2020).

\bibitem{C-20}
\bysame, \emph{Global wellposedness for the 3{D} {M}uskat problem with medium size slope}, arXiv: 2002.00508 (2020).

\bibitem{CCF-21}
A.~Castro, D.~C\'{o}rdoba, and D.~Faraco, \emph{Mixing solutions for the {M}uskat problem}, Invent. Math. \textbf{226} (2021), no.~1, 251--348. \MR{4309495}

\bibitem{CCFG-13}
\'Angel Castro, Diego C\'ordoba, Charles Fefferman, and Francisco Gancedo, \emph{Breakdown of smoothness for the {M}uskat problem}, Arch. Ration. Mech. Anal. \textbf{208} (2013), no.~3, 805--909. \MR{3048596}

\bibitem{CCFG-16}
Angel Castro, Diego C\'ordoba, Charles Fefferman, and Francisco Gancedo, \emph{Splash singularities for the one-phase {M}uskat problem in stable regimes}, Arch. Ration. Mech. Anal. \textbf{222} (2016), no.~1, 213--243. \MR{3519969}

\bibitem{CCFGL-12}
\'Angel Castro, Diego C\'ordoba, Charles Fefferman, Francisco Gancedo, and Mar\'ia L\'opez-Fern\'andez, \emph{Rayleigh-{T}aylor breakdown for the {M}uskat problem with applications to water waves}, Ann. of Math. (2) \textbf{175} (2012), no.~2, 909--948. \MR{2993754}

\bibitem{CNQX-22}
Ke~Chen, Quoc-Hung Nguyen, and Yiran Xu, \emph{The {M}uskat problem with {$C^1$} data}, Trans. Amer. Math. Soc. \textbf{375} (2022), no.~5, 3039--3060. \MR{4402655}

\bibitem{Cheng-16}
C.~H.~Arthur Cheng, Rafael Granero-Belinch\'{o}n, and Steve Shkoller, \emph{Well-posedness of the {M}uskat problem with {$H^2$} initial data}, Adv. Math. \textbf{286} (2016), 32--104. \MR{3415681}

\bibitem{CO-09}
Yonggeun Cho and Tohru Ozawa, \emph{Sobolev inequalities with symmetry}, Commun. Contemp. Math. \textbf{11} (2009), no.~3, 355--365. \MR{2538202}

\bibitem{CJK-07}
Sunhi Choi, David Jerison, and Inwon Kim, \emph{Regularity for the one-phase {H}ele-{S}haw problem from a {L}ipschitz initial surface}, Amer. J. Math. \textbf{129} (2007), no.~2, 527--582. \MR{2306045}

\bibitem{CJK-09}
\bysame, \emph{Local regularization of the one-phase {H}ele-{S}haw flow}, Indiana Univ. Math. J. \textbf{58} (2009), no.~6, 2765--2804. \MR{2603767}

\bibitem{CCGRS16}
Peter Constantin, Diego C\'{o}rdoba, Francisco Gancedo, Luis Rodr\'{\i}guez-Piazza, and Robert~M. Strain, \emph{On the {M}uskat problem: global in time results in 2{D} and 3{D}}, Amer. J. Math. \textbf{138} (2016), no.~6, 1455--1494. \MR{3595492}

\bibitem{CCGS-13}
Peter Constantin, Diego C\'{o}rdoba, Francisco Gancedo, and Robert~M. Strain, \emph{On the global existence for the {M}uskat problem}, J. Eur. Math. Soc. (JEMS) \textbf{15} (2013), no.~1, 201--227. \MR{2998834}

\bibitem{CGSV-17}
Peter Constantin, Francisco Gancedo, Roman Shvydkoy, and Vlad Vicol, \emph{Global regularity for 2{D} {M}uskat equations with finite slope}, Ann. Inst. H. Poincar\'{e} C Anal. Non Lin\'{e}aire \textbf{34} (2017), no.~4, 1041--1074. \MR{3661870}

\bibitem{CCG-11}
Antonio C\'{o}rdoba, Diego C\'{o}rdoba, and Francisco Gancedo, \emph{Interface evolution: the {H}ele-{S}haw and {M}uskat problems}, Ann. of Math. (2) \textbf{173} (2011), no.~1, 477--542. \MR{2753607}

\bibitem{CCG-13}
\bysame, \emph{Porous media: the {M}uskat problem in three dimensions}, Anal. PDE \textbf{6} (2013), no.~2, 447--497. \MR{3071395}

\bibitem{CFG-11}
Diego Cordoba, Daniel Faraco, and Francisco Gancedo, \emph{Lack of uniqueness for weak solutions of the incompressible porous media equation}, Arch. Ration. Mech. Anal. \textbf{200} (2011), no.~3, 725--746. \MR{2796131}

\bibitem{CG-07-contour}
Diego C\'{o}rdoba and Francisco Gancedo, \emph{Contour dynamics of incompressible 3-{D} fluids in a porous medium with different densities}, Comm. Math. Phys. \textbf{273} (2007), no.~2, 445--471. \MR{2318314}

\bibitem{CGZ-17}
Diego C\'ordoba, Javier G\'omez-Serrano, and Andrej Zlato{\v{s}}, \emph{A note on stability shifting for the {M}uskat problem, {II}: {F}rom stable to unstable and back to stable}, Anal. PDE \textbf{10} (2017), no.~2, 367--378. \MR{3619874}

\bibitem{CGZ-15}
Diego C\'ordoba, Javier G\'omez-Serrano, and Andrej Zlato\v{s}, \emph{A note on stability shifting for the {M}uskat problem}, Philos. Trans. Roy. Soc. A \textbf{373} (2015), no.~2050, 20140278, 10. \MR{3393318}

\bibitem{CL-21}
Diego C\'{o}rdoba and Omar Lazar, \emph{Global well-posedness for the 2{D} stable {M}uskat problem in {$H^{3/2}$}}, Ann. Sci. \'{E}c. Norm. Sup\'{e}r. (4) \textbf{54} (2021), no.~5, 1315--1351. \MR{4363243}

\bibitem{Darcy}
Henry Darcy, \emph{Les {F}ontaines {P}ubliques de la {V}ille de {D}ijon}, Dalmont, Paris, 1856.

\bibitem{DLL-17}
Fan Deng, Zhen Lei, and Fanghua Lin, \emph{On the two-dimensional {M}uskat problem with monotone large initial data}, Comm. Pure Appl. Math. \textbf{70} (2017), no.~6, 1115--1145. \MR{3639321}

\bibitem{Hitchhiker}
Eleonora Di~Nezza, Giampiero Palatucci, and Enrico Valdinoci, \emph{Hitchhiker's guide to the fractional {S}obolev spaces}, Bull. Sci. Math. \textbf{136} (2012), no.~5, 521--573. \MR{2944369}

\bibitem{DGN-23}
Hongjie Dong, Francisco Gancedo, and Huy~Q. Nguyen, \emph{Global well-posedness for the one-phase {M}uskat problem}, Comm. Pure Appl. Math. \textbf{76} (2023), no.~12, 3912--3967. \MR{4655356}

\bibitem{DGN-23-3d}
\bysame, \emph{Global well-posedness for the one-phase {M}uskat problem in 3{D}}, arXiv: 2308.14230 (2023).

\bibitem{EMM-12}
Joachim Escher, Anca-Voichita Matioc, and Bogdan-Vasile Matioc, \emph{Modelling and analysis of the {M}uskat problem for thin fluid layers}, J. Math. Fluid Mech. \textbf{14} (2012), no.~2, 267--277. \MR{2925108}

\bibitem{EM-11}
Joachim Escher and Bogdan-Vasile Matioc, \emph{On the parabolicity of the {M}uskat problem: well-posedness, fingering, and stability results}, Z. Anal. Anwend. \textbf{30} (2011), no.~2, 193--218. \MR{2793001}

\bibitem{Eva}
Lawrence~C. Evans, \emph{Partial differential equations. {S}econd edition}, Graduate Studies in Mathematics, vol.~19, American Mathematical Society, Providence, RI, 2010.

\bibitem{FSL-18}
Clemens F\"{o}rster and L\'{a}szl\'{o} Sz\'{e}kelyhidi, Jr., \emph{Piecewise constant subsolutions for the {M}uskat problem}, Comm. Math. Phys. \textbf{363} (2018), no.~3, 1051--1080. \MR{3858828}

\bibitem{GGPS-19}
F.~Gancedo, E.~Garc\'{\i}a-Ju\'{a}rez, N.~Patel, and R.~M. Strain, \emph{On the {M}uskat problem with viscosity jump: global in time results}, Adv. Math. \textbf{345} (2019), 552--597. \MR{3899970}

\bibitem{GGS-20}
Francisco Gancedo, Rafael Granero-Belinch\'on, and Stefano Scrobogna, \emph{Surface tension stabilization of the {R}ayleigh-{T}aylor instability for a fluid layer in a porous medium}, Ann. Inst. H. Poincar\'e{} C Anal. Non Lin\'eaire \textbf{37} (2020), no.~6, 1299--1343. \MR{4168918}

\bibitem{GL-22}
Francisco Gancedo and Omar Lazar, \emph{Global well-posedness for the three dimensional {M}uskat problem in the critical {S}obolev space}, Arch. Ration. Mech. Anal. \textbf{246} (2022), no.~1, 141--207. \MR{4487512}

\bibitem{GS-14}
Francisco Gancedo and Robert~M. Strain, \emph{Absence of splash singularities for surface quasi-geostrophic sharp fronts and the {M}uskat problem}, Proc. Natl. Acad. Sci. USA \textbf{111} (2014), no.~2, 635--639. \MR{3181769}

\bibitem{GGHP-23}
Eduardo Garc\'ia-Ju\'arez, Javier G\'omez-Serrano, Susanna~V. Haziot, and Benoit Pausader, \emph{Desingularization of small moving corners for the {M}uskat equation}, Ann. PDE, arXiv:2305.05046, To appear.

\bibitem{2Dself}
Eduardo Garc\'{\i}a-Ju\'{a}rez, Javier G\'{o}mez-Serrano, Huy~Q. Nguyen, and Beno\^{i}t Pausader, \emph{Self-similar solutions for the {M}uskat equation}, Adv. Math. \textbf{399} (2022), Paper No. 108294, 30. \MR{4385135}

\bibitem{GG-14}
Javier G\'omez-Serrano and Rafael Granero-Belinch\'on, \emph{On turning waves for the inhomogeneous {M}uskat problem: a computer-assisted proof}, Nonlinearity \textbf{27} (2014), no.~6, 1471--1498. \MR{3215843}

\bibitem{KT-21}
Inwon Kim and Jiajun Tong, \emph{Interface dynamics in a two-phase tumor growth model}, Interfaces Free Bound. \textbf{23} (2021), no.~2, 191--304. \MR{4292953}

\bibitem{LM-17}
Philippe Lauren\c{c}ot and Bogdan-Vasile Matioc, \emph{Self-similarity in a thin film {M}uskat problem}, SIAM J. Math. Anal. \textbf{49} (2017), no.~4, 2790--2842. \MR{3679915}

\bibitem{Matioc-18}
Bogdan-Vasile Matioc, \emph{Viscous displacement in porous media: the {M}uskat problem in 2{D}}, Trans. Amer. Math. Soc. \textbf{370} (2018), no.~10, 7511--7556. \MR{3841857}

\bibitem{Matioc-19}
\bysame, \emph{The {M}uskat problem in two dimensions: equivalence of formulations, well-posedness, and regularity results}, Anal. PDE \textbf{12} (2019), no.~2, 281--332. \MR{3861893}

\bibitem{Muskat-1}
Morris Muskat, \emph{The flow of fluids through porous media}, Journal of Applied Physics \textbf{8} (1937), no.~4, 274--282.

\bibitem{Muskat}
Morris Muskat and Ralph~Dewey Wyckoff, \emph{The flow of homogeneous fluids through porous media}, New York, London, McGraw-Hill Book Company, inc., 1937.

\bibitem{Naras-98}
T.N. Narasimhan, \emph{Hydraulic characterization of aquifers, reservoir rocks, and soils: {A} history of ideas}, Water Resources Research \textbf{34} (1998), no.~1, 33--46.

\bibitem{N-22}
Huy~Q. Nguyen, \emph{Global solutions for the {M}uskat problem in the scaling invariant {B}esov space {$\dot {B}^1_{\infty,1}$}}, Adv. Math. \textbf{394} (2022), Paper No. 108122, 28. \MR{4348695}

\bibitem{NP-20}
Huy~Q. Nguyen and Beno\^{i}t Pausader, \emph{A paradifferential approach for well-posedness of the {M}uskat problem}, Arch. Ration. Mech. Anal. \textbf{237} (2020), no.~1, 35--100. \MR{4090462}

\bibitem{NSL-21}
Florent Noisette and L\'{a}szl\'{o} Sz\'{e}kelyhidi, Jr., \emph{Mixing solutions for the {M}uskat problem with variable speed}, J. Evol. Equ. \textbf{21} (2021), no.~3, 3289--3312. \MR{4350274}

\bibitem{Shi-23}
Jia Shi, \emph{Regularity of solutions to the {M}uskat equation}, Arch. Ration. Mech. Anal. \textbf{247} (2023), no.~3, Paper No. 36. \MR{4575396}

\bibitem{Shi-23-1}
\bysame, \emph{The regularity of the solutions to the {M}uskat equation: the degenerate regularity near the turnover points}, arXiv: 2310.14531 (2023).

\bibitem{SCH-04}
Michael Siegel, Russel~E. Caflisch, and Sam Howison, \emph{Global existence, singular solutions, and ill-posedness for the {M}uskat problem}, Comm. Pure Appl. Math. \textbf{57} (2004), no.~10, 1374--1411. \MR{2070208}

\bibitem{Strauss-77}
Walter~A. Strauss, \emph{Existence of solitary waves in higher dimensions}, Comm. Math. Phys. \textbf{55} (1977), no.~2, 149--162. \MR{454365}

\bibitem{SL-12}
L\'{a}szl\'{o} Sz\'{e}kelyhidi, Jr., \emph{Relaxation of the incompressible porous media equation}, Ann. Sci. \'{E}c. Norm. Sup\'{e}r. (4) \textbf{45} (2012), no.~3, 491--509. \MR{3014484}

\bibitem{Yi-96}
Fahuai Yi, \emph{Local classical solution of {M}uskat free boundary problem}, J. Partial Differential Equations \textbf{9} (1996), no.~1, 84--96. \MR{1384001}

\bibitem{Yi-03}
\bysame, \emph{Global classical solution of {M}uskat free boundary problem}, J. Math. Anal. Appl. \textbf{288} (2003), no.~2, 442--461. \MR{2019452}

\bibitem{Zlatos-24-LWP}
Andrej Zlato{\v{s}}, \emph{The 2{D} {M}uskat problem {I}: {L}ocal {R}egularity on the {H}alf-plane, {P}lane, and {S}trips}, arXiv: 2401.14659 (2024).

\bibitem{Zlatos-24-FB}
\bysame, \emph{The 2{D} {M}uskat problem {I}{I}: {S}table {R}egime {S}mall {D}ata {S}ingularity on the {H}alf-plane}, arXiv: 2401.14660 (2024).

\end{thebibliography}

\end{document}